\documentclass[11pt,reqno,english]{amsart}

\usepackage[a4paper,includeheadfoot,
left=32mm,right=32mm,
top=32mm,bottom=35mm]{geometry}

\usepackage[T1]{fontenc}
\usepackage[utf8]{inputenc}
\usepackage[english]{babel}

\usepackage{amsmath,amssymb,amsfonts,amsthm}
\usepackage{mathrsfs}
\usepackage{mathtools}
\usepackage[mathscr]{euscript}

\usepackage{xcolor}

\usepackage{microtype}

\usepackage[hidelinks]{hyperref}
\hypersetup{colorlinks={true},linkcolor={blue},citecolor=blue}
\numberwithin{equation}{section}

\theoremstyle{plain}
\newtheorem{theorem}{Theorem}[section]
\newtheorem{proposition}[theorem]{Proposition}
\newtheorem{lemma}[theorem]{Lemma}
\newtheorem{corollary}[theorem]{Corollary}

\theoremstyle{definition}
\newtheorem{definition}[theorem]{Definition}

\theoremstyle{remark}
\newtheorem{remark}[theorem]{Remark}

\renewcommand{\epsilon}{\varepsilon}

\newcommand{\noi}{\noindent}

\newcommand{\ov}{\overline}

\def\Im{\textrm{Im}}

\newcommand{\R}{\mathbb{R}}
\newcommand{\C}{\mathbb{C}}
\newcommand{\Z}{\mathbb{Z}}
\newcommand{\N}{\mathbb{N}}

\newcommand{\la}{\langle}
\newcommand{\ra}{\rangle}

\newcommand{\dotcirc}{\mathbin{\odot}}

\newcommand{\e}{\mathrm{e}}
\newcommand{\dd}{\mathrm{d}}

\makeatletter
\@namedef{subjclassname@2020}{\textup{2020} Mathematics Subject Classification}
\makeatother

\begin{document}
	
	\title[
	Probabilistic well-posedness for the radial NLS on the $3d$ ball
	]{	Gauge transforms, random averaging operator ansatz and improved probabilistic well-posedness for the radial NLS on the $3d$ ball }
	
	\author{Nicolas Burq}
	\address{Universit\'e Paris-Saclay, Laboratoire de Math\'ematique d’Orsay, UMR CNRS 8628, Orsay,
		France, and Institut universitaire de France }
	\email{nicolas.burq@universite-paris-saclay.fr}
	
	\author{Nicolas Camps}
	\address{Univ Rennes, IRMAR - UMR CNRS 6625, F-35000 Rennes, France}
	\email{nicolas.camps@univ-rennes.fr}
	
	\author{Chenmin Sun}
	\address{CNRS, Universit\'e Paris-Est Cr\'eteil,  Laboratoire d'Analyse et de Math\'ematiques Appliqu\'ees, UMR CNRS 8050, Cr\'eteil, France}
	\email{chenmin.sun@cnrs.fr}
	
	\author{Nikolay Tzvetkov}
	\address{Ecole Normale Sup\'erieure de Lyon, Unit\'e de Math\'ematiques Pures et Appliqu\'es,  UMR CNRS 5669, Lyon, France}
	\email{nikolay.tzvetkov@ens-lyon.fr}

	\subjclass[2020]{35Q55, 35A01, 35R01, 35R60, 37KXX}
	
	\keywords{nonlinear Schrödinger equation, invariant measure, probabilistic scaling, multilinear dispersive estimates, low-regularity well-posedness, bounded domain}
	
	\date{\today}

	\begin{abstract}
	We construct probabilistic strong solutions to the cubic Schr\"odinger equation on the three-dimensional ball
	with radial initial data, which is a significant improvement of a result by Bourgain--Bulut.
	These solutions lie in the supercritical regime with respect to the probabilistic scaling introduced by Deng--Nahmod--Yue.
	We achieve this result through gauge transformations that do not modify the equation, combined with a refined modulation analysis using random averaging operators.
\end{abstract}

	\ \vskip -1cm  \hrule \vskip 1cm \vspace{-8pt}
	\maketitle 
	{ \textwidth=4cm \hrule}

	\maketitle
	\setcounter{tocdepth}{1}
	\tableofcontents
	
	\section{Introduction}
Consider the defocusing cubic Schrödinger equation
\begin{equation}
	\label{eq:NLS}
	\tag{NLS}
	(i\partial_{t}+ \Delta)u = |u|^{2}u, \qquad (t,x)\in \R\times \mathbf{B}_{d},
\end{equation}
posed on the unit ball~$\mathbf{B}_{d}$ in~$\R^{d}$:
\[
\mathbf{B}_{d}:=\{x\in\R^{d}:\ |x|\leq 1\}, \qquad d=2,3.
\]
We restrict to radial solutions with Dirichlet boundary conditions
$
u|_{\R\times \partial\mathbf{B}_{d}}=0.
$
In two dimensions, the Gibbs measure and the associated global dynamics were constructed in~\cite{Tzv08}.
In three dimensions, the situation is significantly more challenging due to the increased singularity of the initial data in the support of the Gibbs measure, and the lack of nonlinear smoothing in the evolution. In~\cite{BB14}, Bourgain and Bulut proved almost sure global existence and uniqueness of solutions that preserve the Gibbs measure. Our work can be seen as a natural continuation of~\cite{Tzv08,BB14}.

In three dimensions, the critical Sobolev regularity for~\eqref{eq:NLS} is~$s=1/2$, while typical initial data distributed according to the Gibbs measure belong to the radial Sobolev space~$H_{\mathrm{rad}}^{s}(\mathbf{B}_{3})$ for every~$s<1/2$. As discussed later in this introduction, the threshold~$s=1/2$ also coincides with the critical regularity for the probabilistic scaling introduced in~\cite{DNY24,DNY22}.

The purpose of the present work is to address the three-dimensional problem and to substantially improve the result of~\cite{BB14} through a refined analysis of the nonlinearity combined with appropriate gauge transformations.

\subsection{Statement of the main result}
For notational convenience, we rescale the problem and assume throughout that the ball~$\mathbf{B}:=\mathbf{B}_{3}$ has radius~$\pi$. An orthonormal basis of radial eigenfunctions of the Dirichlet Laplacian on~$\mathbf{B}$ is given by
\begin{equation}
	\label{eq:en}
	\mathbf{e}_{n}(x)
	= \frac{1}{\sqrt{2}\pi}\frac{\sin(n|x|)}{|x|}\mathbf{1}_{|x|\leq \pi},
	\qquad n\in\N\setminus\{0\},
\end{equation}
associated with the eigenvalues~$n^{2}$.
Denoting by~$\langle\cdot,\cdot\rangle$ the $L^{2}(\mathbf{B})$ inner product, any radial function
$u\in L^{2}(\mathbf{B})$ admits the expansion
\[
u=\sum_{n\geq1}\langle u,\mathbf{e}_{n}\rangle\,\mathbf{e}_{n}.
\]
The random initial data are defined by
\begin{equation}\label{eq:initial_data}
	u|_{t=0}
	=
	\phi_{\alpha}(\omega,x):=\sum_{n\geq1}\frac{g_{n}(\omega)}{n^{\alpha}}\,\mathbf{e}_{n}(x),
	\qquad \alpha\leq 1,
\end{equation}
where~$(g_{n})_{n\geq1}$ is a sequence of independent standard complex Gaussian variables.
The map~$\omega\mapsto \phi_\alpha(\omega,\cdot)$ induces the Gaussian measure~$\mu_\alpha$, supported on
\[
H_\mathrm{rad}^{(\alpha-\frac{1}{2})-}(\mathbf{B}):=\bigcap_{s<\alpha-\frac{1}{2}}H_\mathrm{rad}^{s}(\mathbf{B})\,.
\]
For~$N\geq1$, we denote by~$\Pi_{N}$ the spectral projector onto frequencies~$\leq N$,
\[
\Pi_{N}u
= \sum_{1\leq n\leq N}\langle u,\mathbf{e}_{n}\rangle\,\mathbf{e}_{n},
\]
and by~$\mathbf{P}_{N}$ the Littlewood--Paley projector onto frequencies of size comparable to~$N$,
\[
\mathbf{P}_{N}:=\Pi_{N}-\Pi_{N/2}.
\]
Bourgain and Bulut studied in~\cite{BB14} the following finite-dimensional approximation of~\eqref{eq:NLS}:
\begin{equation}
	\label{eq:BB}
	\begin{cases}
		(i\partial_{t}+\Delta)u_{N}
		= \Pi_{N}(|u_{N}|^{2}u_{N}),\\
		u_{N}|_{t=0}
		= \Pi_{N}\phi.
	\end{cases}
\end{equation}
Since the dynamics evolve in a finite-dimensional space, the solution
$u_{N}=\Pi_{N}u_{N}$ is globally defined by standard Cauchy--Lipschitz theory, together with mass conservation. They proved the following theorem.
\begin{theorem}[\cite{BB14}]
	\label{thm:BB}
	There exists a measurable set~$\Sigma_{1}$ with~$\mu_{1}(\Sigma_{1})=1$ such that
	for every~$\phi\in\Sigma_{1}$ and every~$T>0$, the sequence~$(u_{N})_{N\geq1}$ converges in
	$L^{\infty}([-T,T]; H_\mathrm{rad}^{\frac12-}(\mathbf{B}_{3}))$ to a unique limit
	$
	u\in C(\R;H_\mathrm{rad}^{\frac12-}(\mathbf{B}_{3}))
	$
	solving~\eqref{eq:NLS}.
\end{theorem}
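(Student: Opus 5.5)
The strategy follows Bourgain's invariant-measure argument: a local theory for the truncated equations \eqref{eq:BB} that is uniform in $N$, combined with the invariance of the finite-dimensional Gibbs measures, which turns local control into global control.

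\emph{Step 1: Gibbs measures and uniform invariance.} For the truncated system \eqref{eq:BB}, the Hamiltonian $H_N(u)=\int|\nabla u|^2+\frac12\int|u|^4$ and the mass are conserved. By Liouville's theorem, the measure $d\rho_N=Z_N^{-1}e^{-\frac12\int|\Pi_N u|^4}d\mu_1$ is invariant under the flow $\Phi_N$. Since the nonlinearity is defocusing, the density is bounded by $1$, and $e^{-\frac12\|\Pi_N u\|_{L^4}^4}$ converges in $L^p(\mu_1)$. This uses that $\|\Pi_N\phi\|_{L^4}$ is a.s. bounded on the ball for radial data, via the radial bound $\|\mathbf e_n\|_{L^4}\lesssim (\log n)^{1/4}$ together with Gaussian hypercontractivity. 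One possibly needs a Wick-type renormalisation, but in the radial $3d$ setting the $L^4$ norm is actually finite.

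\emph{Step 2: uniform local theory.} Write $u_N=\Pi_N e^{it\Delta}\phi+w_N$. The goal is to show that on an event of probability $\ge 1-e^{-cT^{-\delta}}$ the remainder $w_N$ is bounded, uniformly in $N$, in a Bourgain space $X^{s,b}$ with $s>1/2$ on $[0,T]$. This requires multilinear estimates for products of radial eigenfunctions in which at least one factor is random; they rely on the bilinear eigenfunction estimates $\|\mathbf e_{n_1}\mathbf e_{n_2}\|_{L^2}\lesssim \min(n_1,n_2)^{1/2}$ and on the resonance structure $n^2=n_1^2-n_2^2+n_3^2$. In the same way one proves that $w_N-w_M$ is small in $X^{s',b}$ for some $s'<1/2$, with a gain $N^{-\epsilon}$ for $M>N$.

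\emph{Step 3: globalisation.} Using Step 2 and the invariance of $\rho_N$, Bourgain's iteration argument gives the following. For every $T$ and $\epsilon$, there is a set of $\rho_N$-measure at least $1-\epsilon$, uniformly in $N$, on which $\sup_{|t|\le T}\|u_N(t)\|_{H^{1/2-}}\lesssim(\log(T/\epsilon))^{C}$. We then apply the difference estimate on each of the $T/\tau$ short intervals and use the Borel--Cantelli lemma over dyadic $N$ and integer $T$. This yields convergence in $L^\infty([-T,T];H^{1/2-})$ on a set $\Sigma_1$ of full $\rho$-measure, hence of full $\mu_1$-measure, since the two measures are mutually absolutely continuous. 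Passing to the limit in the Duhamel formula shows that the limit solves \eqref{eq:NLS}. Uniqueness is understood in the class $\Pi e^{it\Delta}\phi+X^{s,b}$.

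\emph{Main obstacle.} The hardest part is Step 2, namely the trilinear estimates at the endpoint regularity $\alpha=1$. Here the data are only in $H^{1/2-}$, which is critical, and the high--high--low interactions $(\text{random},\text{random},\text{smooth})$ with a high-frequency output are borderline. My first attempt would be to remove the resonant part $n_1=n$ or $n_3=n$ by a phase renormalisation, that is, the gauge $\sum|u_n|^2$. The remaining terms would then be controlled via large-deviation estimates for Gaussian multilinear sums, combined with counting of the lattice points on $n^2-n_1^2+n_2^2-n_3^2=O(1)$, which is divisor-bounded.
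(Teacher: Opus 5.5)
\textbf{Step~2 fails.} The paper does not reprove this theorem; it is quoted from \cite{BB14}. It does, however, explain why your Step~2 cannot hold. At $\alpha=1$ there is no uniform-in-$N$ short-time theory in which $w_N=u_N-\Pi_Ne^{it\Delta}\phi$ is smoother than the data. This is a structural obstruction, not a technical one.

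The resonant term \eqref{res:hhh}, namely $2\big(\sum_m\gamma_{nnmm}|u_m|^2\big)u_n$, has $\gamma_{nnmm}\approx c\min(n,m)$. So even its mean is $\mu_n^2\sim\log n$, by \eqref{eq:mu-est}. At a mode $n\sim N$, its contribution to the first Duhamel iterate is $-it\,e^{-itn^2}\big(2\sum_m\gamma_{nnmm}|g_m|^2m^{-2}\big)g_n/n$. This has $H^{1/2}$-norm of order $t\log N$, whereas $\|\mathbf P_N\phi_1\|_{H^{1/2}}\sim1$. Heuristically $u_n(t)\approx e^{-it(n^2+C_n)}u_n(0)$ with $C_n\sim\log n$, so $w_N$ is as large as the data at frequencies $N\gtrsim e^{1/t}$. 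Compare Lemma~\ref{lem:ps}, which expresses that $\alpha=1$ is critical for the probabilistic scaling.

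Independently, the partially paired high$\times$low$\times$low interactions \eqref{eq:112} do not regularize. This is why the introduction says Bourgain's affine ansatz is not applicable to this model. So the uniform $X^{s,b}$ bound for $w_N$ with $s>1/2$ is false. Your Step~3, which iterates this local theory, fails with it, and so does your uniqueness class $\Pi e^{it\Delta}\phi+X^{s,b}$.

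\textbf{The gauge does not repair it, and what \cite{BB14} do instead.} The coefficient $C_n(u)=2\sum_m\gamma_{nnmm}|u_m|^2$ depends on $n$, so it is not a multiple of the mass. It also depends on $t$, since the $|u_m(t)|^2$ are not conserved. The paper stresses that no global gauge removes it while preserving the rest of the nonlinearity.

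What the paper itself does is a genuinely nonlinear ansatz. It shifts the dispersion by the deterministic $\mu_n^2$. It turns $|u_m(t)|^2-|u_m(0)|^2$ into the quintic term \eqref{eq:N4} via the equation. It replaces $\Pi_Ne^{it\Delta}\phi$ by colored Gaussians with random phases $\mathcal H_n^N$.

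The argument of \cite{BB14}, as the introduction describes it, avoids a smoother remainder altogether. For a suitable norm, with $x_{N,M}=\|u_N-u_M\|$, it proves $x_{N,M}(t)\le CN^{-\delta}+C\log N\int_0^t x_{N,M}(\tau)\,\mathrm{d}\tau$. The projector $\Pi_N$ in \eqref{eq:BB}, and hence the invariance of the truncated Gibbs measures, is used essentially here. Gronwall then gives $x_{N,M}(t)\le CN^{-\delta+Ct}$, hence convergence for $CT<\delta$, and this is propagated in time by invariance. The logarithmic loss is not removed; it is beaten by the $N^{-\delta}$ gain on short intervals.

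\textbf{A minor point in Step~1.} On $\mathbf B_3$ one has $\|\mathbf e_n\|_{L^4}^4=\gamma_{nnnn}\sim n$, not $\log n$; the $\log n$ bound is the one for the disc. Your conclusion still holds: $\int|\phi|^4<\infty$ almost surely, since $\mathbb E\|\phi\|_{L^4}^4=2\sum_{n,m}\gamma_{nnmm}n^{-2}m^{-2}\lesssim\sum_{n,m}\min(n,m)\,n^{-2}m^{-2}<\infty$.
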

The analysis of~\cite{BB14} also shows that the Gibbs measure is invariant under the flow~$\phi\mapsto u(t,\cdot)$. Recall that the Gibbs measure is absolutely continuous with respect to~$\mu_1$, with Radon--Nikodym derivative
$
\exp\big(-\frac12\int_{\mathbf{B}}|u(x)|^{4}\,\mathrm{d}x\big).
$

Our goal is to substantially improve the result of~\cite{BB14}.
We establish almost sure local well-posedness for Gaussian initial data given by~\eqref{eq:initial_data} at Sobolev regularities strictly below the typical regularity of functions in the support of the Gibbs measure, for some~$\alpha < 1$, in a supercritical regime with respect to the probabilistic scaling introduced by Deng--Nahmod--Yue in~\cite{DNY24,DNY22}.

To this end, we still consider frequency-truncated initial data but evolve them according to the full
cubic Schrödinger equation:
\begin{equation}
	\label{eq:NLS-N}
	\begin{cases}
		(i\partial_{t}+\Delta)u_{N}
		= |u_{N}|^{2}u_{N},\\
		u_{N}|_{t=0}
		= \Pi_{N}\phi.
	\end{cases}
\end{equation}
Since the initial data of~\eqref{eq:NLS-N} are smooth almost surely,
using the methods of~\cite{Tzv08} one can show that~\eqref{eq:NLS-N} has a unique global solution
in
$
C(\R;H_\mathrm{rad}^2(\mathbf{B})\cap H^1_0(\mathbf{B})),
$
for every~$N\geq 1$. However, the limit~$N\to\infty$ of~$u_N$ is highly nontrivial because of the low regularity of the limit of
$\Pi_{N}\phi_\alpha$.
Our main result proves the almost sure local-in-time convergence of~$u_N$.
\begin{theorem}
	\label{thm:main1}
	Let~$\alpha>15/16$. There exists a measurable set~$\Sigma_{\alpha}$ with~$\mu_{\alpha}(\Sigma_{\alpha})=1$ such that, for every~$\phi\in\Sigma_{\alpha}$, there exists~$T>0$ such that the sequence~$(u_{N})_{N\geq1}$ of solutions to~\eqref{eq:NLS-N}
	converges in
	$L^{\infty}([-T,T];H_\mathrm{rad}^{\alpha-\frac12-}(\mathbf{B}))$
	to a limit~$u\in C([-T,T];H_\mathrm{rad}^{\alpha-\frac12-}(\mathbf{B}))$ solving~\eqref{eq:NLS}.
\end{theorem}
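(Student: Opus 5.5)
The plan is to write $u_N$ as a sum of explicit random structures plus a smoother remainder, and to close a contraction argument for the remainder uniformly in $N$. The structures follow the random averaging operator (RAO) ansatz of Deng--Nahmod--Yue, adapted to the radial basis $\mathbf{e}_n$, and are preceded by gauge transformations that remove the resonant interactions.

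\textbf{Gauge and renormalization.} Expanding $u=\sum_n u_n(t)\mathbf{e}_n$, the cubic term reads
\[
\sum c(n_1,n_2,n_3,n)\,u_{n_1}\ov{u_{n_2}}u_{n_3}\,\mathbf{e}_n,
\qquad
c=\int \mathbf{e}_{n_1}\mathbf{e}_{n_2}\mathbf{e}_{n_3}\mathbf{e}_n .
\]
The coefficient $c$ is explicit: it is essentially $\min$-type and of size $\lesssim \min(n_j)$. I would isolate the pairings $n_2\in\{n_1,n_3\}$, whose sums $\sum_m c(m,m,n,n)|u_m|^2$ are divergent or large. For the mean part $\sum_m |u_m|^2$ (mass-type quantity), I would use the phase rotation $u\mapsto e^{it\cdot(\text{mass term})}u$. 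The mass is conserved for \eqref{eq:NLS-N}, so this gauge does not modify the equation; it only conjugates it. The $n$-dependent part of the resonant coefficient I would handle by a frequency-dependent modulation inserted into the linear evolution: $e^{-it(n^2+\sigma_n)}$ with $\sigma_n$ depending on the data. This is where the "gauge transforms that do not modify the equation" enter.

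\textbf{Ansatz.} After the gauge, I would decompose
\[
u_N=\sum_{M\le N}\big(\psi_M+z_M\big),
\qquad
\psi_M=\mathcal{H}^{M}\big(\mathbf{P}_M\phi\big),
\]
where $\mathcal{H}^M$ is the random averaging operator. It is obtained by solving the equation linearized around the low-frequency solution $u_{M/2}$, with the high-frequency input $\mathbf{P}_M\phi$. This captures the high-low-low interactions, which are the obstruction in the supercritical regime relative to probabilistic scaling. The remainder $z_M$ is then to be placed in $X^{s',b}$ with $s'$ strictly larger than $\alpha-\tfrac12$ (or close to $\tfrac12$). The key steps are:
\begin{enumerate}
\item Bound the kernel of $\mathcal{H}^M$ in operator norm and Hilbert--Schmidt norms, using independence of $\mathbf{P}_M\phi$ from $\mathcal{H}^M$. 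This gives $\psi_M$ the same $\ell^\infty$-type randomness as the data, up to $M^{\epsilon}$ losses.
\item Estimate the multilinear (cubic) expressions in $\psi$ and $z$ using large-deviation estimates for Gaussian chaos conditional on low frequencies. For this, the counting of $n_1-n_2+n_3=\pm n$ and $n_1^2-n_2^2+n_3^2-n^2=O(1)$ in one dimension is elementary, and that is the radial advantage.
\item Close the induction on dyadic scales $M$, establishing convergence as $N\to\infty$ by the standard difference estimates.
\end{enumerate}

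\textbf{Main obstacle.} I expect the hardest part to be the high-high-high to high and the "high$\times$(low resonance)" terms that are not absorbed by the RAO. Near the threshold, the $1$-dimensional counting only gives square-root gains, and the coefficient $c$ lacks decay when one frequency is small. The exponent $15/16$ should come from optimizing the interpolation between deterministic bilinear Strichartz on the ball (Bourgain--Bulut type) and the probabilistic gains in these terms.

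I would first try a refined modulation analysis: split by the size of $|n_1^2-n_2^2+n_3^2-n^2|$ and use $X^{s,b}$ with $b$ near $\tfrac12$ to gain from large modulation. The gauged phase $\sigma_n$ must be shown Lipschitz in the data so that the modulation analysis survives. The local time $T$ is chosen from the size of the random norms via the usual $T^{\theta}$ gain, and measurability of $\Sigma_\alpha$ follows by taking intersections over dyadic events.
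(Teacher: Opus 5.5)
There is a genuine gap in how you treat the resonant term $2\big(\sum_m\gamma_{nnmm}|u_m|^2\big)u_n$, which is where this problem departs from the Deng--Nahmod--Yue setting. You attribute the supercritical obstruction to high-low-low interactions absorbed by $\mathcal{H}^M$. In fact the probabilistic-scaling computation (Lemma~\ref{lem:ps}) shows that the $tN^{2(1-\alpha)}$ growth of the second Picard iterate comes from $m\sim n\sim N$. That is a purely high$\times$high$\times$high interaction, which an operator built from $u_{M/2}$ never sees.

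Neither of your gauges removes it.
\begin{itemize}
\item The mass rotation is beside the point. The mass is finite for $\alpha>\frac12$, while the divergent coefficient genuinely depends on $n$ (recall $\gamma_{nnmm}\gtrsim n$ for $m\sim n$). No global phase can absorb it, as the paper stresses.
\item A shift $n^2\mapsto n^2+\sigma_n$ with $\sigma_n$ constant in time removes only the time-independent part. What remains, $2\sum_m\gamma_{nnmm}\big(|u_m(t)|^2-|u_m(0)|^2\big)u_n$, is not perturbative by direct estimates. Write $u_m=\psi_m+w_m$ with $|\psi_m(t)|=|\psi_m(0)|$. The piece $\sum_{m\sim N}\gamma_{nnmm}\,2\mathrm{Re}(\overline{\psi_m}w_m)\,\psi_n$ then costs $N\cdot N^{-\alpha}\cdot N^{\frac12}\cdot N^{\frac12-\alpha}=N^{2(1-\alpha)}$ relative to $\|w_N\|$ in the fixed point for $w_N$. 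This is the same kind of $N^{\beta(\alpha)}$ loss as in \eqref{eq:int3}, which defeats the Bourgain--Bulut Gronwall argument for $\alpha<1$.
\item Letting $\sigma_n$ depend on $u(t)$ (a nonlinear gauge) instead inserts solution-dependent, time-dependent phases into every non-resonant interaction. ``Lipschitz dependence on the data'' is not a mechanism that recovers the modulation gains.
\item Building $\sigma_n$ from $(|g_m|^2)_{m\sim n}$ would moreover couple the scale-$N$ propagator to the very Gaussians that the conditional chaos estimates treat as independent.
\end{itemize}

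The missing idea is the paper's. First, put only the deterministic $\mu_n^2=2\sum_m\gamma_{nnmm}m^{-2\alpha}\sim n^{2(1-\alpha)}$ into $\lambda_n^2=n^2+\mu_n^2$. Second, control the fluctuation $\sum_m\gamma_{nnmm}(|g_m|^2-1)m^{-2\alpha}$ scale by scale by hypercontractivity, as in \eqref{R-bound2}. Third, use the equation to write $|u_m(t)|^2-|u_m(0)|^2=2\Im\int_0^t\sum\gamma_{mm_1m_2m_3}\,u_{m_1}\overline{u_{m_2}}u_{m_3}\overline{u_m}\,\mathrm{d}t'$. Every pairing with $m_1$ or $m_3$ in $\{m,m_2\}$ is real and drops out. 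This turns the resonant term into the \emph{non-resonant} quintic term $\mathcal{N}^{(4)}(u)\dotcirc u$. It is estimated pointwise in time (Lemma~\ref{lem:N4mapping}, Corollaries~\ref{cor:4-linear-1} and~\ref{cor:4-linearprob}), and its low-frequency part enters the diagonal, phase-valued random averaging operator.

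Your counting step is also misstated. On the ball, $\gamma_{nn_1n_2n_3}$ is not supported on $n_1-n_2+n_3=\pm n$. One only has $\gamma\lesssim\min(n,n_1,n_2,n_3)$, and $\gamma\lesssim n^{-2}$ when $n\gg n_1+n_2+n_3$. With the modified dispersion, $|\Omega-\kappa|\le 1$ only localizes $n_1^2-n_2^2+n_3^2-n^2$ to a window of width $O(N^{2(1-\alpha)})$. One therefore needs divisor-type counting with that loss (Lemmas~\ref{lem:db} and~\ref{improvedcounting}).

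The threshold $15/16$ comes from combining two constraints: $s<4\alpha-3$, from the (C)(C)(C) cubic bound with this counting loss, and $s+4\alpha-\frac92>0$, from the high-high quintic terms. It does not come from interpolating with bilinear Strichartz estimates.
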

Our proof of Theorem~\ref{thm:main1} applies equally well to both approximations~\eqref{eq:BB} and~\eqref{eq:NLS-N}, and naturally produces the same limit. By contrast, the approach of Bourgain--Bulut crucially exploits the presence of the projector~$\Pi_{N}$ on the right-hand side of~\eqref{eq:BB}.

A quantitative statement of Theorem~\ref{thm:main1}, describing the precise local-in-time structure of~$u_{N}$, is given in Theorem~\ref{thm:quan} below. The key point is to prove the convergence of the dyadic subsequence~$(u_N)_{N\in 2^{\mathbb{N}}}$, and we focus only on dyadic integers~$N$ in the sequel.
The convergence for the full sequence~$(u_{N})$ can then be deduced as in~\cite{DNY24,BCST24}.

The constraint~$\alpha>15/16$, which is discussed in Remark~\ref{rem:alpha}, is not optimal. Our only objective here is to go beyond the critical threshold~$\alpha=1$.

We now compare our approach with that of Bourgain--Bulut.
For~$N<M$ and a norm~$\|\cdot\|$, we set
\[
x_{N,M}(t):=\|u_{N}(t,\cdot)-u_{M}(t,\cdot)\|.
\]
To prove statements like Theorem~\ref{thm:BB} or Theorem~\ref{thm:main1}, one has to show that for~$t\in [-T,T]$ and a suitable choice of the norm~$\|\cdot\|$,
\[
\lim_{N,M\to\infty}
x_{N,M}(t)=0.
\]
Initially, $x_{N,M}(0)=0$. In~\cite{BB14}, it is shown that for a suitable choice of norm there exist constants~$C>0$ and~$\delta>0$ such that, for all~$N<M$ and for short times~$t\geq0$,
\begin{equation}
	\label{eq:int}
	x_{N,M}(t)\leq CN^{-\delta}+C\log(N)\int_{0}^{t}x_{N,M}(\tau)\,\mathrm{d}\tau.
\end{equation}
Applying Gronwall's inequality yields
\[
x_{N,M}(t)\leq CN^{-\delta+Ct},
\]
which implies convergence of~$(u_{N})_{N\geq1}$ on time intervals~$[-T,T]$ with~$CT<\delta$.

\subsubsection{Probabilistic scaling}
The logarithmic divergence in~\eqref{eq:int} arises from a purely high-frequency interaction, isolated below in~\eqref{res:hhh}.
This contribution cannot be handled by the random averaging operators nor by the random tensor techniques from~\cite{DNY24,DNY22}.

In the terminology of~\cite[Paragraph~2.2]{DNYvietnam}, the problem is critical with respect to the probabilistic scaling when~$\alpha=1$, and supercritical when~$\alpha<1$.
Further discussion and quantitative statements are deferred to Appendix~\ref{sec:scaling}.

Let us compare this situation with the case of~$\mathbb{T}^{d}$.
On the torus, non-smoothing contributions also arise, such as
\[
\|u_{N}(t)\|_{L^{2}(\mathbb{T}^{d})}^{2}u_{N}(t)
\]
in the cubic case, or more generally
\[
\|u_{N}(t)\|_{L^{2r}(\mathbb{T}^{d})}^{2r}u_{N}(t)
\]
for power-type nonlinearities\footnote{More precisely, these arise in their Wick-ordered formulations.}.
Crucially, these terms can be completely eliminated by global gauge transformations.
For instance, following~\cite{Bou96} in the cubic case or~\cite{NS15,DNY24} in the general case, one may introduce a gauge transformation of the form
\[
v_{N}(t):=\exp\left(i\int_{0}^{t}\|u_{N}(t')\|_{L^{2r}}^{2r}\mathrm{d}t'\right)u_{N}(t)\,.
\]
In the present setting, however, no such global transformation can remove the singular interaction.
Indeed, each Fourier mode~$\widehat{u_{N}}(n)$ satisfies
\begin{equation}
	\label{eq:Cn}
	(i\partial_{t}-n^{2})\widehat{u_{N}}(n)
	= C_{n}(u_{N})\,\widehat{u_{N}}(n)+\text{(other terms)},
\end{equation}
where the divergent real-valued coefficient~$C_{n}(u_{N})$, made explicit in~\eqref{res:hhh}, depends on the frequency~$n$.
No global gauge transformation can simultaneously remove all singular contributions while preserving the remaining structure of the nonlinearity.

\subsubsection{Gauge transformations}
\label{sec:gauge}
A key observation of the present work is that the divergent coefficient~$C_{n}(u_{N})$ possesses a favorable algebraic structure.
Exploiting this structure, we introduce refined gauge transformations adapted to~\eqref{eq:Cn}, through which the problematic resonant contribution is converted into non-resonant quintic terms~\eqref{eq:N4}. These new contributions can then be treated perturbatively.

Gauge transformations of this type are inspired by Tao's work on wave maps and the Benjamin--Ono equation~\cite{Tao01,Tao04}, as well as by~\cite{OTW20} in the probabilistic setting.

Combining this transform with the random averaging ansatz of~\cite{DNY24} and a refined modulation analysis in the spirit of~\cite{BCST24}, we obtain the analogue of~\eqref{eq:int} without the logarithmic divergence:~$x_{N,M}$ satisfies, for short times~$t\geq0$,
\begin{equation}
	\label{eq:int2}
	x_{N,M}(t)\leq CN^{-\delta}+C\int_{0}^{t}x_{N,M}(\tau)\,\mathrm{d}\tau.
\end{equation}
In contrast, the approach of~\cite{BB14} would yield in this regime an estimate of the form
\begin{equation}
	\label{eq:int3}
	x_{N,M}(t)\leq CN^{-\delta}
	+CN^{\beta(\alpha)}\int_{0}^{t}x_{N,M}(\tau)\,\mathrm{d}\tau,
\end{equation}
with~$\beta(\alpha)>0$ and~$\beta(\alpha)\to 0$ as~$\alpha\to 1^{-}$. The divergence in~$N^{\beta(\alpha)}$ prevents any Gronwall-type argument.
	
	\subsection{Overview of the main ingredients}
We outline below the main ingredients in the proof of Theorem~\ref{thm:main1}.

We begin by decomposing the cubic nonlinearity to isolate the singular resonant contribution producing the logarithmic divergence identified in~\cite{BB14}.
This decomposition motivates the gauge transformations discussed in Paragraph~\ref{sec:gauge}.

Our strategy builds on the framework developed for the nonlinear Schrödinger equation on the two-dimensional sphere in our previous work~\cite{BCST24}.
In particular, we introduce refined random averaging operators acting on a very small portion of the nonlinearity.
In the radial setting, these reduce to multiplication by complex phases --- one-dimensional analogues of the unitary operators in~\cite{BCST24}.

Finally, unlike~\cite{BCST24}, the analysis is carried out entirely in frequency space, reflecting the one-dimensional nature of the radial problem.
The quantitative part of the argument relies on a refined modulation analysis in Fourier--Lebesgue spaces and requires new Strichartz-type estimates, which constitute one of the main technical contributions.

\subsubsection{Structure of the nonlinearity}
We now decompose the cubic nonlinearity to identify the interaction responsible for the logarithmic divergence in~\cite{BB14}.

Fix $N\in 2^{\N}$. The approximate solution $u_{N}$ satisfies
\begin{equation}
	\label{eq:nlsN}
	(i\partial_{t}+\Delta)u_{N} = |u_{N}|^{2}u_{N}\,,\quad
	u_{N}(0) = \Pi_{N}\phi_{\alpha}\,.
\end{equation}
For $n\in\N\setminus\{0\}$ and $t\in\R$, we denote the $n$-th mode of $u_{N}$ by
\[
u_{N,n}(t):=\la u_{N}(t)\,|\,\mathbf{e}_{n}\ra \in\C\,.
\]
The coefficients $(u_{N,n})$ are solutions to
\[
(i\partial_{t}-n^2)u_{n} =
\sum_{n_{1},n_{2},n_{3}}
\gamma_{nn_{1}n_{2}n_{3}}\cdot
u_{n_1}\overline{u}_{n_2}u_{n_3}\,,
\]
where, given $(n,n_{1},n_{2},n_{3})\in\N^{4}$, the correlation function $\gamma$, which is symmetric under all permutations of its indices (since $\mathbf{e}_n$ is real-valued), is defined by
\begin{equation}
	\label{eq:gamma}
	\gamma_{nn_{1}n_{2}n_{3}}:=\int_{\mathbf{B}}\mathbf{e}_{n}(x)\mathbf{e}_{n_{1}}(x)\mathbf{e}_{n_{2}}(x)\mathbf{e}_{n_{3}}(x)\mathrm{d}x\,.
\end{equation}
With these notations we may associate an $L^{2}$-function $u$ with the sequence of its coefficients $u_{n}:=\la u\,|\,\mathbf{e}_{n}\ra$, for $n\in\N\setminus\{0\}$, and we expand the nonlinearity
\begin{equation}
	\label{eq:non}
	|u|^{2}u
	=
	\sum_{n,n_{1},n_{2},n_{3}}
	\gamma_{nn_{1}n_{2}n_{3}}\cdot
	u_{n_{1}}\overline{u_{n_{2}}}u_{n_{3}} \mathbf{e}_{n}\,.
\end{equation}

For fixed $n\in\N\setminus\{0\}$, the $n$-th mode of the nonlinearity can be decomposed as follows:
\begin{align}
	(i\partial_{t}-n^{2})u_{n}
	&=
	\sum_{n_{1},n_{2},n_{3}}
	\mathbf{1}_{n\notin\{n_{1},n_{3}\}}
	\gamma_{nn_{1}n_{2}n_{3}}u_{n_{1}}
	\overline{u_{n_{2}}}u_{n_{3}} - \gamma_{nnnn}|u_{n}|^{2}u_{n}
	\label{eq:111}
	\\
	&+
	2
	\big(
	\sum_{n_{2},n_{3}}\mathbf{1}_{n_{2}\neq n_{3}}
	\gamma_{nnn_{2}n_{3}}
	\overline{u_{n_{2}}}u_{n_{3}}
	\big)
	u_{n}
	\label{eq:112}
	\\
	&+
	2\big(
	\sum_{ m}\gamma_{nnmm}|u_{m}|^{2}
	\big)u_{n}\,.
	\label{res:hhh}
\end{align}
As in~\cite{BCLST25,BCST24,BCST24-2}, the high$\times$low$\times$low interactions with a partial pairing are not regularizing, so that the affine ansatz introduced by Bourgain~\cite{Bou96}
\[
u_N(t) = \e^{it\Delta} \Pi_N \phi_\alpha + \textit{smoother remainder}
\]
which relies on dispersive nonlinear smoothing effects is not applicable to the present model. In addition, when $\alpha \leq 1$, the first Picard iterate associated with resonant high$\times$high$\times$high interactions in \eqref{res:hhh} also fails to exhibit any smoothing.

\subsubsection{The gauge transform}
\label{gauge}
We identify the term~\eqref{res:hhh} as the critical one, producing a logarithmic divergence when $\alpha=1$ even in the high$\times$high$\times$high regime. The correction needed to handle this term is a priori nonlinear. Note, however, that one can benefit from cancellations in $|u_{n}(0)|^{2}-1$ in the probabilistic framework. Exploiting these cancellations allows us to avoid a nonlinear transformation of the equation and instead perform a correction at the level of the linear part, in sharp contrast with~\cite{OTW20}.

For each $n \in \mathbb{N}\setminus\{0\}$, define
\begin{equation}
	\label{eq:mu}
	\mu_n^2 := 2 \sum_{m\geq1} \gamma_{nnmm} \, \mathbb{E}[|u_m(0)|^2]
	= 2 \sum_{m\geq1} \frac{\gamma_{nnmm}}{m^{2\alpha}}.
\end{equation}
For $\alpha > 1/2$, the growth of $\mu_n^2$ is sublinear. More precisely, Lemma~\ref{lem:gamma} gives, as $n\to\infty$,
\begin{equation}
	\label{eq:mu-est}
	\mu_{n}^{2}\sim n^{2(1-\alpha)}\ \text{when}\ \alpha<1\,,\quad \mu_{n}^{2} \sim \log(n)\ \text{when}\ \alpha=1\,.
\end{equation}

We then set the modified dispersion relation
\[
\lambda_n^2 := n^2 + \mu_n^2,
\]
and introduce the associated twisted Laplace operator $\widetilde{\Delta}$ via its action on the eigenfunctions: for each $n \in \mathbb{N}$,
\[
\widetilde{\Delta} \mathbf{e}_n := \lambda_n^2 \mathbf{e}_n.
\]
Subtracting $\mu_n^2 u_n$ from both sides and inserting $\pm\, 2(\sum_m \gamma_{nnmm} |u_m(0)|^2) u_n$, we obtain:
\begin{align*}
	(i\partial_{t}-n^2-\mu_{n}^{2})u_{n}
	&=
	\sum_{n_{1},n_{2},n_{3}}
	\mathbf{1}_{n\notin \{n_{1},n_{3}\}}
	\gamma_{nn_{1}n_{2}n_{3}}
	u_{n_{1}}
	\overline{u_{n_{2}}}u_{n_{3}}
	- \gamma_{nnnn}|u_{n}|^{2}u_{n}
	\\
	&+2
	\sum_{n_{2},n_{3}}\mathbf{1}_{n_{2}\neq n_{3}}
	\gamma_{nnn_{2}n_{3}}
	\overline{u_{n_{2}}}u_{n_{3}}
	u_{n} \\
	&+
	2
	\sum_{m}\gamma_{nnmm}
	\big(
	|u_{m}|^{2}-|u_{m}(0)|^{2}
	\big)
	u_{n} \\
	&+
	2
	\sum_{m}\gamma_{nnmm}
	\big(
	|u_{m}(0)|^{2}-\mathbb{E}|u_{m}(0)|^{2}
	\big)
	u_{n}.
\end{align*}
The resonant interaction is eliminated by the observation that for fixed $m$,
\begin{align*}
	|u_{m}(t)|^{2} - |u_{m}(0)|^{2}
	&=
	\int_{0}^{t}\frac{\mathrm{d}}{\mathrm{d}t'}|u_{m}(t')|^{2}\mathrm{d}t'\\
	&\hspace{-20pt}=2\Im
	\int_{0}^{t}\sum_{m_{1},m_{2},m_{3}}
	\gamma_{mm_{1}m_{2}m_{3}}
	u_{m_{1}}(t')\overline{u_{m_{2}}}(t')u_{m_{3}}(t')\overline{u_{m}}(t')
	\mathrm{d}t'
	\\
	&\hspace{-20pt}=2\Im
	\int_{0}^{t}\sum_{\substack{m_{1},m_{2},m_{3}\\ \{m,m_2\}\cap\{m_1,m_3\}=\emptyset}}
	\gamma_{mm_{1}m_{2}m_{3}}
	u_{m_{1}}(t')\overline{u_{m_{2}}}(t')u_{m_{3}}(t')\overline{u_{m}}(t')
	\mathrm{d}t'\,.
\end{align*}
In the last line, we benefit from a crucial cancellation that eliminates the problematic resonant interactions involving $m\in\{m_{1},m_{3}\}$, since the contribution with $m_1\in\{m_2,m\}$ or $m_3\in\{m_2,m\}$ is real-valued. This strategy draws inspiration from similar computations performed in~\cite{OTW20} and~\cite{Tsutsumi}, at the difference that we perform a linear correction of the frequencies rather than a nonlinear gauge transform.

The above discussion leads us to introduce the following quadrilinear form:
\begin{multline}
	\label{eq:N4}
	\mathcal{N}_n^{(4)}(f_1, f_{2}, f_{3}, f_{4})(t)
	:=
	2\Im \sum_{\substack{m_{1},m_{2},m_{3}\\
			\{m,m_2\}\cap\{m_1,m_3\}=\emptyset
	}}
	\gamma_{nnmm} \, \gamma_{mm_1m_2m_3}
	\\
	\int_0^t f_{1,m_1}(t') \, \overline{f_{2,m_2}}(t') \, f_{3,m_3}(t') \, \overline{f_{4,m}}(t') \, \mathrm{d}t'\,.
\end{multline}

To organise the various contributions in the equation for $u_n$, we also introduce the trilinear form:
\begin{equation}
	\label{eq:N3}
	\mathcal{N}_n^{(3)}(f_1, f_{2}, f_{3})(t)
	:=
	\sum_{\substack{n_1, n_2, n_3 \\ n \notin \{n_1, n_3\}
	}}
	\gamma_{nn_1n_2n_3} \,
	f_{1,n_1}(t) \, \overline{f_{2,n_2}}(t) \, f_{3,n_3}(t)\,.
\end{equation}
We now introduce the term involving constants and initial data:
\begin{equation}
	\label{eq:N2}
	\begin{aligned}
		\mathcal{N}_n^{(2)}(f_{2}, f_{3})(t)
		:=\;&
		2\sum_{\substack{n_2, n_3 \\ n_2 \neq n_3}}
		\gamma_{nnn_2n_3} \, \overline{f_{2,n_2}}(t) \, f_{3,n_3}(t) \\
		&+
		2\sum_{m}
		\gamma_{nnmm} \left( f_{3,m}(0) \, \overline{f_{2,m}}(0) - \mathbb{E}|u_{m}(0)|^{2} \right).
	\end{aligned}
\end{equation}
The equation for $u_n$ then takes the form:
\begin{equation}
	\label{eq:un}
	(i\partial_t - \lambda_n^2) u_n
	= \mathcal{N}_n^{(3)}(u)
	+ \big( \mathcal{N}_n^{(2)}(u) + \mathcal{N}_n^{(4)}(u) \big) u_n
	- \gamma_{nnnn} \, |u_n|^2 u_n.
\end{equation}
The term $\mathcal{N}^{(3)}$ exhibits a smoothing effect at the level of the second Picard iteration, even in the presence of high$\times$low frequency interactions.
By contrast, such interactions in $\mathcal{N}^{(2)}$ and $\mathcal{N}^{(4)}$ do not yield any regularization.

\subsubsection{The random averaging operator ansatz}
We now implement a \emph{random averaging operator} ansatz to capture high$\times$low frequency interactions. This approach is a frequency-localized variant of the paracontrolled ansatz of Gubinelli--Imkeller--Perkowski~\cite{GIP15}, implemented in the context of stochastic wave equations in~\cite{GKO24}. Random averaging operators were introduced by Deng--Nahmod--Yue~\cite{DNY24}, building on key ideas from Bringmann~\cite{Bri20}. The adaptation to bounded domains different from~$\mathbb{T}^{d}$ relies on the framework developed in~\cite{BCST24}.

Our goal is to prove the convergence of the sequence
\[
u_N(t) := \sum_{n} u_{N,n}(t) \mathbf{e}_n
\]
in $H_\mathrm{rad}^{\alpha-\frac{1}{2}-}(\mathbf{B})$,
where $u_N$ solves~\eqref{eq:nlsN}. Let $\psi_N$ denote the colored Gaussian term --- or term of type (C) --- defined by
\[
\psi_N(t)
= \sum_{\frac{N}{2} < n \leq N} \psi_{N,n}(t) \mathbf{e}_n,
\quad \psi_N(0) = \mathbf{P}_N \phi_\alpha(\omega),
\]
where each mode satisfies the linear ODE:
\begin{equation}
	\label{eq:psi}
	(i\partial_t - \lambda_n^2) \psi_{N,n}
	= \left( \mathcal{N}_n^{(2)}(u_{\frac{N}{2}})(t) + \mathcal{N}_n^{(4)}(u_{\frac{N}{2}})(t) \right)\cdot \psi_{N,n},
\end{equation}
with initial condition
\[
\psi_{N,n}(0) = \mathbf{1}_{\frac{N}{2} < n \leq N} \cdot \frac{g_n(\omega)}{n^\alpha}.
\]
Equation~\eqref{eq:psi} is a linear ODE in $\mathbb{C}$ for each mode $\psi_{N,n}$, so that, in this model, the random averaging operator acts as multiplication by a complex phase.

Knowing $u_{\frac{N}{2}}$, we define, for $\frac{N}{2}<n\leq N$,
\[
\Theta_n^N(t) := \mathcal{N}_{n}^{(2)}(u_{\frac{N}{2}})(t) + \mathcal{N}_{n}^{(4)}(u_{\frac{N}{2}})(t) \in\R\,,
\]
and $\Theta_{n}^N(t)=0$ otherwise.
Hence $\psi_{N,n}(t)=\mathcal{H}_n^N(t)\cdot \psi_{N,n}(0)$, where the associated \emph{random averaging phase} is the time-dependent function
\[
\mathcal{H}_n^N(t) :=
\exp\left( -i \Big(t\lambda_{n}^{2}+\int_0^t \Theta_n^N(\tau) \, \mathrm{d}\tau\Big) \right).
\]

Consequently, the colored Gaussian variable can be written explicitly as
\begin{equation}
	\label{eq:C}
	\psi_N(t) = \sum_{\frac{N}{2} < n \leq N} \mathcal{H}_n^N(t) \cdot \frac{g_n(\omega)}{n^\alpha} \mathbf{e}_n.
\end{equation}
Since $\Theta_n^N$ is real-valued, we have the following property 
\[
	|\mathcal{H}_n^N(t)| = 1.
\]
It is not needed in the sequel. Moreover, the map $t \mapsto \Theta_n^N(t)$ is $\mathcal{B}_{\leq \frac{N}{2}}$-measurable, hence independent from the collection of Gaussian variables $(g_{n})_{\frac{N}{2}<n\leq N}$. 

Later we will rigorously define the modification $\mathcal{H}_n^{N,\dag}(t)$ through an inductive time-localization procedure, following~\cite{BCST24}.

\medskip

Set $v_N:=u_N-u_{\frac{N}{2}}$. We have
\begin{align*}
	(i\partial_t-\lambda_n^2)v_{N,n}=
	&\mathcal{N}_{n}^{(3)}(u_{N})-\mathcal{N}_{n}^{(3)}(u_{\frac{N}{2}})
	\\+&\Theta^N_{n}\cdot v_{N,n}+\sum_{j=2,4}\big(\mathcal{N}_{n}^{(j)}(u_{\frac{N}{2}}+v_{N})-\mathcal{N}_{n}^{(j)}(u_{\frac{N}{2}})\big)\cdot u_{N,n} \\
	+& \mathcal{R}_{n}(u_N)-\mathcal{R}_{n}(u_{\frac{N}{2}}),
\end{align*}
where the formal remainder is
\begin{equation}
	\label{def:R1}
	\mathcal{R}_{n}(f):=\gamma_{nnnn}|f_n|^2 f_n.
\end{equation}
The colored term $\psi_N$ solves
\[
(i\partial_t-\lambda_n^2)\psi_{N,n}=\Theta^N_n(t)\psi_{N,n},\quad \psi_{N,n}|_{t=0}=\mathbf{1}_{\frac{N}{2}<n\leq N}\phi_{\alpha,n}.
\]
We now impose the decomposition ansatz:
\[
u_N = u_{\frac{N}{2}} + v_N,\quad v_N=\psi_N+w_N.
\]
By construction, each mode of the smoother remainder $w_{N}$ solves
\begin{align}
	\label{eq:w}
	(i\partial_{t}-\lambda_{n}^{2})w_{N,n}
	=&\mathcal{N}^{(3)}_{n}(u_{\frac{N}{2}}+v_N)-
	\mathcal{N}_{n}^{(3)}(u_{\frac{N}{2}})\notag \\
	+&\Theta_n^N\cdot w_{N,n} +
	\sum_{j=2,4}\big(
	\mathcal{N}_{n}^{(j)}(u_{\frac{N}{2}}+v_N)-\mathcal{N}_{n}^{(j)}(u_{\frac{N}{2}})
	\big)
	\cdot u_{N,n} \notag \\
	+&\mathcal{R}_{n}(u_{\frac{N}{2}}+v_N)-\mathcal{R}_{n}(u_{\frac{N}{2}}),
\end{align}
with initial condition
\[
w_{N,n}|_{t=0}=0.
\]
The key cancellation occurs in the equation for the remainder~$w_N$. Since $v_N = \psi_N + w_N$ and $\psi_N$ solves~\eqref{eq:psi}, the most singular high$\times$low$\times$low interaction is absorbed: in the second line of~\eqref{eq:w}, the identity
\[
\Theta_{n}^{N}\cdot w_{N,n} = \Theta_{n}^{N}\cdot v_{N,n} - \Theta_{n}^{N}\cdot\psi_{N,n}
\]
shows that the contribution of~$\Theta_n^N$ to~$\psi_{N,n}$ is precisely subtracted, leaving only the action on the smoother term~$w_{N,n}$.

\subsubsection{The modulation analysis and quantitative statement}
The low-regularity regime necessitates new Strichartz-type estimates and delicate deterministic analysis adapted to the presence of non-resonant interactions, similar in spirit to those required in our forthcoming work~\cite{BCST24-2}. In particular, we develop in Sections~\ref{sec:detertool} and~\ref{sec:trilinear} new Strichartz-type estimates and an almost-orthogonality argument that play a key role in the modulation analysis. However, in contrast with our other works~\cite{BCST24,BCST24-2} on $\mathbb{S}^{2}$, there is no analysis in the physical space here.

\medskip

We now state the quantitative version of Theorem~\ref{thm:main1}.
\begin{theorem}[Quantitative statement]
	\label{thm:quan}
	Let $\alpha>15/16$. There exist absolute constants $C_1>c_1>0$ and $\delta_0>0$ such that the following holds. For any $R\geq 1$, let $\tau_R=R^{-C_1}$. There exists a measurable set $\Sigma_R\subset H_{\mathrm{rad}}^{\alpha-\frac{1}{2}-}(\mathbf{B})$ with
	\[
	\mu_{\alpha}(\Sigma_{R}^c)<C_1\e^{-c_1R^{\delta_0}},
	\]
	such that for all $\phi_{\alpha}\in\Sigma_R$, the sequence of smooth solutions $u_N$ of~\eqref{eq:nlsN} with initial data $\Pi_N\phi_{\alpha}$ converges to $u$ in $C([-\tau_R,\tau_R];H_{\mathrm{rad}}^{\alpha-\frac{1}{2}-}(\mathbf{B}))$. Moreover, the limit $u$ solves~\eqref{eq:NLS} in the distributional sense and can be written as
	\begin{align*}
		u(t)=\psi_{\infty}(t)+w_{\infty}(t),
	\end{align*}
	where $\psi_{\infty}$ is the limit of the colored Gaussian terms~\eqref{eq:C} in $C([-\tau_R,\tau_R];H_{\mathrm{rad}}^{\alpha-\frac{1}{2}-}(\mathbf{B}))$, and $w_{\infty}$ is the limit of the smoother remainders solving~\eqref{eq:w} in $C([-\tau_R,\tau_R];H_{\mathrm{rad}}^{s_0}(\mathbf{B}))$ for some $s_0>1/2$.
\end{theorem}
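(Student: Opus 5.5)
The proof will be an induction on dyadic scales $N\in 2^{\N}$, in the spirit of \cite{DNY24,BCST24}. We fix small parameters $0<\delta\ll\epsilon\ll 1$, a target regularity $s_0\in(\frac12,\,2\alpha-\frac12-\epsilon)$ for the remainder (this interval is nonempty because $\alpha>15/16$), and a norm $X^{s_0,b}_{\tau}$ of Fourier--Lebesgue/Bourgain type adapted to the twisted dispersion $\lambda_n^2$ on $[-\tau,\tau]$. For $L\le N$ we write $\mathcal{B}_{\le L}$ for the $\sigma$-algebra generated by $(g_n)_{n\le L}$. The induction hypothesis $\mathrm{Loc}(M)$, for all $M\le N/2$, consists of four bounds. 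First, $\|w_M\|_{X^{s_0,b}_\tau}\le M^{-\delta}$. Second, the phases satisfy $\|\mathcal{H}^{M,\dag}_n - \mathcal{H}^{M/2,\dag}_n e^{-it(\lambda_n^2)}\cdots\|\lesssim M^{-\delta}$ in a weighted $X^{b}$-type norm in time; more precisely, we control $\Theta^M_n-\Theta^{M/2}_n$ in $H^{b}_t$ with gain $M^{-\delta}$. Third, $\Theta^M_n$ is $\mathcal{B}_{\le M/2}$-measurable. Fourth, the same bounds hold for the time-localized versions. We excise from the probability space the events on which the large deviation estimates below fail. The union over dyadic $N$ of these events has probability $\lesssim \e^{-cR^{\delta_0}}$ once $\tau=R^{-C_1}$, because each multilinear Gaussian estimate costs a factor $N^{\theta}R$ with probability $1-\e^{-N^{c}R^{c}}$. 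With $\mathrm{Loc}(N)$ established for every $N$, the telescoping sum $u=\sum_N (\psi_N+w_N)$ converges. The colored part converges in $C_tH^{\alpha-\frac12-}$ because $|\mathcal{H}^N_n|=1$ and by Gaussian hypercontractivity with independence. The remainder $\sum w_N$ converges in $C_tH^{s_0}$ from the geometric decay $N^{-\delta}$. Passing to the limit in \eqref{eq:un} identifies $u$ as a distributional solution, and the equivalence of \eqref{eq:un} with \eqref{eq:NLS} is exact algebra.

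The inductive step starts with the phase $\Theta^N_n=\mathcal{N}^{(2)}_n(u_{N/2})+\mathcal{N}^{(4)}_n(u_{N/2})$, which we bound using $\mathrm{Loc}(M)$, $M<N$. For the constant part of $\mathcal{N}^{(2)}$ we use Lemma~\ref{lem:gamma}: the bound $|\gamma_{nnmm}|\lesssim \min(n,m)$ turns $\sum_m\gamma_{nnmm}(|g_m|^2-1)m^{-2\alpha}$ into a sum of independent centered variables of size $O(1)$, since its variance is $\sum \min(n,m)^2m^{-4\alpha}<\infty$ for $\alpha>3/4$. The off-diagonal sum is non-resonant in $n_2\neq n_3$, and we estimate it in $H^b_t$ through the modulation $|n_2^2-n_3^2|\ge1$. $\mathcal{N}^{(4)}$ comes with the time integral and the cancellation $\{m,m_2\}\cap\{m_1,m_3\}=\emptyset$, which forbids pairings inside the quartic. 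Its size is therefore that of a non-resonant quartic Gaussian chaos divided by the modulation $\langle m_1^2-m_2^2+m_3^2-m^2\rangle$. We aim to show $\Theta^N_n=O(N^{\epsilon})$ in $H^{b}_t$, with increments $\Theta^N_n-\Theta^{N/2}_n=O(N^{-\delta})$. This gives $\psi_N$ directly through \eqref{eq:C}.

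Next we close a contraction for $w_N$ in \eqref{eq:w}. The map is $w\mapsto$ Duhamel of the right side, multiplied by the time cutoff $\chi(t/\tau)$, which gains $\tau^{\theta}$. We sort the multilinear terms by the types of inputs: colored $\psi_M$ with $M\le N$, and smooth $w_M$. The trilinear $\mathcal{N}^{(3)}$ terms with at least one input at scale $N$ are handled by random tensor estimates in the radial frequency space. Here $\gamma_{nn_1n_2n_3}$ is explicit, being essentially $\min$ of the frequencies on the set where the $\pm$ combination vanishes, and a counting argument on $n\pm n_1\pm n_2\pm n_3$ combined with Strichartz-type bounds gains $N^{-(s_0-\alpha+\frac12)-}$. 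This includes the high$\times$low$\times$low case with $n\notin\{n_1,n_3\}$, where a modulation $\gtrsim N$ appears. For the terms $(\mathcal{N}^{(j)}(u_{N/2}+v_N)-\mathcal{N}^{(j)}(u_{N/2}))u_{N,n}$, at least one input sits at frequency $\sim N$ inside $\mathcal{N}^{(j)}$, and it is non-resonant, so the factor is $O(N^{-\delta'})$. The high factor $u_{N,n}$ then only loses its own regularity. The term $\Theta^N_n w_{N,n}$ is linear with a bounded multiplier. $\mathcal{R}_n$ is harmless because $\gamma_{nnnn}\lesssim n$ and a single mode has size $n^{-\alpha}$.

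The main obstacle is the quartic term $\mathcal{N}^{(4)}$, together with the trilinear high$\times$high$\times$high interactions, in the regime $\alpha<1$. Here the room $s_0-\frac12<2\alpha-1-\epsilon$ is small, which is exactly where $\alpha>15/16$ enters. I would first try to bound the quartic chaos by splitting according to the modulation $\kappa=m_1^2-m_2^2+m_3^2-m^2$. For $|\kappa|\ge N^{\eta}$ we integrate by parts in time, producing a quintic expression that we estimate with the new Strichartz-type estimates. For $|\kappa|<N^{\eta}$ we count lattice points of $(m_1-m_2)(m_1+m_2)=\dots$ with divisor bounds and apply the almost-orthogonality argument. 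We then optimize $\eta$ against the losses $N^{4(1-\alpha)}$ coming from the $\mu_n$-growth and from the sublinear time regularity; this optimization is where the constraint $\alpha>15/16$ should come out.
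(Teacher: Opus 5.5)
Your top-level architecture matches the paper: dyadic induction on $\mathrm{Loc}(N)$, a union bound over scales with $\tau=R^{-C_1}$, telescoping $u=\sum_N(\psi_N+w_N)$, and the variance computation for the constant part of $\mathcal N^{(2)}$. Unimodularity of $\mathcal H^N_n$ on $[-T,T]$ does make the convergence of $\psi_\infty$ in $C_tH^{\alpha-\frac12-}$ immediate. The inductive step, however, has a genuine gap in how you treat $\Theta^N_n$.

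\textbf{The phase $\Theta^N_n$ is not a small or bounded multiplier.} On the ball there is no frequency conservation. For $n\gg n_2,n_3$ with $n_2\neq n_3$, replace $\sin^2(nr)$ by its mean $\frac12$ and use $\int_0^\infty\frac{\sin(ar)\sin(br)}{r^2}\,\mathrm dr=\frac\pi2\min(a,b)$: this shows $\gamma_{nnn_2n_3}$ is comparable to $\min(n_2,n_3)$, not zero. Hence the off-diagonal part $2\sum_{n_2\neq n_3}\gamma_{nnn_2n_3}\overline{u_{n_2}}u_{n_3}$ of $\mathcal N^{(2)}_n(u_{N/2})$ is, at each fixed $t$, a second-order chaos whose block $n_2,n_3\sim M$ has typical size $M^{2-2\alpha}$. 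So $\Theta^N_n$ has pointwise size about $N^{2(1-\alpha)}$, the same as $\mu_n^2$. In $H^b_t$ it is far larger: each term oscillates at frequency $\lambda_{n_2}^2-\lambda_{n_3}^2$, and $H^b_t$ weights it by $\langle\lambda_{n_2}^2-\lambda_{n_3}^2\rangle^{b}$, so the modulation works against you there.

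Consequently your claim $\Theta^N_n=O(N^\epsilon)$ in $H^b_t$ is false. The same computation refutes your claim that $\big(\mathcal N^{(2)}(u_{N/2}+v_N)-\mathcal N^{(2)}(u_{N/2})\big)_n=O(N^{-\delta'})$ as a factor. More fundamentally, no pointwise multiplier bound can close the scheme. Treating $\Theta^N_nw_{N,n}$ as linear with a multiplier of size $N^{\epsilon}$ requires $\tau^{\theta}N^{\epsilon}<1$, an $N$-dependent time. This is exactly the obstruction \eqref{eq:int3} the paper is built to avoid.

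\textbf{What the paper proves instead.} It establishes uniform-in-$N$ bounds for the Duhamel operators $f\mapsto\mathcal I_{\chi_T}\big(f\dotcirc\mathcal N^{(2)}(v_{N_2},v_{N_3})\big)$ and $f\mapsto\mathcal I_{\chi_T}\big(f\dotcirc\mathcal N^{(4)}(\cdots)\big)$ on $X^{0,b}$ and $X^{0,\gamma}_{q,q}$. Their norms are $R^{O(1)}T^{\sigma}$ times a negative power of the input scales, hence summable (\eqref{eq:op-X}--\eqref{eq:op-FL} and Proposition~\ref{prop:quintilinear}). The ingredients are:
\begin{itemize}
\item For $\mathcal N^{(2)}$, one places $Z_n\,\mathcal N^{(2)}_{[23],n}$ in a \emph{negative} modulation space $\widetilde{\mathcal F}L^{-\gamma_0}_{q_1}$ (Proposition~\ref{bilinear}). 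The oscillation enters through counting on unit level sets of $\lambda_{n_2}^2-\lambda_{n_3}^2$ (Lemma~\ref{lem:db}), and the Duhamel integral recovers the loss (Corollary~\ref{co:2-linear:1}).
\item The $\mathcal N^{(2)}$-differences multiplying $u_{N,n}$ need the modulation analysis of Proposition~\ref{linear:type1}.
\item For $\mathcal N^{(4)}$, which is already a time integral, an $L^\infty_t$ bound suffices (Lemma~\ref{lem:N4mapping}).
\end{itemize}
These operator bounds are also what control $\mathcal H^{N,\dag}_n-\chi\e^{-it\lambda_n^2}$ in $\widetilde{\mathcal F}L^\gamma_q\cap\widetilde{\mathcal F}L^b_2$ via the fixed point \eqref{H2Ndag}. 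Note that $\Theta^N_n$ vanishes unless $\frac N2<n\le N$, so your increments $\Theta^M_n-\Theta^{M/2}_n$ are not the relevant quantities.

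\textbf{The counting and the numerology are also off.} There is no linear relation among $n,n_1,n_2,n_3$ to count on. The coefficient $\gamma$ is not concentrated near any hyperplane $n\pm n_1\pm n_2\pm n_3=0$ (the example above already shows this). It is only bounded by $\min(n_j)$, with the decay \eqref{eq:gamma3} when one frequency dominates. The only constraint is $\Omega(\vec n)=\kappa+O(1)$. With $\lambda_n^2=n^2+\mu_n^2$, the divisor-type counting (Lemmas~\ref{lem:db} and~\ref{improvedcounting}) loses $N^{2(1-\alpha)}$. This produces the (C)(C)(C) bound $N^{3-4\alpha+\delta}$ of Proposition~\ref{prop:CCC} and forces $s<4\alpha-3$.

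Your range $s_0<2\alpha-\frac12$ is therefore not achievable; it even exceeds the heuristic regularity $3\alpha-2$ of the non-resonant second iterate. Its non-emptiness also holds for every $\alpha>\frac12$, so it cannot be where $15/16$ enters. The actual threshold is $4\alpha-3>\frac92-4\alpha$. The second constraint comes from the deterministic high--high quintic estimates via \eqref{eq:numerology} (Remark~\ref{rem:alpha}).

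\textbf{The quartic term and the remainder's tails.} Your modulation-splitting and normal-form plan for $\mathcal N^{(4)}$ is left unquantified, and the paper needs no further normal form. It bounds $\mathcal N^{(4)}_n(t)$ pointwise in $t$ by duality with the sharp cutoff $\mathbf 1_{[0,t]}$ (Lemma~\ref{productrule}). It uses counting estimates when at least two inputs are of type (D) (Corollary~\ref{cor:4-linear-1}) and moment bounds otherwise (Corollary~\ref{cor:4-linearprob}). Finally, since frequencies are not conserved, $w_N$ is not frequency-localized. The paper propagates the tail bound $\|(\mathrm{Id}-\Pi_L)w_N\|_{X^{0,b}}\le\frac NL N^{-s}$ in \eqref{R-bound1}. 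This is stronger than what your $X^{s_0,b}$ hypothesis gives, and it is used in the dyadic summations.
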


\begin{remark}
	As suggested in~\cite[Remark~6.3]{DNY24}, it is possible to extend the structural information globally in time when $\alpha=1$, using the invariant measure argument.
\end{remark}

\subsection{Further remarks}
We conclude this introduction with some further remarks on our main results and possible perspectives.

\begin{remark}[Other dimensions]
	The case $d\geq 4$ appears to be out of reach for several reasons.
	In particular, for $d=4$ the cubic NLS on $\mathbb{S}^4$ is energy-critical,
	and the Strichartz estimates on the sphere suffer from a derivative loss that
	prevents one from closing a critical well-posedness argument.
	This is in sharp contrast with the case of $\mathbb{T}^{4}$, where the
	arithmetic structure of the torus yields stronger Strichartz estimates, and a
	global flow in the energy space was constructed by Yue~\cite{Yue21}.
	
	The Gibbs measure problem in the case $d=2$, however, is subcritical with respect to
	the probabilistic scaling and was addressed by
	Tzvetkov~\cite{Tzv08}. A refined analysis establishing the flow-map property for NLS with general nonlinearities
	was recently carried out in~\cite{FW25}.
\end{remark}

\begin{remark}[On the parameter $\alpha$]
	\label{rem:alpha}
	The main restriction on the parameter $\alpha$ arises from the conditions
	\[
	\frac12<s<4\alpha-3,
	\qquad
	s+4\alpha-\frac{9}{2}>0.
	\]

	The first constraint is dictated by the regularity of the second Picard iteration after removal of the resonant frequency interaction.
	Heuristically, performing the probabilistic scaling analysis as in~\cite{DNY24,DNY22}, after having removed the critical term, the typical regularity of the non-resonant high$\times$high interaction is $N^{-3\alpha+2}$, which is significantly smoother than the initial data $N^{-\alpha+1/2}$ provided $\alpha>3/4$.
	In our analysis, an additional factor $N^{1-\alpha}$ is lost due to non-optimal counting estimates associated with the modified dispersion relation $\lambda_n^2=n^2+\mu_n^2$.

	The second constraint is more technical and originates from the high--high interaction in the quintic term, where we rely solely on deterministic estimates.
	
	We believe that $\alpha>3/4$ should represent the optimal threshold for probabilistic well-posedness within the present approach, but we do not pursue this issue here in order to keep the presentation concise.
\end{remark}

\begin{remark}[Zonal spherical harmonics on $\mathbb{S}^{3}$]
	The analysis developed in this work is expected to extend to the cubic NLS on the three-dimensional sphere $\mathbb{S}^{3}$ for zonal spherical harmonics.
	In fact, this setting should be technically simpler, as no boundary effects are present. The key ingredient needed to overcome the boundary effect is the decay estimate of the correlation function stated in~\eqref{eq:gamma3}.
\end{remark}

\subsection{Organization of the article}
The article is organized as follows. In Section~\ref{sec:functional}, we present a more precise statement of Theorem~\ref{thm:main1}, the functional framework, the rigorous iteration scheme, as well as key multilinear estimates. In Sections~\ref{sec:detertool} and~\ref{sec:probatool}, we establish Strichartz-type inequalities and probabilistic moment and operator bounds that are used substantially in the proof of the multilinear estimates. Sections~\ref{sec:trilinear} and~\ref{sec:quintilinear} are devoted to the proof of the key multilinear estimates, namely Proposition~\ref{prop:trilinear} and Proposition~\ref{prop:quintilinear}, which occupy the major part of this article.

\subsection*{Acknowledgments}
We are grateful to Yu Deng and Bjoern Bringmann for stimulating discussions, respectively in September 2024 at Orsay and in April 2025 at Princeton.
This research was supported by the European Research Council (ERC) under the European Union's Horizon 2020 research and innovation programme (Grant agreement 101097172 -- GEOEDP). C.S.\ and N.T.\ were partially supported by the ANR project Smooth ANR-22-CE40-0017.
N.C.\ benefited from the support of the Centre Henri Lebesgue ANR-11-LABX-0020-0.


	\section{Functional setup, induction scheme and proof of the main Theorem}\label{sec:functional}

In this section, we set up the functional framework for the proof of Theorem~\ref{thm:main1}. We begin with multilinear eigenfunction estimates and introduce the relevant function spaces. We then describe the rigorous induction scheme and state the key multilinear estimates that drive the argument.

\subsection{Eigenfunction estimates}
We state some important bounds on the correlation coefficient. The first is taken from~\cite{BB14}, the second is nontrivial due to boundary effects, and the third is a lower bound in the high$\times$high regime that plays a role in Appendix~\ref{sec:scaling} for probabilistic scaling considerations.

\begin{lemma}[Correlation bound]\label{lem:gamma}
	Given $n,n_{1},n_{2},n_{3}\in\N^{4}$, let~$\gamma_{n,n_{1},n_{2},n_{3}}$ be the correlation coefficient defined in~\eqref{eq:gamma'}.
	\begin{enumerate}
		\item There exists~$C>0$ such that
		\begin{equation}
			\label{eq:gamma'}
			\gamma_{nn_{1}n_{2}n_{3}}
			\leq C
			\min(n,n_{1},n_{2},n_{3})\,.
		\end{equation}
		\item Moreover, if~$n\gg n_1+n_2+n_3$, then
		\begin{equation}
			\label{eq:gamma3}
			\gamma_{nn_1n_2n_3}\leq C n^{-2}.
		\end{equation}
		\item There exists~$c>0$ such that for all~$(n,m)$ with~$n/2\leq m \leq 2n$,
		\begin{equation}
			\label{eq:gamma2}
			c\,n \leq \gamma_{nnmm}\,.
		\end{equation}
	\end{enumerate}
\end{lemma}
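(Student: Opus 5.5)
The plan is to reduce everything to one-dimensional trigonometric integrals. Since $\mathbf{e}_n(x)=\frac{1}{\sqrt2\pi}\frac{\sin(n r)}{r}$ with $r=|x|$ and $\mathrm{d}x = 4\pi r^2\,\mathrm{d}r$, we have
\[
\gamma_{nn_1n_2n_3}=\frac{1}{\pi^3}\int_0^\pi \frac{\sin(nr)\sin(n_1r)\sin(n_2r)\sin(n_3r)}{r^2}\,\mathrm{d}r .
\]
By symmetry of $\gamma$ we may assume $n_3=\min(n,n_1,n_2,n_3)$.

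\textbf{Item (1).} We bound $|\sin(n_3 r)|\le n_3 r$ and $|\sin(n r)|\le 1$. This reduces the claim to a uniform bound on $\int_0^\pi |\sin(n_1r)\sin(n_2 r)|\,r^{-1}\,\mathrm{d}r$, which is not bounded (it is logarithmic), so absolute values cannot be used throughout. Instead we expand the product of the three large sines into sums of $\cos(k r)$ and $\sin(kr)$ with $k=\pm n\pm n_1\pm n_2$. It then suffices to bound $\int_0^\pi \frac{\sin(n_3 r)}{r^2}\,\phi(kr)\,\mathrm{d}r$ for $\phi\in\{\sin,\cos\}$, uniformly in $k$, by $Cn_3$. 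On $r\le 1/n_3$ we use $|\sin(n_3r)|\le n_3r$. The cosine term is awkward because $\int \frac{\cos kr}{r}$ diverges near $0$. However, the products of four sines combine in pairs, and the total combination $\sum \pm\cos((\pm n\pm n_1\pm n_2\pm n_3)r)$ vanishes to second order at $r=0$. So I would write $\gamma$ as $\frac{1}{8\pi^3}\sum_{\epsilon}c_\epsilon \int_0^\pi \frac{\cos(k_\epsilon r)}{r^2}\,\mathrm{d}r$ with $\sum c_\epsilon=0$, group the terms into combinations like $\cos(a r)-\cos(b r)$, and use the kernel identity $\int_0^\pi \frac{1-\cos(kr)}{r^2}\,\mathrm{d}r = \frac{\pi}{2}|k|+O(1)$ (scaling plus the tail). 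The main term then becomes $\frac{\pi}{2}\cdot\frac{1}{8}\sum_\epsilon \pm |\pm n\pm n_1\pm n_2\pm n_3|$, a piecewise linear function. One checks it is bounded by $C\min$; it is in fact exactly a multiple of the minimum in the generic case, the classical computation of Bourgain--Bulut. The $O(1)$ errors are harmless since the minimum is $\ge 1$.

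\textbf{Item (3).} The same formula gives $\gamma_{nnmm}$ as a constant times $\sum_\epsilon\pm|k_\epsilon|$ plus $O(1)$. For $n_1=n,n_3=m$ this main term equals $c\min(n,m)$ with $c>0$; a direct check with $k\in\{0,2n,2m,2(n\pm m)\}$ gives something like $\frac{\pi}{4}(\ldots)\ge c\, n$ for $n/2\le m\le 2n$. For $n$ large this gives the lower bound. Small $n$ is handled by positivity: $\gamma_{nnmm}=\int \mathbf{e}_n^2\mathbf{e}_m^2>0$ over finitely many pairs.

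\textbf{Item (2)} is the main obstacle because of the boundary. When $n\gg n_1+n_2+n_3$, all the frequencies $k_\epsilon$ have size $\sim n$ and the main piecewise linear term cancels exactly, since $\sum_\epsilon c_\epsilon|n+\ell_\epsilon| = \sum_\epsilon c_\epsilon (n+\ell_\epsilon)=0$. So we need the precise remainder. I would compute
\[
\int_0^\pi \frac{1-\cos(kr)}{r^2}\,\mathrm{d}r = |k|\int_0^{\pi |k|}\frac{1-\cos s}{s^2}\,\mathrm{d}s,
\]
expand the tail $\int_{X}^\infty \frac{1-\cos s}{s^2}\,\mathrm{d}s = \frac1X - \frac{\sin X}{X^2}+O(X^{-3})$ by integrating by parts twice, and note that $\sin(\pi k)=0$ for integer $k$. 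The remainder at $X=\pi|k|$ is then $\frac{1}{\pi}-0+O(k^{-2})$. The constant $\frac1\pi$ cancels in the alternating sum since $\sum c_\epsilon=0$, which leaves $O(n^{-2})$. Care is needed with signs of $k_\epsilon$ (all positive here, as $n$ dominates) and with the next-order term, which is $\cos(\pi k)/k^3$-type with a factor $|k|$ giving $O(k^{-2})$. This fits exactly the claimed $Cn^{-2}$, and it is also where the Dirichlet boundary enters through $\sin(\pi k)=0$.
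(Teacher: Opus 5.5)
Your argument is correct, and it takes a genuinely different route from the paper's.

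The paper does not prove (1); it cites Bourgain--Bulut. For (3) it rescales $r\mapsto r/n$ and uses that $\rho\mapsto\int_0^\infty \sin^2 r\,\sin^2(\rho r)\,r^{-2}\,\mathrm{d}r$ is continuous and positive on $[\frac12,2]$. For (2) it inserts a cutoff near $r=0$ and shows the interior piece is $O(n^{-\infty})$, using $r^{-2}=\sin^{-2}r-(\sin^{-2}r-r^{-2})$, the Mittag-Leffler expansion, the product formula for $\sin(kz)/\sin z$ and non-stationary phase. It then integrates the piece near $r=\pi$ by parts twice.

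You instead establish one exact identity,
$\gamma=-\frac{1}{8\pi^3}\sum_\epsilon c_\epsilon K(k_\epsilon)$, where $K(k)=\int_0^\pi(1-\cos kr)\,r^{-2}\,\mathrm{d}r=\frac{\pi}{2}|k|-\frac{1}{\pi}+O(k^{-2})$ for $k\in\Z\setminus\{0\}$ and $K(0)=0$. You read all three items off it: (1) and (3) from the piecewise linear main term (for instance $\gamma_{nnmm}=\frac{\min(n,m)}{4\pi^2}+O(1)$), and (2) from $\sum_\epsilon c_\epsilon=\sum_\epsilon c_\epsilon k_\epsilon=0$. This is self-contained, covers (1) too, and gives explicit constants.

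Pushed one step further it is sharper than needed. For $k\geq 1$ one has $K(k)-\frac{\pi}{2}k+\frac{1}{\pi}=(-1)^k\phi(k)$ with $\phi$ smooth and $\phi^{(j)}(y)=O(y^{-2-j})$. All $k_\epsilon$ have the same parity, so in the regime of (2) the sum is a third-order difference of $\phi$, and $\gamma=O(n_1n_2n_3\,n^{-5})$.

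What the paper's route buys is localization: the interior contributes $O(n^{-\infty})$, so the decay is visibly a boundary effect. In your computation the boundary enters only through $\sin(\pi k)=0$ and the $(-1)^k k^{-2}$ term. Incidentally, that term shows the boundary contribution after the paper's second integration by parts does not vanish in general (it is even, not odd, under $\vec\iota\mapsto-\vec\iota$). This is harmless for the $n^{-2}$ bound.

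A few points to tidy:
\begin{itemize}
\item The claim that the main term is at most $C\min$ needs its one line. With $f(x)=|x|$ and $D_hg(x)=g(x+h)-g(x-h)$, the main term is a constant times $\Phi:=(D_{n_1}D_{n_2}D_{n_3}f)(n)$. Since $|D_{n_3}f|\le 2n_3$ and $\sup|D_hg|\le 2\sup|g|$, you get $|\Phi|\le 8n_3$.
\item Your aside that $\Phi$ is exactly a multiple of the minimum in the generic case is not accurate: it vanishes as soon as $n\ge n_1+n_2+n_3$, consistent with (2). You never use it.
\item The tail is $\frac1X+\frac{\sin X}{X^2}+O(X^{-3})$, with a plus sign. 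This is immaterial since $\sin(\pi k)=0$.
\item Your abandoned first attempt at (1) in fact works. A product of three sines is a combination of $\sin((\pm n\pm n_1\pm n_2)r)$ only, with no cosines, and $\int_0^\pi \sin(n_3r)\sin(kr)\,r^{-2}\,\mathrm{d}r=\frac12\bigl(K(k+n_3)-K(k-n_3)\bigr)\le\frac{\pi}{2}n_3+O(1)$.
\end{itemize}
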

We refer to~\cite{BB14} for a short proof of the first statement, and to Appendix~\ref{sub:gamma} for the proof of the second and third statements.

\subsection{Notations and parameters}

We introduce a hierarchy of small parameters as in~\cite{BCST24}.
\begin{definition}[Hierarchy of small parameters]\label{hierarchy}
	Fix a universal small parameter~$0<\sigma\ll 1$, and set~$(s,q,\gamma,b,\delta)=(s_{\sigma},q_{\sigma},\gamma_{\sigma},b_{\sigma},\delta_{\sigma})$ satisfying
	\begin{align*}
		&\frac{1}{q_{\sigma}}=\sigma, \qquad
		b_{\sigma}-\frac{1}{2}=\gamma_{\sigma}-\frac{1}{q_{\sigma}'}=\sigma^{10},\qquad
		\delta_{\sigma}=\sigma^{20},\\
		&\frac{1}{2}+2^{200}\sigma<s_{\sigma}<4\alpha-3-2^{100}\sigma,\qquad
		s_{\sigma}+4\alpha-\frac{9}{2}>2^{100}\sigma.
	\end{align*}
\end{definition}
\begin{remark}\label{rem:hierarchy}
	For sufficiently small~$\sigma>0$, the range of~$s_{\sigma}$ is non-empty if~$\alpha\in(\frac{15}{16},1]$.
\end{remark}

\subsection{Functional spaces}
In light of the decomposition of the nonlinearity presented in the introduction, it is natural to use the Fourier-restriction norm associated with the twisted Laplace operator. For a space-time function~$f(t,x)=\sum_{n}f_n(t)\mathbf{e}_n(x)$, define
\begin{equation}
	\label{eq:xsb}
	\|f\|_{X^{s,b}} =
	\|
	\langle n\rangle^{s}\langle \tau + \lambda_{n}^{2} \rangle^{b}
	\widehat{f_{n}}(\tau)
	\|_{\ell_{n}^{2}L_{\tau}^{2}}
	=
	\|\langle n\rangle^{s}
	\langle \kappa \rangle^{b}
	\widetilde{f_{n}}(\kappa)
	\|_{\ell_{n}^{2}L_{\kappa}^{2}},
\end{equation}
where
\[
\widetilde{f_{n}}(\kappa):=\widehat{f_{n}}(\kappa-\lambda_{n}^{2})
= \widehat{\e^{it\lambda_{n}^{2}}f_{n}}(\kappa)\,.
\]
We will show that the Fourier-restriction space associated with the norm~\eqref{eq:xsb}, thanks to the bound~\eqref{eq:gamma'} on~$\mu_{n}$, has the same properties as the standard one.

We also use more general Fourier-restriction norms:
\[
\|f\|_{X_{q,r}^{s,\gamma}}:=\|\la n\ra^s\la\kappa\ra^{\gamma}\widetilde{f}_n(\kappa) \|_{\ell_n^qL_{\kappa}^r}.
\]
For the random averaging operator~$\mathcal{H}_n^N(t)$ (a time-dependent function) with a fixed~$n$, we denote
\[
\|\mathcal{H}_n^N \|_{\widetilde{\mathcal{F}}L_q^{\gamma}}:=\|\la\kappa\ra^{\gamma}\widetilde{H}_n^N(\kappa)\|_{L_{\kappa}^q},
\]
where the wide tilde stands for the twisted Fourier transform, to compare with the usual Fourier--Lebesgue norm
\[
\|F\|_{\mathcal{F}L_q^{\gamma}}:=\|\la\tau\ra^{\gamma}\widehat{F}(\tau) \|_{L_{\tau}^q}.
\]
We introduce the Duhamel operator, acting on a space-time function~$F$ by
\begin{align}\label{def:Duhamel}
	\mathcal{I}F(t):=-i\int_0^t\e^{i(t-t')\widetilde{\Delta}}F(t') \mathrm{d}t'.
\end{align}
Decomposing~$F=\sum_{n}F_n\mathbf{e}_n$, we denote the mode-by-mode Duhamel operator
\begin{align}\label{def:Duhameln}
	\mathcal{I}_n(F)(t):=-i\int_0^t\e^{-i\lambda_n^2(t-t')}F_n(t')\mathrm{d}t'.
\end{align}

\subsection{Rigorous iteration scheme}

Following~\cite{BCST24}, we first define the extension objects. Fix a bump function~$\chi\in C_c^{\infty}((-1,1))$ such that~$\chi(t)\equiv 1$ for~$|t|\leq \frac{1}{2}$. For small~$0<T<1$, we set~$\chi_T(\cdot):=\chi(T^{-1}\cdot)$.
We define the truncated Duhamel operator
\begin{align}\label{def:DuhamelT}
	\mathcal{I}_{\chi_T}(F)(t):=-i\chi_T(t)\int_0^t \e^{-i(t-t')\widetilde{\Delta}}\chi_T(t')F(t')\mathrm{d}t',
\end{align}
and the mode-by-mode version
\begin{align}\label{def:DuhamelTn}
	\mathcal{I}_{\chi_T,n}(F)(t):=-i\chi_T(t)\int_0^t\e^{-i\lambda_n^2(t-t')}\chi_T(t')F_n(t') \mathrm{d}t'.
\end{align}

We denote by~$\mathcal{B}_{\leq N}$ the $\sigma$-algebra generated by~$(g_n)_{n\leq N}$, and set~$\mathcal{B}_{\leq\infty}:=\sigma(\bigcup_N\mathcal{B}_{\leq N})$. 
\medskip
\noi
$\bullet${\bf A. Initialization :} 
Fix a small time $T\in(0,1/2)$. For the initial frequency~$N_0=1$, we define the following objects at scale~$N_0$:
\[
\big(\mathcal{H}_1^{1,\dag}(t),\;
g^{\dag}_1(t),\; \psi_1^{\dag}(t),\;
w_1^{\dag}(t)\big):=(\chi(t)\mathcal{H}_1^{1}(t),\; \chi(t)g_1,\; \chi_T(t)g_1^{\dag}(t)\mathbf{e}_1,\; 0 ).
\]
\medskip
$\bullet${\bf B. Initialization :} We inductively define the objects for  scales~$N\geq N_0$ 
\[
((\mathcal{H}_n^{N,\dag}(t))_{N/2<n\leq N},\; (g_n^{\dag}(t))_{N/2<n\leq N},\; \psi_N^{\dag}(t),\; w_N^{\dag}(t) )
\]
such that they have time support~$(-1,1)$ and coincide on~$[-T/2,T/2]$ with
\[
((\mathcal{H}_n^{N}(t))_{N/2<n\leq N},\; (g_n(t))_{N/2<n\leq N} ,\; \psi_N(t),\; w_N(t))\,.
\]
Assuming that the above objects are well-defined on $[-T,T]$
for all $M\leq N$, we set
\[
u_N^{\dag}:=\sum_{M\leq N}(\psi_M^{\dag}+w_M^{\dag}).
\]
By construction, $u_N^{\dag}(t) = u_N(t)$ for~$|t|\leq T/2$.
Next, for~$N<n\leq 2N$, set
\begin{align}\label{Theta2Ndag}
	\Theta_n^{2N,\dag}(t):=\mathcal{N}_n^{(2)}(u^{\dag}_{N}(t))+\mathcal{N}_n^{(4)}(u_N^{\dag}(t)).
\end{align}
\noi
$\bullet${\bf Step 1 : Random averaging operator and colored Gaussians.}

We then define~$\mathcal{H}_n^{2N,\dag}(t)$ for~$N<n\leq 2N$ by solving the fixed-point problem
\begin{align}\label{H2Ndag}
	\mathcal{H}_n^{2N,\dag}(t):=\chi(t)\e^{-it\lambda_n^2}-i\chi_{2T}(t)\int_0^t
	\e^{-i\lambda_n^2(t-t')}\Theta_n^{2N,\dag}(t')\mathcal{H}_{n}^{2N,\dag}(t')\mathrm{d}t'.
\end{align}
If the fixed-point problem \eqref{H2Ndag} admits a unique solution (in a suitable function space to be specified), then for ~$|t|\leq T$, $\mathcal{H}_n^{2N,\dag}(t)$ solves the ODE
\[
i\frac{\mathrm{d}}{\mathrm{d}t}\mathcal{H}_n^{2N,\dag}(t)=\lambda_n^2\mathcal{H}_n^{2N,\dag}(t)+\Theta_n^{2N,\dag}(t)\mathcal{H}_n^{2N,\dag}(t),
\]
hence for~$|t|\leq T$,
\[
\mathcal{H}_n^{2N,\dag}(t)=\exp\Big(-i\Big(t\lambda_n^2+\int_0^t\Theta_n^{2N,\dag}(\tau)\mathrm{d}\tau\Big)\Big).
\]
Moreover, for~$|t|\leq T/2$, since~$\Theta_n^{2N,\dag}(t)=\Theta_n^{2N}(t)$ we have
\[
\mathcal{H}_n^{2N,\dag}(t)=\mathcal{H}_n^{2N}(t)\,.
\]
We define the time-dependent Gaussian random variable
\begin{align}\label{def:gndag}
	g^{2N,\dag}_n(t;\omega):=\mathcal{H}_n^{2N,\dag}(t)\cdot g_n(\omega),\quad N<n\leq 2N,
\end{align}
and
\begin{align}\label{def:psi2Ndag}
	\psi_{2N}^{\dag}(t):=\chi_T(t)\sum_{N<n\leq 2N}\frac{g^{2N,\dag}_n(t,\omega)}{n^{\alpha}}\mathbf{e}_n(x).
\end{align}
In the sequel, we abbreviate~$g_n^{2N,\dag}(t)$ for~$g_n^{2N,\dag}(t;\omega)$. Consequently,
\[
\widetilde{g}_n^{2N,\dag}(\kappa):=\widetilde{\mathcal{H}}_n^{2N,\dag}(\kappa)g_n(\omega).
\]
Since the random variables~${\Theta}_n^{2N,\dag}$ are~$\mathcal{B}_{\leq N}$-measurable and have modulus~$1$ on~$[-T,T]$, the variable~$g_n^{2N,\dag}(t)$ has the same law as~$g_n(\omega)$ for any~$t\in[-T,T]$. However, $\widetilde{g}_n^{2N,\dag}(\kappa)$ does not have the same law as~$g_n$, since the invariance in law holds only pointwise in time and~$\widetilde{\mathcal{H}}_n^{2N,\dag}(\kappa)$, which involves an average over all times, is not of modulus one. Nevertheless, for all~$M\leq 2N$, $m\approx M$, the random variables~$(\widetilde{\mathcal H}_m^{M,\dag})$ are independent of~$(g_n(\omega))_{N<n\leq 2N}$.
We note that the equation
\[
(i\partial_t-\lambda_n^2)\psi^{\dag}_{2N,n}=\Theta_{n}^{2N,\dag}(t)\psi_{2N,n}^{\dag}
\]
holds only for~$t\in[-T/2,T/2]$.

\medskip
\noi$\bullet${\bf Step 2 : Remainder. }

Finally, we define the remainder~$w_{2N}^{\dag}$ by solving the problem
\begin{align}\label{w2Ndag}
	w_{2N,n}^{\dag}(t)=&
	\mathcal{I}_{\chi_T,n}\big[
	\mathcal{N}^{(3)}_n(u^{\dag}_N+\psi_{2N}^{\dag}+w^{\dag}_{2N})-\mathcal{N}^{(3)}_{n}(u^{\dag}_N)
	\big]\notag \\+
	& \mathcal{I}_{\chi_T,n}(\Theta^{2N,\dag}_{n} w^{\dag}_{2N,n})
	+\mathcal{I}_{\chi_T,n}\Big(
	\sum_{j=2,4}\big(\mathcal{N}_{n}^{(j)}(u_{2N}^{\dag})-\mathcal{N}_{n}^{(j)}(u_N^{\dag})  \big)
	u^{\dag}_{2N,n} \Big)\notag \\ +&\mathcal{I}_{\chi_T,n}\big(\mathcal{R}_{n}(u_N^{\dag}+\psi_{2N}^{\dag}+w_{2N}^{\dag}) -\mathcal{R}_{n}(u_N^{\dag})  \big).
\end{align}
In summary, when 
both problems \eqref{H2Ndag} and \eqref{w2Ndag} admits unique solutions with the same $T>0$ as in the previous steps, 
we obtain the objects
\[
((\mathcal{H}^{2N,\dag}_n(t))_{N<n\leq 2N},\; (g^{\dag}_n(t))_{N<n\leq 2N} ,\; \psi^{\dag}_{2N}(t),\; w^{\dag}_{2N}(t))\,
\]
at the scale $2N$.

\subsection{Definition of the objects and the main estimates}

We now introduce the two types of inputs that appear in the multilinear estimates: colored Gaussian terms of type~$\mathrm{(C)}$ and deterministic remainders of type~$\mathrm{(D)}$.

\begin{definition}\label{def:CD}
	Assume that~$(s,q,\gamma,b,\delta)=(s_{\sigma},q_{\sigma},\gamma_{\sigma},b_{\sigma},\delta_{\sigma})$ satisfies the hierarchy in Definition~\ref{hierarchy}.
	We define type~$\mathrm{(C)}$ and type~$\mathrm{(D)}$ terms as follows:
	\begin{enumerate}
		\item Type~$\mathrm{(C)}$: an input function~$v$ is of type~$\mathrm{(C)}$ at scale~$N$ of size~$R$, if there is a~$\mathcal{B}_{\leq \frac{N}{2}}$-measurable function~$\mathcal{T}_n^N(t)\in\widetilde{\mathcal{F}}L_q^{\gamma}(\R)$, compactly supported in~$t\in(-2T,2T)$, such that
		\[
		v(t,x) = \sum_{\frac{N}{2}<n\leq N}
		\mathcal{T}^N_{n}(t)\frac{g_{n}(\omega)}{n^{\alpha}}\e^{-it\lambda_{n}^{2}}\cdot\mathbf{e}_{n}(x)\,,
		\]
		satisfying
		\begin{align}\label{bddef:TypeC}
		\sup_{\frac{N}{2}<n\leq N}\big(\|\langle\kappa\rangle^{\gamma}\widetilde{\mathcal{T}}^N_{n}(\kappa)\|_{L_{\kappa}^{q}} + \|\la\kappa\ra^b\widetilde{\mathcal{T}}^N_n(\kappa) \|_{L_{\kappa}^2}\big)
		\leq R  \,,
		\end{align}
		and
		\begin{align}\label{bddef:TypeC'}
		&\|v(t)\|_{X^{0,b}}\leq RT^{-(b-\frac{1}{2})}N^{-\alpha+\frac{1}{2}+\delta}, \notag \\ &\|v(t)\|_{X_{q,q}^{0,\gamma}}\leq RT^{-(\gamma-\frac{1}{q'})}N^{-\alpha+\frac{1}{q}+\delta}.
		\end{align}
		\item Type~$\mathrm{(D)}$: an input function~$v$ is of type~$\mathrm{(D)}$ at scale~$N$, if~$v$ is compactly supported in~$t\in(-T,T)$, with the estimates
		\[
		\|v\|_{X^{0,b}}
		\leq N^{-s},\quad \|(\mathrm{Id}-\Pi_L)v\|_{X^{0,b}}\leq  \big(\frac{N}{L}\big)N^{-s}
		\]
		for~$L\geq 4N$.
	\end{enumerate}
\end{definition}

\begin{remark}
For type (C) in Definition \ref{def:CD}, the associate operator $\mathcal{T}_n^N(t)$ is also a random variable. In the specific application,  \eqref{bddef:TypeC}, \eqref{bddef:TypeC'} only hold on some $\mathcal{B}_{\leq \frac{N}{2}}$-measurable set $\Xi_{\frac{N}{2}}$, we will explicitly mention this dependence. 
\end{remark}

\begin{definition}\label{R-certainly}
	Given~$R>1$ and~$0<\theta<1$, a statement~$\mathrm{(S)}$ is said to hold~$(R,\theta)$-certainly if there exists a measurable set~$\Sigma$ such that~$\mathrm{(S)}$ holds on~$\Sigma$ and~$\mathbb{P}(\Sigma)>1-\e^{-R^{\theta}}$. If~$\Sigma$ is~$\mathcal{G}$-measurable with respect to some sub-$\sigma$-field of~$\mathcal{B}_{\leq \infty}$, we say that~$\mathrm{(S)}$ holds~$(R,\theta;\mathcal{G})$-certainly.
\end{definition}

We now define the quantitative statement which needs to be propagated in the proof :
\begin{definition}[$\mathrm{Loc}(N)$]\label{Databound}
	Let~$R>1, T>0$, $N\in 2^{\N}$, and assume that parameters~$(s,q,\gamma,b,\delta)=(s_{\sigma},q_{\sigma},\gamma_{\sigma},b_{\sigma},\delta_{\sigma})$ satisfying Definition~\ref{hierarchy}. We say that $\mathrm{Loc}(N)$ on a data set $\Sigma$ holds, if for any initial data from $\Sigma$, the fixed-point problems  \eqref{H2Ndag} and \eqref{w2Ndag} admits solutions
	$$ \big( (\mathcal{H}_n^{N,\dag}(t))_{N/2<n\leq N},\; w_N^{\dag}(t) \big)
	$$
	with
	 the following estimates :
	\begin{enumerate}
		\item For all~$\frac{N}{2}<n\leq N$,
		\begin{align}\label{R-bound3}
		&	\sup_{\frac{N}{2}<n\leq N}\|\mathcal{H}^{N,\dag}_n(t)-\chi(t)\e^{-it\lambda_n^2}\|_{\widetilde{\mathcal{F}}L_q^{\gamma}
				\cap\widetilde{\mathcal{F}}L_2^b
			}\leq R^{-1},\\
&			\sup_{\frac{N}{2}<n\leq N}\|g_{n}^{N,\dag}\|_{\widetilde{\mathcal{F}}L_q^{\gamma}
				\cap\widetilde{\mathcal{F}}L_2^b
			}\leq RN^{\delta},
		\end{align}
		\item For any~$n_0\in\N$,
		\begin{equation}\label{R-bound2}
			\Big|\sum_{\frac{N}{2}<m\leq N}\gamma_{n_0n_0mm}\frac{|g_m^{N,\dag}(0)|^2-1}{m^{2\alpha}}\Big|\leq RN^{-2\alpha+\frac{1}{2}+\delta}\min(N,n_0),
		\end{equation}
		\item For any dyadic number~$L\geq 4N$,
		\begin{equation}\label{R-bound1}
			\|w_N^{\dag}\|_{X^{0,b}}\leq N^{-s}, \quad \|(\mathrm{Id}-\Pi_L)w_N^{\dag}\|_{X^{0,b}}\leq \big(\frac{N}{L}\big)N^{-s}.
		\end{equation}
	\end{enumerate}
\end{definition}

Consequently, with the property Loc$(N)$, we have from Proposition~A.1 of~\cite{BCST24} the following estimates :
\begin{align}\label{R-bound'}
	&\|\psi_N^{\dag}\|_{X_{q,q}^{0,\gamma}}\leq RT^{-(\gamma-\frac{1}{q'})}N^{-\alpha+\frac{1}{q}+\delta},\\  &\|\psi_N^{\dag}\|_{X^{0,b}}\leq RT^{-(b-\frac{1}{2})}N^{-\alpha+\frac{1}{2}+\delta}.
\end{align}

Following~\cite{BCST24} and \cite{DNY24}, the key induction step can be stated as follows.
\begin{proposition}[Key induction]\label{prop:key}
	Assume that~$(s,q,\gamma,b,\delta)=(s_{\sigma},q_{\sigma},\gamma_{\sigma},b_{\sigma},\delta_{\sigma})$ satisfies the hierarchy in Definition~\ref{hierarchy}. Let $R\geq 1$.
	Then there exist a numerical constant~$c_0>0$ and $T=T_R>0$, such that for sufficiently small~$\sigma>0$, the following statement holds : If $\mathrm{Loc}(M)$ holds for all $M\leq N$,  then~$(N^{\delta}R,c_0;\mathcal{B}_{\leq 2N})$-certainly, $\mathrm{Loc}(2N)$ holds true. In particular, a part from a set of probability smaller than $C_{\delta}\e^{-c_0R^{\delta}}$, $\mathrm{Loc}(N)$ holds true for all $N\in 2^{\N}$.
\end{proposition}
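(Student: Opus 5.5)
The plan is to follow the induction scheme of~\cite{DNY24,BCST24}. We assume $\mathrm{Loc}(M)$ for all $M\leq N$ and verify the three items of $\mathrm{Loc}(2N)$ separately, each outside an exceptional set of probability at most $\e^{-(N^{\delta}R)^{c_0}}$. Summing over dyadic $N$ then gives the final claim, since $\sum_N \e^{-c_0 N^{\delta c_0}R^{c_0}}\lesssim C_\delta \e^{-c_0R^{\delta}}$ after adjusting constants.

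\textbf{Step 1: the phases $\mathcal{H}^{2N,\dag}_n$.} The first task is to solve the fixed-point problem~\eqref{H2Ndag} in $\widetilde{\mathcal{F}}L^\gamma_q\cap\widetilde{\mathcal{F}}L^b_2$. The input $\Theta^{2N,\dag}_n$ is $\mathcal{B}_{\leq N}$-measurable and built from $u_N^\dag$. By the standard Duhamel bound in Fourier--Lebesgue spaces with the gain $T^{\theta}$ coming from $\chi_{2T}$, it suffices to bound $\Theta_n^{2N,\dag}$ as a multiplier, uniformly in $N<n\leq 2N$, by a constant times $R^{C}$. This requires estimates on three pieces. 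The off-diagonal part of $\mathcal{N}^{(2)}$ is handled by bilinear bounds on $\overline{u_{N,n_2}}u_{N,n_3}$ using Lemma~\ref{lem:gamma} and the $X^{0,b}$ bounds~\eqref{R-bound1},~\eqref{R-bound'}. The constant part is exactly controlled by the induction hypothesis~\eqref{R-bound2} summed over $M\leq N$, which gives $\sum_M M^{-2\alpha+\frac32+\delta}R\lesssim RN^{\frac32-2\alpha+\delta}$ and stays bounded only after the time integration. The quintic part $\mathcal{N}^{(4)}$ carries a built-in time integral and is estimated by Proposition~\ref{prop:quintilinear}. Choosing $T=T_R$ small closes the contraction and yields~\eqref{R-bound3} with $R^{-1}$. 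The bound on $g^{2N,\dag}_n$ follows, outside a set of probability $\e^{-N^{c}R^{c}}$, from the Gaussian tail of $g_n$ and a union bound over $n\sim N$.

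\textbf{Step 2: the large deviation bound~\eqref{R-bound2}.} Since $|g_m^{2N,\dag}(0)|=|g_m|$, the sum is a centered sum of independent variables $|g_m|^2-1$ with coefficients $\gamma_{n_0n_0mm}m^{-2\alpha}$. Lemma~\ref{lem:gamma}\,(1) bounds these coefficients by $\min(N,n_0)N^{-2\alpha}$. Bernstein's (or hypercontractivity) inequality then gives the bound $N^{1/2}\min(N,n_0)N^{-2\alpha}$ times $RN^{\delta}$, with probability $1-\e^{-(RN^\delta)^{c}}$. To make this uniform in $n_0$, we use~\eqref{eq:gamma3} for $n_0\gg N$, where the sum decays like $n_0^{-2}$. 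This leaves only $n_0\lesssim N^{C}$, which a union bound handles.

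\textbf{Step 3: the remainder $w_{2N}^\dag$.} This is the core of the argument. We solve~\eqref{w2Ndag} by contraction in the ball defined by~\eqref{R-bound1} at scale $2N$. Using the linear estimate for $\mathcal{I}_{\chi_T}$ from $X^{0,b-1}$ to $X^{0,b}$ with a $T^{\theta}$ gain, it suffices to show that each multilinear term is $\lesssim R^{C}(2N)^{-s-\epsilon}$ in $X^{0,b-1}$. The input combinations are of type (C) or (D) at scales $\leq 2N$, at least one of which has scale $\sim 2N$. The trilinear term $\mathcal{N}^{(3)}$ is treated with Proposition~\ref{prop:trilinear}. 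The terms $(\mathcal{N}^{(j)}(u_{2N})-\mathcal{N}^{(j)}(u_N))u_{2N,n}$ are differences in which a high input appears inside $\mathcal{N}^{(j)}$, so they are non-singular; they are handled by Proposition~\ref{prop:quintilinear} together with the bilinear estimate used in Step 1. The term $\Theta w$ is linear in $w$ and is absorbed by smallness in $T$. The remainder $\mathcal{R}$ is diagonal, and $\gamma_{nnnn}\lesssim n$ combined with the decay of the inputs gives ample room. The frequency-tail bound $\|(\mathrm{Id}-\Pi_L)w\|\lesssim (N/L)N^{-s}$ comes from~\eqref{eq:gamma3}: outputs with $n\gg N$ gain $n^{-2}$.

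The main obstacle is Step 3, specifically the trilinear and quintic estimates in the high$\times$high regime when $\alpha<1$. The resonance structure there is governed by the perturbed dispersion relation $\lambda_n^2=n^2+\mu_n^2$, which loses $N^{1-\alpha}$ in the counting estimates; this is the source of the constraints $s<4\alpha-3$ and $s+4\alpha-\frac92>0$. For the present proposition these estimates are taken as given from Propositions~\ref{prop:trilinear} and~\ref{prop:quintilinear}. What remains is to check that the probabilistic exceptional sets are $\mathcal{B}_{\leq 2N}$-measurable and have the stated size.
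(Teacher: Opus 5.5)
Your three-step outline matches the paper's proof: a contraction for $\mathcal{H}^{2N,\dag}_n$ via the operator bounds, hypercontractivity for \eqref{R-bound2}, and a contraction for $w^{\dag}_{2N}$ via Propositions~\ref{prop:trilinear} and~\ref{prop:quintilinear}. However, several justifications you add would fail as written.

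\textbf{Step 2.} You cannot use \eqref{eq:gamma3} for $\gamma_{n_0n_0mm}$. Its hypothesis $n\gg n_1+n_2+n_3$ is violated because $n_0$ occurs twice. In fact $\gamma_{n_0n_0mm}\sim m$ for $n_0\gg m$, so the sum in \eqref{R-bound2} does not decay in $n_0$, and your reduction to $n_0\lesssim N^{C}$ is unjustified. What works is stabilization. Write $\sin^2(n_0r)=\frac12(1-\cos 2n_0r)$ and integrate by parts twice; all boundary terms vanish. This gives $\gamma_{n_0n_0mm}=\gamma^{\infty}_m+O(m^{3}n_0^{-2})$, where $\gamma^{\infty}_m$ is obtained by replacing $\sin^2(n_0r)$ by $\frac12$ in the radial integral, so it is independent of $n_0$. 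All large $n_0$ are then covered by a single event for the $n_0$-independent sum plus a negligible error, and the union bound runs over polynomially many $n_0$ only. The paper's own proof does not address this uniformity.

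\textbf{Step 1.} Since $\frac32-2\alpha+\delta<0$, the constant part gives $\sum_{M\le N}RM^{\frac32-2\alpha+\delta}\lesssim R$. This sum is dominated by small $M$, not by $M=N$, and the bound $\lesssim R$ is all you need. Separately, the (C)(C) part of $\mathcal{N}^{(2)}_{[23]}$ requires the Fourier--Lebesgue bound in \eqref{R-bound'}, as in Proposition~\ref{bilinear} and Corollary~\ref{co:2-linear:1}. The $\ell^2$-based $X^{0,b}$ bounds lose $M^{1/2}$ per colored input, and the resulting sum diverges for $\alpha<1$.

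\textbf{Step 3, the $\mathcal{N}^{(2)}$ terms.} The terms $(\mathcal{N}^{(2)}(u^{\dag}_{2N})-\mathcal{N}^{(2)}(u^{\dag}_N))\dotcirc u^{\dag}_{2N}$ cannot be closed with the multiplier bound from Step~1. The bounds of Corollary~\ref{co:2-linear:1} gain at most $(2N)^{-(s+\alpha-\frac32)}$, or $(2N)^{-1/q}$ if the top-scale input is colored (up to $(2N)^{10\epsilon_0}$). Multiplied by a colored $\dotcirc$ factor of size $N_1^{\frac12-\alpha}$, this falls short of $(2N)^{-s}$. You need \eqref{eq:circ}, whose proof rests on the modulation gain \eqref{resonant:2}.

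\textbf{Step 3, the tail bound.} More seriously, $\|(\mathrm{Id}-\Pi_L)w^{\dag}_{2N}\|_{X^{0,b}}\le (2N/L)(2N)^{-s}$ does not follow from \eqref{eq:gamma3}. The type~(D) inputs $w^{\dag}_M$, and $w^{\dag}_{2N}$ itself, are not frequency localized. In $\mathcal{N}^{(2)}\dotcirc u$, $\mathcal{N}^{(4)}\dotcirc u$, $\Theta^{2N,\dag}_n w^{\dag}_{2N,n}$ and $\mathcal{R}_n$, the output frequency is that of the $\dotcirc$ factor, while $\gamma_{nnn_2n_3}$ and $\gamma_{nnmm}$ do not decay in $n$. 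In $\mathcal{N}^{(3)}$, the configuration $n\sim n_1\gg n_2,n_3$ gives no decay either. The paper instead uses a dichotomy:
\begin{itemize}
\item if all inputs are $\psi^{\dag}_M$, which are exactly supported in $(M/2,M]$, then \eqref{eq:gamma3} gives $L^{-2}$;
\item otherwise some type~(D) input sits at frequency $\gtrsim L$, and its inductive tail bound from \eqref{R-bound1} supplies the factor $N_j/L$.
\end{itemize}
So the tail condition must be propagated from the induction hypothesis and built into the contraction ball, rather than derived from correlation decay.
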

	
In order to prove Proposition~\ref{prop:key}, we decompose the nonlinearity appearing on the right-hand side of~\eqref{eq:w} dyadically.
	\medskip

\paragraph{\bf Cubic terms.}
We decompose the trilinear non-resonant interactions as
\[
\mathcal{N}^{(3)}(\cdot)
=\mathcal{N}_{[123]}^{(3)}(\cdot)
+\mathcal{N}_{(12)}^{(3)}(\cdot)
+\mathcal{N}_{(23)}^{(3)}(\cdot),
\]
with:

\emph{(i) Fully non-resonant interactions:}
\begin{equation}
	\label{def:N^3(123)}
	(\mathcal{N}_{[123]}^{(3)})_n(f_1,f_2,f_3)
	:=\sum_{\substack{n_1,n_2,n_3\\
			\{n,n_2\}\cap\{n_1,n_3\}=\emptyset}}
	\gamma_{nn_1n_2n_3}\,
	f_{1,n_1}\,\overline{f_{2,n_2}}\,f_{3,n_3}.
\end{equation}

\emph{(ii) Partially resonant interactions:}
\begin{align}
	\label{def:N^3(12)}
	(\mathcal{N}_{(12)}^{(3)})_n(f_1,f_2,f_3)
	&:=\sum_{\substack{n_1,n_2,n_3\\
			n_1=n_2,\, n_3\neq n}}
	\gamma_{nn_1n_2n_3}\,
	f_{1,n_1}\,\overline{f_{2,n_2}}\,f_{3,n_3}\,,
	\\
	\label{def:N^3(23)}
	(\mathcal{N}_{(23)}^{(3)})_n(f_1,f_2,f_3)
	&:=\sum_{\substack{n_1,n_2,n_3\\
			n_1\neq n,\, n_2=n_3}}
	\gamma_{nn_1n_2n_3}\,
	f_{1,n_1}\,\overline{f_{2,n_2}}\,f_{3,n_3}.
\end{align}
The trilinear estimates for~$\mathcal{N}_{(12)}^{(3)}$ and~$\mathcal{N}_{(23)}^{(3)}$ require a treatment that differs substantially from the one developed in~\cite{BCST24}, due to the absence of a cubic Wick renormalization in the present setting, an effect of the non-translation-invariant geometry.
By symmetry of the indices, it suffices to focus on the analysis of~$\mathcal{N}_{(23)}^{(3)}$.

\paragraph{\bf Quadratic and quartic terms.}
For functions~$u=\sum_{n}u_n\mathbf{e}_n$ and~$v=\sum_{n}v_n\mathbf{e}_n$, we introduce the diagonal product
\begin{equation}
	\label{def:dotcirc}
	(u\dotcirc v)(x):=\sum_{n}u_nv_n\,\mathbf{e}_n(x).
\end{equation}
This operation naturally arises in the terms~$\mathcal{N}^{(2)}$ and~$\mathcal{N}^{(4)}$.

We now summarize the estimates leading to the proof of Proposition~\ref{prop:key}.
Throughout, we assume that the Loc$(M)$ holds for all $M\leq N$, and that~$N_1,N_2,N_3$ are dyadic integers with non-increasing rearrangement
\[
2N=N_{(1)}\geq N_{(2)}\geq N_{(3)}.
\]
The proof of Proposition \ref{prop:key} relies crucially on the following multi-linear estimates, which we will state for general type (C) and type (D) terms.
More precisely, write~$v_M^{(\mathrm{C})}$ or~$v_M^{(\mathrm{D})}$ when~$v$ is of type~(C) or~(D) at frequency scale~$M$ and of size~$R$.
\begin{proposition}[Trilinear estimates]
	\label{prop:trilinear}
 Assume that $R\geq 1$ and $T>0$. Let $(v_M^{(\mathrm{C})}, v_M^{(\mathrm{D})})_{M\in 2^{\N}}$ are sequences of type $(\mathrm{C})$ and type $(\mathrm{D})$ terms according to Definition \ref{def:CD}.
 Then with an absolute constant $c_0>0$ and for all sufficiently small parameter $\sigma>0$ defining the parameters $(s,q,\gamma,b,\delta)$, the following estimates hold
	$((2N)^{\delta}R,c_0;\mathcal{B}_{\leq2N})$-certainly for any choice of types~$(\ast_1,\ast_2,\ast_3)\in\{\mathrm{C},\mathrm{D}\}^3$:

	\emph{(A) Fully non-resonant cubic interactions:}
	\begin{equation}
		\label{eq:tri-cubic}
		\big\|\mathcal{I}_{\chi_T}\mathcal{N}_{[123]}^{(3)}
		(v_{N_1}^{(\ast_1)},v_{N_2}^{(\ast_2)},v_{N_3}^{(\ast_3)})\big\|_{X^{0,b}}
		\lesssim R^3T^{\sigma}N_{(1)}^{-s}N_{(2)}^{-\delta},
	\end{equation}

	\emph{(B) Resonant cubic interaction:}
	\begin{equation}
		\label{eq:tri-res}
		\big\|\mathcal{I}_{\chi_T}\mathcal{N}_{(23)}^{(3)}
		(v_{N_1}^{(\ast_{1})},v_{N_2}^{(\ast_{2})},v_{N_3}^{(\ast_{3})})\big\|_{X^{0,b}}
		\lesssim R^3T^{\sigma}N_{(1)}^{-s}N_{(2)}^{-\delta}.
	\end{equation}

	\emph{(C) Quadratic interaction:} The following estimate holds except when~$N_2,N_3\leq N_1/2$ and~$\ast_1=\mathrm{(C)}$:
	\begin{equation}
		\label{eq:circ}
		\big\|\mathcal{I}_{\chi_T}\big(
		v_{N_1}^{(\ast_{1})}\dotcirc\mathcal{N}^{(2)}(v_{N_2}^{(\ast_{2})},v_{N_3}^{(\ast_{3})})
		\big)\big\|_{X^{0,b}}
		\lesssim R^3T^{\sigma}N_{(1)}^{-s}N_{(2)}^{-\delta}.
	\end{equation}

	\emph{(D) Operator bounds.}
	If~$N_2,N_3\leq N$, the operator
	\[
	f \longmapsto \mathcal{I}_{\chi_T}\big(
	f\dotcirc\mathcal{N}^{(2)}(v_{N_2}^{(\ast_{2})},v_{N_3}^{(\ast_{3})})
	\big)
	\]
	satisfies
	\begin{align}
		\label{eq:op-X}
		\| \cdot \|_{X^{0,b}\to X^{0,b}}
		&\lesssim R^2T^{\sigma}\max(N_2,N_3)^{-\delta},\\
		\label{eq:op-FL}
		\| \cdot \|_{X_{q,q}^{0,\gamma}\to X_{q,q}^{0,\gamma}}
		&\lesssim R^2T^{\sigma}\max(N_2,N_3)^{-\delta}.
	\end{align}
\end{proposition}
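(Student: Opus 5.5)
The proof will follow the standard modulation-analysis scheme of \cite{DNY24,BCST24}, carried out entirely on the frequency side. First I reduce everything to multilinear sums in the twisted Fourier variables. Writing each input as $v_{N_j,n_j}(t)$ with twisted transform $\widetilde v_{n_j}(\kappa_j)$, the transfer principle for $\mathcal{I}_{\chi_T}$ gives (after conceding $T^{\sigma}$ from $b-\tfrac12=\sigma^{10}$ and the time localization)
\[
\|\mathcal{I}_{\chi_T}F\|_{X^{0,b}}\lesssim T^{\sigma}\|F\|_{X^{0,b-1+\sigma^{5}}}.
\]
By duality, each estimate then becomes a bound on
\[
\sum_{n,n_j}\int \gamma_{n n_1n_2n_3}\,\frac{\widetilde v_{n_1}(\kappa_1)\overline{\widetilde v_{n_2}(\kappa_2)}\widetilde v_{n_3}(\kappa_3)\,\overline{h_n(\kappa)}}{\langle \kappa\rangle^{1-b}}
\]
with $\kappa=\kappa_1-\kappa_2+\kappa_3+\Omega$, where $\Omega=\lambda_n^2-\lambda_{n_1}^2+\lambda_{n_2}^2-\lambda_{n_3}^2$. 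I then split according to the size of the largest modulation relative to $|\Omega|$.

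For (A), in the fully non-resonant region we have $|\Omega|\gtrsim |(n-n_1)(n-n_3)|$, up to the perturbation $\mu_n^2-\mu_{n_1}^2+\dots=O(N^{1-2\alpha+})$ coming from \eqref{eq:mu-est}. This is where the extra loss $N^{1-\alpha}$ of Remark~\ref{rem:alpha} enters. The counting of $(n_1,n_3)$ with $n$ and $\Omega$ fixed therefore costs at most $N^{1-\alpha+}$ more than in the unperturbed problem. I combine this counting with the bound $\gamma\lesssim\min(n,n_1,n_2,n_3)$ from \eqref{eq:gamma'}, and with \eqref{eq:gamma3} when $n\gg n_1+n_2+n_3$. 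The (D) inputs are placed in $X^{0,b}$. The (C) inputs are placed in $X^{0,\gamma}_{q,q}$, and their sums are handled by conditioning on $\mathcal{B}_{\le N_j/2}$: since the $\mathcal{T}_n^N$ are independent of $(g_n)$, Gaussian hypercontractivity together with the $TT^*$ / matrix moment bounds of Section~\ref{sec:probatool} gain the square-root cancellation $N_{(1)}^{-1/2}$ on the high-frequency Gaussians. The endpoints $s<4\alpha-3$ then close by power counting: the probabilistic $\mathrm{hhh}$ term has size $N^{-3\alpha+2}$, and losing $N^{1-\alpha}$ gives $N^{-4\alpha+3}$. The factor $N_{(2)}^{-\delta}$ comes from the leftover room in the hierarchy. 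The Strichartz-type estimates of Section~\ref{sec:detertool} handle the purely deterministic configurations.

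For (B), where $n_2=n_3$ and $n_1\neq n$, the inner sum $\sum_{m}\gamma_{nn_1mm}\overline{v_m}v_m$ acts as a kernel in $(n,n_1)$. When both paired entries are (C) and at the same scale, I split $|g_m|^2=1+(|g_m|^2-1)$. The mean part is a deterministic kernel, and it is off-diagonal because $n_1\neq n$. Summing with $\gamma_{nn_1mm}$, and using the decay \eqref{eq:gamma3} in $n$ versus $m$ where the ball's boundary enters, yields an operator bounded in $n_1\to n$ with a gain from the modulation $|n^2-n_1^2|\ge n+n_1$. The fluctuation part is a Gaussian chaos and gains $N^{-1/2}$ from Wiener chaos bounds. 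All other type combinations go as in (A), with the pairing reducing one summation.

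For (C) and (D), note that $\mathcal{N}^{(2)}$ acts diagonally. Its first part is an off-diagonal sum $n_2\neq n_3$, and its second part is the centered initial-data term, which is exactly controlled by \eqref{R-bound2}. For the operator bounds (D), the key point is that multiplication by a time function $\Theta_n(t)$ preserves $X^{0,b}$ and $X^{0,\gamma}_{q,q}$ mode by mode, with norm controlled by $\|\Theta_n\|_{\mathcal{F}L^1}$-type quantities. So I need $\sup_n\|\widetilde{\Theta}_n\|$ small. Off-diagonal terms give time oscillations $e^{it(\lambda_{n_2}^2-\lambda_{n_3}^2)}$, and the Duhamel integral gains $|n_2^2-n_3^2|^{-1}$ from them. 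Combined with $\gamma_{nnn_2n_3}\lesssim\min(n,n_2,n_3)$ and a probabilistic square-root gain, this gives $\max(N_2,N_3)^{-\delta}$. The excluded case of (C), a high (C) input against low inputs, is precisely what $\mathcal{H}^N$ absorbs. In the remaining cases for (C), either $v_{N_1}$ is (D), and we use the operator bound, or one of the inputs of $\mathcal{N}^{(2)}$ is at the top frequency, and the smallness comes from it directly.

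The main obstacle, I expect, is the counting in (A) and (B) with the modified dispersion relation $\lambda_n^2=n^2+\mu_n^2$. The divisor-type counting for $n^2$ is no longer exact, and controlling the perturbation while keeping the $s>1/2$ margin is what forces $\alpha>15/16$. I would first try treating $\mu_n^2$ as a smooth sublinear shift: fix the dyadic size of $n-n_1$ and bound the level sets by $1+N^{1-\alpha}$ per value.
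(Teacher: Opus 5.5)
Your proposal has three genuine gaps: the modulation mechanism you use for (A) does not exist on the ball, and two configurations (in (A) and in (C)) are missing the key idea the paper needs. A smaller inaccuracy in (D) is noted at the end.

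\textbf{The mechanism for (A) is a torus one.} On the ball there is no momentum conservation: $\gamma_{nn_1n_2n_3}$ is not supported on $n-n_1+n_2-n_3=0$. So $\mathcal N^{(3)}_{[123]}$ sums over three free indices, and the non-pairing condition $\{n,n_2\}\cap\{n_1,n_3\}=\emptyset$ gives no lower bound on $\Omega$.
\begin{itemize}
\item For instance, $(n,n_1,n_2,n_3)=(k,5k,7k,5k)$ is non-paired with $|(n-n_1)(n-n_3)|=16k^2$. Yet $|\Omega|\lesssim k^{2(1-\alpha)}$, because $k^2+49k^2=25k^2+25k^2$.
\item Hence splitting according to $|\Omega|$ gains nothing on the large near-resonant set. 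Counting ``pairs $(n_1,n_3)$ with $n,\Omega$ fixed'' also undercounts: the relevant quantity is $\#\Gamma_{\vec N,\kappa}(n)$ of Lemma~\ref{improvedcounting}, which is up to $N^{1+\varepsilon}$ in the high$\times$high$\times$high case.
\item Your perturbation size is off. In general $\mu_n^2-\mu_{n_1}^2+\mu_{n_2}^2-\mu_{n_3}^2$ is of size $N^{2(1-\alpha)}$, not $O(N^{1-2\alpha})=o(1)$; the latter would produce no loss at all. The $N^{1-\alpha}$ loss is the square root of the thickening of the level sets in Lemma~\ref{lem:db}.
\end{itemize}
The paper replaces your step by the divisor-type counting of Lemmas~\ref{lem:db} and~\ref{improvedcounting}, and the Strichartz-type bounds of Lemma~\ref{timeStrichart}. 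These feed into the moment and random tensor bounds of Section~\ref{sec:probatool}. All of these need an output weight $\langle\kappa_0\rangle^{-b_1}$ with $b_1>\frac12$, so you cannot estimate directly in $X^{0,b-1+\sigma^5}$. The paper first reduces to $K_{(1)}\sim K_{(2)}$ (Lemma~\ref{lem:highlowlowlow}), then interpolates with a crude $X^{0,0}$ bound whose loss is polynomial in $N_{(2)}$. This is why a strict gain $N_{(2)}^{-\theta}$ with $\theta\gg\sigma$ must be produced, rather than ``leftover room''. A genuine modulation gain exists only in two places:
\begin{itemize}
\item for $\mathcal N^{(3)}_{(23)}$ and $\mathcal N^{(2)}$, where $|\Omega|=|\lambda_{n_0}^2-\lambda_{n_1}^2|\gtrsim\max(n_0,n_1)$ for $n_0\neq n_1$ (Proposition~\ref{linear:type1});
\item in the high-high-very-low regime.
\end{itemize}

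\textbf{The configuration that forces that last regime is missing.} Consider (C)(C)(D) with the (D) input at the top scale, and both (C) inputs at scales $N_3\le N_2\ll N_{(1)}^{\delta_0}$. The (D) input may be the unknown $w^\dag_{2N}$.
\begin{itemize}
\item There is no high-frequency Gaussian to give a square-root gain.
\item The non-commutative Khintchine bound loses $\log N_{(1)}$. This cannot be absorbed by $N_{(2)}^{-\delta}$, because the operator bound must be uniform in the top frequency for the contraction.
\item Pure counting, e.g.\ \eqref{eq:tri1'}, only gives the factor $N_3(N_2N_3)^{\frac12-\alpha}$, which is $\gtrsim1$ for $N_2\sim N_3$.
\end{itemize}
The paper resolves this with Proposition~\ref{prop:highhighverylow} (Case~1c of Corollary~\ref{cor:randomop}). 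The two top frequencies cannot cancel in $\Omega$, since they are distinct when they enter with opposite signs, while the others are $\le N_{(1)}^{\epsilon_0}$. So $|\Omega|\gtrsim N_{(1)}$, and taking $b_1<\frac12$ gains $N_{(1)}^{\frac12-2b_1}$.

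\textbf{The same issue appears in (C).} When $v_{N_1}$ and the top input of $\mathcal N^{(2)}$ are both of type (C), the operator bound of Corollary~\ref{co:2-linear:1} only yields $\|v_{N_1}\|_{X^{0,b}}\sim N_{(1)}^{\frac12-\alpha+\delta}\gg N_{(1)}^{-s}$. So the smallness does not ``come directly'' from the top input; the paper needs the trilinear modulation estimate \eqref{resonant:2}.

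\textbf{A smaller point on (D).} $\Theta_n$ itself is not small: for $n\gtrsim\max(N_2,N_3)$ and (C)(C) inputs, its typical pointwise size is $\max(N_2,N_3)^{2-2\alpha}$, since $\gamma_{nnn_2n_3}\sim\min(n_2,n_3)$. So a multiplier bound in $\mathcal FL^1$ cannot work. The smallness appears only after the Duhamel integral. This is what Proposition~\ref{bilinear} and Corollary~\ref{co:2-linear:1} quantify, deterministically, in $\widetilde{\mathcal F}L^{\gamma}_q$ with $\gamma$ close to $1$. No probabilistic square-root gain is needed beyond \eqref{R-bound2}.
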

We briefly indicate how each group of estimates is proved.

\emph{$(\mathrm{A})$ Fully non-resonant cubic interactions.}
We distinguish several cases according to the input types and the frequency configuration:
\begin{itemize}
	\item If at least two inputs are of type~(D), the bound~\eqref{eq:tri-cubic} follows from Proposition~\ref{linear:type1} and a careful dyadic summation procedure presented in Section~\ref{sec:mainproof}.

	\item The same deterministic argument applies, for arbitrary input types, in the high$\times$low regime~$N_{(2)}\lesssim N_{(1)}^{1/2}$.

	\item By contrast, in the regime~$N_{(2)}\gg N_{(1)}^{1/2}$ and when at least two inputs are of type~(C), the proof of~\eqref{eq:tri-cubic} requires probabilistic estimates, which are developed in Section~\ref{sec:probatool}.

	\item If at least two inputs are of type~(C) and~$N_{(2)}\ll N_{(1)}^{\epsilon}$, the random matrix estimate developed in Section~\ref{sec:probatool} is insufficient due to the loss of a~$\log(N_{(1)})$-factor. We remedy this by an almost-orthogonality argument (Proposition~\ref{prop:highhighverylow}).
\end{itemize}

\emph{$(\mathrm{B})$ Partially resonant cubic interactions.}
For the component~$\mathcal{N}_{(23)}^{(3)}$, the deterministic analysis applies for all choices of types.
The bound~\eqref{eq:tri-res} is obtained within the proof of Proposition~\ref{linear:type1}.

\emph{$(\mathrm{C})$ Quadratic interaction.}
The estimate~\eqref{eq:circ} for the quadratic contribution
is also proved in the course of the proof of Proposition~\ref{linear:type1}.

\emph{$(\mathrm{D})$ Operator bounds.}
The operator estimates~\eqref{eq:op-X} and~\eqref{eq:op-FL} are direct consequences of
Corollary~\ref{co:2-linear:1}.
\medskip

\paragraph{ \bf Quintic terms.}
Rewritten in terms of the diagonal product~\eqref{def:dotcirc}, the quintic contribution on the right-hand side of~\eqref{w2Ndag} takes the form
\[
u_{2N}^{\dag}\dotcirc \mathcal{N}^{(4)}(u_{2N}^{\dag}),
\]
where the quadrilinear form~$\mathcal{N}^{(4)}$ is defined in~\eqref{eq:N4}.
Throughout, we assume that Loc$(M)$ holds for all $M\leq N$, and that~$N_1,\dots,N_5$ are dyadic integers with non-increasing rearrangement
\[
2N = N_{(1)} \geq N_{(2)} \geq \cdots \geq N_{(5)}.
\]

\begin{proposition}[Quintilinear estimates]
	\label{prop:quintilinear}
 Assume that $R\geq 1$ and $T>0$. Let $(v_M^{(\mathrm{C})}, v_M^{(\mathrm{D})})_{M\in 2^{\N}}$ are sequences of type $(\mathrm{C})$ and type $(\mathrm{D})$ terms according to Definition \ref{def:CD}.
Then with an absolute constant $c_0>0$ and for all sufficiently small parameter $\sigma>0$ defining the parameters $(s,q,\gamma,b,\delta)$, the following estimates hold
$((2N)^{\delta}R,c_0;\mathcal{B}_{\leq2N})$-certainly for any choice of types~$(\ast_1,\ast_2,\ast_3)\in\{\mathrm{C},\mathrm{D}\}^3$:
	\begin{align*}
		\big\|\mathcal{I}_{\chi_T}\big(
		\mathcal{N}^{(4)}(v_{N_1}^{(\ast_1)},v_{N_2}^{(\ast_2)},v_{N_3}^{(\ast_3)},v_{N_4}^{(\ast_4)})
		\dotcirc v_{N_5}^{(\ast_5)}
		\big)\big\|_{X^{0,b}}
		&\lesssim R^5T^{\sigma}N_{(1)}^{-s}N_{(2)}^{-\delta},
	\end{align*}
	whenever~$N_5\leq \max(N_1,N_2,N_3,N_4)$, or~$2\max(N_1,N_2,N_3,N_4)\leq N_5$ and~$v_{N_5}^{(\ast_5)}$ is of type~(D).
	Moreover, if~$\max(N_1,N_2,N_3,N_4)\leq N$, the following operator bounds hold:
	\begin{align*}
		\big\|\cdot\mapsto
		\mathcal{I}_{\chi_T}\big(
		\cdot\dotcirc\mathcal{N}^{(4)}(v_{N_1}^{(\ast_1)},v_{N_2}^{(\ast_2)},v_{N_3}^{(\ast_3)},v_{N_4}^{(\ast_4)})
		\big)\big\|_{X^{0,b}\to X^{0,b}}
		&\lesssim R^4T^{\sigma}\max(N_1,\dots,N_4)^{-\delta},\\
		\big\|\cdot\mapsto
		\mathcal{I}_{\chi_T}\big(
		\cdot\dotcirc\mathcal{N}^{(4)}(v_{N_1}^{(\ast_1)},v_{N_2}^{(\ast_2)},v_{N_3}^{(\ast_3)},v_{N_4}^{(\ast_4)})
		\big)\big\|_{X_{q,q}^{0,\gamma}\to X_{q,q}^{0,\gamma}}
		&\lesssim R^4T^{\sigma}\max(N_1,\dots,N_4)^{-\delta}.
	\end{align*}
\end{proposition}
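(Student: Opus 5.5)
The plan is to reduce the quintilinear estimate to bounds on the time-integrated quadrilinear form $\mathcal{N}^{(4)}$, which is a scalar (mode-independent up to the weight $\gamma_{nnmm}$) function of time for each output mode $n$, and then exploit the diagonal structure of the product $\dotcirc$.

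\emph{Step 1: the operator bounds.} For fixed $n$, the map $f\mapsto \mathcal{I}_{\chi_T}(f\dotcirc \mathcal{N}^{(4)})$ acts mode by mode as multiplication of $f_n(t)$ by the function $\mathcal{N}^{(4)}_n(t)$, followed by the truncated Duhamel operator. So it suffices to bound $\sup_n\|\chi_T\mathcal{N}_n^{(4)}\|_{\mathcal{F}L^1_\tau}$ (or a weighted $\mathcal{F}L^{1}$ norm, with an extra $\langle\tau\rangle^{\sigma^{5}}$ weight to absorb $b>1/2$) by $R^4T^{\sigma}\max(N_1,\dots,N_4)^{-\delta}$. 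Indeed, in twisted variables multiplication by a scalar function of $t$ is a convolution in $\kappa$, and Young's inequality gives boundedness on both $\ell^2_nL^2_\kappa$ and $\ell^q_nL^q_\kappa$ with weight $\langle\kappa\rangle^b$, $\langle\kappa\rangle^{\gamma}$ respectively; the Duhamel operator then gains the $T^{\sigma}$ via the standard $X^{0,b-1+}\to X^{0,b}$ estimate. Since $\mathcal{N}^{(4)}_n$ is a time integral, $\partial_t\mathcal{N}^{(4)}_n$ is the quartic sum $2\Im\sum\gamma_{nnmm}\gamma_{mm_1m_2m_3}(\cdots)$, and it suffices to estimate $\sum_m\gamma_{nnmm}\,|\mathcal{Q}_m(t)|$ in $L^2_t$-based Fourier norms, where $\mathcal{Q}_m=\sum_{\{m,m_2\}\cap\{m_1,m_3\}=\emptyset}\gamma_{mm_1m_2m_3}f_{1,m_1}\overline{f_{2,m_2}}f_{3,m_3}$. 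Using $\gamma_{nnmm}\lesssim\min(n,m)$ from Lemma~\ref{lem:gamma}, this is $\lesssim\langle \min(n,\cdot)\, \overline{f_4},\mathcal{N}^{(3)}_{[123]}(f_1,f_2,f_3)\rangle$-type pairing, and one gains integrability in time from one integration. I would then invoke the fully non-resonant trilinear bounds \eqref{eq:tri-cubic} (their proof, via Proposition~\ref{linear:type1} and the probabilistic estimates of Section~\ref{sec:probatool}) paired against $f_4$ in $X^{0,b}$ — the weight $\min(n,m)\leq N_{(1)}$ is compensated by the factor $N_{(1)}^{-s}$ from the trilinear gain and the $N^{-\alpha+1/2}$ size of $f_4$, giving total decay $N_{(1)}^{1-s-\alpha+1/2+}$; since $s>1/2$ and $\alpha>15/16$ this is negative, yielding the $\max(N_1,\dots,N_4)^{-\delta}$.

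\emph{Step 2: the quintilinear estimate.} When $N_5\leq\max(N_1,\dots,N_4)$, $N_{(1)}$ is among the inputs of $\mathcal{N}^{(4)}$; I would bound $v_{N_5}$ trivially in $X^{0,b}$ or $X^{0,\gamma}_{q,q}$ (losing $N_5^{-\alpha+1/2+}$ at worst, or $N_5^{-s}$) and estimate $\mathcal{N}^{(4)}$ in a norm carrying the $N_{(1)}^{-s}$ decay, again by the pairing argument; the case where $N_{(1)}$ is the conjugated slot $f_4$ (so $m\sim N_{(1)}$ and $\gamma_{nnmm}\lesssim n\leq N_5$) is favorable. When $N_5\geq 2\max(N_1,\dots,N_4)$ and $v_{N_5}$ is of type (D), this is exactly Step 1 applied to $f=v_{N_5}$, with $\|v_{N_5}\|_{X^{0,b}}\leq N_5^{-s}$.

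\emph{Main obstacle.} The high--high regime inside $\mathcal{N}^{(4)}$ where two or more of $N_1,\dots,N_4$ are comparable to $N_{(1)}$ and the weight $\gamma_{nnmm}\sim \min(n,m)$ is large; here only deterministic estimates are used (Remark~\ref{rem:alpha}) and the constraint $s+4\alpha-\frac92>0$ must be exactly what closes the counting. I would first try Strichartz-type bounds from Section~\ref{sec:detertool} on the quartic product in $L^1_t$, counting $|m_1-m_2+m_3-m|$ constrained by $\gamma$ decay, and accept a $N^{1-\alpha}$ loss from the modified dispersion $\lambda_n^2$.
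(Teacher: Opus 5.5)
\textbf{Gap 1: the time integral.} Your Step 1 reduction (multiplication by a scalar in $t$ is a convolution in $\kappa$) is legitimate, but it asks for more than is needed: Lemma~\ref{lem:N4mapping} shows that $\|\chi_1\mathcal{N}^{(4)}_n\|_{L^\infty_t}$ suffices, not an $\mathcal{F}L^1$ norm.

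More seriously, your tool for bounding $\mathcal{N}^{(4)}_n$ does not reach either quantity. Pairing the $X^{0,-b}$ trilinear bound with $f_4\in X^{0,b}$ only controls the smooth-cutoff quantity $\int\varphi(t)\,\partial_t\mathcal{N}^{(4)}_n(t)\,\mathrm{d}t$. It gives neither $\sup_t|\int_0^t\cdots|$ nor $\mathcal{F}L^1$ control. The latter would need roughly an $H^{-1/2+}_t$ bound on $\partial_t\mathcal{N}^{(4)}_n$, i.e.\ pairing against rough $h\in H^{1/2-}_t$. In both cases the obstruction is the same: $\mathbf{1}_{[0,t]}f_4$ and $hf_4$ are not in $X^{0,b}$ for $b>\frac12$. ``Gaining integrability from one integration'' does not address this.

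The paper handles exactly this point. It puts the sharp cutoff on the top-frequency factor and measures that factor in $X^{0,\widetilde{\gamma}}_{q,q}$ with $\widetilde{\gamma}<1/q'$ (Lemma~\ref{productrule}). The cost is $M_{(1)}^{2\theta+\varepsilon}$ from a modulation decomposition (Proposition~\ref{4linear'}). In the probabilistic cases it either absorbs $\mathbf{1}_{[0,t]}$ into the random averaging operator (Lemma~\ref{lem:momentnew}), or interpolates the trilinear object into $X^{0,-b_0}$ with $b_0<\frac12$ (Case~1 of Corollary~\ref{cor:4-linearprob}).

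\textbf{Gap 2: the numerology.} Even granting the cutoff, the bookkeeping does not close the first estimate. You treat $f_4$ deterministically ($\|f_4\|_{X^{0,b}}\sim N^{1/2-\alpha}$) and use \eqref{eq:tri-cubic} as a black box. This gives at best $|\mathcal{N}^{(4)}_n|\lesssim N_{(1)}^{3/2-\alpha-s+}$ when $\gamma_{nnmm}\sim N_{(1)}$. That exponent is negative only because of the hierarchy constraint $s>\frac92-4\alpha$; $s>\frac12$ and $\alpha>\frac{15}{16}$ alone do not suffice. It is enough for the operator bounds.

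It is not enough for the first estimate when $v_{N_5}$ also sits at $N_{(1)}$. Then $n\sim N_{(1)}$, the weight is not small, and $v_{N_5}$ supplies only $N_{(1)}^{1/2-\alpha}$. Two examples:
\begin{itemize}
\item All five inputs of type~(C) at one scale $N$. Even the sharp (CCC) bound of Proposition~\ref{prop:CCC} gives $N\cdot N^{\frac12-\alpha}\cdot N^{3-4\alpha}\cdot N^{\frac12-\alpha}=N^{5-6\alpha}$. This is $\lesssim N^{-s}$ only if $s<6\alpha-5$, which contradicts $s>\frac92-4\alpha$ for every $\alpha\le\frac{19}{20}$.
\item The black-box factor $N_{(1)}^{-s}N_{(2)}^{-\delta}$ discards the decay of the second top input. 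With two type~(D) inputs at $N$ among $f_1,f_2$, and $f_4,v_{N_5}$ of type~(C) at $N$, you get $N^{2-2\alpha-s}$.
\end{itemize}

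\textbf{The missing idea.} You need a direct quartic estimate with two features. Both top-frequency inputs must contribute decay, via the four-factor counting of Lemma~\ref{timeStrichart} and Lemma~\ref{4-linear}. When at least three inputs are of type~(C), all the Gaussians, including the one in $f_4$, must be treated jointly to recover the missing $N^{-1/2}$ (Lemma~\ref{lem:momentnew}). This yields \eqref{finalbd:Nn4} with its factor $\frac{M_{(1)}\wedge n}{M_{(1)}}$. The quintic bound then closes by envelope summation using $\nu_0+s_D-s>0$, i.e.\ $s+4\alpha-\frac92>0$; see \eqref{eq:numerology}.
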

The proof of Proposition~\ref{prop:quintilinear} is given in Section~\ref{sec:quintilinear}.
Using Lemma~\ref{lem:N4mapping}, it essentially reduces to estimating the~$L_t^{\infty}$-norm of the quadrilinear term~$\mathcal{N}_n^{(4)}(\cdots)$.

\emph{Deterministic regime.}
For interactions of type~(DDDD), (DDDC), or~(DDCC), the bounds in
Proposition~\ref{prop:quintilinear} follow from Corollary~\ref{cor:4-linear-1}
and rely solely on deterministic estimates.

\emph{Probabilistic regime.}
When at least three inputs are of type~(C), deterministic arguments are no longer sufficient.
In this case, a probabilistic gain is required and is provided by
Corollary~\ref{cor:4-linearprob}, which exploits the moment bound
from Lemma~\ref{momentbd'}.

\subsection{Proof of Proposition~\ref{prop:key} and Theorem~\ref{thm:quan} assuming the multilinear estimates}

Following~\cite{BCST24}, we sketch how the key induction step, Proposition~\ref{prop:key}, follows from
the dyadic decomposition of the right-hand side of~\eqref{w2Ndag} together with the
multilinear bounds in Proposition~\ref{prop:trilinear} (cubic estimates) and
Proposition~\ref{prop:quintilinear} (quintic estimates). We then explain briefly how
Theorem~\ref{thm:quan} follows by iterating Proposition~\ref{prop:key} over dyadic scales.

\begin{proof}[Proof of Proposition~\ref{prop:key}]
	Fix a dyadic number~$N\ge 2$ and assume that~$\mathrm{Loc}(M)$ hold for scales $M\leq N$. More precisely, our goal is to construct related objects at scale~$2N$ by solving~\eqref{H2Ndag} and~\eqref{w2Ndag} and then defining~$\psi_{2N}^{\dag}$ through~\eqref{def:psi2Ndag}. Note that since Loc$(M)$ holds for all $M\leq N$, there is a $\mathcal{B}_{\leq N}$-measurable set $\Xi_N$, such that for all $M\leq N$, $\psi_{M}^{\dag}$ are of type (C), according to Definition \ref{def:CD} on $\Xi_N$.

	\noi
	$\bullet${\bf Step~1: construction of~$\mathcal{H}_n^{2N,\dag}$}
	
	Rewrite~\eqref{H2Ndag} as the fixed-point problem
	\[
	\mathcal{H}_n^{2N,\dag}=\chi(t)\e^{-it\lambda_n^2}\;+\;\mathcal{I}_{\chi_{2T},n}\big(\Theta_n^{2N,\dag}\,\mathcal{H}_n^{2N,\dag}\big),
	\qquad N<n\le 2N,
	\]
	where~$\Theta_n^{2N,\dag}$ is given by~\eqref{Theta2Ndag}. Using the decomposition
	\[
	u_N^{\dag}=\sum_{M\le N}(\psi_M^{\dag}+w_M^{\dag})
	\]
	and the dyadic splitting of the nonlinearities (as encoded in
	Proposition~\ref{prop:trilinear} and Proposition~\ref{prop:quintilinear}),
	the coefficient~$\Theta^{2N,\dag}$ can be written as a finite sum of dyadic pieces of the form~$\mathcal{N}^{(2)}(\cdot,\cdot)$ and~$\mathcal{N}^{(4)}(\cdot,\cdot,\cdot,\cdot)$ with inputs
	of type~(C) or~(D) at scales~$\le N$.
	The operator bounds of Proposition~\ref{prop:trilinear} and
	Proposition~\ref{prop:quintilinear} then yield, $((2N)^{\delta}R,c_0;\mathcal{B}_{\le 2N})$-certainly,
	the uniform estimate
	\[
	\sup_{N<n\le 2N}
	\big\|\ \cdot\ \mapsto \mathcal{I}_{\chi_{2T},n}\big(\Theta_n^{2N,\dag}\,\cdot\big)\big\|_
	{\widetilde{\mathcal{F}}L_q^{\gamma}\to\widetilde{\mathcal{F}}L_q^{\gamma}}
	\ \lesssim\ R^5T^{\sigma}.
	\]

	With the choice~$T=R^{-C_1}$ (as in Theorem~\ref{thm:quan}) and~$C_1$ sufficiently large,
	the right-hand side can be made less than~$1/2$. Hence~\eqref{H2Ndag} has a unique solution~$\mathcal{H}_n^{2N,\dag}\in \widetilde{\mathcal{F}}L_q^{\gamma}$, and moreover
	\[
	\sup_{N<n\le 2N}\big\|\mathcal{H}_n^{2N,\dag}-\chi(t)\e^{-it\lambda_n^2}\big\|_{\widetilde{\mathcal{F}}L_q^{\gamma}\cap \widetilde{\mathcal{F}}L_{2}^{b}}
	\ \lesssim\ R^5T^{\sigma}
	\ \le\ R^{-1},
	\]
	which is exactly the first bound in~\eqref{R-bound3} at scale~$2N$.

	Next, by~\eqref{def:gndag} and the bound~$\|\mathcal{H}_n^{2N,\dag}\|_{\widetilde{\mathcal{F}}L_q^{\gamma}}\lesssim 1$,
	we have~$\|g_n^{2N,\dag}\|_{\widetilde{\mathcal{F}}L_q^{\gamma}}\lesssim |g_n(\omega)|$. By a standard Gaussian
	large-deviation estimate and a union bound over~$N<n\le 2N$, we obtain $((2N)^{\delta}R,c_0)$-certainly
	\[
	\sup_{N<n\le 2N}|g_n(\omega)|\ \le\ R(2N)^{\delta}\,.
	\]
	This gives the second estimate in~\eqref{R-bound3}
	at scale~$2N$. Consequently, $\psi_{2N}^{\dag}$ is of type~(C) at scale~$2N$ of size~$R$.
\medskip

	\noi
	$\bullet${\bf Step~2: bound~\eqref{R-bound2} at scale~$2N$}

	Since~$\mathcal{H}_m^{2N,\dag}(0)=\chi(0)=1$, we have~$g_m^{2N,\dag}(0)=g_m(\omega)$.
	Hence the quantity in~\eqref{R-bound2} at scale~$2N$ is a sum of independent centered random variables
	depending only on~$(g_m)_{N<m\le 2N}$, and is independent of~$\mathcal{B}_{\le N}$.
	Hypercontractivity together with the bound~$\gamma_{n_0n_0mm}\lesssim \min(n_0,m)$ (from~\eqref{eq:gamma'}) yields~\eqref{R-bound2} at scale~$2N$, $((2N)^{\delta}R,c_0)$-certainly.

	\emph{Step~3: construction of~$w_{2N}^{\dag}$ and bounds.}
	We solve~\eqref{w2Ndag} by a contraction mapping argument in the ball
	\[
	\mathfrak{B}_{2N}:=\{w\in X^{0,b}:\ \|w\|_{X^{0,b}}\le (2N)^{-s}\}.
	\]
	Using the dyadic decomposition of the cubic term~$\mathcal{N}^{(3)}(u_N^{\dag}+\psi_{2N}^{\dag}+w)-\mathcal{N}^{(3)}(u_N^{\dag})$
	and applying Proposition~\ref{prop:trilinear} to each dyadic piece,
	we obtain a bound of the form
	\[
	\big\|\mathcal{I}_{\chi_T}\big(\mathcal{N}^{(3)}(\cdots)-\mathcal{N}^{(3)}(u_N^{\dag})\big)\big\|_{X^{0,b}}
	\ \lesssim\ R^3T^{\sigma}(2N)^{-s}
	\]
	whenever~$w\in\mathfrak{B}_{2N}$. Similarly, the quintic contribution in~\eqref{w2Ndag}
	is controlled by Proposition~\ref{prop:quintilinear}, and the linear term~$\mathcal{I}_{\chi_T,n}(\Theta_n^{2N,\dag}w_{2N,n}^{\dag})$ is controlled by the operator bounds
	in Proposition~\ref{prop:trilinear} and Proposition~\ref{prop:quintilinear}.
	The remainder term involving~$\mathcal{R}_{2N,n}$ is easier since it is diagonal in~$n$
	(there is no frequency summation), and it is handled by the same~$X^{0,b}$-based contraction argument.
	Choosing again~$T=R^{-C_1}$ with~$C_1$ large, these bounds make the right-hand side of~\eqref{w2Ndag}
	a strict contraction on~$\mathfrak{B}_{2N}$, so a unique solution~$w_{2N}^{\dag}\in\mathfrak{B}_{2N}$
	exists and satisfies
	\[
	\|w_{2N}^{\dag}\|_{X^{0,b}}\le (2N)^{-s}\,.
	\]
	This is the first inequality in~\eqref{R-bound1} at scale~$2N$.

	The second inequality in~\eqref{R-bound1} follows from the same argument. We briefly explain the factor~$(N/L)$ when estimating~$\|(\mathrm{Id}-\Pi_{L})w_{2N}^{\dag}\|_{X^{0,b}}$ for~$L\gg 2N$, which differs from the case of~$\mathbb{S}^2$ in~\cite{BCST24} (where the correlation coefficient vanishes in this regime, yielding a larger decay). We observe that~$\psi_{N_j}^{\dag}$ is exactly supported in frequencies~$n_j\in(N_j/2,N_j]$. If~$L\gg 2N$ and all inputs are of the form~$\psi_{N_j}^{\dag}$, we use the decay rate~$L^{-2}$ of the correlation coefficients to obtain the corrected bound~$L^{-2}N_{(1)}^{-s}N_{(2)}^{-\delta}$ in the proof of Proposition~\ref{prop:trilinear} and Proposition~\ref{prop:quintilinear}. Otherwise, at least one input is a type~(D) term~$w_{N_j}^{\dag}$ with~$N_j\leq 2N$ and frequency support on~$n_j\gtrsim L$. This generates a correction~$(N_j/L)\leq (2N/L)$ in the output of Proposition~\ref{prop:trilinear} and Proposition~\ref{prop:quintilinear}, which allows us to conclude.
\medskip

	Collecting the previous two steps, we obtain $\mathrm{Loc}(2N)$, $((2N)^{\delta}R,c_0)$-certainly, and the construction is~$\mathcal{B}_{\le 2N}$-measurable. This completes the proof of Proposition~\ref{prop:key}.
\end{proof}

\paragraph{\bf Proof of Theorem~\ref{thm:quan}.}
Fix~$R\ge 1$ and set~$\tau_R=R^{-C_1}$ as in Theorem~\ref{thm:quan}. We only prove the convergence for dyadic sequences, as
it is now standard (see~\cite{DNY24,BCST24}) to pass from the dyadic convergent sequence to the convergence of the full sequence.
Starting from the initial scale~$N_0=2$,
standard Gaussian large-deviation bounds lead to the property $\mathrm{Loc}(2)$ outside a set of probability~$\lesssim \e^{-cR^{c_0}}$. Iterating Proposition~\ref{prop:key} over dyadic scales~$N=2^k$ and taking the intersection
of the resulting good events, we obtain a set~$\Omega_R$ with
\[
\mathbb{P}(\Omega_R^c)\ \lesssim\ \sum_{k\ge 1}\e^{-( (2^{k})^{\delta}R)^{c_0}}\ \lesssim\ \e^{-cR^{c_0}}.
\]
Transferring this estimate to the induced measure~$\mu_{\alpha}$ gives the set~$\Sigma_R$ in Theorem~\ref{thm:quan}.
On~$\Omega_R$, the bounds~\eqref{R-bound1} imply in particular
\[
\sum_{k\ge 1}\|w_{2^k}^{\dag}\|_{X^{0,b}}\ \lesssim\ \sum_{k\ge 1}2^{-ks}<\infty.
\]
By frequency localization, choosing any~$s_0\in(\frac12,s)$, we also have~$\sum_{k\ge 1}\|w_{2^k}^{\dag}\|_{C_tH_x^{s_0}}<\infty$, hence~$w_{\infty}:=\sum_{k\ge 1}w_{2^k}^{\dag}\in C([-\tau_R,\tau_R];H_{\mathrm{rad}}^{s_0}(\mathbf{B}))$.
Similarly, the bounds for~$\psi_{2^k}^{\dag}$ yield that~$\psi_{\infty}:=\sum_{k\ge 1}\psi_{2^k}^{\dag}$ converges in~$C([-\tau_R,\tau_R];H_{\mathrm{rad}}^{\alpha-\frac12-}(\mathbf{B}))$.
Setting~$u:=\psi_{\infty}+w_{\infty}$ gives the claimed decomposition in Theorem~\ref{thm:quan}.
Since each~$u_N$ solves~\eqref{eq:nlsN} and all nonlinear contributions are controlled uniformly along the
iteration by Proposition~\ref{prop:trilinear} and Proposition~\ref{prop:quintilinear},
one passes to the limit in the Duhamel formula to obtain that~$u$ solves the cubic NLS
in the distributional sense on~$[-\tau_R,\tau_R]$.
This finalizes the proof of Theorem \ref{thm:quan}.

	
	\section{Counting and Strichartz-type estimates}\label{sec:detertool}

For expository reasons, we only present statements and proofs for~$\alpha<1$. The modification for the case~$\alpha=1$ is straightforward: every factor~$N^{1-\alpha}$ is replaced by a factor~$\log N$.

For~$\kappa\in\R$ and fixed constants~$C_0,C_1>1$ and dyadic integers~$N_1,N_2$, define:
\begin{align}\label{def:E+-}
	E_{+}(\kappa;N_1,N_2)&:=\Big\{(n_1,n_2):\; \tfrac{1}{C_1}N_i<n_i<C_1 N_i,\;\notag\\
	&\qquad\qquad\qquad |\lambda_{n_1}^2+ \lambda_{n_2}^2-\kappa|\leq C_0\Big\},\notag \\
	E_-(\kappa;N_1,N_2)&:=\Big\{(n_1,n_2):\; n_1\neq n_2,\; \tfrac{1}{C_1}N_i<n_i< C_1N_i,\;\notag\\
	&\qquad\qquad\qquad |\lambda_{n_1}^2-\lambda_{n_2}^2-\kappa|\leq C_0 \Big\}.
\end{align}

\subsection{Counting estimates}
In this section we prove refined counting estimates for multi-indices restricted to thickened level sets of the resonance function, from which we deduce new Strichartz-type estimates.

\begin{lemma}[Divisor bound with perturbed frequencies]
	\label{lem:db}
	Let~$N_{1}, N_{2}$ be dyadic integers with~$N_{2} \le N_{1}$, and let~$C_1, C_0$ be numerical constants defining the sets~$E_{\pm}(\kappa;N_1,N_2)$. Then for every~$\varepsilon>0$,
	\[
	\sup_{\kappa\in\R}\# E_{\pm}(\kappa;N_1,N_2)
	\;\leq\; C_{\varepsilon}\min(N_2, N_1^{2(1-\alpha)}N_2^{\varepsilon}),
	\]
	where~$C_{\varepsilon}$ depends only on~$C_0$, $C_1$, and~$\varepsilon$.
	Moreover, there exists~$C_2:=C_2(C_1,C_0)\geq 1$ such that if~$C_2\cdot N_2 < N_1^{1/2}$, then
	\[
	\sup_{\kappa\in\R} \# E_{\pm}(\kappa;N_1,N_2)\leq C,
	\]
	where the constant~$C$ is independent of~$N_1$ and~$N_2$.
\end{lemma}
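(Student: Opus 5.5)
The plan is to treat $\lambda_n^2=n^2+\mu_n^2$ as a slowly varying perturbation of $n^2$, and to reduce every count either to a one-parameter count or to the classical divisor bound. The preliminary step, which I expect to be the main obstacle, is to establish regularity of the sequence $\mu_n^2$:
\begin{equation*}
\mu_n^2\lesssim n^{2(1-\alpha)},\qquad |\mu_{n+1}^2-\mu_n^2|\lesssim n^{1-2\alpha}+n^{-1}\ll n .
\end{equation*}
The size bound follows from \eqref{eq:mu} and \eqref{eq:gamma'}. The increment bound needs more than Lemma~\ref{lem:gamma}. I would compute
\begin{equation*}
\gamma_{nnmm}=\frac{1}{2\pi^2}\int_0^\pi\frac{\sin^2(nr)\sin^2(mr)}{r^2}\,\dd r
\end{equation*}
explicitly; it is essentially $c\min(n,m)$ plus a bounded oscillatory correction. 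This should give $|\gamma_{n+1,n+1,m,m}-\gamma_{nnmm}|\lesssim \mathbf 1_{m\gtrsim n}+O(1/\min(n,m))$, and summing against $m^{-2\alpha}$ gives the increment bound. Consequently $\lambda_{n+1}^2-\lambda_n^2\geq 2n+1-o(n)\geq n$ for $n$ large, so $n\mapsto\lambda_n^2$ is increasing with gaps $\gtrsim n$. For the finitely many small $n$, the gaps are at least a fixed positive constant, which only affects the constants.

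\emph{Bound by $N_2$.} For each of the $\lesssim N_2$ choices of $n_2$, the constraint $|\lambda_{n_1}^2\pm\lambda_{n_2}^2-\kappa|\le C_0$ confines $\lambda_{n_1}^2$ to an interval of length $2C_0$. Since the gaps of $\lambda^2$ are bounded below, this leaves $O_{C_0}(1)$ choices of $n_1$.

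\emph{Second statement ($C_2N_2<N_1^{1/2}$).} Over the admissible range, $\lambda_{n_2}^2$ varies in an interval of length $\lesssim N_2^2+N_2^{2(1-\alpha)}\le N_1/100$ if $C_2$ is large. Hence $\lambda_{n_1}^2$ lies in a fixed interval of length $\le N_1/100+2C_0$. Since consecutive $\lambda_{n_1}^2$ are $\gtrsim N_1$ apart, this allows only $O(1)$ values of $n_1$. For each such $n_1$, $\lambda_{n_2}^2$ is confined to a window of length $2C_0$, so there are $O(1)$ values of $n_2$. The total is $O(1)$, uniformly in $\kappa$.

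\emph{Bound by $N_1^{2(1-\alpha)}N_2^{\varepsilon}$.} Since $|\mu_{n_1}^2\pm\mu_{n_2}^2|\lesssim N_1^{2(1-\alpha)}$, every $(n_1,n_2)\in E_\pm$ satisfies $n_1^2\pm n_2^2=k$ for some integer $k$ in an interval of length $O(N_1^{2(1-\alpha)}+C_0)$. For each fixed $k\lesssim N_1^2$ the number of representations is $\lesssim_\varepsilon N_1^{\varepsilon}$. In the $-$ case $k\neq0$ because $n_1\neq n_2$, and I would use the divisor bound for $(n_1-n_2)(n_1+n_2)=k$. In the $+$ case I would use $r_2(k)\lesssim k^{\varepsilon}$. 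This gives $\#E_\pm\lesssim N_1^{2(1-\alpha)+\varepsilon}$.

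To replace $N_1^\varepsilon$ by $N_2^\varepsilon$, split into two regimes. If $N_2\ge C_2^{-1}N_1^{1/2}$, then $N_1^{\varepsilon}\lesssim N_2^{2\varepsilon}$, and I rename $\varepsilon$. If $N_2<C_2^{-1}N_1^{1/2}$, the second statement gives $O(1)$, which is smaller. Taking the minimum with the $N_2$ bound concludes.
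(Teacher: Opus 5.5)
Your proof is correct, but it takes a different route from the paper's in two places.

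\emph{The preliminary step is unnecessary.} The step you flag as the main obstacle, the increment bound for $\mu_n^2$, is not needed. Since $\mu_n^2\ge 0$, $\mu_n^2\lesssim n^{2(1-\alpha)}$ and $2(1-\alpha)<1$, for $n\neq n'$ with $n,n'\sim N$ one has directly
\[
|\lambda_n^2-\lambda_{n'}^2|\ \ge\ |n^2-n'^2|-CN^{2(1-\alpha)}\ \ge\ \tfrac{N}{C_1}-CN^{2(1-\alpha)}\ \gtrsim\ N
\]
for $N$ large. This is exactly the separation you use, both in the $N_2$ count and in the $O(1)$ bound for $C_2N_2<N_1^{1/2}$. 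It is also all the paper uses in those two parts, which otherwise match your arguments. No explicit computation of $\gamma_{nnmm}$ is required. Incidentally, summing your $O(1/\min(n,m))$ error against $m^{-2\alpha}$ gives $O(1)$, not $O(n^{-1})$. That is harmless, since only $o(n)$ matters, but it is another reason to drop the step.

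\emph{The divisor bound is obtained differently.} For the bound $N_1^{2(1-\alpha)}N_2^{\varepsilon}$, the paper simply invokes the counting lemma \cite[Lemma~3.2]{BGT05}, which already carries the loss in the smaller frequency. You instead use the classical bounds $d(k),\,r_2(k)\lesssim_\varepsilon k^{\varepsilon}$, which give $N_1^{2(1-\alpha)+\varepsilon}$. You then trade $N_1^{\varepsilon}$ for $N_2^{2\varepsilon}$ by splitting into two regimes:
\begin{itemize}
\item $N_2\ge C_2^{-1}N_1^{1/2}$, where $N_1\lesssim N_2^2$;
\item $N_2<C_2^{-1}N_1^{1/2}$, where the second statement gives $O(1)$.
\end{itemize}
This bootstrap is valid, since the second statement is proved independently of the first. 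It makes the lemma self-contained at the cost of proving the two parts in reverse order. The paper's route is shorter but relies on the external reference.
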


\begin{proof}
	Given~$\kappa\in \R$, we note that~$(n_1,n_2)\in E_{\pm}(\kappa;N_1,N_2)$ forces
	\[
	|n_{1}^{2}\pm n_{2}^{2}-\kappa|\;\leq CN_{1}^{2(1-\alpha)}\,.
	\]
	For each fixed~$n_{2}\sim N_{2}$, and since~$2(1-\alpha)<1$, at most one~$n_{1}$ can satisfy the above relation when~$N_{1}$ is sufficiently large. Thus the number of pairs~$(n_{1},n_{2})$ is controlled by~$O(N_{2})$. Alternatively, for a given~$\kappa$, the quantity to estimate can be bounded from above by
	\[
	\#\big\{
	(n_{1},n_{2})\in \N^{2}:\ C_1^{-1}N_i\leq n_{i}\leq C_1N_{i}\,,\quad |n_{1}^{2}\pm n_{2}^{2} - \kappa|\leq CN_{1}^{2(1-\alpha)}
	\big\}
	\]
	so that the bound~$N_{1}^{2(1-\alpha)}N_{2}^{\varepsilon}$ follows from the standard divisor bound in~\cite[Lemma~3.2]{BGT05}.

	To prove the improved bound in the regime~$C_2N_2 < N_1^{1/2}$, it suffices to consider sufficiently large~$N_1$. We claim that if~$(n_1,n_2),(n_1^*, n_2^*)\in E_{\pm}(\kappa;N_1,N_2)$, then~$n_1=n_1^*$. Indeed, for any such tuples,
	\[
	|(\lambda_{n_1}^2-\lambda_{n_1^*}^2)\pm (\lambda_{n_2}^2-\lambda_{n_2^*}^2)|\leq 2C_0,
	\]
	which implies
	\[
	|\lambda_{n_1}^2-\lambda_{n_1^*}^2|\leq 2C_0+2C_1^2N_2^2+CN_2^{2(1-\alpha)}\leq 2C_0+(2C_1^2+C)N_2^2.
	\]
	If~$n_1\neq n_1^*$, then for sufficiently large~$N_{1}$,
	\[
	|\lambda_{n_1}^2-\lambda_{n_1^*}^2|\geq|(n_1-n_1^*)(n_1+n_1^*)|-CN_1^{2(1-\alpha)}\geq 2C_1^{-1}N_1-CN_1^{2(1-\alpha)}>\frac{N_1}{2C_1}.
	\]
	This implies
	\[
	N_1<4C_0C_1+(4C_1^2+2C)C_1N_2^2\leq (4C_0+4C_1^2+2C)C_1N_2^2,
	\]
	which is impossible if~$C_2N_2<N_1^{1/2}$ with~$C_2^2>(4C_0+4C_1^2+2C)C_1$. Therefore, there exists a unique~$n_1^0$ such that~$(n_1,n_2)\in E_{\pm}(\kappa;N_1,N_2)$ implies~$n_1=n_1^0$. It then suffices to count~$n_2\sim N_2$ for the fixed~$n_1=n_1^0$. With~$n_1=n_1^0$ fixed, the constraint~$|\lambda_{n_1^0}^2\pm\lambda_{n_2}^2-\kappa|\le C_0$ restricts~$n_2$ to an interval of length~$O(1)$, so there are~$O(1)$ admissible choices of~$n_2$.
\end{proof}

We also need an improved counting estimate. For~$\kappa\in \mathbb{R}$ and~$\vec{N}=(N_1,N_2,N_3,N_4)$, let~$N_{(1)}\geq N_{(2)}\geq N_{(3)}\geq N_{(4)}$ be the non-increasing rearrangement of~$N_1,N_2,N_3,N_4$. For~$\vec{n}=(n_1,n_2,n_3,n_4)$, define the resonance function
\begin{equation}\label{def:Omega}
	\Omega(\vec{n}):=\lambda_{n_1}^2-\lambda_{n_2}^2+\lambda_{n_3}^2-\lambda_{n_4}^2.
\end{equation}
We set
\[
\Gamma_{\vec{N},\kappa}:=\{(n_1,n_2,n_3,n_4):\; n_j\sim N_j,\quad \{n_{1},n_{3}\}\cap \{n_{2},n_{4}\}=\varnothing\,,\quad \Omega(\vec{n})=\kappa+O(1)\}.
\]
For~$j\in\{1,2,3,4\}$ we denote by~$\Gamma_{\vec{N},\kappa}(n_j)$ the section of~$\Gamma_{\vec{N},\kappa}$ with~$n_j$ fixed, and similarly~$\Gamma_{\vec{N},\kappa}(n_i,n_j)$ for the section with~$n_i$ and~$n_j$ fixed.

\begin{lemma}[Improved counting estimates]\label{improvedcounting}
	Let~$\frac{1}{2}< \alpha< 1$. For any~$\varepsilon>0$, there exists~$C_{\varepsilon}>0$ such that
	for any~$\vec{N}=(N_1,N_2,N_3,N_4)$, $j\in\{1,2,3,4\}$, $n_j\sim N_j$, and~$\kappa\in\R$,
	\begin{align}\label{improvedcounting1}
		\#\Gamma_{\vec{N},\kappa}(n_j)\leq C_{\varepsilon}
		\min\Big(M_2^{\varepsilon}\Big(1+\frac{M_{2}^{2}}{M_{1}}\Big),\;
		M_2^{\varepsilon}M_{1}^{2(1-\alpha)}M_{3} \Big),
	\end{align}
	where~$M_1\geq M_2\geq M_3$ is the non-increasing rearrangement of~$\{N_i:\ i\in\{1,2,3,4\}\setminus\{j\}\}$.
\end{lemma}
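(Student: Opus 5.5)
We fix $n_j$ and reduce the counting to the two-variable sets $E_\pm$ of Lemma~\ref{lem:db}. Write $\Omega(\vec n)=\kappa+O(1)$ with $n_j$ fixed and move $\pm\lambda_{n_j}^2$ to the right-hand side, setting $\kappa':=\kappa\mp\lambda_{n_j}^2$. The remaining three indices have dyadic sizes $M_1\ge M_2\ge M_3$. Call $a$ the free index of size $M_1$ and $b,c$ the indices of sizes $M_2,M_3$. The constraint becomes
\[
\pm\lambda_{a}^2\pm\lambda_{b}^2\pm\lambda_{c}^2=\kappa'+O(1).
\]

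\emph{Second bound.} Fix the index of size $M_3$; there are $O(M_3)$ choices. The remaining pair $(a,b)$ then lies in a set $E_\pm(\kappa'';M_1,M_2)$ up to harmless changes of the constants $C_0,C_1$. If $a$ and $b$ enter with opposite signs and $a=b$, that case is excluded by $\{n_1,n_3\}\cap\{n_2,n_4\}=\varnothing$. If they enter with the same sign, the pair is in $E_+$, where no exclusion is needed. Lemma~\ref{lem:db} gives $\#E_\pm\lesssim_\varepsilon M_1^{2(1-\alpha)}M_2^{\varepsilon}$. Summing over the index of size $M_3$ yields $M_2^{\varepsilon}M_1^{2(1-\alpha)}M_3$.

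\emph{First bound.} Fix the index $c$ of size $M_3\le M_2$; there are at most $M_3\le M_2$ choices. The pair $(a,b)$ then satisfies $\pm\lambda_a^2\pm\lambda_b^2=\kappa''+O(1)$.
\begin{itemize}
\item If $M_2\lesssim M_1^{1/2}$, then $M_2^2/M_1\lesssim1$ and the improved part of Lemma~\ref{lem:db} gives $O(1)$ pairs. A plain sum over $c$ would cost $M_3$, so instead we first pin $a$ uniformly. Comparing two admissible tuples forces $|\lambda_a^2-\lambda_{a^*}^2|\lesssim M_2^2$, and $a\ne a^*$ would force this difference to be $\gtrsim M_1$. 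So when $M_2\ll M_1^{1/2}$, $a$ is unique. With $a$ fixed, the equation $\pm\lambda_b^2\pm\lambda_c^2=\kappa'''+O(1)$ in $(b,c)$ is counted by Lemma~\ref{lem:db} at scales $(M_2,M_3)$. This gives $\min(M_3,M_2^{2(1-\alpha)+\varepsilon})$, which must be compared with $M_2^\varepsilon$.
\item If $M_2\gtrsim M_1^{1/2}$, the same argument splits $a$ into $O(1+M_2^2/M_1)$ possible values, because consecutive values of $\lambda_a^2$ are spaced by $\gtrsim M_1$ and the window has size $O(M_2^2)$. For each such $a$ we count the $(b,c)$ pairs, and we need $M_2^{\varepsilon}$ of them.
\end{itemize}

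\emph{Main obstacle.} The step I expect to be hardest is counting $(b,c)$ with $a$ fixed when both $b$ and $c$ range over a scale $M_2$ large compared to $1$. For $\pm(b^2\pm c^2)=K+O(M_2^{2(1-\alpha)})$ the divisor bound only gives $M_2^{2(1-\alpha)+\varepsilon}$, not $M_2^\varepsilon$. The plan is to use the fact that $\mu_n^2$ is a smooth, slowly varying function of $n$ (from Lemma~\ref{lem:gamma}, $\mu_n^2=c\,n^{2(1-\alpha)}+$ lower order, with derivative $O(n^{1-2\alpha})=o(1)$). The perturbation $\mu_b^2\pm\mu_c^2$ is then nearly constant along the level curves of $b^2\pm c^2$: it varies by $O(1)$ over ranges $|\Delta b|\lesssim M_2^{2\alpha-1}$. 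We partition $(b,c)$ into such blocks. On each block the constraint becomes $b^2\pm c^2=K'+O(1)$, which the divisor bound controls by $M_2^\varepsilon$, and we would show that only $O(1)$ blocks contribute. If this fails, the fallback is the cruder estimate $M_2^{\varepsilon}(1+M_2^2/M_1)\cdot M_2^{2(1-\alpha)}$, and we would check whether the losses can be absorbed where Lemma~\ref{improvedcounting} is applied.
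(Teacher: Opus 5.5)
Your second bound matches the paper's proof exactly, and so does your confinement of the top-scale index to $O(1+M_2^2/M_1)$ values. The gap is the last step of the first bound. With $a$ fixed you need $\#\{(b,c)\}\lesssim_\varepsilon M_2^{\varepsilon}$, and you do not prove it.

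If $C_2M_3<M_2^{1/2}$, the improved part of Lemma~\ref{lem:db} gives $O(1)$ pairs and that regime is done. If $M_3\gtrsim M_2^{1/2}$, Lemma~\ref{lem:db} at scales $(M_2,M_3)$ only gives $C_\varepsilon\min(M_3,M_2^{2(1-\alpha)}M_3^{\varepsilon})$, as you note.

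Your block decomposition does not improve this. Along a level curve there can be up to about $M_2^{2-2\alpha}$ blocks of length $M_2^{2\alpha-1}$. Each block carries its own integer target $K'$, and these targets spread over a window of length $O(M_2^{2(1-\alpha)})$. Applying the divisor bound block by block and summing gives back $M_2^{2(1-\alpha)+\varepsilon}$, which is just Lemma~\ref{lem:db} again.

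The claim that only $O(1)$ blocks contribute is the whole difficulty. It is a statement about lattice points of the one-parameter family of conics $b^2\pm c^2=K'$ landing on prescribed short arcs, and slow variation of $\mu_n^2$ alone does not control it. Also, Lemma~\ref{lem:gamma} and \eqref{eq:mu-est} give only the size of $\mu_n^2$. An expansion with derivative bounds would have to be derived separately from the formula for $\gamma_{nnmm}$. Your fallback proves a weaker statement, not the lemma.

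The paper's proof does not supply the missing idea either. After the same reduction, it asserts that Lemma~\ref{lem:db} with $n_1,n_4$ fixed gives $\#\Gamma_{\vec N,\kappa}(n_1,n_4)\le C_\varepsilon N_2^{\varepsilon}$. But the free pair $(n_2,n_3)$ lives at scales $(N_2,N_3)$, and Lemma~\ref{lem:db} yields only the bound you found.

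So what the written argument, yours or the paper's, actually establishes is the second bound together with the following first bound:
\begin{itemize}
\item $\#\Gamma_{\vec N,\kappa}(n_j)\lesssim 1+M_2^2/M_1$ when $C_2M_3<M_2^{1/2}$;
\item $\#\Gamma_{\vec N,\kappa}(n_j)\lesssim_\varepsilon M_2^{\varepsilon}(1+M_2^2/M_1)\min(M_3,M_2^{2(1-\alpha)})$ otherwise.
\end{itemize}
For instance, when $M_1\sim M_2\sim M_3\sim M$ this gives $M^{3-2\alpha+\varepsilon}$ rather than the claimed $M^{1+\varepsilon}$.

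Proving the lemma as stated therefore needs a genuinely new counting input for the perturbed problem. Otherwise the extra factor $M_2^{2(1-\alpha)}$ has to be carried into \eqref{Strichartztype2} and the estimates built on it, and the numerology rechecked.
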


\begin{proof}
	Without loss of generality, we estimate~$\#\Gamma_{\vec{N},\kappa}(n_4)$ and assume that~$N_1\geq N_2\geq N_3$. For fixed~$n_4$, the constraint~$\Omega(\vec{n})=\kappa+O(1)$ gives
	\begin{equation}\label{eq:Omega-fixed-n4}
		\lambda_{n_1}^2-\lambda_{n_2}^2+\lambda_{n_3}^2=\kappa+\lambda_{n_4}^2+O(1).
	\end{equation}
	To obtain the bound~$M_2^{\varepsilon}M_1^{2(1-\alpha)}M_3$, fix~$n_3\sim N_3$. Then~\eqref{eq:Omega-fixed-n4} becomes
	\[
	\lambda_{n_1}^2-\lambda_{n_2}^2=\kappa+\lambda_{n_4}^2-\lambda_{n_3}^2+O(1).
	\]
	By Lemma~\ref{lem:db}, for each fixed~$n_3$ and~$n_4$ we have~$\#\Gamma_{\vec{N},\kappa}(n_3,n_4)\le C_\varepsilon N_2^\varepsilon N_1^{2(1-\alpha)}$.
	Summing over~$n_3\sim N_3$ yields
	\[
	\#\Gamma_{\vec{N},\kappa}(n_4)\le C_\varepsilon\,N_2^\varepsilon\,N_1^{2(1-\alpha)}\,N_3.
	\]

	It remains to prove the other bound. Let
	\[
	\mathrm{Proj}_1(\Gamma_{\vec{N},\kappa}(n_4))
	:=\left\{n_1\sim N_1:\ \exists\,n_2\sim N_2,\ n_3\sim N_3\ \text{with } (n_1,n_2,n_3,n_4)\in\Gamma_{\vec{N},\kappa}
	\right\}\,.
	\]
	We claim that there exists an interval~$J\subset\R$ with~$|J|\lesssim 1+\frac{N_2^2}{N_1}$ such that
	\[
	\mathrm{Proj}_1(\Gamma_{\vec{N},\kappa}(n_4))\subset J\,.
	\]
	Indeed, for any~$(n_1,n_2,n_3,n_4)\in\Gamma_{\vec{N},\kappa}(n_4)$, rearranging~\eqref{eq:Omega-fixed-n4} gives
	\[
	\lambda_{n_1}^2=\kappa+\lambda_{n_4}^2+\lambda_{n_2}^2-\lambda_{n_3}^2+O(1)
	=\kappa+\lambda_{n_4}^2+O(N_2^2),
	\]
	since~$n_2\sim N_2$ and~$n_3\sim N_3\le N_2$. Hence for two admissible~$n_1,n_1^\ast\sim N_1$,
	\[
	|\lambda_{n_1}^2-\lambda_{n_1^\ast}^2|=O(N_2^2).
	\]
	Using~$\lambda_n^2=n^2+\mu_n^2$ and~$|\mu_n^2|\lesssim n^{2(1-\alpha)}$, we obtain
	\[
	|(n_1-n_1^\ast)(n_1+n_1^\ast)|
	=|n_1^2-n_1^{\ast\,2}|
	\le |\lambda_{n_1}^2-\lambda_{n_1^\ast}^2|+|\mu_{n_1}^2-\mu_{n_1^\ast}^2|
	=O\big(N_2^2+N_1^{2(1-\alpha)}\big).
	\]
	Since~$n_1+n_1^\ast\sim N_1$ and~$\alpha>\frac12$, this yields
	\[
	|n_1-n_1^\ast|\ \lesssim\ \frac{N_2^2}{N_1}+N_1^{1-2\alpha}
	\ \lesssim\ 1+\frac{N_2^2}{N_1},
	\]
	which proves the claim.

	For fixed~$n_1\in J\cap\N$ and~$n_4$, applying Lemma~\ref{lem:db} to~\eqref{eq:Omega-fixed-n4} (with~$n_1$ and~$n_4$ fixed)
	gives~$\#\Gamma_{\vec{N},\kappa}(n_1,n_4)\le C_\varepsilon N_2^\varepsilon$.
	Therefore,
	\[
	\#\Gamma_{\vec{N},\kappa}(n_4)
	\le \sum_{n_1\in J\cap\N}\#\Gamma_{\vec{N},\kappa}(n_1,n_4)
	\le C_\varepsilon\,N_2^\varepsilon\,|J|
	\ \lesssim\ C_\varepsilon\,N_2^\varepsilon\Big(1+\frac{N_2^2}{N_1}\Big).
	\]
	This proves~\eqref{improvedcounting1}.
\end{proof}

\subsection{Strichartz-type estimates}

Equipped with the improved counting estimates, we obtain the following Strichartz-type bounds.

\begin{lemma}\label{timeStrichart}
	Let~$\eta\in\mathcal{S}(\R;\R_{+})$. For~$j=1,2,3,4$, let~$\mathbf{a}^{(j)}\in\ell^{2}(\N)$ be
	supported in~$\{k_j\sim K_j\}$. Denote by~$K_{(1)}\ge K_{(2)}\ge K_{(3)}\ge K_{(4)}$ the non-increasing rearrangement of~$K_1,K_2,K_3,K_4$. Let
	\[
	\mathcal{M}_{\vec{K}}((a^{(j)})_{1\leq j\leq 4})
	:=
	\sup_{\kappa\in\R}
	\sum_{\substack{k_{1},k_{2},k_{3},k_{4}\\
			\{k_{1},k_{3}\}\cap\{k_{2},k_{4}\}=\emptyset}}
	\eta(\Omega(\vec k)-\kappa)\prod_{j=1}^{4}|\mathbf{a}^{(j)}_{k_{j}}|\,.
	\]
	Then, for every~$\varepsilon>0$, there exists~$C_{\varepsilon,\eta}>0$ such that
	\begin{equation}
		\label{Strichartztype1}
		\mathcal{M}_{\vec{K}}((a^{(j)})_{1\leq j\leq 4})
		\le
		C_{\varepsilon,\eta}\,
		K_{(1)}^{2(1-\alpha)}\,
		K_{(2)}^{\varepsilon}\,
		\min
		\Big(
		\prod_{j=1}^{4}\|\mathbf{a}^{(j)}\|_{\ell^{2}}\,,\;
		K_{(3)}K_{(4)}
		\prod_{j=1}^{4}
		\|a^{(j)}\|_{\ell^{\infty}}
		\Big)\,.
	\end{equation}
	If in addition\footnote{More precisely, $C_2K_{(3)}< K_{(2)}^{1/2}$ with the numerical constant~$C_2$ from Lemma~\ref{improvedcounting}.} $K_{(1)}\sim K_{(2)}\gg K_{(3)}^2$, then
	\begin{equation}\label{Strichartztype1'}
		\mathcal{M}_{\vec{K}}((a^{(j)})_{1\leq j\leq 4})
		\le
		C_{\eta}\,
		\prod_{j=1}^{4}\|\mathbf{a}^{(j)}\|_{\ell^{2}}.
	\end{equation}
	Moreover, for any~$j_0\in\{1,2,3,4\}$ and any~$j_1\in\{1,2,3,4\}\setminus\{j_0\}$,
	let~$M_1\ge M_2\ge M_3$ be the non-increasing rearrangement of the scales~$\{K_j:\ j\in\{1,2,3,4\}\setminus\{j_1\}\}$. Then
	\begin{multline}
		\label{Strichartztype2}
		\mathcal{M}_{\vec{K}}((a^{(j)})_{1\leq j\leq 4})
		\le
		C_{\varepsilon,\eta}\,
		\min(K_{j_0},K_{j_1})^{\varepsilon}M_1^{1-\alpha} \,
		\min\Big(\Big(1+\frac{M_{2}^{2}}{M_{1}}\Big)^\frac12,\; M_{1}^{1-\alpha}M_{3}^\frac12\Big)
		\\
		\times\|\mathbf{a}^{(j_0)}\|_{\ell^{\infty}}\prod_{j\neq j_0}\|\mathbf{a}^{(j)}\|_{\ell^{2}}.
	\end{multline}
\end{lemma}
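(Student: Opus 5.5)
The plan is to reduce every bound to the counting estimates of Lemma~\ref{lem:db} and Lemma~\ref{improvedcounting}, via Schur's test or Cauchy--Schwarz on the thickened level sets of $\Omega$. Since $\eta$ is Schwartz, we decompose $\eta(\Omega(\vec k)-\kappa)\le \sum_{\ell\in\Z} C_A\langle \ell\rangle^{-A}\mathbf{1}_{|\Omega(\vec k)-\kappa-\ell|\le 1}$. This reduces all statements, uniformly in $\kappa$, to the sharp-cutoff quantity
\[
\sum_{\vec k\in\Gamma_{\vec K,\kappa}}\prod_j|\mathbf{a}^{(j)}_{k_j}|,
\]
where $\Gamma_{\vec K,\kappa}$ now carries the $O(1)$ thickening.

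\emph{Proof of \eqref{Strichartztype1}.} Pair the two largest-scale indices against the two smallest. Without loss of generality, let $k_a,k_b$ carry $K_{(1)},K_{(2)}$ and $k_c,k_d$ carry $K_{(3)},K_{(4)}$. Fixing $(k_c,k_d)$, the constraint becomes $\pm\lambda_{k_a}^2\pm\lambda_{k_b}^2=\kappa'+O(1)$, so $(k_a,k_b)$ lies in some $E_\pm(\kappa';K_{(1)},K_{(2)})$. Symmetrically, fixing $(k_a,k_b)$ confines $(k_c,k_d)$ to a set $E_\pm$ at scales $(K_{(3)},K_{(4)})$. We then write the sum as a bilinear form $\sum_{(k_a,k_b),(k_c,k_d)}A_{ab}B_{cd}\,\mathbf{1}_{\Gamma}$, with $A_{ab}=|\mathbf a^{(a)}_{k_a}\mathbf a^{(b)}_{k_b}|$ and $B$ defined similarly. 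Schur's test bounds it by $(\sup\text{row count}\cdot\sup\text{column count})^{1/2}\|A\|_{\ell^2}\|B\|_{\ell^2}$. By Lemma~\ref{lem:db}, the row count is at most $C_\varepsilon K_{(1)}^{2(1-\alpha)}K_{(2)}^\varepsilon$ and the column count is at most $C_\varepsilon K_{(3)}^{2(1-\alpha)}K_{(4)}^\varepsilon$. This crude version loses the factor $K_{(3)}^{1-\alpha}$, which is too much.

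To remove this loss, I would instead use the non-symmetric bound: the sum is at most $\sup_{(k_c,k_d)}\#\{(k_a,k_b)\}$ times $\|B\|_{\ell^1}$ and $\|A\|_{\ell^\infty}$. For the $\ell^\infty$ version this gives directly
\[
K_{(1)}^{2(1-\alpha)}K_{(2)}^\varepsilon\cdot K_{(3)}K_{(4)}\prod_j\|\mathbf{a}^{(j)}\|_{\ell^\infty}.
\]
For the $\ell^2$ version, Cauchy--Schwarz in $(k_a,k_b)$ for fixed $(k_c,k_d)$ gives a factor $(\#\text{row})^{1/2}\|A\|_{\ell^2}$, and the sum over $(k_c,k_d)$ is then handled by Schur in the reverse direction. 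I expect the needed exponent bookkeeping to come out as $K_{(1)}^{2(1-\alpha)}K_{(2)}^{\varepsilon}$, using that the column counts are also at most $K_{(1)}^{2(1-\alpha)}K_{(2)}^\varepsilon$, since the perturbation $\mu^2$ scale is dominated by $K_{(1)}$.

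\emph{Proof of \eqref{Strichartztype1'}.} In the regime $K_{(1)}\sim K_{(2)}\gg K_{(3)}^2$, the second part of Lemma~\ref{lem:db} applies to the pair at scales $(K_{(1)},K_{(2)})$ for fixed small indices: once $(k_c,k_d)$ is fixed, there are $O(1)$ admissible $(k_a,k_b)$. Conversely, for fixed $(k_a,k_b)$, the level set forces $|\lambda^2_{k_c}\pm\lambda^2_{k_d}-\kappa''|\le C$. Here I would use the gap $|\lambda^2_{k}-\lambda^2_{k^*}|\gtrsim K_{(1)}$ between distinct large indices to show that each pair $(k_c,k_d)$ matches only $O(1)$ pairs $(k_a,k_b)$ modulo shifts of size $O(K_{(3)}^2)\ll K_{(1)}$. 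Schur's test with both counts $O(1)$ then yields $\prod\|\mathbf a^{(j)}\|_{\ell^2}$ with no loss. This step, namely establishing both uniform counts simultaneously without any $\varepsilon$-loss, is the main obstacle, since the divisor bound must be avoided entirely.

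\emph{Proof of \eqref{Strichartztype2}.} Put $\mathbf a^{(j_0)}$ in $\ell^\infty$ and fix $k_{j_1}$. For each $k_{j_1}$ the remaining three indices range over the section $\Gamma_{\vec K,\kappa}(k_{j_1})$, which is bounded by Lemma~\ref{improvedcounting}. I would write the sum as $\sum_{k_{j_1}}|\mathbf a^{(j_1)}_{k_{j_1}}|\sum_{\Gamma(k_{j_1})}|\mathbf a^{(j_0)}|\prod_{j\ne j_0,j_1}|\mathbf a^{(j)}|$. The inner sum is handled by Cauchy--Schwarz: one factor is $(\#\Gamma(k_{j_1}))^{1/2}$, which supplies the term
\[
\min\Big(\Big(1+\frac{M_2^2}{M_1}\Big)^{1/2},\,M_1^{1-\alpha}M_3^{1/2}\Big)\min(K_{j_0},K_{j_1})^{\varepsilon},
\]
and the other is the $\ell^2$ norm of the two remaining sequences. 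The outer sum over $k_{j_1}$ is paired with a dual count of $k_{j_1}$ given the other two: by Lemma~\ref{lem:db}, this is $\lesssim M_1^{2(1-\alpha)}$ up to $\varepsilon$-losses, and its square root gives the factor $M_1^{1-\alpha}$.
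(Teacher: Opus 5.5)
Your argument for \eqref{Strichartztype1'} has a genuine gap, and the obstacle you flag comes from your choice of pairing.

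You group the two top indices $(k_a,k_b)$, at scales $(K_{(1)},K_{(2)})$, against the two bottom ones. The second part of Lemma~\ref{lem:db} needs the two scales \emph{inside} a pair to be separated ($C_2N_2<N_1^{1/2}$), and this fails when $K_{(1)}\sim K_{(2)}$. For fixed $(k_c,k_d)$, counting pairs $(k_a,k_b)$ with $\pm\lambda_{k_a}^2\pm\lambda_{k_b}^2=\kappa'+O(1)$ is a lattice-point count near a circle or hyperbola of radius $\sim K_{(1)}$. For it Lemma~\ref{lem:db} gives only $C_\varepsilon K_{(1)}^{2(1-\alpha)}K_{(2)}^{\varepsilon}$, and no uniform bound is available: already if $\mu_n^2$ were zero this would be a sum-of-two-squares or divisor count, unbounded in $\kappa'$. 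Nothing separates $K_{(3)}$ from $K_{(4)}$ either, so the column count is not $O(1)$. The gap $|\lambda_k^2-\lambda_{k^*}^2|\gtrsim K_{(1)}$ does not rescue this, because two solutions $(k_a,k_b)$, $(k_a^*,k_b^*)$ can have both differences $\lambda_{k_a}^2-\lambda_{k_a^*}^2$ and $\lambda_{k_b}^2-\lambda_{k_b^*}^2$ large and cancelling.

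The missing idea is to pair each large index with a small one. Relabel so that $K_1\ge K_2\ge K_3\ge K_4$; the signs in $\Omega$ play no role, since Lemma~\ref{lem:db} covers both $E_+$ and $E_-$. Then group $(k_1,k_4)$ against $(k_2,k_3)$. Now $K_1\gg K_4^2$ and $K_2\gg K_3^2$, so the second part of Lemma~\ref{lem:db} gives uniformly $O(1)$ counts for both families of level sets. This is exactly where your gap argument does work: with the complementary pair fixed, two solutions satisfy $|\lambda_{k_1}^2-\lambda_{k_1^*}^2|=O(K_4^2)\ll K_1$, which forces $k_1=k_1^*$. Your Schur test then yields \eqref{Strichartztype1'} with no loss. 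Equivalently, use the paper's Cauchy--Schwarz on unit level sets followed by Schur's test in the level indices $(m,\ell)$ with the summable kernel $\sup_{|\nu|\le1}\eta(m-\ell+\nu-\kappa)$. This is the paper's proof, which uses the same cross pairing for \eqref{Strichartztype1} as well.

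Two smaller remarks.

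\emph{The $\ell^2$ part of \eqref{Strichartztype1}.} Your first Schur bound is not lossy. It gives $K_{(1)}^{1-\alpha}K_{(3)}^{1-\alpha}K_{(2)}^{\varepsilon}\le K_{(1)}^{2(1-\alpha)}K_{(2)}^{\varepsilon}$, which already proves the claim and is slightly better than the paper's $K_{(1)}^{1-\alpha}K_{(2)}^{1-\alpha}$. Your subsequent refinement is the same Schur bound with a weaker column count, so it is harmless but unnecessary. Your $\ell^\infty$ part matches the paper.

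\emph{Estimate \eqref{Strichartztype2}.} Write $\{\ell_0,\ell_1\}=\{1,2,3,4\}\setminus\{j_0,j_1\}$. Your route is the paper's, provided the Cauchy--Schwarz is done over all of $\Gamma_{\vec K,\kappa}$ at once, separating $|\mathbf a^{(j_1)}_{k_{j_1}}|^2$ from $|\mathbf a^{(\ell_0)}_{k_{\ell_0}}\mathbf a^{(\ell_1)}_{k_{\ell_1}}|^2$. Then the second factor carries $\#\Gamma_{\vec K,\kappa}(k_{\ell_0},k_{\ell_1})$, which is bounded by Lemma~\ref{lem:db}. If instead, for each fixed $k_{j_1}$, you bound the inner factor by $\|\mathbf a^{(\ell_0)}\|_{\ell^2}\|\mathbf a^{(\ell_1)}\|_{\ell^2}$, as your wording suggests, you are left with an $\ell^1$ sum in $k_{j_1}$ and lose $K_{j_1}^{1/2}$.
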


\begin{remark}
	In the deterministic analysis, the guiding principle for the choice between~\eqref{Strichartztype1} and~\eqref{Strichartztype2} is as follows: when dealing with a multilinear expression, we use~\eqref{Strichartztype2} if there is a type~(C) input on the relative top frequencies~$\geq N_{(1)}^{1/2}$; otherwise we apply~\eqref{Strichartztype1} or~\eqref{Strichartztype1'}.
\end{remark}

\begin{remark}
	If the type~(C) term appears on the top frequency, i.e.\ $j_0\in\{1,2,3,4\}$ satisfies~$K_{j_0}\in\{K_{(1)},K_{(2)}\}$, then~\eqref{Strichartztype2} gives
	\begin{multline}
		\label{Strichartztype2'}
		\mathcal{M}_{\vec{K}}((a^{(j)})_{1\leq j\leq 4})
		\leq C_{\varepsilon,\eta}\,
		K_{(2)}^{\varepsilon}K_{j_0}^{1-\alpha} \,
		\min\Big(\Big(1+\frac{K_{(3)}^{2}}{K_{j_0}}\Big)^\frac12,\; K_{j_0}^{1-\alpha}K_{(4)}^\frac12\Big)
		\\
		\times
		\|\mathbf{a}^{(j_0)}\|_{\ell^{\infty}}\prod_{j\neq j_0}\|\mathbf{a}^{(j)}\|_{\ell^{2}}.
	\end{multline}
	To this in the case when $K_{j_0}=K_{(2)}$, take $j_1$ such that $K_{j_1}=K_{(1)}$.
\end{remark}
\begin{proof}
	Since we assume~$\{n_{1},n_{3}\}\cap\{n_{2},n_{4}\}=\varnothing$, and Lemma~\ref{lem:db}
	provides bounds for both~$E_{+}$ and~$E_{-}$, we may assume without loss of generality that
	\begin{equation}
		\label{eq:orderK}
		K_1\ge K_2\ge K_3\ge K_4.
	\end{equation}
	We write~$\vec K=(K_1,K_2,K_3,K_4)$ and first prove~\eqref{Strichartztype1} and~\eqref{Strichartztype1'}.

	We decompose~$\R$ into unit intervals. For~$m,\ell\in\Z$, define the level sets
	\[
	E_m:=\{(k_1,k_4): k_1\sim K_1,\ k_4\sim K_4,\ k_1\neq k_4,\
	\lambda_{k_1}^2-\lambda_{k_4}^2\in[m,m+1]\},
	\]
	\[
	F_\ell:=\{(k_2,k_3): k_2\sim K_2,\ k_3\sim K_3,\ k_2\neq k_3,\
	\lambda_{k_2}^2-\lambda_{k_3}^2\in[\ell,\ell+1]\}.
	\]
	With this notation,
	\begin{multline}
		\label{eq:levelsetdec-new}
		\mathcal{M}_{\vec{K}}\big((a^{(j)})_{1\le j\le 4}\big)
		\le
		\sum_{m,\ell\in\Z}
		\Big(\sup_{|\nu|\le 1}\eta(m-\ell+\nu-\kappa)\Big)
		\\
		\times
		\Big(\sum_{(k_1,k_4)\in E_m}
		\mathbf{a}^{(1)}_{k_1}\mathbf{a}^{(4)}_{k_4}\Big)
		\Big(\sum_{(k_2,k_3)\in F_\ell}
		\mathbf{a}^{(2)}_{k_2}\mathbf{a}^{(3)}_{k_3}\Big).
	\end{multline}

	By Cauchy--Schwarz,
	\[
	\sum_{(k_1,k_4)\in E_m}
	\mathbf{a}^{(1)}_{k_1}\mathbf{a}^{(4)}_{k_4}
	\le
	\Big(
	\sum_{(k_1,k_4)\in E_m}
	|\mathbf{a}^{(1)}_{k_1}\mathbf{a}^{(4)}_{k_4}|^2
	\Big)^{\frac12}
	(\#E_m)^{\frac12}.
	\]
	Lemma~\ref{lem:db} gives uniformly in~$m$
	\[
	\#E_m \le C_\varepsilon K_1^{2(1-\alpha)}K_4^\varepsilon,
	\]
	and therefore
	\begin{equation}
		\label{eq:Em-bound}
		\sum_{(k_1,k_4)\in E_m}
		\mathbf{a}^{(1)}_{k_1}\mathbf{a}^{(4)}_{k_4}
		\le
		C_\varepsilon K_1^{1-\alpha}K_4^\varepsilon
		\Big(
		\sum_{(k_1,k_4)\in E_m}
		|\mathbf{a}^{(1)}_{k_1}\mathbf{a}^{(4)}_{k_4}|^2
		\Big)^{\frac12}.
	\end{equation}
	Similarly,
	\[
	\#F_\ell \le C_\varepsilon K_2^{2(1-\alpha)}K_3^\varepsilon,
	\]
	and hence
	\begin{equation}
		\label{eq:Fl-bound}
		\sum_{(k_2,k_3)\in F_\ell}
		\mathbf{a}^{(2)}_{k_2}\mathbf{a}^{(3)}_{k_3}
		\le
		C_\varepsilon K_2^{1-\alpha}K_3^\varepsilon
		\Big(
		\sum_{(k_2,k_3)\in F_\ell}
		|\mathbf{a}^{(2)}_{k_2}\mathbf{a}^{(3)}_{k_3}|^2
		\Big)^{\frac12}.
	\end{equation}
	Combining~\eqref{eq:Em-bound}--\eqref{eq:Fl-bound} with
	\[
	\sup_{\ell}\sum_{m\in\Z}
	\sup_{|\nu|\le 1}
	\eta(m-\ell+\nu-\kappa)
	\lesssim_\eta 1,
	\]
	and applying Schur's test in~\eqref{eq:levelsetdec-new}, we obtain
	\[
	\mathcal{M}_{\vec{K}}\big((a^{(j)})_{1\le j\le 4}\big)
	\lesssim_{\varepsilon,\eta}
	K_1^{2(1-\alpha)}K_3^\varepsilon
	\prod_{j=1}^4\|\mathbf{a}^{(j)}\|_{\ell^2},
	\]
	which proves the first bound in~\eqref{Strichartztype1}. If in addition~$K_{(1)}\sim K_{(2)}\gg K_{(3)}^2$, then~$K_1\gg K_4^2$ and~$K_2\gg K_3^2$, so~$\#E_m\lesssim 1$ and~$\# F_{\ell}\lesssim 1$ uniformly in~$m$ and~$\ell$, which gives~\eqref{Strichartztype1'}.

	\emph{Alternative bound in~\eqref{Strichartztype1}.}
	To obtain the second estimate in~\eqref{Strichartztype1}, we use the rapid decay of~$\eta$:
	\begin{align*}
		\mathcal{M}_{\vec{K}}\big((a^{(j)})_{1\le j\le 4}\big)
		&\lesssim
		\sup_{\kappa}
		\sum_{\substack{k_{1},k_{2},k_{3},k_{4}\\
				\{k_{1},k_{3}\}\cap\{k_{2},k_{4}\}=\varnothing}}
		\langle\Omega(\vec k)-\kappa\rangle^{-100}
		\prod_{j=1}^4 |a_{k_j}^{(j)}|
		\\
		&\lesssim
		\sup_{\ell,\kappa}
		\sum_{\substack{k_{1},k_{2},k_{3},k_{4}\\
				\{k_{1},k_{3}\}\cap\{k_{2},k_{4}\}=\varnothing}}
		\mathbf{1}_{|\Omega(\vec k)-\ell-\kappa|\le 1}
		\prod_{j=1}^4 |a_{k_j}^{(j)}|
		\\
		&\lesssim
		K_3K_4
		\Big(
		\sup_{\kappa'}\#E_{-}(\kappa',K_1,K_2)
		\Big)
		\prod_{j=1}^4\|a^{(j)}\|_{\ell^\infty}\,.
	\end{align*}
	Invoking Lemma~\ref{lem:db}, we obtain the second part of~\eqref{Strichartztype1}.

	We now prove~\eqref{Strichartztype2}.
	Fix~$j_0\in\{1,2,3,4\}$ and~$j_1\in\{1,2,3,4\}\setminus\{j_0\}$.
	Let~$\{\ell_{0},\ell_{1}\}:=\{1,2,3,4\}\setminus\{j_0,j_1\}$.
	Using~$\|\mathbf{a}^{(j_0)}\|_{\ell^\infty}$ to bound the~$k_{j_0}$-sum, we obtain
	\begin{equation}
		\label{eq:pf-Stri2-1-gen}
		\mathcal{M}_{\vec{K}}((a^{(j)})_{1\leq j\leq 4})
		\le
		\|\mathbf{a}^{(j_0)}\|_{\ell^\infty}\,
		\sup_{\kappa\in\R}
		\sum_{k_{j_1},k_{\ell_0},k_{\ell_1}}
		\sum_{k_{j_0}}
		\eta(\Omega(\vec k)-\kappa)\,
		|\mathbf{a}^{(j_1)}_{k_{j_1}}\mathbf{a}^{(\ell_{0})}_{k_{\ell_{0}}}\mathbf{a}^{(\ell_{1})}_{k_{\ell_{1}}}|,
	\end{equation}
	where we do not explicitly display the non-resonant condition~$\{k_1,k_3\}\cap\{k_2,k_4\}=\emptyset$ on the right-hand side.
	Since~$\eta\ge0$ is Schwartz, we discretize the modulation into unit intervals as before and apply Cauchy--Schwarz in~$(k_{j_0},k_{j_1},k_{\ell_{0}},k_{\ell_{1}})$.
	Taking the supremum in~$\kappa$, we obtain
	\begin{multline}\label{eq:pf-Stri2-2-gen}
		\mathcal{M}_{\vec{K}}((a^{(j)})_{1\leq j\leq 4})
		\lesssim_\eta
		\|\mathbf{a}^{(j_0)}\|_{\ell^\infty}\,
		\\
		\times\sup_{\kappa\in\R}
		\left(
		\sum_{k_{j_1}}|\mathbf{a}^{(j_1)}_{k_{j_1}}|^2\ \#\Gamma_{\vec K,\kappa}(k_{j_1})
		\right)^{\frac12}
		\left(
		\sum_{k_{\ell_{0}},k_{\ell_{1}}}|\mathbf{a}^{(\ell_{0})}_{k_{\ell_{0}}}\mathbf{a}^{(\ell_{1})}_{k_{\ell_{1}}}|^2\ \#\Gamma_{\vec K,\kappa}(k_{\ell_{0}},k_{\ell_{1}})\right)^{\frac12}.
	\end{multline}
	We now estimate the counting factors.
	Applying Lemma~\ref{improvedcounting} with the section index~$j=j_1$,
	writing~$M_1\ge M_2\ge M_3$ for the rearrangement of~$\{K_j:\ j\neq j_1\}$, we have
	\begin{equation}\label{eq:count-kjstar-sqrt}
		\sup_{\kappa\in\R}\sup_{k_{j_1}\sim K_{j_1}}
		\big(\#\Gamma_{\vec K,\kappa}(k_{j_1})\big)^{\frac12}
		\ \le\
		C_\varepsilon\,M_2^{\varepsilon}\,
		\min\Big(\Big(1+\frac{M_2^2}{M_1}\Big)^{\frac12},\ M_1^{1-\alpha}M_3^{\frac12}\Big).
	\end{equation}
	Moreover, by the divisor bound (Lemma~\ref{lem:db}),
	\begin{equation}\label{eq:count-kpq-sqrt-gen}
		\sup_{\kappa\in\R}\sup_{k_{\ell_0}\sim K_{\ell_0},\ k_{\ell_1}\sim K_{\ell_1}}
		\big(\#\Gamma_{\vec K,\kappa}(k_{\ell_0},k_{\ell_1})\big)^{\frac12}
		\ \le\
		C_\varepsilon\,M_1^{1-\alpha}\min(K_{j_0},K_{j_1})^{\varepsilon}.
	\end{equation}
	Inserting~\eqref{eq:count-kjstar-sqrt} and~\eqref{eq:count-kpq-sqrt-gen} into~\eqref{eq:pf-Stri2-2-gen}
	gives~\eqref{Strichartztype2}.
\end{proof}


	\section{Probabilistic bounds for fully non-resonant trilinear interactions}
\label{sec:probatool}

In this section we collect moment bounds for multilinear expansions involving terms of type~(C) defined in Definition~\ref{def:CD}, as well as random operator estimates in modulated Fourier--Lebesgue spaces.
The goal is to exploit randomness through Wiener chaos expansions and non-commutative Khintchine inequalities, reducing the analysis to deterministic counting estimates with a gain of a square root.

Thought this section, for a type (C) term at scale $N$ of size $R$, defined in Definition \ref{def:CD}, we denote
\begin{align}\label{def:gnN}
g_n^N(t):=\mathcal{T}_n^N(t)\cdot g_n(\omega).
\end{align}

\subsection{Moment bounds}

The following lemma bounds, conditionally on the low-frequency sigma-algebra, multilinear quantities involving type~(C) terms at a fixed frequency scale~$N$.
The key input is the probabilistic structure of these terms, and in particular the independence between the low- and high-frequency Gaussian variables.

\begin{lemma}\label{k-linearproba}
	Fix~$N\geq1$, $k\in\N$, and let~$f_{n_1,\dots,n_k}(\kappa_1,\dots,\kappa_k)$ be a~$\mathcal{B}_{\leq \frac{N}{2}}$-measurable function in
	\[
	L_\omega^r\,\ell_{n_1,\dots,n_k}^2 L_{\kappa_1,\dots,\kappa_k}^{q'}.
	\]
Assume that~$g_n^{N}(t):=\mathcal{T}_n^{N}(t)\cdot g_n(\omega)$, defined in \eqref{def:gnN}. 
	Then there exists a constant~$C=C(k)>0$ such that for all~$r\geq2$, almost surely,
	\begin{multline*}
		\Big\|
		\int_{\R^k}
		\sum_{\substack{n_1,\dots,n_k\\ \mathrm{non\text{-}paired}}}
		\Big(\prod_{j=1}^k (\widetilde g_{n_j}^{N})^{\pm}(\kappa_j)\Big)
		f_{n_1,\dots,n_k}(\kappa_1,\dots,\kappa_k)
		\,\mathrm{d}\kappa_1\cdots\mathrm{d}\kappa_k
		\Big\|_{L_\omega^r\mid\mathcal{B}_{\leq N}} \\
		\leq C\,r^{\frac k2}\|\widetilde{\mathcal{T}}_{n}^{N}(\kappa)\|_{\ell_n^{\infty}\widetilde{\mathcal{F}}L_q^{\gamma}}^k
		\Big\|
		\prod_{j=1}^k \langle\kappa_j\rangle^{-\gamma}
		f_{n_1,\dots,n_k}(\kappa_1,\dots,\kappa_k)
		\Big\|_{\ell_{n_1,\dots,n_k}^2 L_{\kappa_1,\dots,\kappa_k}^{q'}}.
	\end{multline*}
\end{lemma}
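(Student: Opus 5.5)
The plan is to condition on $\mathcal{B}_{\leq \frac N2}$ and use that, conditionally, the only randomness left is in the Gaussians $(g_n)_{\frac N2<n\le N}$, which are independent of $\mathcal{B}_{\leq \frac N2}$. By the definition \eqref{def:gnN}, $\widetilde g^N_{n}(\kappa)=\widetilde{\mathcal T}^N_n(\kappa)\,g_n(\omega)$ (with the conjugate version for the $-$ sign). Since $\mathcal T^N_n$ and $f$ are $\mathcal{B}_{\leq\frac N2}$-measurable, we can pull the $\kappa$-integration inside the Gaussian sum. Writing
\[
F_{n_1,\dots,n_k}:=\int_{\R^k}\prod_{j=1}^k(\widetilde{\mathcal T}^N_{n_j})^{\pm}(\kappa_j)\,f_{n_1,\dots,n_k}(\kappa_1,\dots,\kappa_k)\,\mathrm d\kappa_1\cdots \mathrm d\kappa_k ,
\]
the quantity to bound becomes $\sum_{\text{non-paired}} F_{n_1,\dots,n_k}\prod_j g_{n_j}^{\pm}$. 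This is a conditionally deterministic-coefficient multilinear Gaussian expression with no pairings, so it lies in the homogeneous Wiener chaos of order $k$.

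First, I would apply the Wiener chaos (hypercontractivity) estimate conditionally:
\[
\Big\|\sum_{\text{non-paired}}F_{\vec n}\prod_j g_{n_j}^{\pm}\Big\|_{L^r_\omega\mid\mathcal B}\le C(k)\,r^{k/2}\Big\|\sum F_{\vec n}\prod g^{\pm}_{n_j}\Big\|_{L^2_\omega\mid\mathcal B}.
\]
Because the indices are non-paired, the $L^2$ norm is bounded by $C(k)\|F\|_{\ell^2_{\vec n}}$: expanding the square, only permutation-related index tuples correlate, and there are at most $k!$ of them, which gives Cauchy--Schwarz with a constant. This step is standard. The one point needing care is whether repeated indices are genuinely excluded or only unpaired (e.g. $g_n^2$ terms). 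In the latter case I would still use that $\mathbb E[g^2]=0$ for complex Gaussians, together with the moments $\mathbb E|g|^{2p}=p!$, which are absorbed into $C(k)$.

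Second, I would bound $F_{\vec n}$ pointwise in $\vec n$ by Hölder in $\kappa$. Inserting $\prod_j\langle\kappa_j\rangle^{\gamma}\langle\kappa_j\rangle^{-\gamma}$ gives
\[
|F_{\vec n}|\le \prod_{j}\|\langle\kappa\rangle^\gamma\widetilde{\mathcal T}^N_{n_j}\|_{L^q_\kappa}\,\Big\|\prod_j\langle\kappa_j\rangle^{-\gamma}f_{\vec n}\Big\|_{L^{q'}_{\vec\kappa}},
\]
using that $L^q_{\kappa_1}\otimes\cdots\otimes L^q_{\kappa_k}$ pairs with $L^{q'}_{\vec\kappa}$ (a product function has $L^q(\R^k)$ norm equal to the product of its one-dimensional norms). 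Bounding each factor by $\sup_n\|\widetilde{\mathcal T}_n^N\|_{\widetilde{\mathcal F}L^\gamma_q}$ and taking $\ell^2_{\vec n}$ then gives the claim.

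The main obstacle is not the estimate itself but justifying the conditional chaos bound with random (only $\mathcal B_{\le \frac N2}$-measurable) coefficients. The statement is an almost sure conditional bound, so I would fix a realization of the low-frequency variables and use independence to apply the deterministic-coefficient hypercontractivity via Fubini. The conditioning in the statement is on $\mathcal B_{\le N}$ while measurability is with respect to $\mathcal B_{\le \frac N2}$. I would interpret the conditional norm as the expectation over $(g_n)_{\frac N2<n\le N}$ given $\mathcal B_{\le \frac N2}$, which is the natural reading. The integrability assumption $f\in L^r_\omega\ell^2L^{q'}$ only ensures that the exchange of the integral and the sum is legitimate.
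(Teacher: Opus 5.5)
Your argument is correct and follows the route the paper indicates; the paper gives no proof here, only a reference to \cite[Lemma~8.2]{BCST24}. Freeze the $\mathcal{B}_{\leq \frac N2}$-measurable coefficients, apply the Wiener chaos bound to the non-paired Gaussian polynomial, and use H\"older in $\vec\kappa$ with the $\widetilde{\mathcal{F}}L^{\gamma}_q$ bound on $\mathcal{T}^N_n$. Integrating in $\vec\kappa$ before applying the chaos estimate, as you do, is what produces the $\ell^\infty_n\widetilde{\mathcal{F}}L^\gamma_q$ and $\ell^2_{\vec n}L^{q'}_{\vec\kappa}$ orderings in the statement. Your reading of the conditioning as on $\mathcal{B}_{\leq \frac N2}$ rather than $\mathcal{B}_{\leq N}$ is the correct, indeed necessary, one, and it matches how the lemma is used in the proof of Lemma~\ref{momentbd'}.
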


The proof relies on conditional Wiener chaos estimates together with the specific structure and bounds satisfied by type~(C) terms. We refer to~\cite[Lemma~8.2]{BCST24} for a proof in the slightly more general case where the spatial variable plays a role.

Next, we establish a general moment bound for multilinear expressions built from type~(C) terms.

\begin{lemma}[Moment bound for unpaired trilinear interactions]
	\label{momentbd'}
Assume that $(v^{(\mathrm{C})}_{M})_{M\leq N}$ are type $(\mathrm{C})$ terms in Definition \ref{def:CD} with associated Gaussians $g_{m}^M(t)$ defined via \eqref{def:gnN}.
Let~$\mathfrak{h}$ be a deterministic tensor.
	Then, for every~$r\geq2$, $\frac12<b_1<1$, and~$N_1,N_2,N_3\leq N$,
	\begin{multline}
		\label{eq:momentbd}
		\hspace{-15pt}\Bigg\|
		\langle\kappa\rangle^{-b_{1}}
		\int_{\R^{3}}
		\sum_{\substack{n_{1},n_{2},n_{3}\\ \mathrm{distinct}}}
		\mathfrak{h}(\vec n)\,
		\widehat{\chi}(\widetilde{\kappa}-\Omega(\vec{n}))\,
		\frac{
			\widetilde{g}_{n_{1}}^{N_{1}}(\kappa_{1})\,
			\overline{\widetilde{g}_{n_{2}}^{N_{2}}(\kappa_{2})}\,
			\widetilde{g}_{n_{3}}^{N_{3}}(\kappa_{3})
		}{
			n_{1}^{\alpha}
			n_{2}^{\alpha}
			n_{3}^{\alpha}}
		\mathrm{d}\kappa_1\,\mathrm{d}\kappa_2\,\mathrm{d}\kappa_3
		\Bigg\|_{L_{\omega}^{r}(\Xi_{N};L_{\kappa}^2\ell_{n}^{2})}
		\\
		\leq
		C\,(N_1N_2N_3)^{-\alpha}\,r^{\frac{3}{2}}R^3
		\sup_{\kappa\in\R}
		\big\|
		\mathfrak{h}(\vec n)\,
		\widehat{\chi}(\kappa-\Omega(\vec{n}))
		\big\|_{\ell_{n,n_{1},n_{2},n_{3}}^{2}}.
	\end{multline}
\end{lemma}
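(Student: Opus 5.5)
The plan is to reduce the moment bound to Lemma~\ref{k-linearproba}, applied successively (or jointly) at the different dyadic scales, after first freezing the output variables $(n,\kappa)$ and the integration variables. Fix $\kappa$ and $n$. The quantity inside the norm is a trilinear expression in the time-dependent Gaussians $\widetilde g^{N_j}_{n_j}(\kappa_j)=\widetilde{\mathcal T}^{N_j}_{n_j}(\kappa_j)g_{n_j}(\omega)$, with kernel
\[
f_{n_1n_2n_3}(\kappa_1,\kappa_2,\kappa_3)=\frac{\mathfrak h(\vec n)\,\widehat\chi(\widetilde\kappa-\Omega(\vec n))}{n_1^{\alpha}n_2^{\alpha}n_3^{\alpha}},
\]
where $\widetilde\kappa$ depends on $\kappa,\kappa_1,\kappa_2,\kappa_3$ (namely $\widetilde\kappa=\kappa-\kappa_1+\kappa_2-\kappa_3$ in the twisted variables). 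Since $n_1,n_2,n_3$ are distinct, no pairing occurs.

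\textbf{Step 1: one scale at a time.} I would order the scales as $N_{(1)}\ge N_{(2)}\ge N_{(3)}$ and condition successively. Conditionally on $\mathcal B_{\le N_{(1)}/2}$, the factors at lower scales are measurable (when the scales differ), so Lemma~\ref{k-linearproba} with $k$ equal to the number of inputs at the top scale applies. I then iterate downward, using Minkowski to move $L^r_\omega$ inside $L^2_\kappa\ell^2_n$ (valid for $r\ge2$) and the tower property. When several inputs share a scale, $k\in\{2,3\}$ in a single application. Each application costs $r^{k/2}R^k$ by \eqref{bddef:TypeC}, giving $r^{3/2}R^3$ in total. The restriction to $\Xi_N$ is what guarantees the $R$ bounds on $\mathcal T$.

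\textbf{Step 2: the deterministic norm.} It remains to bound
\[
\Big\|\langle\kappa\rangle^{-b_1}\prod_j\langle\kappa_j\rangle^{-\gamma}\,\mathfrak h(\vec n)\,\widehat\chi(\widetilde\kappa-\Omega(\vec n))\Big\|_{L^2_\kappa\ell^2_n\,\ell^2_{n_1n_2n_3}L^{q'}_{\kappa_1\kappa_2\kappa_3}}.
\]
The weights $n_j^{-\alpha}\sim N_j^{-\alpha}$ come out directly. For each fixed $(\kappa_1,\kappa_2,\kappa_3)$, I take the supremum over the shift, which yields $\sup_{\kappa'}\|\mathfrak h\,\widehat\chi(\kappa'-\Omega)\|_{\ell^2}$. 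Two integrability facts are needed. First, $\langle\kappa\rangle^{-b_1}\in L^2_\kappa$, since $b_1>1/2$. Second, $\langle\kappa_j\rangle^{-\gamma}\in L^{q'}$, since $\gamma q'>1$ by the hierarchy $\gamma-1/q'=\sigma^{10}>0$, i.e.\ $\gamma>1/q'$. I would order the norms as $L^2_\kappa$ outside and Minkowski (when $q'\le2$) to place the $\kappa_j$ integration suitably.

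\textbf{Main obstacle.} The delicate point is the order of norms. Minkowski between $L^{q'}_{\kappa_j}$ and $\ell^2$ requires $q'\le2$, which holds. Combined with the conditional iteration, this needs measurability of the partially integrated kernels, which is straightforward. A secondary subtlety is equal scales: there one must invoke Lemma~\ref{k-linearproba} jointly with distinctness ensuring non-pairing, rather than iterating.
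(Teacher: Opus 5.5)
Your proposal is correct and follows the paper's proof. Both condition top-down on $\mathcal{B}_{\le N_{(j)}/2}$ and apply Lemma~\ref{k-linearproba} at each scale (jointly with $k=2$ or $3$ when scales coincide, with distinctness giving non-pairing), use Minkowski (valid since $q'\le 2\le r$) to reorder the mixed norms, and finish with the supremum over the shift $\widetilde\kappa$ together with $2b_1>1$ and $\gamma q'>1$.
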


\begin{proof}
	Without loss of generality, assume~$N_1\geq N_2\geq N_3$.
	We distinguish the frequency generations of the random variables~$(\widetilde g_{n_j}^{N_j})$ in order to apply Lemma~\ref{k-linearproba} with~$k=1$ iteratively.
	For clarity, we treat the case~$N_1>N_2>N_3$; the remaining cases follow by a similar argument. For instance, when~$N_{1}=N_{2}=N_{3}$, the type~(C) terms come from the same generation and Lemma~\ref{k-linearproba} can be applied directly with~$k=3$.

	\emph{Step~1: conditioning at scale~$N_1/2$.}
	Define the~$\mathcal{B}_{\leq N_1/2}$-measurable coefficient
	\[
	f_{n,n_1}(\kappa,\kappa_1)
	:=
	\int_{\R^2}
	\sum_{\substack{n_2,n_3\\ n_2\neq n_3,n_1\\ 
	n_3\neq n}}
	\mathfrak{h}(\vec n)\,
	\widehat{\chi}(\widetilde{\kappa}-\Omega(\vec{n}))
	\frac{
		\overline{\widetilde g_{n_2}^{N_2}(\kappa_2)}\,
		\widetilde g_{n_3}^{N_3}(\kappa_3)
	}{
		n_1^{\alpha}
		n_2^{\alpha}
		n_3^{\alpha}}
	\,\mathrm{d}\kappa_2\,\mathrm{d}\kappa_3,
	\]
	The quantity we want to bound in~\eqref{eq:momentbd} can be rewritten as
	\[
	\langle\kappa\rangle^{-b_1}
	\int_{\R}\sum_{n_1}
	\widetilde g_{n_1}^{N_1}(\omega,\kappa_1)\,
	f_{n,n_1}(\kappa,\kappa_1)\,\mathrm{d}\kappa_1.
	\]

	Using conditional expectation, Minkowski's inequality (since~$r\geq2$),
	Lemma~\ref{k-linearproba} with~$k=1$, and Hölder's inequality, we obtain
	\begin{multline}
		\label{eq:nrenn}
		\Big\|
		\langle\kappa\rangle^{-b_1}
		\int_{\R}\sum_{n_1}
		\widetilde g_{n_1}^{N_1}(\kappa_1)\,
		f_{n,n_1}(\kappa,\kappa_1)\,\mathrm{d}\kappa_1
		\Big\|_{L_\omega^r(\Xi_N;L_\kappa^2\ell_n^2)}
		\\
		\lesssim
		r^{\frac12}R
		\big\|
		\langle\kappa\rangle^{-b_1}\langle\kappa_1\rangle^{-\gamma}
		f_{n,n_1}(\kappa,\kappa_1)
		\big\|_{L_\omega^r(\Xi_N;L_{\kappa,\kappa_1}^{q'}\ell_{n,n_1}^2)}.
	\end{multline}
	Here we used that on~$\Xi_N$, the variable~$\widetilde g^{N_1}$ is of type~(C)
	at scale~$N_1$ and of size~$R$.

	\emph{Step~2: conditioning at scale~$N_2/2$.}
	For fixed~$(n,n_1,n_{2},\kappa,\kappa_1,\kappa_{2})$, define
	\[
	f_{n,n_1,n_2}(\kappa,\kappa_1,\kappa_2)
	:=
	\int_{\R}
	\sum_{\substack{n_3\\ n_3\neq n_1,n_2}}
	\mathfrak{h}(\vec n)\,
	\widehat{\chi}(\widetilde{\kappa}-\Omega(\vec{n}))
	\frac{\widetilde g_{n_3}^{N_3}(\kappa_3)}
	{n_1^{\alpha}n_2^{\alpha}n_3^{\alpha}}
	\,\mathrm{d}\kappa_3,
	\]
	so that
	\[
	f_{n,n_1}(\kappa,\kappa_1)
	=
	\int_{\R}\sum_{\substack{n_{2}\\n_2\neq n_1}}
	\widetilde g_{n_2}^{N_2}(\kappa_2)\,
	f_{n,n_1,n_2}(\kappa,\kappa_1,\kappa_2)\,\mathrm{d}\kappa_2.
	\]
	Applying Lemma~\ref{k-linearproba} with~$k=1$ again bounds~\eqref{eq:nrenn} by
	\[
	rR^2
	\big\|
	\langle\kappa\rangle^{-b_1}
	\langle\kappa_1\rangle^{-\gamma}
	\langle\kappa_2\rangle^{-\gamma}
	f_{n,n_1,n_2}(\kappa,\kappa_{1},\kappa_{2})
	\big\|_{L_\omega^r(\Xi_N;L_{\kappa,\kappa_1,\kappa_2}^{q'}\ell_{n,n_1,n_2}^2)}.
	\]

	\emph{Step~3: final iteration.}
	Setting
	\[
	f_{n,n_1,n_2,n_3}(\kappa,\kappa_{1},\kappa_{2},\kappa_{3})
	:=
	\mathfrak{h}(\vec n)\,
	\widehat{\chi}(\widetilde{\kappa}-\Omega(\vec{n}))
	\prod_{j=1}^3n_j^{-\alpha},
	\]
	and repeating the argument one last time, we obtain the bound
	\[
	r^{\frac32}R^3
	\big\|
	\langle\kappa\rangle^{-b_1}
	\prod_{j=1}^3\langle\kappa_j\rangle^{-\gamma}
	f_{n,n_1,n_2,n_3}(\kappa,\kappa_{1},\kappa_{2},\kappa_{3})
	\big\|_{L_\kappa^2L_{\kappa_1,\kappa_2,\kappa_3}^{q'}\ell_{n,n_1,n_2,n_3}^2}.
	\]

	Since~$\widetilde{\kappa}=\kappa_1-\kappa_2+\kappa_3-\kappa$ and~$2b_{1}>1$, $q'\gamma>1$, this is bounded by
	\[
	(N_1N_2N_3)^{-\alpha}\,r^{\frac32}R^3
	\sup_{\kappa\in\R}
	\big\|
	\mathfrak{h}(\vec n)\,
	\widehat{\chi}(\kappa-\Omega(\vec{n}))
	\big\|_{\ell_{n,n_1,n_2,n_3}^2},
	\]
	which proves~\eqref{eq:momentbd}.
\end{proof}

\subsection{The cubic fully non-resonant interaction with type~(C) terms}
\label{CCC}

In this section we study the (C)(C)(C) configurations that cannot be handled by purely deterministic arguments. The analysis combines conditional Wiener chaos estimates with the deterministic Strichartz-type bounds established earlier.
\begin{proposition}\label{prop:CCC}
	Let $(v_M^{(\mathrm{C})})_{M\leq N}$ be type (C) terms on a $\mathcal{B}_{\leq N}-$ measurable set $\Xi_N$. Then, $(NR,\delta_{0},\mathcal{B}_{\leq 2N})$-certainly on $\Xi_N$, for all dyadic integers~$N_{1},N_{2},N_{3}$ with $$ \max(N_{1},N_{2},N_{3})=2N,\text{ and } N_{0}\lesssim N,$$
	\[
	\big\|
	\chi\,\mathbf{P}_{N_{0}}
	\mathcal{N}_{[1,2,3]}^{(3)}(v^{(\mathrm{C})}_{N_{1}},
v^{(\mathrm{C})}_{N_{2}},v^{(\mathrm{C})}_{N_{3}})
	\big\|_{X^{0,-b_{1}}}
	\lesssim R^{4}
	\,
	N^{3(1-\alpha)-\alpha+\delta}.
	\]
\end{proposition}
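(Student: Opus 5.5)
We plan to reduce the estimate to the moment bound of Lemma~\ref{momentbd'} followed by a deterministic counting bound. In the twisted Fourier variables, write the $n$-th mode of $\chi\mathcal{N}^{(3)}_{[123]}(v_{N_1},v_{N_2},v_{N_3})$ with $n\sim N_0$. Each $v_{N_j}$ has the form $\sum_{n_j}\mathcal{T}^{N_j}_{n_j}(t)g_{n_j}n_j^{-\alpha}\e^{-it\lambda_{n_j}^2}\mathbf{e}_{n_j}$. Since $\e^{it\lambda_n^2}$ multiplies the output, the twisted Fourier transform of the product becomes a convolution of $\widetilde{g}^{N_j}_{n_j}(\kappa_j)$ against $\widehat{\chi}(\widetilde\kappa-\Omega(\vec n))$, where $\Omega(\vec n)=\lambda_{n_1}^2-\lambda_{n_2}^2+\lambda_{n_3}^2-\lambda_n^2$ (with the appropriate sign convention), $\widetilde\kappa=\kappa_1-\kappa_2+\kappa_3-\kappa$, and the tensor is $\mathfrak{h}(\vec n)=\gamma_{nn_1n_2n_3}\mathbf 1_{n\sim N_0}\mathbf 1_{\{n,n_2\}\cap\{n_1,n_3\}=\emptyset}$.

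The $X^{0,-b_1}$ norm is exactly the $L^2_\kappa\ell^2_n$ norm weighted by $\langle\kappa\rangle^{-b_1}$ that appears in \eqref{eq:momentbd}. Two cases arise:
\begin{itemize}
\item If $n_1,n_2,n_3$ are pairwise distinct, Lemma~\ref{momentbd'} gives an $L^r_\omega$ bound of order $r^{3/2}R^3(N_1N_2N_3)^{-\alpha}\sup_\kappa\|\mathfrak h\,\widehat\chi(\kappa-\Omega)\|_{\ell^2}$.
\item If $n_1=n_3$ (the only pairing allowed, since $n_2\notin\{n_1,n_3\}$), the $g_{n_1}^2$ factor is a non-paired Gaussian chaos element of the same generation. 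We treat it by the same Lemma~\ref{k-linearproba}, with $k=2$ on that generation, yielding an identical bound with the sum over $n_3$ removed. This only improves the counting.
\end{itemize}
We then convert to a high-probability statement by Chebyshev with $r\sim (NR)^{c}$, and take a union bound over the $O((\log N)^4)$ dyadic choices of $(N_0,N_1,N_2,N_3)$. This produces the $(NR,\delta_0,\mathcal B_{\le 2N})$-certain bound at the cost of a factor $N^{\delta}R$.

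The remaining deterministic step is to bound
\[
\sup_\kappa\sum_{n,n_1,n_2,n_3}|\gamma_{nn_1n_2n_3}|^2|\widehat\chi(\kappa-\Omega(\vec n))|^2 .
\]
We use $|\gamma|\lesssim \min(n,n_1,n_2,n_3)$ from Lemma~\ref{lem:gamma}, which is at most $N_{(3)}$ in terms of the sizes of the four indices. The count is controlled by fixing two of the indices, chosen as the two lowest scales, and applying Lemma~\ref{lem:db} to the remaining pair. This restricts the remaining pair to $\lesssim N^{2(1-\alpha)+\varepsilon}$ choices, the thickened level set of $\lambda^2\pm\lambda^2$ on unit intervals. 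The resulting square-summed quantity is
\[
\lesssim N_{(3)}^2\,N_{(3)}N_{(4)}\,N^{2(1-\alpha)+\varepsilon}.
\]
Its square root, combined with $(N_1N_2N_3)^{-\alpha}$ where $\max N_j=2N$, yields $N^{-\alpha}$ times factors in $N_{(3)},N_{(4)}$. Those factors are compensated by $N_{(2)}^{-\alpha}N_{(3)}^{-\alpha}$ as long as $\alpha\ge 3/4$, possibly after using the alternative bound $N_{(3)}\le N$. The slack then allows up to $N^{3(1-\alpha)}$ growth, which covers the target bound $N^{3(1-\alpha)-\alpha+\delta}$. Because the target loses $N^{3(1-\alpha)}$, a crude counting that loses $N^{2(1-\alpha)}$ from the divisor bound, together with a $\gamma$-loss bounded by $N^{1-\alpha}$-type slack, should suffice.

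The main obstacle is this bookkeeping across the frequency configurations, in particular where $N_0$ is small compared with $N_{(2)}$ and $\gamma$ is not small. There we would use the bound $\gamma\lesssim\min$ together with the section counts of Lemma~\ref{improvedcounting}, fixing $n$ and summing over $n_3$, to prevent a loss of $N_{(2)}$ powers.
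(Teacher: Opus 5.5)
Your overall route is the paper's. You apply Lemma~\ref{momentbd'} to reduce to $(N_1N_2N_3)^{-\alpha}\sup_\kappa\|\gamma\,\mathfrak h\,\widehat\chi(\kappa-\Omega)\|_{\ell^2}$, and you pull $\gamma$ out by its supremum. You then count by fixing the two lowest of the four indices and applying Lemma~\ref{lem:db} to the top pair, which is exactly the $\ell^\infty$ form of \eqref{Strichartztype1} the paper invokes. Your separate treatment of the diagonal $n_1=n_3$ is, if anything, more careful than the paper.

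The gap is in the final numerology, which you defer as ``bookkeeping''; as written it does not close. You bound $|\gamma|$ by $N_{(3)}$, the second-smallest of the four scales $N_0,\dots,N_3$, and then claim compensation by $N_{(2)}^{-\alpha}N_{(3)}^{-\alpha}$. But the $-\alpha$ weights sit only on the three \emph{input} scales; the output scale $N_0$ carries none. Take $N_1=2N$, $N_0\sim N_2\sim N$, $N_3\sim 1$. Your square-summed quantity is then $\lesssim N^2\cdot N\cdot N^{2(1-\alpha)}$. After the square root and the weight $(N_1N_2N_3)^{-\alpha}\sim N^{-2\alpha}$ you get $N^{5/2-3\alpha}$, which exceeds the target $N^{3-4\alpha}$ by $N^{\alpha-1/2}$. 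The obstacle you name is also backwards. When $N_0$ is small, $\gamma\lesssim N_0$ is small; the configuration that breaks your bound is $N_0$ large with a small input.

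The fix needs neither Lemma~\ref{improvedcounting} nor $\alpha\ge 3/4$. Order the inputs as $N_3\le N_2\le N_1=2N$. Use $|\gamma|\lesssim\min(N_0,N_3)\le N_3$, the minimum over all four indices. Note that the product of the two smallest of the four scales is at most $N_2N_3$. Then
\[
(N_1N_2N_3)^{-\alpha}\,N_3\,N^{1-\alpha+\varepsilon}(N_2N_3)^{1/2}
= N^{1-2\alpha+\varepsilon}N_2^{\frac12-\alpha}N_3^{\frac32-\alpha}
\le N^{1-2\alpha+\varepsilon}N_2^{2(1-\alpha)}\le N^{3-4\alpha+\varepsilon}.
\]
This is precisely the paper's computation. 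The $N^{\varepsilon}$ and the extra factor from the large-deviation step are absorbed into $R^4N^{\delta}$.
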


\begin{remark}\label{rmk:numerology}
	According to the constraints on~$s$ in Definition~\ref{hierarchy}, this is possible provided that
	\[
	4\alpha - 3> \frac{9}{2}-4\alpha +2^{100}\sigma + 3\delta\,,
	\]
	namely when~$\alpha>15/16$ and~$\sigma$ is sufficiently small. Combined with the interpolation argument in Section~\ref{sec:mainproof}, this completes the proof of~\eqref{eq:tri-cubic} for the (C)(C)(C) case in Proposition~\ref{prop:trilinear}.
\end{remark}

\begin{proof}
	Up to an interpolation argument presented in Section~\ref{sec:mainproof} with a crude bound on the~$X^{0,0}$ norm, we may assume that~$b_{1}>\frac{1}{2}$. From the moment bounds in the previous lemma, we deduce that $(RN,\delta_{0},\mathcal{B}_{\leq 2N})$-certainly,
	\begin{multline*}
		\big\|
		\chi\,\mathbf{P}_{N_{0}}
		\mathcal{N}_{[1,2,3]}^{(3)}(v^{(\mathrm{C})}_{N_{1}},
	v^{(\mathrm{C})}_{N_{2}},v^{(\mathrm{C})}_{N_{3}})\big\|_{X^{0,-b_{1}}}
		\lesssim R^{4}N^{\delta/2}
		\cdot
		(N_{1}N_{2}N_{3})^{-\alpha}\\
		\sup_{\kappa\in\R}
		\|\gamma_{n_{0}n_{1}n_{2}n_{3}}\mathfrak{h}_{\vec{N}}(\vec{n})
		\widehat{\chi}(\kappa-\Omega(\vec{n}))\|_{\ell_{n_{0},n_{1},n_{2},n_{3}}^{2}}\,,
	\end{multline*}
	where
	\[
	\mathfrak{h}_{\vec{N}}(\vec{n})
	:=\mathbf{1}_{\{n_{1},n_{3}\}\cap\{n_{0},n_{2}\}=\varnothing}\prod_{i=0}^{3}
	\mathbf{1}_{n_{i}\sim N_{i}}\,.
	\]
	Without loss of generality, assume~$N_{3}\leq N_{2}\leq N_{1}=2N$ and~$N_{0}\lesssim N$. The Strichartz-type bound~\eqref{Strichartztype1} gives
	\begin{multline*}
		\sup_{\kappa\in\R}
		\|
		\mathfrak{h}_{\vec{N}}(\vec{n})
		\widehat{\chi}(\kappa-\Omega(\vec{n}))
		\|_{\ell_{n_{0},n_{1},n_{2},n_{3}}^{2}}
		\lesssim
		\sup_{\kappa}
		\Big(\sum_{\vec{n}}|\mathfrak{h}_{\vec{N}}(\vec{n})
		\widehat{\chi}(\kappa-\Omega(\vec{n}))|
		\Big)^{\frac{1}{2}}
		\\
		\lesssim_{\varepsilon} N_{1}^{1-\alpha+\varepsilon}(N_{2}N_{3})^{\frac{1}{2}}\,.
	\end{multline*}
	Together with the bound~\eqref{eq:gamma'} on the correlation function, we control the above by
	\[
	R^{4}N^{\delta/2}
	\underbrace{(N_{1}N_{2}N_{3})^{-\alpha}}_{\text{weight}}
	\underbrace{N_{3}}_{\text{correlation factor}}
	\,\underbrace{
		N_{1}^{1-\alpha+\varepsilon}(N_{2}N_{3})^{\frac{1}{2}}
	}_{\text{Strichartz-type bound}}\,,
	\]
	which is bounded by (controlling $N_1^{\epsilon}$ by $N^{\delta/2}$ )
	\[
	R^{4}
	N_{1}^{1-2\alpha+\delta}N_{2}^{\frac{1}{2}-\alpha}N_{3}^{\frac{3}{2}-\alpha}
	\lesssim
	R^{4}
	N_{1}^{1-2\alpha+\delta}N_{2}^{2(1-\alpha)}
	\lesssim
	R^{4}
	N_{1}^{3-4\alpha+\delta}\,.
	\]
	This completes the proof.
\end{proof}

\subsection{Random operator estimates}
In this section we prove~\eqref{eq:tri-cubic} when there is exactly one type~(D) input, a configuration not amenable to the deterministic estimates developed earlier.
We restrict to the case~$\ast_{1}= \mathrm{D}$ and~$\ast_{2}=\ast_{3}=\mathrm{C}$.
For the interaction~$\mathcal{N}_{[1,2,3]}^{(3)}$, the position of the inputs plays no role in our analysis, and the remaining configurations are handled in exactly the same way.

We first establish tensor estimates that follow from the deterministic counting estimates in Section~\ref{sec:detertool}.

\begin{definition}[Tensor and tensor norms] \label{def:tens}
	Let~$\vec{\kappa}=(\kappa_{0},\kappa_{1},\kappa_{2},\kappa_{3})$,
	$\vec{n}=(n_{0},n_{1},n_{2},n_{3})$, and
	$\vec{N}=(N_{0},N_{1},N_{2},N_{3})$.
	We define the tensor
	\[
    \hspace{-15pt}\mathcal{G}_{\vec N}(\vec{n},\vec{\kappa})
	:=\mathfrak{h}_{\vec{N}}(\vec n)\,
	\widehat{\chi}\big(\widetilde{\kappa}-\Omega(\vec n)\big),
	\qquad
	\mathfrak{h}_{\vec{N}}(\vec n)
	:=\mathbf{1}_{\{n_{1},n_{3}\}\cap\{n_{0},n_{2}\}=\varnothing}
	\prod_{i=0}^{3}\mathbf{1}_{n_{i}\sim N_{i}},
	\]
	and its merged tensor norm
	\begin{multline*}
	\|\mathcal{G}_{\vec{N}}\|_{\mathsf{Ten}(n_{1}\to n_{0})}
\\
    :=\max\Big\{
	\|\mathcal{G}_{\vec{N}}\|_{n_{1},n_{2},n_{3}\to n_{0}},\
	\|\mathcal{G}_{\vec{N}}\|_{n_{1},n_{2}\to n_{0},n_{3}},\
	\|\mathcal{G}_{\vec{N}}\|_{n_{1},n_{3}\to n_{0},n_{2}},\
	\|\mathcal{G}_{\vec{N}}\|_{n_{1}\to n_{0},n_{2},n_{3}}
	\Big\},
	\end{multline*}
where the norms inside the max are operator norms on $\ell^2$-type spaces.
The other merged tensor norms $\|\mathcal{G}_{\vec{N}}\|_{\mathsf{Ten}(n_j\to n_0) }$ is defined in an analogue way. 
\end{definition}


	\begin{lemma}[Tensor estimates]\label{lem:tens}
		Let~$\vec N=(N_0,N_1,N_2,N_3)$ and let
		\[
		N_{\max}\ge N_{\mathrm{med}}\ge N_{\min}
		\]
		denote the non-increasing rearrangement of~$N_1,N_2,N_3$.
		Assume that either~$N_0\sim N_{\max}$, or that~$N_0\lesssim N_{\max}$ and~$N_{\mathrm{med}}\sim N_{\max}$.
		
		Then, for any~$j\in\{1,2,3\}$ and every~$\varepsilon>0$,
		\[
		\sup_{\vec\kappa}
		\|\mathcal G_{\vec N}(\,\cdot\,,\vec\kappa)\|_{\mathsf{Ten}(n_{j}\to n_{0})}
		\;\lesssim_{\varepsilon}\;
		\begin{cases}
			N_{\max}^{\,1-\alpha+\varepsilon}\,N_{\mathrm{med}}^{1/2},
			& \text{if } N_{\mathrm{med}}\gtrsim N_{\max}^{1/2},\\[0.2cm]
			N_{\mathrm{max}}^{\,1-\alpha+\varepsilon},
			& \text{if } N_{\mathrm{med}}\ll N_{\max}^{1/2},\\[0.2cm]
			N_{\mathrm{med}}^{2(1-\alpha)+\epsilon}, &\text{if } N_{\mathrm{med}}\ll N_{\max}^{1/2},\; N_j=N_{\max}.
		\end{cases}
		\]
	\end{lemma}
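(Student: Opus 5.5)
The plan is to reduce every operator norm in $\mathsf{Ten}(n_j\to n_0)$ to a counting problem, and then apply Lemma~\ref{lem:db} and Lemma~\ref{improvedcounting}.

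\emph{Reduction to counting.} Since $\widehat{\chi}$ is Schwartz, I first dominate $|\widehat\chi(\widetilde\kappa-\Omega(\vec n))|$ by $\sum_{\ell\in\Z}\langle \ell\rangle^{-10}\mathbf{1}_{|\Omega(\vec n)-\widetilde\kappa-\ell|\le 1}$. This reduces the problem to the tensor $\mathbf{1}_{\Gamma_{\vec N,\kappa'}}$ with $\kappa'$ arbitrary, uniformly in $\vec\kappa$. For a $0/1$ tensor and a split $A\to B$ of the indices $\{n_0,n_1,n_2,n_3\}$, Schur's test gives
\[
\|\cdot\|_{A\to B}\le \Big(\sup_{a}\#\{b\}\cdot\sup_{b}\#\{a\}\Big)^{1/2}.
\]
Here the first supremum counts the sections with the $A$-indices fixed, and the second counts those with the $B$-indices fixed. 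For the split $n_1,n_2,n_3\to n_0$ I will instead use the trivial Hilbert--Schmidt bound $\sup_{n_0}\#\Gamma(n_0)^{1/2}$, which is better than Schur here. The four splits then become counting problems, each with one or two indices fixed, or with three indices fixed (which leaves $O(1)$ choices for the remaining index).

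\emph{Case $N_{\mathrm{med}}\gtrsim N_{\max}^{1/2}$.} Each split has at least one side containing two indices, or the $n_0$ side alone. For the splits with two fixed indices on one side, I apply Lemma~\ref{lem:db} to the remaining pair. This bounds the section count by $N_{\max}^{2(1-\alpha)+\varepsilon}$, or by $\min(N_{\mathrm{small}},\cdot)$. The opposite side fixes two indices as well, or three, which gives $O(1)$. Taking the square root of the product yields $N_{\max}^{1-\alpha+\varepsilon}$ in the balanced splits. The single-index splits ($n_1,n_2,n_3\to n_0$ and $n_j\to n_0,\dots$) require $\#\Gamma(n_0)$ or $\#\Gamma(n_j)$. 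For these I use the second bound of Lemma~\ref{improvedcounting}, $M_1^{2(1-\alpha)}M_3 N^\varepsilon$, or alternatively the first bound $1+M_2^2/M_1$. Under the hypothesis $N_0\sim N_{\max}$ or $N_{\mathrm{med}}\sim N_{\max}$, this gives at most $N_{\max}^{2(1-\alpha)+\varepsilon}N_{\mathrm{med}}$, and the square root is the claimed $N_{\max}^{1-\alpha+\varepsilon}N_{\mathrm{med}}^{1/2}$.

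\emph{Case $N_{\mathrm{med}}\ll N_{\max}^{1/2}$.} The hypothesis then forces $N_0\sim N_{\max}$, so the two large indices are $n_0$ and $n_{\max}$. The second part of Lemma~\ref{lem:db} now says that once the small indices are fixed, the large pair is determined up to $O(1)$. Conversely, once one large index is fixed, the other is confined to an interval of length $1+N_{\mathrm{med}}^2/N_{\max}\lesssim1$. This is the first bound in Lemma~\ref{improvedcounting}. So every section count in which a large index is fixed on one side, together with the small ones appropriately, is $O(N^\varepsilon)$. The loss $N_{\max}^{1-\alpha}$ comes only from splits where both large indices sit on the same side. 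In that situation I use Lemma~\ref{lem:db} for $E_\pm$ with a perturbation of size $N_{\max}^{2(1-\alpha)}$, giving a count of $N_{\max}^{2(1-\alpha)}$ and hence $N_{\max}^{1-\alpha+\varepsilon}$ after the square root.

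\emph{Refinement when $N_j=N_{\max}$.} In $\mathsf{Ten}(n_j\to n_0)$ with $N_j=N_{\max}$, the two large indices $n_j$ and $n_0$ always lie on opposite sides of every split. Hence the large pair is never counted together. The only losses come from counting the small pairs through Lemma~\ref{lem:db} at scale $N_{\mathrm{med}}$, which gives $N_{\mathrm{med}}^{2(1-\alpha)+\varepsilon}$ per section, and I take the Schur product of two such counts.

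The main obstacle I anticipate is bookkeeping which large indices are separated in each of the four splits. A second concern is that the perturbation $\mu_n^2\sim n^{2(1-\alpha)}$ could break the "$O(1)$ once separated" claim. I expect to control this through the $n_1\ne n_1^*$ gap argument in the proof of Lemma~\ref{lem:db}.
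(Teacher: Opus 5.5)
Your first and third cases go through. In the first, the balanced splits actually give $N_{\max}^{2(1-\alpha)+\varepsilon}$ rather than $N_{\max}^{1-\alpha+\varepsilon}$, because both Schur factors are two-index counts from Lemma~\ref{lem:db}; this is harmless, since it is still $\lesssim N_{\max}^{1-\alpha+\varepsilon}N_{\mathrm{med}}^{1/2}$. The genuine gap is in the second case, when $n_j$ is one of the two small indices; call the other small index $n_k$. In the split $n_j\to(n_0,n_{\max},n_k)$, the Schur factor with $n_j$ fixed is $\#\Gamma_{\vec N,\kappa}(n_j)$, and $n_k$ is still free in it. You count only the large pair ($N_{\max}^{2(1-\alpha)}$ from Lemma~\ref{lem:db}) and drop the sum over $n_k$. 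Here Lemma~\ref{improvedcounting} gives only $N_{\max}^{2(1-\alpha)+\varepsilon}N_k$; its first bound is $\sim N_{\max}$, because two of the remaining indices sit at the top scale.

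This loss is real and cannot be repaired. The values $\lambda_{n_0}^2\pm\lambda_{n_{\max}}^2$ (sign dictated by $\Omega$) are about $N_{\max}^2$ points in an interval of length $O(N_{\max}^2)$. The shifts $\lambda_{n_k}^2$ range over an interval of length $O(N_k^2)\ll N_{\max}^2$. Averaging over $\widetilde\kappa$ therefore produces some $\widetilde\kappa$ with $\#\Gamma_{\vec N,\kappa}(n_j)\gtrsim N_k$. Apply the operator to the unit vector $\delta_{n_j}$ and use that $|\widehat\chi|$ is bounded below near the origin: this gives $\sup_{\vec\kappa}\|\mathcal G_{\vec N}\|_{n_j\to n_0,n_{\max},n_k}\gtrsim N_k^{1/2}$. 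Take for instance $N_0\sim N_{\max}$ and $N_j\le N_k=N_{\max}^{2/5}$, which is admissible since $\alpha>15/16$. Then $N_k^{1/2}$ exceeds $N_{\max}^{1-\alpha+\varepsilon}$. So no bookkeeping can prove the second case as stated when $N_j\neq N_{\max}$. What the counting actually gives there is $N_{\max}^{1-\alpha+\varepsilon}N_k^{1/2}\le N_{\max}^{1-\alpha+\varepsilon}N_{\mathrm{med}}^{1/2}$, that is, the first-case bound.

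There are three smaller slips in the same part of your argument. First, the second part of Lemma~\ref{lem:db} concerns a pair made of one large and one small index. It does not apply to the pair $(n_0,n_{\max})$, which only gets $N_{\max}^{2(1-\alpha)+\varepsilon}$ from the first part. Second, in the split $(n_j,n_k)\to(n_0,n_{\max})$, the Schur factor with $(n_0,n_{\max})$ fixed counts the small pair. It is therefore $\min(N_{\min},N_{\mathrm{med}}^{2(1-\alpha)+\varepsilon})$ rather than $O(1)$; this is absorbed by the corrected bound. Third, your bound for $n_1,n_2,n_3\to n_0$ is not a Hilbert--Schmidt bound; it is Schur's test with an $O(1)$ count in $n_0$.

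Apart from this, your route is the paper's: Schur's test on the four splits, Lemma~\ref{improvedcounting} for one-index sections, and Lemma~\ref{lem:db} for two-index sections. The paper's Step~2 in fact keeps the factor you dropped, obtaining $\max(N_0,N_2)^{1-\alpha+\varepsilon}\min(N_0,N_3)^{1/2}$. It then asserts that this coincides with \eqref{eq:tensor-basic}. In the paper's normalization $j=1$, $N_2\ge N_3$, that claim fails when $N_2=N_{\max}$ and $N_{\mathrm{med}}\ll N_{\max}^{1/2}$. So the paper's proof has the same hole. The second case of the statement should either be restricted to $N_j=N_{\max}$, where it follows from the third case, or weakened to $N_{\max}^{1-\alpha+\varepsilon}N_{\mathrm{med}}^{1/2}$. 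As far as I can check, the weaker bound still closes Case~2 of Corollary~\ref{cor:randomop}.
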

	
	\begin{proof}
		Without loss of generality, we prove the case~$j=1$ and assume~$N_2\ge N_3$.
		We estimate the tensor norm by controlling all operator norms
		appearing in~$\|\cdot\|_{\mathsf{Ten}(n_{1}\to n_{0})}$.
		
		\emph{Step~1: the norm~$\|\cdot\|_{n_1,n_2,n_3\to n_0}$.}
		Fix~$\vec\kappa$. By Schur's test,
		\[
		\|\mathcal G_{\vec N}(\cdot,\vec\kappa)\|_{n_1,n_2,n_3\to n_0}
		\lesssim
		\Big(\sup_{n_1,n_2,n_3}\sum_{n_0}|\mathcal G_{\vec N}|\Big)^{1/2}
		\Big(\sup_{n_0}\sum_{n_1,n_2,n_3}|\mathcal G_{\vec N}|\Big)^{1/2}.
		\]
		The first factor is uniformly bounded.
		For the second, we write
		\[
		\sup_{n_0}\sum_{n_1,n_2,n_3}|\mathcal G_{\vec N}|
		\lesssim
		\sup_{n_0}\sum_{m\ge1}\sup_{|\nu|\le1}
		\widehat\chi(m+\nu)\,
		\#\Gamma_{\vec N,\widetilde\kappa+m}(n_0).
		\]
		Applying Lemma~\ref{improvedcounting}, we obtain
		\[
		\sup_{n_0}\sum_{n_1,n_2,n_3}|\mathcal G_{\vec N}|
		\lesssim_{\varepsilon}
		\begin{cases}
			N_{\max}^{\,2(1-\alpha)+\varepsilon}\,N_{\min},
			&\quad \text{if}\ N_{\mathrm{med}}\gtrsim N_{\max}^{1/2},\\
			N_{\mathrm{med}}^{\varepsilon},
			&\quad \text{if}\ N_{\mathrm{med}}\ll N_{\max}^{1/2}.
		\end{cases}
		\]
		Combining the two factors yields
		\begin{equation}
			\label{eq:tensor-basic}
			\sup_{\vec\kappa}
			\|\mathcal G_{\vec N}(\cdot,\vec\kappa)\|_{n_1,n_2,n_3\to n_0}
			\lesssim_{\varepsilon}
			\begin{cases}
				N_{\max}^{\,1-\alpha+\varepsilon}\,N_{\mathrm{med}}^{1/2},
				&\quad \text{if}\  N_{\mathrm{med}}\gtrsim N_{\max}^{1/2},\\
				N_{\mathrm{med}}^{\varepsilon},
				&\quad \text{if}\  N_{\mathrm{med}}\ll N_{\max}^{1/2}.
			\end{cases}
		\end{equation}
		
		\emph{Step~2: the norm~$\|\cdot\|_{n_1\to n_0,n_2,n_3}$.}
		We distinguish according to the size of~$N_2$ relative to~$N_0^{1/2}$.
		The same counting argument yields
		\[
		\sup_{\vec\kappa}
		\|\mathcal G_{\vec N}(\cdot,\vec\kappa)\|_{n_1\to n_0,n_2,n_3}
		\lesssim_{\varepsilon}
		\begin{cases}
			\max(N_0,N_2)^{\,1-\alpha+\varepsilon}\min(N_0,N_3)^{1/2},
			& N_2\gtrsim N_0^{1/2},\\
			N_2^{\varepsilon}\lesssim N_{\mathrm{med}}^{\varepsilon},
			& N_2\ll N_0^{1/2}.
		\end{cases}
		\]
		If~$N_0\sim N_{\max}$, this coincides with~\eqref{eq:tensor-basic}.
		Otherwise, $N_0\ll N_{\max}$ and~$N_2\sim N_{\max}$ under our assumptions, so similar bound follows, replacing $N_{\min}^{1/2}$ by $N_{\mathrm{med}}^{1/2}$ (in the case where $N_1=N_{\min}$).
		
		\emph{Step~3: remaining tensor norms.}
		We now estimate
		\[
		\|\cdot\|_{n_1,n_2\to n_0,n_3}
		\quad\text{and}\quad
		\|\cdot\|_{n_1,n_3\to n_0,n_2}.
		\]
		Using the same argument together with Lemma~\ref{lem:db}, we obtain:
		\begin{itemize}
			\item If~$N_{\mathrm{med}}\gtrsim N_{\max}^{1/2}$,
			\[
			\sup_{\vec\kappa}
			\big(
			\|\mathcal G_{\vec N}\|_{n_1,n_2\to n_0,n_3}
			+
			\|\mathcal G_{\vec N}\|_{n_1,n_3\to n_0,n_2}
			\big)
			\lesssim_{\varepsilon}
			N_{\max}^{2(1-\alpha)+\varepsilon}\lesssim N_{\max}^{1-\alpha+\epsilon}N_{\mathrm{med}}^{1/2}.
			\]
			\item If~$N_{\mathrm{med}} \ll N_{\max}^{1/2}$ and~$N_{\mathrm{med}}\ll N_0^{1/2}$, both norms are~$\mathcal O(1)$.
			\item If~$N_{\mathrm{med}}\ll N_{\max}^{1/2}$ and~$N_{\mathrm{med}}\gtrsim N_0^{1/2}$,
			\[
		\sup_{\vec\kappa}
			\big(
			\|\mathcal G_{\vec N}\|_{n_1,n_2\to n_0,n_3}
			+
			\|\mathcal G_{\vec N}\|_{n_1,n_3\to n_0,n_2}
			\big)
			\lesssim_{\varepsilon}
			\max(N_0,N_{2})^{\,1-\alpha+\varepsilon/2}
			\lesssim
			N_{\mathrm{max}}^{\,1-\alpha+\varepsilon/2}.
			\]
			Moreover, when $N_1=N_{\max}$, we have $N_2\leq N_{\mathrm{med}}$, and $N_0\leq N_{\mathrm{med}}^2$. Hence \[\max(N_0,N_2)^{1-\alpha+\varepsilon/2}\lesssim N_{\mathrm{med}}^{2(1-\alpha)+\epsilon}.
	      \]
		\end{itemize}
		
		Collecting all cases proves the lemma.
	\end{proof}
	

Equipped with the tensor estimates, we establish the main random operator bound in Fourier-restriction spaces.
	
	\begin{proposition}\label{prop:random-opdyadic}
	Assume that~$b,b_1>1/2$. Let~$N_0,N_1,N_2,N_3$ be dyadic integers, let~$N_{\mathrm{max}}\geq N_{\mathrm{med}}\geq N_{\mathrm{min}}$ be the non-increasing rearrangement of~$N_1,N_2,N_3$, and set~$M:=\max(N_2,N_3)$. Assume that either~$N_0\lesssim N_{\mathrm{max}}$ or~$N_{\mathrm{med}}\sim N_{\mathrm{max}}$. Let $v_{N_2}^{(\mathrm{C})},v_{N_3}^{(\mathrm{C})}$ are type (C) terms according to Definition \ref{def:CD}. Then $(MR,\delta_0;\mathcal{B}_{\leq M})$-certainly,
	\[
	\|\chi(t)\mathbf{P}_{N_0}\mathcal{N}_{[1,2,3]}^{(3)}(\mathbf{P}_{N_1}\cdot,v_{N_2}^{(\mathrm{C})},v_{N_3}^{\mathrm{(C)}}) \|_{X^{0,b}\to X^{0,-b_1}}\lesssim R^2\Lambda_{\vec{N}},
	\]
	where
	\[
	\Lambda_{\vec{N}}=(N_2N_3)^{-\alpha}\max(N_2,N_3)^{\delta_0}\min(N_{\mathrm{min}},N_0)\log(N_{\mathrm{max}})\sup_{\vec{\kappa}}\|\mathcal{G}_{\vec{N}}(\vec{\kappa})\|_{\mathsf{Ten}(n_{1}\to n_{0})},
	\]
	and the tensor bound~$\|\mathcal{G}_{\vec{N}}(\vec{\kappa})\|_{\mathsf{Ten}(n_{1}\to n_{0})}$ is given in Lemma~\ref{lem:tens}.
\end{proposition}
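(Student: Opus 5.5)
We reduce the operator bound to a random tensor (matrix) estimate in the spirit of \cite{DNY24,BCST24}, conditioning successively on the low-frequency $\sigma$-algebras as in the proof of Lemma~\ref{momentbd'}.

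\emph{Step 1: twisted Fourier reduction.} Write an input $f\in X^{0,b}$ as $f_{n_1}(t)=\int \widetilde f_{n_1}(\kappa_1)\e^{it(\kappa_1-\lambda_{n_1}^2)}\,\dd\kappa_1$, and the type~(C) inputs as in Definition~\ref{def:CD}, with $\widetilde g_{n_j}^{N_j}(\kappa_j)=\widetilde{\mathcal T}_{n_j}^{N_j}(\kappa_j)g_{n_j}$. After multiplying by $\chi(t)$ and taking the twisted Fourier transform in time, the output mode $n_0$ at modulation $\kappa$ is
\[
\int \sum_{n_1,n_2,n_3}\gamma_{n_0n_1n_2n_3}\mathcal G_{\vec N}(\vec n,\vec\kappa)\,\widetilde f_{n_1}(\kappa_1)\frac{\overline{\widetilde g^{N_2}_{n_2}(\kappa_2)}\,\widetilde g^{N_3}_{n_3}(\kappa_3)}{n_2^\alpha n_3^\alpha}\,\dd\kappa_1\dd\kappa_2\dd\kappa_3 ,
\]
with $\widetilde\kappa=\kappa_1-\kappa_2+\kappa_3-\kappa$. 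Since $b,b_1>1/2$, the weights $\la\kappa\ra^{-b_1}$ and $\la\kappa_1\ra^{-b}$ are $L^2$-integrable. Thus, by Cauchy--Schwarz in $(\kappa,\kappa_1)$, it suffices to bound the $\ell^2_{n_1}\to\ell^2_{n_0}$ norm of the random matrix
\[
\mathcal K_{n_0n_1}(\vec\kappa)=\sum_{n_2,n_3}\gamma_{n_0n_1n_2n_3}\mathcal G_{\vec N}\frac{\overline{\widetilde g_{n_2}}\widetilde g_{n_3}}{n_2^\alpha n_3^\alpha},
\]
uniformly in $(\kappa,\kappa_1)$ and integrated against $\la\kappa_2\ra^{-\gamma}\la\kappa_3\ra^{-\gamma}$ in $L^{q'}$. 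This use of Hölder is where the $\widetilde{\mathcal F}L^\gamma_q$ bound~\eqref{bddef:TypeC} enters. The correlation coefficient is removed via~\eqref{eq:gamma'}, $|\gamma|\lesssim\min(N_{\min},N_0)$. Since $\gamma$ is not factorized, I would instead treat $\gamma\cdot\mathfrak h$ as the tensor: its tensor norms are bounded by $\min(N_{\min},N_0)$ times the counting bounds of Lemma~\ref{lem:tens}, because every step there (Schur's test and counting) only uses $\ell^\infty$ control of the entries.

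\emph{Step 2: random matrix bound.} Conditionally on $\mathcal B_{\le M/2}$, the expression is a chaos of order at most two in the fresh Gaussians. When $N_2\ne N_3$, we condition first on the larger scale and use the non-commutative Khintchine (Gaussian matrix) bound for a single Gaussian, then repeat for the other one. When $N_2=N_3$, we apply the bilinear chaos version directly; the non-resonance condition $n_2\ne n_3$ built into $\mathfrak h$ guarantees there are no pairings. The standard result (cf.~\cite[Prop.~4.14]{DNY22} and the moment Lemma~\ref{k-linearproba}) bounds the $L^r_\omega$ norm of $\|\mathcal K\|_{n_1\to n_0}$ by $r\,R^2(N_2N_3)^{-\alpha}$ times the maximum over all partitions of the tensor operator norms. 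These are exactly the four norms defining $\mathsf{Ten}(n_1\to n_0)$. The loss $\log N_{\max}$ and $M^{\delta_0}$ come from two sources: the $\log$ dimensional factor in the matrix Khintchine inequality, and a union bound over a $N_{\max}^{O(1)}$-net of $\vec\kappa$, using the Lipschitz dependence of $\widehat\chi$. The tail of $\kappa$ outside $|\kappa|\lesssim N_{\max}^{C}$ is handled crudely by the Schwartz decay of $\widehat\chi$.

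\emph{Step 3: deviation.} Chebyshev's inequality with $r\sim (MR)^{\delta_0}$ converts the moment bound into a $(MR,\delta_0;\mathcal B_{\le M})$-certain statement. The main obstacle is Step~2: making the matrix Khintchine bound uniform in the continuous variables $\kappa_2,\kappa_3$, since the $\widetilde g$ are no longer Gaussian in $\kappa$. I would handle this by first pulling the $\kappa_2,\kappa_3$ integrals outside via Minkowski, exactly as in Lemma~\ref{momentbd'}. This leaves Gaussians $g_{n_j}$ times $\mathcal B_{\le N_j/2}$-measurable coefficients, to which the Khintchine bound applies for each fixed $(\kappa_2,\kappa_3)$.
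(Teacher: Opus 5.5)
Your skeleton matches the paper's: duality, reduction to the random matrix norm $\|\cdot\|_{n_1\to n_0}$, conditional non-commutative Khintchine at the larger scale first and then at the smaller one, decoupling or absence of pairings when $N_2=N_3$, and large deviations. (Your Schur-type treatment of $\gamma_{n_0n_1n_2n_3}$ is fine, and more careful than the paper's.) The gap is in the step the paper's proof singles out: the successive removal of the random averaging operators between the two Khintchine steps.

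Take $N_2>N_3$. After the Khintchine step in $(g_{n_2})$ conditionally on $\mathcal B_{\le N_2/2}$, the resulting tensor norms still contain $\widetilde{\mathcal T}^{N_2}_{n_2}(\kappa_2)$. This is only $\mathcal B_{\le N_2/2}$-measurable, and in general depends on $(g_{n_3})_{n_3\sim N_3}$.
\begin{itemize}
\item Your claim that one is left with Gaussians $g_{n_j}$ times $\mathcal B_{\le N_j/2}$-measurable coefficients therefore fails for $j=3$. The second Khintchine step cannot be applied until $\mathcal T^{N_2}$ has been eliminated.
\item This cannot be done by H\"older at the level of $\mathcal K(\vec\kappa)$, as your Step~1 suggests. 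The factor $\widetilde{\mathcal T}^{N_j}_{n_j}(\kappa_j)$ multiplies $g_{n_j}$ inside the contracted sum over $n_j$, so stripping it off before Khintchine destroys the cancellation being exploited.
\item Your alternative, pulling the $\kappa_2,\kappa_3$ integrals outside first, is not what Lemma~\ref{momentbd'} does. There the integral against each $\widetilde{\mathcal T}^{N_j}$ stays inside the coefficient fed to Lemma~\ref{k-linearproba}, and Minkowski is used only in the remaining free variables.
\item That alternative also costs a power. At fixed $\kappa_2$, the factor $\widetilde{\mathcal T}^{N_2}_{n_2}(\kappa_2)$ is a diagonal multiplier on a free index, so it can only be removed through $\sup_{n_2}|\widetilde{\mathcal T}^{N_2}_{n_2}(\kappa_2)|$. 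After a H\"older inequality in $\omega$ and the $\kappa_2$-integration, you need the $L^q_\kappa\ell^\infty_n$ norm $\|\la\kappa\ra^{\gamma}\sup_{n}|\widetilde{\mathcal T}^{N_2}_{n}(\kappa)|\|_{L^q_\kappa}$. But~\eqref{bddef:TypeC} only controls $\sup_{n}\|\la\kappa\ra^{\gamma}\widetilde{\mathcal T}^{N_2}_{n}\|_{L^q_\kappa}$.
\item The result is a loss of up to $(N_2N_3)^{1/q}$. The paper's argument has no such loss; it loses only $\log N_{\max}$ and the large-deviation factor.
\end{itemize}

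The paper keeps the $\kappa_2$-integral inside the coefficient.
\begin{itemize}
\item It writes $\mathcal T_{n_0n_1}=\sum_{n_2}\overline{g_{n_2}}\,\mathcal L_{n_0n_1n_2}$, where $\mathcal L$ contains $\int\overline{\widetilde{\mathcal T}^{N_2}_{n_2}(\kappa_2)}(\cdots)\,\dd\kappa_2$. Khintchine then gives $\|\mathcal L\|_{n_1,n_2\to n_0}$ and $\|\mathcal L\|_{n_1\to n_0,n_2}$.
\item Only afterwards is $\mathcal T^{N_2}$ removed, pathwise in $\omega$, by H\"older in $\kappa_2$. Because $n_2$ is now a free index, this costs only $\sup_{n_2}\|\la\kappa\ra^{\gamma}\widetilde{\mathcal T}^{N_2}_{n_2}\|_{L^q_\kappa}\le R$.
\item Concretely, write $\mathcal L x=\int A(\kappa_2)D(\kappa_2)x\,\dd\kappa_2$ with $D(\kappa_2)$ diagonal in $n_2$. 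Then $\|\mathcal L x\|_{\ell^2}\le\|\la\kappa_2\ra^{-\gamma}\|A(\kappa_2)\|_{n_1,n_2\to n_0}\|_{L^{q'}_{\kappa_2}}\,\|\la\kappa_2\ra^{\gamma}\|D(\kappa_2)x\|_{\ell^2}\|_{L^{q}_{\kappa_2}}$.
\item The triangle inequality in $L^{q/2}$ bounds the last factor by $\sup_{n_2}\|\la\kappa\ra^{\gamma}\widetilde{\mathcal T}^{N_2}_{n_2}\|_{L^q_\kappa}\|x\|_{\ell^2}$. The norm $n_1\to n_0,n_2$ follows by duality.
\item What remains is an $L^{q'}_{\kappa_2}$-average of an expression involving only $(g_{n_3})$ and $\mathcal T^{N_3}$. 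Minkowski (since $p\ge q'$), Khintchine conditionally on $\mathcal B_{\le N_3/2}$, and the same removal of $\mathcal T^{N_3}$ then give~\eqref{eq:pmoment-pre} without loss.
\end{itemize}

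Separately, your net in $\vec\kappa$ is unnecessary. Since $p\ge2$, Minkowski reduces the weighted $L^2_{\kappa_0,\kappa_1}$ norm to $\sup_{\kappa_0,\kappa_1}$ of $p$-th moments, as in~\eqref{eq:01}. Your tail argument via the decay of $\widehat\chi$ also would not work as stated, because $\widetilde\kappa-\Omega(\vec n)$ stays bounded when $\kappa_0$ and $\kappa_1$ grow together.
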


\begin{remark}\label{prop:random-opdyadicremark}
	Similarly, as an operator ($\mathbb{C}$-antilinear),
	\[
	\|\chi(t)\mathbf{P}_{N_0}\mathcal{N}_{[1,2,3]}^{(3)}(v_{N_1}^{(\mathrm{C})},\mathbf{P}_{N_2}\cdot,v_{N_3}^{(\mathrm{C})}) \|_{X^{0,b}\to X^{0,-b_1}}\lesssim R^2\Lambda_{\vec{N}},
	\]
	where
	\[
	\Lambda_{\vec{N}}=(N_1N_3)^{-\alpha}\max(N_1,N_3)^{\delta_0}\min(N_{\mathrm{min}},N_0)\log(N_{\mathrm{max}})\sup_{\vec{\kappa}}\|\mathcal{G}_{\vec{N}}(\vec{\kappa})\|_{\mathsf{Ten}(n_{2}\to n_{0})}.
	\]
\end{remark}

The proof combines decoupling with the non-commutative Khintchine inequality, following the general strategy of~\cite{Kan25}. This inequality is originally due to~\cite{HP93} (see also~\cite{vHan17} for a modern treatment).
A logarithmic loss in this inequality requires an additional input when~$\max(N_{2},N_{3})$ is much smaller than the remaining scales.
This issue already appears in~\cite[Claim~5.2]{DNY24} and~\cite[Proposition~5.1]{BR25}, and is resolved here in Proposition~\ref{prop:highhighverylow} by exploiting a gain from the modulation analysis as  in the treatment of the partially resonant interaction~$\mathcal{N}_{(23)}^{(3)}$. 

\begin{proof}
	Without loss of generality, assume~$\max(N_2,N_3)=N_2$.
	Since all terms are frequency-localized, we remove the spatial weights in the denominators, gaining a factor~$(N_{2}N_{3})^{-\alpha}$, and replace the correlation coefficient~$\gamma_{nn_{1}n_{2}n_{3}}$ by~$\min(N_{0},N_{\min})$.

	\emph{Step~1: duality.}
	Given~$u$ and~$v$ with~$\|u\|_{X^{0,0}}, \|v\|_{X^{0,0}}\le 1$, we need to bound
	\begin{equation}
		\label{eq:du}
		\left|
		\sum_{\vec{n}}
		\mathfrak{h}_{\vec{N}}(\vec n)
		\int
		\widehat{\chi}\big(\widetilde{\kappa}-\Omega(\vec n)\big)\,
		\overline{\widetilde{v}_{n_{0}}(\kappa_{0})}\widetilde{u}_{n_{1}}(\kappa_{1})\,
		\overline{\widetilde{g}_{n_{2}}^{N_{2}}(\kappa_{2})}\,
		\widetilde{g}_{n_{3}}^{N_{3}}(\kappa_{3})\,
		\frac{\mathrm{d}\vec \kappa}
		{\langle\kappa_{0}\rangle^{b_{1}}\langle\kappa_{1}\rangle^{b}}
		\right|\,,
	\end{equation}
	where~$\mathfrak{h}_{\vec{N}}$ is as in Lemma~\ref{lem:tens} and~$b_{1}>\frac{1}{2}$. Isolating the~$X^{0,0}$-norms of~$u$ and~$v$ from the random inputs, we bound the above by
	\begin{multline*}
		\left\|\langle\kappa_{0}\rangle^{-b_{1}}
		\langle\kappa_{1}\rangle^{-b}
		\left\|
		\sum_{n_{2},n_{3}}
		\mathfrak{h}_{\vec{N}}(\vec n)
		\int
		\overline{\widetilde{g}_{n_{2}}^{N_{2}}(\kappa_{2})}\,
		\widetilde{g}_{n_{3}}^{N_{3}}(\kappa_{3})\,
		\widehat{\chi}\big(\widetilde{\kappa}-\Omega(\vec n)\big)\,
		\mathrm{d}\kappa_{2}\mathrm{d}\kappa_{3}
		\right\|_{n_{1}\to n_{0}}
		\right\|_{L_{\kappa_{0},\kappa_{1}}^{2}}
		\\
		\times\|\mathbf{P}_{N_{0}}v\|_{X^{0,0}}
		\|\mathbf{P}_{N_{1}}u\|_{X^{0,0}}\,.
	\end{multline*}
	For fixed~$(\kappa_{0},\kappa_{1})$ and~$(n_{0},n_{1})$, set
	\[
	\mathcal{T}_{n_{0},n_{1}}(\omega;\kappa_{0},\kappa_{1})
	:=
	\sum_{n_{2},n_{3}}
	\mathfrak{h}_{\vec{N}}(\vec n)
	\int
	\overline{\widetilde{g}_{n_{2}}^{N_{2}}(\kappa_{2})}\,
	\widetilde{g}_{n_{3}}^{N_{3}}(\kappa_{3})\,
	\widehat{\chi}\big(\widetilde{\kappa}-\Omega(\vec n)\big)\,
	\mathrm{d}\kappa_{2}\mathrm{d}\kappa_{3}.
	\]
	The bound for~\eqref{eq:du} then reads
	\begin{equation}
		\label{ch}
		\left\|
		\langle\kappa_{0}\rangle^{-b_{1}}
		\langle\kappa_{1}\rangle^{-b}
		\left\|
		\mathcal{T}(\omega;\kappa_{0},\kappa_{1})
		\right\|_{n_{1}\to n_{0}}
		\right\|_{L_{\kappa_{0},\kappa_{1}}^{2}}
		\|\mathbf{P}_{N_{0}}u\|_{X^{0,0}}
		\|\mathbf{P}_{N_{1}}v\|_{X^{0,0}}\,.
	\end{equation}

	\emph{Step~2: moments and non-commutative Khintchine with successive removal of RAOs.}
	For~$p\ge 2$, using~$2b>1$ and~$2b_{1}>1$,
	\begin{multline}
		\label{eq:01}
		\left\|
		\left\|
		\langle\kappa_{0}\rangle^{-b_{1}}\langle\kappa_{1}\rangle^{-b}\,
		\left\|\mathcal{T}(\omega;\kappa_{0},\kappa_{1})\right\|_{n_{1}\to n_{0}}
		\right\|_{L^{2}_{\kappa_{0},\kappa_{1}}}
		\right\|_{L_{\omega}^{p}}
		\\
		\le
		\left\|
		\langle\kappa_{0}\rangle^{-b_{1}}\langle\kappa_{1}\rangle^{-b}\,
		\left\|
		\|\mathcal{T}(\omega;\kappa_{0},\kappa_{1})\|_{n_{1}\to n_{0}}
		\right\|_{L_{\omega}^{p}}
		\right\|_{L^{2}_{\kappa_{0},\kappa_{1}}}
		\\
		\lesssim
		\sup_{\kappa_{0},\kappa_{1}}
		\left\|
		\|\mathcal{T}(\omega;\kappa_{0},\kappa_{1})\|_{n_{1}\to n_{0}}
		\right\|_{L_{\omega}^{p}}\,.
	\end{multline}
	Fix~$(\kappa_{0},\kappa_{1},n_{0},n_{1})$ and write
	\[
	\mathcal{T}_{n_{0},n_{1}}(\omega;\kappa_{0},\kappa_{1})
	=
	\sum_{n_{2}\neq n_{3}}
	\overline{g_{n_{2}}}\,g_{n_{3}}\,
	\mathfrak{h}_{\vec{N}}(\vec n)
	\int
	\widehat{\chi}\big(\widetilde{\kappa}-\Omega(\vec n)\big)\,
	\overline{\widetilde{\mathcal{T}}_{n_{2}}^{N_{2}}(\kappa_{2})}\,
	\widetilde{\mathcal{T}}_{n_{3}}^{N_{3}}(\kappa_{3})\,
	\mathrm{d}\kappa_{2}\mathrm{d}\kappa_{3}\,.
	\]
	Introduce the random tensor
	\[
	\mathcal{L}_{n_{0},n_{1},n_{2}}(\omega;\kappa_{0},\kappa_{1})
	:=
	\sum_{n_{3}}
	g_{n_{3}}\,
	\mathfrak{h}_{\vec{N}}(\vec n)
	\int
	\widehat{\chi}(\widetilde{\kappa}-\Omega(\vec n))\,
	\overline{\widetilde{\mathcal{T}}_{n_{2}}^{N_{2}}(\kappa_{2})}\,
	\widetilde{\mathcal{T}}_{n_{3}}^{N_{3}}(\kappa_{3})\,
	\mathrm{d}\kappa_{2}\mathrm{d}\kappa_{3},
	\]
	so that
	\[
	\mathcal{T}_{n_{0},n_{1}}(\omega;\kappa_{0},\kappa_{1})
	=\sum_{n_{2}}
	\overline{g_{n_{2}}}\,
	\mathcal{L}_{n_{0},n_{1},n_{2}}(\omega;\kappa_{0},\kappa_{1}).
	\]
	We only use deterministic bounds for the random averaging operators~$\mathcal{T}^{N_{2}}=(\mathcal{T}_{n_2}^{N_2})_{n_2\sim N_2}$ and~$\mathcal{T}^{N_{3}}=(\mathcal{T}_{n_3}^{N_3})_{n_3\sim N_3}$.
	However, $\mathcal{T}^{N_{2}}$ (resp.\ $\mathcal{T}^{N_{3}}$) is~$\mathcal{B}_{\leq N_{2}}$-measurable (resp.\ $\mathcal{B}_{\leq N_{3}}$-measurable).
	Before invoking probabilistic tools, we eliminate these operators by applying Hölder's inequality in the modulation variables, replacing them by their~$X^{0,\gamma}_{\infty,q}$ norms.
	We distinguish two cases.

	\noindent\emph{Case~$N_{2}>N_{3}$.}
	For $N\in 2^{\N}$, denote $\mathcal{B}_{N}$ the $\sigma$-algebra generated by $\{g_n\}_{n\sim N}$.
	In this case,  $\mathcal{T}^{N_{2}}(t)$ is not independent to random variables ~$\{g_{n_{3}}\}_{n_{3}\sim N_{3}}$, but is independent from~$\mathcal{B}_{N_2}$, generated by $\{g_{n_2} \}_{n_2\sim N_2}$.  We proceed in two steps.

	\emph{(i)} Apply the non-commutative Khintchine inequality, as stated in~\cite[(1.1)]{Kan25}, to the sum over~$n_{2}$ conditionally on~$\mathcal{B}_{\leq N_{2}/2}$: we obtain that almost surely,
	\begin{multline*}
		\Big\|
		\|\mathcal{T}(\omega;\kappa_{0},\kappa_{1})\|_{n_{1}\to n_0}
		\Big\|_{L^{p}_{\omega}\mid \mathcal{B}_{\leq N_{2}/2}}
		\lesssim
		\sqrt{p\log(N_{\mathrm{max}})}\,
		\\
		\times\max
		\left(
		\|\mathcal{L}(\omega;\kappa_{0},\kappa_{1})\|_{n_{1},n_{2}\to n_{0}},\
		\|\mathcal{L}(\omega;\kappa_{0},\kappa_{1})\|_{n_{1}\to n_{0},n_{2}}
		\right).
	\end{multline*}
	We then bound~$\mathcal{T}^{N_{2}}$ deterministically: using~$\|\mathcal{T}^{N_{2}}\|_{X^{0,\gamma}_{\infty,q}}\le R$ and Hölder in~$\kappa_{2}$ (with~$q'\gamma>1$), we obtain almost surely
	\begin{multline*}
		\|\mathcal{L}(\omega;\kappa_{0},\kappa_{1})\|_{n_{1},n_{2}\to n_{0}}
		\\
		\lesssim
		R\,
		\left\|
		\langle\kappa_{2}\rangle^{-\gamma}
		\left\|
		\sum_{n_{3}} g_{n_{3}}\,
		\mathfrak{h}_{\vec{N}}(\vec n)
		\int
		\widehat{\chi}(\widetilde{\kappa}-\Omega(\vec n))\,
		\widetilde{\mathcal{T}}_{n_{3}}^{N_{3}}(\kappa_{3})\,
		\mathrm{d}\kappa_{3}
		\right\|_{n_{1},n_{2}\to n_{0}}
		\right\|_{L^{q'}_{\kappa_{2}}}\,.
	\end{multline*}
	We proceed similarly for the tensor norm~$\|\cdot\|_{n_{1}\to n_{0},n_{2}}$.

	\emph{(ii)} Apply the non-commutative Khintchine inequality to the sum over~$n_{3}$ conditionally on~$\mathcal{B}_{\leq N_{3}/2}$.
	This yields another loss of~$\sqrt{p\log(N_{\max})}$, and using Hölder in~$\kappa_{3}$ together with the deterministic bound for~$\mathcal{T}^{N_{3}}$ gives
	\begin{multline*}
		\|\|\mathcal{T}(\omega;\kappa_{0},\kappa_{1})\|_{n_{1}\to n_{0}}
		\|_{L_{\omega}^{p}|\mathcal{B}_{\leq N_3/2}}
		\\
		\lesssim\ p\log(N_{\mathrm{max}})\,R^{2}\,
		\|
		\langle\kappa_{2}\rangle^{-\gamma}
		\langle\kappa_{3}\rangle^{-\gamma}\,
		\|\mathcal{G}_{\vec{N}}(\vec{\kappa})\|_{\mathsf{Ten}(n_{1}\to n_{0})}
		\|_{L_{\kappa_{2},\kappa_{3}}^{q'}}\,,
	\end{multline*}
	where the merged tensor norm on the right-hand side was defined in Definition~\ref{def:tens}. Since~$\gamma q'>1$,
	\begin{equation}
		\label{eq:pmoment-pre}
		\sup_{\kappa_{0},\kappa_{1}}
		\|\|\mathcal{T}(\omega;\kappa_{0},\kappa_{1})\|_{n_{1}\to n_{0}}
		\|_{L_{\omega}^{p}}
		\lesssim
		p\log(N_{\mathrm{max}})R^{2}
		\sup_{\vec{\kappa}}
		\|\mathcal{G}_{\vec{N}}(\vec{\kappa})\|_{\mathsf{Ten}(n_{1}\to n_{0})}\,.
	\end{equation}

	\noindent\emph{Case~$N_{2}=N_{3}$.}
	Using the constraint~$n_{2}\neq n_{3}$, we decouple the Gaussians by introducing
	an independent copy~$\{g_{n_{3}}(\omega')\}$, which is independent of the $\sigma$-algebra $\mathcal{B}_{\leq N_3}$. 
	We then apply the non-commutative Khintchine inequality twice
	(first in~$\omega$, then in~$\omega'$),
	together with the deterministic RAO bounds.
	This yields again~\eqref{eq:pmoment-pre}.

	By standard large-deviation arguments derived from~$p$-moment bounds,
	we conclude that $(MR,\delta_{0};\mathcal{B}_{\leq M})$-certainly,
	\[
	\Big\|
	\langle\kappa_{0}\rangle^{-b_{1}}
	\langle\kappa_{1}\rangle^{-b}
	\|\mathcal{T}(\omega;\kappa_{0},\kappa_{1})\|_{n_{1}\to n_{0}}
	\Big\|_{L_{\kappa_{0},\kappa_{1}}^{2}}
	\lesssim
	M^{\delta_0}R^{2+\delta_0}\log(N_{\max})
	\sup_{\vec{\kappa}}
	\|\mathcal{G}_{\vec{N}}(\vec{\kappa})\|_{\mathsf{Ten}(n_{1}\to n_{0})}.
	\]

	\emph{Step~3: conclusion.}
	Inserting this bound into~\eqref{ch}, we obtain for all~$N_{0}$, $N_{1}$ and all~$u$, $v$,
	\begin{multline}
		\label{din}
		\left|
		\sum_{n_{0}\sim N_{0}}
		\int
		v_{n_{0}}(t)\,
		\chi(t)\,
		\mathcal{N}_{[1,2,3]}^{(3)}
		\big(
		\mathbf{P}_{N_{1}}u,
		v_{N_{2}}^{(\mathrm{C})},
		v_{N_{3}}^{(\mathrm{C})}
		\big)_{n_{0}}
		\,\mathrm{d}t
		\right|
		\\
		\lesssim
		R^{3}\Lambda_{\vec{N}}
		\|\mathbf{P}_{N_{1}}u\|_{X^{0,b}}
		\|\mathbf{P}_{N_{0}}v\|_{X^{0,b_{1}}},
	\end{multline}
	where
	\[
	\Lambda_{\vec{N}}
	:=
	(N_{2}N_{3})^{-\alpha}
	\min(N_{0},N_{\mathrm{min}})
	\log(N_{\max})
	N_{2}^{\delta}
	\sup_{\vec{\kappa}}
	\|\mathcal{G}_{\vec{N}}(\vec{\kappa})\|_{\mathsf{Ten}(n_{1}\to n_{0})}.
	\]
	This completes the proof.
\end{proof}
	\begin{corollary}[Random operator bound]\label{cor:randomop}
		\label{prop:random-op}
		There exists~$b_{1}$ satisfying~$0<b_{1}<\tfrac12<b$ and~$b+b_{1}<1$,
		such that for all dyadic integers~$N_{2},N_{3}$, setting~$M:=\max(N_{2},N_{3})$, the following holds.
Assume that $(v_{M'}^{(\mathrm{C})})_{M'\leq M}$ are type (C) terms on a $\mathcal{B}_{\leq M}$-measurable set $\Xi_M$. Then
		$(MR,\delta_{0};\mathcal{B}_{\leq M})$-certainly, for any~$0\leq s_0< 2\alpha-1-2\delta_0$,
		\begin{equation}
			\label{eq:random-op}
			\Big\|
			\chi\,
			\mathcal{N}_{[1,2,3]}^{(3)}
			\big(\,\cdot\,,v_{N_{2}}^{(\mathrm{C})},v_{N_{3}}^{(\mathrm{C})}\big)
			\Big\|_{X^{s_0,b}\to X^{0,-b_{1}}}
			\;\lesssim\;
			R^{2}M^{-s_0-\delta_0}.
		\end{equation}
	\end{corollary}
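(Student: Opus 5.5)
The plan is to deduce Corollary~\ref{cor:randomop} from the dyadic bound of Proposition~\ref{prop:random-opdyadic}, the tensor bounds of Lemma~\ref{lem:tens}, and a summation over dyadic scales. Write the operator as a sum over $N_0,N_1$ of $\chi\mathbf{P}_{N_0}\mathcal{N}^{(3)}_{[1,2,3]}(\mathbf{P}_{N_1}\cdot,v^{(\mathrm{C})}_{N_2},v^{(\mathrm{C})}_{N_3})$. The first step reduces to $b_1>1/2$: for $b_1<1/2$ we interpolate the $X^{0,-b_1'}$ bound ($b_1'>1/2$) against a crude $X^{0,0}$ bound. The crude bound costs at most a polynomial loss in $N_{\max}$, which is absorbed because the interpolation weight is of size $O(\sigma^{10})$ (this is the device already used for Proposition~\ref{prop:CCC}).

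The contributions with $N_0\gg N_{\max}$ are handled separately. There $N_{\mathrm{med}}\sim N_{\max}$ fails, so the assumption of Proposition~\ref{prop:random-opdyadic} is violated. Instead we use the decay \eqref{eq:gamma3}: $\gamma\lesssim N_0^{-2}$, together with a crude Cauchy--Schwarz, gives a summable factor $N_0^{-2+}$. Similarly, if $N_1\gg\max(N_0,M)$, the resonance relation and \eqref{eq:gamma3} applied with $n_1$ as the top index give $N_1^{-2}$ decay. This decay beats the loss $N_1^{-s_0}$ versus the gain coming from the input norm $\|\mathbf{P}_{N_1}u\|_{X^{s_0,b}}$. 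So we may assume $N_0\sim N_1\gtrsim M$, or $N_0,N_1\lesssim M$.

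\emph{Case $N_1\sim N_0\gtrsim M$ (high input).} Here $N_{\max}=N_1$ and $N_{\mathrm{med}}=M$. By Lemma~\ref{lem:tens} with $j=1$ and $N_1=N_{\max}$, the tensor norm is at most $N_1^{1-\alpha+\varepsilon}M^{1/2}$ if $M\gtrsim N_1^{1/2}$, and at most $M^{2(1-\alpha)+\varepsilon}$ otherwise. Then $\Lambda_{\vec N}\lesssim (N_2N_3)^{-\alpha}M^{\delta_0}N_{\min}\log N_1$ times the tensor norm. In the regime $M\ll N_1^{1/2}$ this is $\lesssim M^{-2\alpha+1+2(1-\alpha)+2\delta_0}\log N_1$. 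The right-hand side divided by $N_1^{s_0}$ is acceptable. The log factor is the obstacle, since it is not summable against a bound depending only on $M$ when $M$ is tiny compared with $N_1$.

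The main difficulty is therefore the high$\times$very-low regime $M\ll N_1^{\epsilon}$. I would invoke there the almost-orthogonality argument announced as Proposition~\ref{prop:highhighverylow}. Alternatively, I would exploit that for fixed $n_2,n_3$ the map $n_1\mapsto n_0$ is essentially a bounded-multiplicity matching (by Lemma~\ref{lem:db}, $\#E_\pm=O(1)$). This allows orthogonal summation over $N_1\sim N_0$ blocks and replaces the non-commutative Khintchine log by a scalar Khintchine over $(n_2,n_3)$ only, which has $M^2$ terms and hence costs only $\log M$. With $N_0\sim N_1$ summed orthogonally, the $M$-exponent is $-s_0-\delta_0$ provided $s_0<2\alpha-1-2\delta_0$. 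The regime $M\gtrsim N_1^{1/2}$ gives $N_1^{1-\alpha+\varepsilon}M^{1/2}\le M^{2(1-\alpha)+1/2+}$, and the extra $N_1^{-s_0}\le M^{-s_0}$ makes the sum over $N_1\le M^2$ harmless.

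\emph{Case $N_0,N_1\lesssim M$.} Here $N_{\max}\sim M$ and the logs are $\log M$, absorbed into $M^{\delta_0}$. The tensor bound $M^{1-\alpha+\varepsilon}M^{1/2}$ and the factor $M^{-2\alpha}\min(N_0,N_{\min})$ leave a polynomially negative power of $M$ for $\alpha>15/16$. The finitely many dyadic sums over $N_0,N_1\le M$ cost only $\log^2M$. Finally, we take the union of the exceptional sets over the countably many $(N_0,N_1)$, with probabilities $e^{-(MR)^{\delta_0}}$ made summable by slightly enlarging the threshold, to reach the $(MR,\delta_0;\mathcal{B}_{\le M})$-certain statement.
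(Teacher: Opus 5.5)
The overall scheme matches the paper's: interpolation to reach $b_1<\frac12$, removal of off-diagonal configurations via \eqref{eq:gamma3}, Proposition~\ref{prop:random-opdyadic} with Lemma~\ref{lem:tens} when $N_1=N_{\max}$, and Proposition~\ref{prop:highhighverylow} when $M\ll N_1^{\delta_0}$. The gap is the case $N_0,N_1\lesssim M$, which does not close as you state it.

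\emph{What the bound must be.} There it is not enough to find some negative power of $M$. Since $\|\mathbf{P}_{N_1}u\|_{X^{0,b}}\sim N_1^{-s_0}\|\mathbf{P}_{N_1}u\|_{X^{s_0,b}}$ gives no decay in $M$ when $N_1\ll M$, you need $\Lambda_{\vec N}\lesssim N_1^{s_0}M^{-s_0-\delta_0}$ up to logarithms. Also, $(N_2N_3)^{-\alpha}=M^{-\alpha}N_3^{-\alpha}$, not $M^{-2\alpha}$, when $N_3\ll M$.

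\emph{Why your bound fails.} With your crude tensor bound $M^{\frac32-\alpha+\varepsilon}$, the configuration $N_1=N_3=1$ only gives $\Lambda_{\vec N}\lesssim M^{\frac32-2\alpha+\delta_0+\varepsilon}\log M$. This yields only $s_0<2\alpha-\frac32-2\delta_0$. Since $2\alpha-\frac32\le\frac12<s$, this excludes $s_0=s$, which is exactly the case used downstream when $N_2=N_{(1)}$.

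\emph{What is missing.} You need the paper's finer case analysis for $N_2=N_{\max}$. When $\max(N_1,N_3)\ll M^{1/2}$, use the second line of Lemma~\ref{lem:tens}: the tensor norm is $\lesssim M^{1-\alpha+\varepsilon}$, without the factor $N_{\mathrm{med}}^{1/2}$. Then pay for $N_1^{-s_0}$ with $\min(N_0,N_{\min})\le\min(N_1,N_3)$. This gives contributions $N_1^{-s_0}N_3^{1-\alpha}M^{1-2\alpha+\delta_0+\varepsilon}$ or $N_1^{1-s_0}N_3^{-\alpha}M^{1-2\alpha+\delta_0+\varepsilon}$. The corner $N_1=N_3=1$ is precisely where the threshold $s_0<2\alpha-1-2\delta_0$ comes from. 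The subregimes in which $N_{\mathrm{med}}^{1/2}$ is present must be summed against $N_1^{-s_0}$ separately; this is where $\alpha>\frac56+\delta_0$ is needed.

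Two smaller points.

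\emph{The crude interpolation bound.} A crude $X^{0,0}$ loss polynomial in $N_{\max}=N_1$ is not absorbed just because the interpolation weight is small. For $s_0=0$ the target $M^{-\delta_0}$ has no decay in $N_1$. The paper arranges a loss of only $N_2^{10}$, using $N_0\sim N_1$ and $\ell^1$ bounds on the two low-frequency colored inputs. In the regime $M\ll N_1^{\delta_0}$ it applies Proposition~\ref{prop:highhighverylow} directly with $b_1<\frac12$, with no interpolation at all.

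\emph{The union bound.} Because that regime is deterministic, only the $O_{\delta_0}((\log M)^2)$ pairs $(N_0,N_1)$ with $N_1\lesssim M^{1/\delta_0}$ need probabilistic control. A union bound with the fixed threshold $MR$ therefore suffices. Enlarging the threshold with $N_1$ to sum infinitely many exceptional sets, as you propose, would instead introduce losses growing in $N_1$ that the estimate cannot absorb.
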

	
	As a consequence, the trilinear bound~\eqref{eq:tri-cubic} in the case of exactly two type~(C) inputs follows. More precisely, if the input~$w_{N_1}^{\dag}$ satisfies~$N_1=N_{(1)}$, we apply it with~$s_0=0$, while if~$N_2=N_{(1)}$ we apply it with~$s_0=s$.
	
	\begin{proof}
		We begin with the following standard preliminary reductions.
		\begin{itemize}
			\item Without loss of generality, assume~$\max(N_{2},N_{3})=N_{2}$.
			\item We decompose both the input and the output dyadically, with frequency scales~$N_{1}$ and~$N_{0}$ respectively, and denote by~$N_{\max}\geq N_{\mathrm{med}}\geq N_{\min}$ the non-increasing rearrangement of~$N_{1},N_{2},N_{3}$.
			\item We may further assume that either~$N_{0}\sim N_{\max}$, or~$N_{0}\ll N_{\max}$ and~$N_{\mathrm{med}}\sim N_{\max}$.
			Indeed, in all other cases the correlation coefficient decays rapidly in~$\max(N_{0},N_{\max})$, by Lemma~\ref{lem:highlowlowlow}.
			\item It suffices to establish the bound for all~$b_{2}>\frac12$.
			Indeed, the statement follows by interpolation with a crude~$X^{0,0}$ bound with loss~$\mathcal{O}(N_{2}^{C})$, which we prove now. If~$N_{2}\sim N_{\max}$, any crude estimate will do. Otherwise, $N_{0}\sim N_{1}$ and we get, for all~$t\in\R$,
			\begin{multline*}
				\left(
				\sum_{n_{0}\sim N_{0}}
				\left|\sum_{n_{1},n_{2},n_{3}}
				\left(
				\prod_{i=0}^{3}\mathbf{1}_{n_{i}\sim N_{i}}
				\right)
				\gamma_{n_{0}n_{1}n_{2}n_{3}}
				u_{n_{1}}(t)v_{N_{2},n_{2}}^{(\mathrm{C})}(t)v_{N_{3},n_{3}}^{(\mathrm{C})}(t)
				\right|^{2}
				\right)^{\frac{1}{2}}
				\\
				\lesssim N_{2}\|v_{N_{2},n_{2}}^{(\mathrm{C})}(t)\|_{\ell_{n_{2}}^{1}}
				\|v_{N_{3},n_{3}}^{(\mathrm{C})}(t)\|_{\ell_{n_{3}}^{1}}\|u_{n_{1}}(t)\|_{\ell_{n_{1}}^{2}}
				\lesssim N_{2}^{10}\|u_{n_{1}}(t)\|_{\ell_{n_{1}}^{2}}\,.
			\end{multline*}
		\end{itemize}
		For any~$v\in X^{0,b}$, from the preliminary reductions, it suffices to estimate, for any~$b_2>1/2$,
		\[
		\Big(
		\sum_{N_0}\Big(\sum_{\substack{N_1\\
				N_{(1)}\sim N_{(2)}
		}}
		\big\|\chi(t)\mathbf{P}_{N_0}\mathcal{N}_{[1,2,3]}^{(3)}(\mathbf{P}_{N_1}v,v_{N_2}^{(\mathrm{C})},v_{N_3}^{(\mathrm{C})}) \big\|_{X^{0,-b_2}}
		\Big)^2
		\Big)^{1/2}.
		\]
		Denote
		\[
		a_{N_1}:=N_1^{s_0}\|\mathbf{P}_{N_1}v\|_{X^{0,b}}.
		\]
		By Proposition~\ref{prop:random-opdyadic}, the expression above is bounded by
		\begin{align}\label{din'}
			R^2\Big(\sum_{N_0}
			\Big(
			\sum_{\substack{N_1\\
					N_{(1)}\sim N_{(2)}
			} } \Lambda_{\vec{N}}
			\cdot N_1^{-s_0} a_{N_1}
			\Big)^2
			\Big)^{1/2}.
		\end{align}
		
		It remains to estimate~$\Lambda_{\vec{N}}$ using Lemma~\ref{lem:tens} and perform the dyadic summation over~$N_0$ and~$N_1$.
		We distinguish cases.
		
		\emph{Case~1: $N_{\max}=N_{1}$.}
		We split according to the size of~$N_{2}$.
		
		\emph{Case~1a: $N_{2}\gtrsim N_{1}^{1/2}$.}
		Lemma~\ref{lem:tens} yields
		\[
		\Lambda_{\vec{N}}
		\lesssim_{\varepsilon}
		(N_{2}N_{3})^{-\alpha}
		N_{3}
		N_{2}^{\delta_0}
		N_{1}^{1-\alpha+\varepsilon}
		N_{2}^{1/2}\lesssim N_2^{\frac{3}{2}-\alpha}N_2^{-\alpha+\delta_0}N_1^{1-\alpha+\varepsilon}
		\lesssim
		N_{1}^{\frac{7}{4}-2\alpha+2\delta_0}.
		\]
		Hence if~$\alpha>\frac{7}{8}+2\delta_0$,
		\[
		\Lambda_{\vec{N}}\cdot N_1^{-s_0}\leq N_2^{-s_0-\delta_0}N_1^{-\delta_0},
		\]
		and the remaining dyadic sum over~$N_0$ and~$N_1$ converges.
		
		\emph{Case~1b: $N_{1}^{\delta_0}\lesssim N_{2}\ll N_{1}^{1/2}$.}
		Here~$N_{0}\sim N_{1}$, so~$N_{0}$ and~$N_{1}$ can be summed in~\eqref{din'}.
		We obtain
		\[
		\Lambda_{\vec{N}}
		\lesssim_{\varepsilon}
		(N_{2}N_{3})^{-\alpha}
		N_{3}\log(N_{1})
		N_{2}^{2(1-\alpha)+\varepsilon+\delta_0}
		\lesssim
		N_{2}^{3(1-\alpha)-\alpha+2\delta_0}.
		\]
		A similar argument is conclusive, provided~$\alpha>\frac{3}{4}+2\delta_0$.
		
		\emph{Case~1c: $N_{2}\ll N_{1}^{\delta_0}$.}
		The argument is deterministic and exploits the large size of the resonance function to gain a power of the highest frequency.
		Using Proposition~\ref{prop:highhighverylow} and~\eqref{R-bound'} directly with~$b_{1}<\frac{1}{2}$ as in the statement, we obtain
		\begin{multline*}
			\|
			\mathbf{P}_{N_{0}}
			\mathcal{N}_{[1,2,3]}^{(3)}
			(\mathbf{P}_{N_{1}}v,v_{N_2}^{(\mathrm{C})},v_{N_{3}}^{(\mathrm{C})})
			\|_{X^{0,-b_{1}}}
			\lesssim
			R^{2}T^{-2(b-\frac12)}
			N_{1}^{\frac12-2b_{1}}
			(N_{2}N_{3})^{1-\alpha}
			\|\mathbf{P}_{N_{1}}u\|_{X^{0,b}}
			\\
			\lesssim
			R^{2}T^{-2(b-\frac12)}
			N_{1}^{\frac12-2b_{1}-s_0+2\delta_0(1-\alpha)}\|\mathbf{P}_{N_1}v\|_{X^{s_0,b}}.
		\end{multline*}
		This is acceptable since~$b_{1}$ is close to~$\tfrac12$ and~$\delta_0$ is small.
		The dyadic sum is taken over~$N_1\sim N_0\gg N_2$, leading to the final bound
		\[
		R^2T^{-2(b-\frac{1}{2})}N_2^{-s_0-\delta_0}\|v\|_{X^{s_0,b}}.
		\]
		
		\emph{Case~2: $N_{\max}=N_{2}$.}
		This regime is favorable, as we gain a negative power of~$N_{\max}$.
		Using Lemma~\ref{lem:tens},
		\[
		N_1^{-s_0}\Lambda_{\vec{N}}
		\lesssim_{\varepsilon}
		\begin{cases}
			N_1^{1-s_0}N_3^{\frac{1}{2}-\alpha}
			N_2^{1-2\alpha+\delta_0+\varepsilon}, & N_1\leq N_3,\; N_3\gtrsim N_2^{1/2},\\
			N_3^{1-\alpha}N_1^{\frac{1}{2}-s_0}
			N_2^{1-2\alpha+\delta_0+\varepsilon}, & N_3\leq N_1,\; N_3\gtrsim N_2^{1/2},\\
			N_1^{1-s_0}N_3^{-\alpha}N_2^{1-2\alpha+\delta_0+\varepsilon}, & N_1\leq N_3\ll N_2^{1/2},\\
			N_1^{-s_0}N_3^{1-\alpha}N_2^{1-2\alpha+\delta_0+\varepsilon}, & N_3\leq N_1\ll N_2^{1/2},\\
	        N_3^{1-\alpha}N_1^{\frac{1}{2}-s_0}
			N_2^{1-2\alpha+\delta_0+\varepsilon}, & N_3\ll N_2^{1/2}\lesssim N_1\leq N_2.
		\end{cases}
		\]
	Note that we do not distinguish whether $0\leq s_0\leq \frac{1}{2}$ or $\frac{1}{2}<s_0<1$. The dyadic summation in \eqref{din'} is bounded by
		\[
		\|a_{N_1}\|_{\ell_{N_1}^2}\cdot N_2^{1-2\alpha+\delta_0+\varepsilon}\big[N_3^{\frac{3}{2}-\alpha-s_0}+N_3^{1-\alpha}N_2^{\frac{1}{2}-s_0}\big],\quad \text{if } N_3\gtrsim N_2^{1/2}.
		\]
		If $N_3\ll N_2^{1/2}$, the same dyadic summation is bounded by
		\begin{align*}
		&\|a_{N_1}\|_{\ell_{N_1}^2}\big[N_3^{1-s_0-\alpha}N_2^{1-2\alpha+\delta_0+\varepsilon}
		+N_3^{1-\alpha}\big(N_2^{\frac{1}{2}(\frac{1}{2}-s_0)}N_2^{1-2\alpha+\delta_0+\varepsilon}+N_2^{\frac{1}{2}-s_0}N_2^{1-2\alpha+\delta_0+\varepsilon} \big)
		\big]\\
		\lesssim & 
		\|a_{N_1}\|_{\ell_{N_1}^2}\cdot \big[N_2^{1-2\alpha+\delta_0+\varepsilon}+N_2^{\frac{1}{2}(1-s_0-\alpha)+1-2\alpha+\delta_0+\varepsilon}+N_2^{\frac{7}{4}-\frac{5\alpha}{2}-\frac{s_0}{2}+\delta_0+\varepsilon }
		+N_2^{2-\frac{5\alpha}{2}-s_0+\delta_0+\varepsilon}  \big],
		\end{align*}
	where we distinguish two cases according to the sign of $1-s_0-\alpha$.

		For~$\alpha>\frac{5}{6}+\delta_0$ and~$0\leq s_0<2\alpha-1-2\delta_0$, a straightforward check shows that~\eqref{din'} is bounded by
		\[
		R^2\|a_{N_1}\|_{\ell_{N_1}^2}\cdot N_2^{-s_0-\delta_0}.
		\]
		This completes the proof.
	\end{proof}
	
	
	
	\section{Multilinear estimates I: Trilinear interactions}\label{sec:trilinear}

In this section, we establish the key trilinear estimates that will be used in the sequel. We treat separately the non-resonant interactions, where the modulation is large, and the partially resonant interactions, where two of the frequencies coincide.

\subsection{Non-resonant interactions}

We begin with the non-resonant case. The following proposition provides bounds on the trilinear nonlinearity~$\mathcal{N}_{[123]}^{(3)}$ when all four frequency parameters are localized.

\begin{proposition}[Non-resonant trilinear estimates]\label{prop:non-resonant}
	Given $\vec{K}=(K_{1},K_{2},K_{3},K_4)$, denote by
	$K_{(1)}\geq K_{(2)}\geq K_{(3)}\geq K_{(4)}$ the non-increasing rearrangement of~$K_1,K_2,K_3,K_4$.
	Denote
	\[
	\Gamma_{K_1K_2K_3K_4}=
	\begin{cases}
		K_{(4)},& \text{if } K_{(1)}\sim K_{(2)},\\
		K_{(1)}^{-2},& \text{if } K_{(1)}\gg K_{(2)}.
	\end{cases}
	\]
	Then for all $b_{1},b>\tfrac12$, $q\ge2$, and $\gamma>1-\tfrac1q$,
	and for all
	$v^{(1)},v^{(2)},v^{(3)}\in X^{0,b}$, we have
	\begin{align*}
		\Big\|
		\chi\,\mathbf{P}_{K_{4}}
		\mathcal{N}_{[123]}^{(3)}
		(\mathbf{P}_{K_{1}}v^{(1)},\mathbf{P}_{K_{2}}v^{(2)},\mathbf{P}_{K_{3}}v^{(3)})
		\Big\|_{X^{0,-b_{1}}} \leq \Upsilon_{K_1K_2K_3K_4}\cdot \prod_{j=1}^3\|\mathbf{P}_{K_j}v^{(j)}\|_{X^{0,b}},
	\end{align*}
	where for any $\epsilon>0$,
	\begin{align}
		\Upsilon_{K_1K_2K_3K_4}&\leq C\;\Gamma_{K_1K_2K_3K_4},\quad \text{if}\quad K_{(1)}\sim K_{(2)}\gg K_{(3)}^2, \label{eq:tri1'}
		\\
		\label{eq:tri1}
		&\leq C_{\epsilon}\;\Gamma_{K_1K_2K_3K_4}\; K_{(1)}^{2(1-\alpha)}K_{(3)}^{\epsilon}\quad \text{otherwise}.
	\end{align}

	Moreover, for any $j_0\in\{1,2,3\}$ and $j_1\in\{1,2,3,4\}\setminus\{j_0\}$, we have
	\begin{multline*}
		\Big\|
		\chi\,\mathbf{P}_{K_{4}}
		\mathcal{N}_{[123]}^{(3)}
		(\mathbf{P}_{K_{1}}v^{(1)},\mathbf{P}_{K_{2}}v^{(2)},\mathbf{P}_{K_{3}}v^{(3)})
		\Big\|_{X^{0,-b_{1}}} \\
		\leq \widetilde{\Upsilon}_{K_1K_2K_3K_4}\cdot \|\mathbf{P}_{K_{j_0}}v^{(j_0)}\|_{X_{q,q}^{0,\gamma}}\cdot \prod_{j\neq j_0}\|\mathbf{P}_{K_j}v^{(j)}\|_{X^{0,b}},
	\end{multline*}
	where for any $\epsilon>0$,
	\begin{align}\label{eq:tri2}
		\widetilde{\Upsilon}_{K_1K_2K_3K_4}\leq C_{\epsilon}\;\Gamma_{K_1K_2K_3K_4}\cdot K_{(2)}^{\epsilon}M_1^{1-\alpha}\min\Big(1+\frac{M_2}{M_1^{1/2}},\; M_1^{1-\alpha}M_3^{1/2}\Big),
	\end{align}
	where $M_1\geq M_2\geq M_3$ is the non-increasing rearrangement of~$\{K_j:\; j\neq j_1\}$.
\end{proposition}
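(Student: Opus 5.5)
The plan is to argue by duality and reduce everything to the Strichartz-type bounds of Lemma~\ref{timeStrichart}. I would test the output against $v^{(0)}=\mathbf{P}_{K_4}v^{(0)}$ with $\|v^{(0)}\|_{X^{0,b_1}}\le 1$. Writing each input through its twisted Fourier transform $\widetilde{v}_{n}^{(j)}(\kappa_j)$, the time integral of $\chi$ against the four modulated factors produces the kernel $\widehat{\chi}(\widetilde\kappa-\Omega(\vec n))$, where $\widetilde\kappa=\kappa_1-\kappa_2+\kappa_3-\kappa_0$. The pairing is then
\[
\int \sum_{\vec n}\mathfrak{h}_{\vec K}(\vec n)\,\gamma_{n_0n_1n_2n_3}\,\widehat{\chi}(\widetilde\kappa-\Omega(\vec n))\prod_{j=0}^{3}|\widetilde v^{(j)}_{n_j}(\kappa_j)|\,\mathrm{d}\vec\kappa,
\]
where the support condition $\{n_1,n_3\}\cap\{n_0,n_2\}=\emptyset$ is exactly the one in Lemma~\ref{timeStrichart}. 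I would bound $|\gamma|$ uniformly by $\Gamma_{K_1K_2K_3K_4}$. When $K_{(1)}\sim K_{(2)}$ this follows from \eqref{eq:gamma'}, since $\min$ of the four frequencies is $\lesssim K_{(4)}$. When $K_{(1)}\gg K_{(2)}$ it follows from \eqref{eq:gamma3}, which gives $K_{(1)}^{-2}$. Here the output frequency $n_0\sim K_4$ plays the role of the fourth index.

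Next, for fixed $\vec\kappa$, the inner $\vec n$-sum is exactly $\mathcal{M}_{\vec K}$ with $\eta=|\widehat\chi|$ (which is Schwartz) and $\mathbf a^{(j)}_{n}=|\widetilde v^{(j)}_n(\kappa_j)|$, at the shifted level $\kappa=\widetilde\kappa$. Since the bounds in Lemma~\ref{timeStrichart} are uniform in $\kappa$, I would apply \eqref{Strichartztype1'} in the regime $K_{(1)}\sim K_{(2)}\gg K_{(3)}^2$ and the $\ell^2$ part of \eqref{Strichartztype1} otherwise. This gives
\[
\Gamma\,K_{(1)}^{2(1-\alpha)}K_{(3)}^{\epsilon}\prod_{j=0}^3\|\widetilde v^{(j)}(\kappa_j)\|_{\ell^2_n}
\]
(without the $K_{(1)}^{2(1-\alpha)}K_{(3)}^\epsilon$ factor in the first regime). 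The factor $K_{(2)}^\epsilon$ arising from the lemma can be replaced by $K_{(3)}^\epsilon$, since in the ordered proof the divisor losses come from $K_3,K_4$. Finally I would integrate in $\vec\kappa$: by Cauchy--Schwarz, $\int\langle\kappa_j\rangle^{-b}\langle\kappa_j\rangle^{b}\|\widetilde v^{(j)}_n(\kappa_j)\|_{\ell^2_n}\,\mathrm{d}\kappa_j\lesssim\|v^{(j)}\|_{X^{0,b}}$ because $2b>1$, and likewise for $\kappa_0$ with $b_1>1/2$. This yields \eqref{eq:tri1'}--\eqref{eq:tri1}.

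For \eqref{eq:tri2} the same reduction applies, now with \eqref{Strichartztype2} and the chosen pair $(j_0,j_1)$. The special input $j_0$ is measured in $\ell^\infty_n$, which is dominated by $\ell^q_n$. After Lemma~\ref{timeStrichart}, the $\kappa_{j_0}$-integral is handled by Hölder: $\int\|\widetilde v^{(j_0)}(\kappa)\|_{\ell^q_n}\,\mathrm{d}\kappa\le\|\langle\kappa\rangle^{-\gamma}\|_{L^{q'}}\|v^{(j_0)}\|_{X^{0,\gamma}_{q,q}}$. This is finite because $\gamma q'>1$, i.e.\ $\gamma>1-1/q$, and Minkowski lets me move $\ell^q_n$ inside $L^q_\kappa$ for $q\ge2$. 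The factor $\min(K_{j_0},K_{j_1})^\epsilon\le K_{(2)}^\epsilon$ then gives \eqref{eq:tri2}.

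The main obstacle, mild but delicate, is the bookkeeping in the step where $\gamma$ is pulled out. The bound on the correlation must be uniform over the whole frequency box, including the case where the output $K_4$ is the unique top frequency; there \eqref{eq:gamma3} requires $n_0\gg n_1+n_2+n_3$, so the implicit constants in $\sim$ versus $\gg$ must be adjusted (with the borderline comparable case falling under $K_{(1)}\sim K_{(2)}$). One also has to check that the non-resonance constraint used in Lemma~\ref{timeStrichart} matches $\mathcal{N}^{(3)}_{[123]}$ after relabeling the output as the fourth index with the correct conjugation sign. This sign is what makes $\Omega$ the right resonance function, so that the sets $E_\pm$ of Lemma~\ref{lem:db} apply.
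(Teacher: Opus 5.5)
Your proposal is correct and follows essentially the same route as the paper's proof: duality against $\mathbf{P}_{K_4}v\in X^{0,b_1}$, bounding $\gamma$ by $\Gamma_{K_1K_2K_3K_4}$ via Lemma~\ref{lem:gamma}, applying Lemma~\ref{timeStrichart} for fixed $\vec\kappa$ (using \eqref{Strichartztype1'}, \eqref{Strichartztype1}, or \eqref{Strichartztype2}), and then integrating in the modulation variables using $2b,2b_1>1$ and $\gamma q'>1$. Your remark that the proof of Lemma~\ref{timeStrichart} actually yields $K_{(3)}^{\epsilon}$, rather than the $K_{(2)}^{\epsilon}$ in its statement, is right, and it is what is needed to match \eqref{eq:tri1} exactly; note also that $|\widehat\chi|$ need only decay rapidly, not be Schwartz, for that lemma's proof to apply.
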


\begin{proof}
	By duality, it suffices to bound
\begin{multline*}
	\sup_{\|v^{(4)}\|_{X^{0,b_{1}}}\le1}
	\left|
	\sum_{\substack{n_{i}\sim K_{i}\\ \{n_{1},n_{3}\}\cap\{n_{2},n_4\}=\emptyset}}
	\gamma_{n_{1}n_{2}n_{3}n_4}
	\int
	\widehat{\chi}(\widetilde{\kappa}-\Omega(\vec n))\right.\\
	\left.\times\,
	\widetilde{v_{n_{1}}^{(1)}}(\kappa_{1})
	\ov{\widetilde{v_{n_{2}}^{(2)}}}(\kappa_{2})
	\widetilde{v_{n_{3}}^{(3)}}(\kappa_{3})
	\ov{\widetilde{v_{n_4}^{(4)}}}(\kappa_{4})
	\mathrm{d}\vec\kappa
	\right|,
\end{multline*}
	where $\widetilde{\kappa}=\kappa_1-\kappa_2+\kappa_3-\kappa_4$.
	To prove~\eqref{eq:tri1}, we rewrite this as
	\begin{multline}
		\label{eq:1}
		\sup_{\|v^{(4)}\|_{X^{0,b_{1}}}\le1}
		\Bigg|
		\sum_{\substack{n_{i}\sim K_{i}\\ \{n_{1},n_{3}\}\cap\{n_{2},n_4\}=\emptyset}}
		\gamma_{n_{1}n_{2}n_{3}n_4}
		\int
		\widehat{\chi}(\widetilde{\kappa}-\Omega(\vec n))
		\\
		\times
		f_{n_{1}}^{(1)}(\kappa_{1})
		\ov{f_{n_{2}}^{(2)}}(\kappa_{2})
		f_{n_{3}}^{(3)}(\kappa_{3})
		\ov{f_{n_4}^{(4)}}(\kappa_{4})
		\Bigl(\prod_{i=1}^{3}\langle\kappa_{i}\rangle^{-b}\Bigr)
		\frac{\mathrm{d}\vec\kappa}{\langle\kappa_{4}\rangle^{b_{1}}}
		\Bigg|,
	\end{multline}
	where
	\[
	f_{n_{4}}^{(4)}(\kappa_{4})=\langle\kappa_{4}\rangle^{b_{1}}\widetilde{v_{n_{4}}^{(4)}}(\kappa_{4}),
	\qquad
	f_{n_{i}}^{(i)}(\kappa_{i})=\langle\kappa_{i}\rangle^{b}\widetilde{v_{n_{i}}^{(i)}}(\kappa_{i}),\quad i\in\{1,2,3\}.
	\]
	For fixed~$\vec\kappa$, we apply~\eqref{Strichartztype1} with
	$\mathbf{a}_{n_{j}}^{(j)}:=f_{n_{j}}^{(j)}(\kappa_{j})$
	and control the correlation factor~$\gamma$ by~$\Gamma_{K_1K_2K_3K_4}$ using Lemma~\ref{lem:gamma}.
	The bound~\eqref{eq:tri1} then follows from H\"older's inequality and the assumption~$b,b_{1}>\tfrac12$.

	The proof of~\eqref{eq:tri2} is similar: we replace the~$X^{0,b}$ norm of~$v^{(j_0)}$ by its Fourier--Lebesgue norm and apply~\eqref{Strichartztype2} instead.
\end{proof}

\subsection{Partially resonant interactions}

We now turn to the partially resonant interactions, in which two of the spatial frequencies in the nonlinearity coincide. In this regime, the resonance function degenerates to a difference of two eigenvalues, and the modulation analysis requires a different approach.

The following elementary lemma controls bilinear sums restricted to level sets of the resonance function.

\begin{lemma}\label{Schurtype}
	For positive sequences $(\mathbf{h}_n),(\mathbf{a}_n)$ in~$\ell^2$, we have
	\[
	\sup_{m}\sum_{\substack{n_0\sim N_0,\, n_1\sim N_1\\
			n_0\neq n_1}} \mathbf{1}_{I_m}(|\lambda_{n_0}^2-\lambda_{n_1}^2|)\mathbf{h}_{n_0}\mathbf{a}_{n_1}\lesssim \Big(\frac{M}{K}\Big)^{\frac{1}{2}}\|\mathbf{h}_n\|_{\ell^2}\|\mathbf{a}_n\|_{\ell^2},
	\]
	where $K=\min(N_0,N_1)$ and $M=\max(N_0,N_1)$, and $I_m=\Big[\frac{m}{2}M,\frac{m+1}{2}M \Big]$.
\end{lemma}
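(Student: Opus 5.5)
The plan is Schur's test: bound the two marginal counts of the indicator kernel $\mathcal{K}_m(n_0,n_1):=\mathbf{1}_{I_m}(|\lambda_{n_0}^2-\lambda_{n_1}^2|)\mathbf{1}_{n_0\sim N_0}\mathbf{1}_{n_1\sim N_1}\mathbf{1}_{n_0\neq n_1}$. By Schur,
\[
\sum_{n_0,n_1}\mathcal{K}_m(n_0,n_1)\mathbf{h}_{n_0}\mathbf{a}_{n_1}\le \Big(\sup_{n_0}\sum_{n_1}\mathcal{K}_m\Big)^{\frac12}\Big(\sup_{n_1}\sum_{n_0}\mathcal{K}_m\Big)^{\frac12}\|\mathbf{h}\|_{\ell^2}\|\mathbf{a}\|_{\ell^2},
\]
so it suffices to show that one marginal is $O(1)$ and the other is $O(M/K)$, uniformly in $m$.

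Assume by symmetry that $N_1=M\ge N_0=K$; the other case is symmetric.

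First fix $n_1\sim M$ and count the $n_0\sim K$. The constraint $|\lambda_{n_0}^2-\lambda_{n_1}^2|\in I_m$ places $\lambda_{n_0}^2$ in at most two intervals of length $M/2$. Since $n\mapsto\lambda_n^2=n^2+\mu_n^2$ is increasing with gaps
\[
\lambda_{n+1}^2-\lambda_n^2\ge 2n+1-|\mu_{n+1}^2-\mu_n^2|\gtrsim n
\]
(using $\mu_n^2\lesssim n^{2(1-\alpha)}$, sublinear growth, and for large $n$ the increment of $\mu^2$ is $o(n)$), an interval of length $M/2$ contains at most $O(1+M/K)=O(M/K)$ values $\lambda_{n_0}^2$ with $n_0\sim K$.

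Next fix $n_0\sim K$ and count the $n_1\sim M$. Here the gaps are $\gtrsim M$, so an interval of length $M/2$ contains $O(1)$ values $\lambda_{n_1}^2$. Combining the two marginals in Schur's test gives $(M/K)^{1/2}$, uniformly in $m$.

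The only delicate point is the lower bound on the gaps $\lambda_{n+1}^2-\lambda_n^2$. I would deduce it from \eqref{eq:mu} directly: $\mu_{n+1}^2-\mu_n^2=2\sum_m(\gamma_{n+1,n+1,m,m}-\gamma_{nnmm})m^{-2\alpha}$, and by \eqref{eq:gamma'} each term is $O(\min(n,m))$, which crudely gives $|\mu_{n+1}^2-\mu_n^2|\lesssim n^{2(1-\alpha)}\ll n$ for $\alpha>1/2$. This suffices for large $n$, with small $n$ absorbed into constants.

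Alternatively, the gap bound can be avoided by arguing as in Lemma~\ref{lem:db}. Two indices in the same window satisfy $|n^2-n'^2|\le M/2+O(M^{2(1-\alpha)})$, hence $|n-n'|\lesssim M/(n+n')$, which yields the same two counts.
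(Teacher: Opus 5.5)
Your proposal is correct and follows the paper's proof: Schur's test with marginal counts $O(1)$ (fix $n_0$ and use that the $\lambda_{n_1}^2$, $n_1\sim M$, are separated by $\gtrsim M$) and $O(M/K)$ (fix $n_1$ and use that the $\lambda_{n_0}^2$, $n_0\sim K$, are separated by $\gtrsim K$ inside a window of length $M/2$). The paper gets the separation as in your final ``alternative'' remark, via $|\lambda_n^2-\lambda_{n'}^2|\ge |n^2-n'^2|-O(N^{2(1-\alpha)})$ for distinct $n,n'\sim N$; your consecutive-gap estimate on $\mu_{n+1}^2-\mu_n^2$ is also valid but not needed.
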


\begin{proof}
	Without loss of generality, assume $K=N_0<N_1=M$. For fixed~$n_0$ and distinct~$n_1,n_1'\sim N_1$,
	\[
	|\lambda_{n_1}^2-\lambda_{n_1'}^2|\geq |(n_1-n_1')(n_1+n_1')|-\mathcal{O}(N_1^{2(1-\alpha)})>\frac{N_1}{2}
	\]
	for sufficiently large~$N_1$, since $\alpha>\frac{1}{2}$. Therefore, for any~$m$,
	\[
	\sup_{n_0}\sum_{\substack{n_1\sim N_1\\
			n_1\neq n_0}}\mathbf{1}_{I_m}(|\lambda_{n_0}^2-\lambda_{n_1}^2|)\lesssim 1.
	\]
	Similarly, for fixed~$n_1$ and distinct~$n_0,n_0'\sim N_0$, we have
	\[
	|\lambda_{n_0}^2-\lambda_{n_0'}^2|>\frac{N_0}{2}
	\]
	for sufficiently large~$N_0$, hence
	\[
	\sup_{n_1}\sum_{\substack{n_0\sim N_0\\
			n_0\neq n_1}}\mathbf{1}_{I_m}(|\lambda_{n_0}^2-\lambda_{n_1}^2|)\lesssim \frac{N_1}{N_0}.
	\]
	The desired estimate then follows from Schur's test.
\end{proof}

Recall that the partially resonant nonlinearities are given by
\begin{align*}
	\big(\mathcal{N}_{(23)}^{(3)}\big)_n(Z,F,G)
	&:=\sum_{\substack{n_1,n_2\\
			n_1\neq n}} \gamma_{n_1 n_2 n_2 n}\, Z_{n_1}\ov{F_{n_2}}G_{n_2}\,,\\
	\big(Z\dotcirc\mathcal{N}_{[23]}^{(2)}(F,G)\big)_{n}
	&:=2Z_{n}\sum_{\substack{n_{2},n_{3}\\n_{2}\neq n_{3}}}
	\gamma_{n n n_{2}n_{3}}\, \overline{F_{n_{2}}}G_{n_{3}}\,.
\end{align*}

The next proposition provides the key estimates for these partially resonant terms. It relies on a modulation analysis that allows us to gain a power of the unpaired frequencies from the large size of the resonance function.

\begin{proposition}\label{linear:type1}
	Assume $b>\frac{1}{2}>b_1>\frac{1}{4}$ with $b+b_1<1$. Let $\alpha\in\big(\frac{1}{2},1\big)$ satisfy
	\[
	2(1-\alpha)<2b_1-\frac{1}{2}.
	\]
	Define
	\[
	A(N_0,N_1,N_2):= \min(N_0,N_1,N_2)\, \frac{(N_0\vee N_1)^{1-2b_1}}{(N_0 \wedge N_1)^{\frac{1}{2}}}\,.
	\]
	Then for all $Z,F,G\in X^{0,b}$,
	\begin{multline}
		\label{resonant:1}
		\left\|
		\chi(t)\,\mathbf{P}_{N_0}\mathcal{N}_{(23)}^{(3)}(\mathbf{P}_{N_1}Z,\mathbf{P}_{N_2}F,\mathbf{P}_{N_2}G)
		\right\|_{X^{0,-b_1}}
		\lesssim A(N_0,N_1,N_2)\\
		\times\|\mathbf{P}_{N_1}Z\|_{X^{0,b}}\|\mathbf{P}_{N_2}F\|_{X^{0,b}}\|\mathbf{P}_{N_2}G\|_{X^{0,b}}\,,
	\end{multline}
	and
	\begin{multline}\label{resonant:2}
		\left\|
		\chi(t)\,\mathbf{P}_{N_1}
        \big(
		\mathbf{P}_{N_1}Z\dotcirc\mathcal{N}_{[23]}^{(2)}(\mathbf{P}_{N_2}F,\mathbf{P}_{N_3}G)
		\big)
		\right\|_{X^{0,-b_1}}
		\lesssim A(N_2,N_3,N_1)
		\\
		\times\|\mathbf{P}_{N_1}Z\|_{X^{0,b}}\|\mathbf{P}_{N_2}F\|_{X^{0,b}}\|\mathbf{P}_{N_3}G\|_{X^{0,b}}\,.
	\end{multline}
\end{proposition}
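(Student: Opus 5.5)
The plan is to prove \eqref{resonant:1} and \eqref{resonant:2} by duality and a modulation analysis that uses the non-degeneracy of the resonance function. Because of the pairing in $\mathcal{N}_{(23)}^{(3)}$, the two paired eigenvalues cancel. What remains is the difference of two distinct eigenvalues, which is large by Lemma~\ref{Schurtype}.

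\textbf{Setup for \eqref{resonant:1}.} We pair with $v^{(0)}$ satisfying $\|\mathbf{P}_{N_0}v^{(0)}\|_{X^{0,b_1}}\le 1$. We bound $\gamma_{n_1n_2n_2n_0}$ by $\min(N_0,N_1,N_2)$ using \eqref{eq:gamma'}. We then collect the paired factors into the scalar time function
\[
H(t):=\chi(t)\sum_{n_2\sim N_2}\overline{F_{n_2}(t)}G_{n_2}(t).
\]
The phases $\e^{\mp it\lambda_{n_2}^2}$ cancel inside $H$, so its twisted and ordinary Fourier transforms agree, and $\widehat H(\sigma)=\sum_{n_2}\int \widehat\chi(\sigma-\kappa_3+\kappa_2)\overline{\widetilde F_{n_2}(\kappa_2)}\widetilde G_{n_2}(\kappa_3)\,\mathrm d\kappa_2\mathrm d\kappa_3$. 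By Cauchy--Schwarz in $n_2$ and $b>\tfrac12$, we get $\|\langle\sigma\rangle^{b}\widehat H\|_{L^2_\sigma}+\|\widehat H\|_{L^1_\sigma}\lesssim \|F\|_{X^{0,b}}\|G\|_{X^{0,b}}$. For the $L^2$ bound, the weight $\langle\sigma\rangle^b$ is distributed onto $\langle\kappa_2\rangle^b$ or $\langle\kappa_3\rangle^b$.

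The dual form then becomes
\[
\sum_{n_0\neq n_1}\int \overline{\widetilde v^{(0)}_{n_0}(\kappa_0)}\,\widetilde Z_{n_1}(\kappa_1)\,\widehat H(\sigma)\,\mathbf 1_{\kappa_0=\kappa_1+\sigma+\Omega},\qquad \Omega:=\lambda_{n_1}^2-\lambda_{n_0}^2 .
\]
Since $n_0\neq n_1$ and $\alpha>\tfrac12$, we have $|\Omega|\gtrsim M:=N_0\vee N_1$ and $|\Omega|\in I_m$ for some $m\ge1$. In particular one of $|\kappa_0|,|\kappa_1|,|\sigma|$ is $\gtrsim mM$.

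\textbf{Case analysis.} We split into three cases according to which modulation dominates, and in each case we fix the dyadic block $I_m$ of $\Omega$.
\begin{enumerate}
\item If $|\kappa_0|\gtrsim mM$, the weight $\langle\kappa_0\rangle^{-b_1}$ applied against $f^{(0)}=\langle\kappa_0\rangle^{b_1}\widetilde v^{(0)}$ gives $(mM)^{-b_1}$. Its $L^2$ integration over a window of length $\sim mM$ gives a further factor, and we use $\widehat H\in L^1$ and $\widetilde Z\in L^1_{\kappa_1}$, the latter since $b>\tfrac12$.
\item If $|\kappa_1|\gtrsim mM$, we use $\langle\kappa_1\rangle^{-b}\le (mM)^{-b}\le(mM)^{-b_1}$.
\item If $|\sigma|\gtrsim mM$, we use $\langle\sigma\rangle^{b}\widehat H\in L^2$ together with $b>b_1$.
\end{enumerate}
In each case, the remaining spatial sum over $(n_0,n_1)$, restricted to $\Omega\in I_m$, is handled by Lemma~\ref{Schurtype}, which yields the factor $(M/K)^{1/2}$ with $K:=N_0\wedge N_1$. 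Collecting the powers should produce roughly $(M/K)^{1/2}M^{1/2-2b_1}m^{-2b_1+1/2}$. Here $b_1>\tfrac14$ makes the sum over $m\ge1$ converge, and the total gives $M^{1-2b_1}K^{-1/2}$, which matches $A$ up to the correlation factor.

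The condition $2(1-\alpha)<2b_1-\tfrac12$ enters where the perturbation $\mu_n^2=O(n^{2(1-\alpha)})$ blurs the level sets $I_m$. Concretely, we will need $|\lambda_{n_1}^2-\lambda_{n_0}^2|$ to be comparable to $|n_1^2-n_0^2|$ up to an error that can be absorbed by the gain $M^{\frac12-2b_1}$. I expect this point, together with the exact bookkeeping of the $m$-sum in the case where $\sigma$ carries the large modulation, to be the main obstacle. The factor $H$ couples the two $X^{0,b}$ norms of $F$ and $G$, and we can afford only $b_1<\tfrac12$ on the output, with $b+b_1<1$. My first attempt there is to put $H$ in $\langle\sigma\rangle^{b}L^2$ and $\widetilde Z$ in $L^1$, and to apply Cauchy--Schwarz in $\kappa_0$ on the window $|\kappa_0|\lesssim mM$.

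\textbf{Proof of \eqref{resonant:2}.} This is identical up to relabeling. Now the output index coincides with the $Z$ index $n_1$, and the unpaired indices are $n_2\neq n_3$. We therefore form $H(t)=\chi(t)\overline{Z_{n_1}}\,v^{(0)}_{n_1}$, summed over $n_1$, which plays the role of the paired product. The resonance is $\Omega=\lambda_{n_3}^2-\lambda_{n_2}^2$, and Lemma~\ref{Schurtype} is applied in $(n_2,n_3)$ with $K=N_2\wedge N_3$ and $M=N_2\vee N_3$. The correlation coefficient is again bounded by the minimum frequency. This yields the constant $A(N_2,N_3,N_1)$.
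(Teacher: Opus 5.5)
There is a genuine gap in the step that actually produces $A$. Your skeleton matches the paper's: duality, $\gamma\lesssim\min(N_0,N_1,N_2)$, Cauchy--Schwarz in the paired index, $|\Omega|\gtrsim M$, blocks $I_m$ with $1\le m\lesssim M$, a case split on the dominant modulation, and Lemma~\ref{Schurtype} in $(n_0,n_1)$. What fails is the summation over $m$.

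A per-block factor $(M/K)^{1/2}(mM)^{\frac12-2b_1}$ is \emph{not} summable. For $b_1<\tfrac34$ one has $\sum_{m\le C_0M}m^{\frac12-2b_1}\sim M^{\frac32-2b_1}$, so your bookkeeping yields $K^{-1/2}M^{\frac52-4b_1}$. This loses $M^{\frac32-2b_1}$ against $A$, and would destroy the decay used in Corollary~\ref{cor:pres1}. The claim that $b_1>\tfrac14$ makes the $m$-sum converge is false. The loss comes from how you treat the dominant variable. Cauchy--Schwarz over a region $\{|\kappa_0|\gtrsim mM\}$ costs $(mM)^{1/2}$, and these regions overlap for different $m$.

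Your case (1) also has a structural obstruction. Once $\widehat H$ and $\widetilde Z_{n_1}$ are spent in $L^1$, what remains is $f^{(0)}_{n_0}$ evaluated at the pair-dependent point $\kappa_1+\sigma-\Omega(n_0,n_1)$. That is a pointwise value of an $L^2$ function depending on $n_1$, so Lemma~\ref{Schurtype} cannot be applied. Reducing $\widehat H$ to its $L^1$ or weighted $L^2$ norm discards exactly the smoothing kernel $\widehat\chi$ that is needed.

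The missing mechanism is the following.
\begin{enumerate}
\item Keep $\widehat\chi(\widetilde\kappa-\Omega(\vec n))$, with $\widetilde\kappa=\kappa_1-\kappa_2+\kappa_3-\kappa_0$, explicit (equivalently, unfold $\widehat H$ back into $\kappa_2,\kappa_3$).
\item Localize $\widetilde\kappa$ to an enlarged window $J_m\supset I_m$ of length $\sim M$. The part with $\widetilde\kappa\notin J_m$ is $O(M^{-A})$ by the decay of $\widehat\chi$.
\item On $J_m$ the largest $|\kappa_j|$ is $\gtrsim mM$, which gives the factor $(mM)^{-b_1}$. In the cases $j\neq0$ this uses $b_1<b$ to upgrade the output weight to $\langle\kappa_0\rangle^{-b}$.
\item Apply Cauchy--Schwarz in that single variable $\kappa_j$ against $\|\widehat\chi(\widetilde\kappa-\Omega)\|_{L^2_{\kappa_j}}\lesssim1$. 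This costs $O(1)$, not $(mM)^{1/2}$.
\item What remains is $\|\mathbf 1_{J_m}(\widetilde\kappa)f_j\|_{L^2_{\kappa_j}}$, which no longer depends on the pair $(n_0,n_1)$ except through $n_j$. Lemma~\ref{Schurtype} therefore applies and gives $(M/K)^{1/2}$.
\item For fixed other variables, the windows in $\kappa_j$ have bounded overlap in $m$. Hence $\sum_m\|\mathbf 1_{J_m}(\widetilde\kappa)f_j\|^2\lesssim\|f_j\|^2$.
\end{enumerate}
Cauchy--Schwarz in $m$ then gives
\[
M^{-b_1}\Big(\sum_{m\le C_0M}m^{-2b_1}\Big)^{1/2}\Big(\frac MK\Big)^{1/2}\sim M^{\frac12-2b_1}\Big(\frac MK\Big)^{1/2}=\frac{M^{1-2b_1}}{K^{1/2}}.
\]
The $m$-sum \emph{diverges} like $M^{1-2b_1}$ because $b_1<\tfrac12$, and this growth is exactly where the factor $M^{1-2b_1}$ in $A$ comes from.

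Neither $b_1>\tfrac14$ nor $2(1-\alpha)<2b_1-\tfrac12$ is used in this proof. The blurring of $I_m$ by $\mu_n^2$ is already harmless for $\alpha>\tfrac12$ via Lemma~\ref{Schurtype}. Both hypotheses matter only downstream in Corollary~\ref{cor:pres1}; for instance, the second one makes the range $\tfrac34-b_1<s_1<\alpha-\tfrac12$ nonempty.
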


\begin{proof}
	To simplify notation, we write $Z=\mathbf{P}_{N_1}Z$, $F=\mathbf{P}_{N_2}F$, and $G=\mathbf{P}_{N_2}G$. We introduce
	\[
	\vec{\kappa}=(\kappa_0,\kappa_1,\kappa_2,\kappa_3),\qquad
	\widetilde{\kappa}:=\kappa_1-\kappa_2+\kappa_3-\kappa_0.
	\]
	We only prove~\eqref{resonant:1}, since the proof of~\eqref{resonant:2} is almost identical. By duality, we must show that for any $H=\mathbf{P}_{N_0}H\in L_{t,x}^2$ with $\|\widetilde{H}_n(\kappa)\|_{\ell_n^2 L_{\kappa}^2}\leq 1$,
	\begin{multline}
		\label{linear:type1-1}
		\int_{\mathbb{R}^4} \mathrm{d}\vec{\kappa} \sum_{\substack{n_0,n_1,n_2\\
				n_0\neq n_1}} \gamma_{n_0 n_1 n_2 n_2}\cdot \frac{\widehat{\chi}(\widetilde{\kappa}-\Omega(\vec{n}))}{\la\kappa_0\ra^{b_1}}\,\ov{\widetilde{H}_{n_0}}(\kappa_0)\,\widetilde{Z}_{n_1}(\kappa_1)\,\ov{\widetilde{F}_{n_2}}(\kappa_2)\,\widetilde{G}_{n_2}(\kappa_3) \\
		\lesssim A(N_0,N_1,N_2)\,\|Z\|_{X^{0,b}}\|F\|_{X^{0,b}}\|G\|_{X^{0,b}},
	\end{multline}
	where the sum runs over $n_0\sim N_0$, $n_1\sim N_1$, $n_2\sim N_2$.
	The absolute value of the left-hand side of~\eqref{linear:type1-1} is bounded by
	\begin{align*}
		\int_{\mathbb{R}^4}\mathrm{d}\vec{\kappa}\sum_{\substack{n_0,n_1\\
				n_0\neq n_1}}\min(N_0,N_1,N_2) \frac{|\widehat{\chi}(\widetilde{\kappa}-\Omega(\vec{n}))|}{\la\kappa_0\ra^{b_1}} |\widetilde{H}_{n_0}(\kappa_0)\,\widetilde{Z}_{n_1}(\kappa_1)|\, \|\widetilde{F}_{n_2}(\kappa_2)\|_{\ell_{n_2}^{2}}
		\|\widetilde{G}_{n_2}(\kappa_3)\|_{\ell_{n_2}^{2}}.
	\end{align*}
	We introduce
	\[
	\mathbf{a}_{n_1}(\kappa_1):=\la\kappa_1\ra^b|\widetilde{Z}_{n_1}(\kappa_1)|,\quad \mathbf{f}(\kappa_2):=\la\kappa_2\ra^b\|\widetilde{F}_{n_2}(\kappa_2)\|_{\ell_{n_2}^{2}},\quad \mathbf{g}(\kappa_3):= \la\kappa_3\ra^b\|\widetilde{G}_{n_2}(\kappa_3)\|_{\ell_{n_2}^{2}}.
	\]
	We need to estimate
	\begin{equation}
		\label{desired}
		\min(N_0,N_1,N_2)\cdot\int_{\mathbb{R}^4}\mathrm{d}\vec{\kappa}\;
		\frac{\mathbf{f}(\kappa_2)\,\mathbf{g}(\kappa_3)}{\la\kappa_2\ra^b\la\kappa_3\ra^b}
		\sum_{\substack{n_0,n_1\\
				n_0\neq n_1}}|\widehat{\chi}(\widetilde{\kappa}-\Omega(\vec{n}))|\,\frac{
			|\widetilde{H}_{n_0}(\kappa_0)|\,\mathbf{a}_{n_1}(\kappa_1)
		}{\la\kappa_0\ra^{b_1}\la\kappa_1\ra^b}\,.
	\end{equation}
	Set $K=\min(N_0,N_1)$ and $M=\max(N_0,N_1)$. For $m\in [1,C_0 M]$, recall that in Lemma \ref{Schurtype}, we define
	\[
	I_m:=\Big[\frac{m}{2}M,\frac{m+1}{2}M\Big].
	\]
	Since $\alpha>\frac{1}{2}$ and $n_0\neq n_1$, for sufficiently large~$M$,
	\[
	|\Omega(\vec{n})|=|n_0^2-n_1^2+\mu_{n_0}^2-\mu_{n_1}^2|\geq |(n_0-n_1)(n_0+n_1)|-\max(\mu_{n_0}^2,\mu_{n_1}^2)\geq \frac{m}{2}M.
	\]

	A direct application of Lemma~\ref{Schurtype} to the sum over~$n_0,n_1$ does not give a satisfactory estimate for~\eqref{desired}, and we need a finer decomposition. We carefully choose the order of integration and summation.
	Define
	\[
	J_m=\Big[\frac{m-1/2}{2}M, \frac{m+3/2}{2}M\Big]\supset I_m.
	\]
	We control~\eqref{desired} by
	\begin{align}\label{S_m+R_m}
		\min(K,N_2)\sum_{m=1}^{C_0 M}\int_{\mathbb{R}^4}\big(S_m(\vec{\kappa}) +R_m(\vec{\kappa})\big)\, \mathrm{d}\vec{\kappa},
	\end{align}
	where
	\[
	S_m(\vec{\kappa}):=\sum_{\substack{n_0,n_1\\
			n_0\neq n_1}}|\widehat{\chi}(\widetilde{\kappa}-\Omega(\vec{n}))|\,\mathbf{1}_{I_m}(\Omega(\vec{n}))\,\mathbf{1}_{J_m}(\widetilde{\kappa})\cdot\frac{
		|\widetilde{H}_{n_0}(\kappa_0)|\,\mathbf{a}_{n_1}(\kappa_1)\,
		\mathbf{f}(\kappa_2)\,\mathbf{g}(\kappa_3)}{\la\kappa_0\ra^{b_1}\la\kappa_1\ra^b\la\kappa_2\ra^b\la\kappa_3\ra^b},
	\]
	and
	\[
	R_m(\vec{\kappa}):=\sum_{\substack{n_0,n_1\\
			n_0\neq n_1}}|\widehat{\chi}(\widetilde{\kappa}-\Omega(\vec{n}))|\,\mathbf{1}_{I_m}(\Omega(\vec{n}))\,\mathbf{1}_{\mathbb{R}\setminus J_m}(\widetilde{\kappa})\cdot\frac{
		|\widetilde{H}_{n_0}(\kappa_0)|\,\mathbf{a}_{n_1}(\kappa_1)\,
		\mathbf{f}(\kappa_2)\,\mathbf{g}(\kappa_3)}{\la\kappa_0\ra^{b_1}\la\kappa_1\ra^b\la\kappa_2\ra^b\la\kappa_3\ra^b}.
	\]
	By the rapid decay of~$\widehat{\chi}$ away from the origin, for any $A>1$,
	\begin{align}\label{Rm}
		\int_{\mathbb{R}^4} \sum_{m=1}^{C_0 M}|R_m(\vec{\kappa})|\, \mathrm{d}\vec{\kappa} &\leq C_A M^{-A}\|\mathbf{a}_{n_1}(\kappa_1)\|_{L_{\kappa_1}^2\ell_{n_1}^2}
		\|\mathbf{f}(\kappa_2)\|_{L_{\kappa_2}^2}\|\mathbf{g}(\kappa_3)\|_{L_{\kappa_3}^2}\notag \\
		&\leq C_A M^{-A}\|Z\|_{X^{0,b}}\|F\|_{X^{0,b}}\|G\|_{X^{0,b}}.
	\end{align}
	It remains to estimate the main contribution $\sum_{m=1}^{C_0 M}\int_{\mathbb{R}^4}S_m(\vec{\kappa})\,\mathrm{d}\vec{\kappa}$.

	On~$J_m$, we have
	\[
	|\widetilde{\kappa}|=|\kappa_0-\kappa_1+\kappa_2-\kappa_3|\geq \frac{mM}{4}.
	\]
	Since $0<b_1<b$, for $\widetilde{\kappa}\in J_m$ we obtain
	\[
	\frac{1}{\la\kappa_0\ra^{b_1}\prod_{j=1}^3\la\kappa_j\ra^b}\lesssim \sum_{j=0}^3\frac{1}{(mM)^{b_1}}\cdot\frac{\la\kappa_j\ra^{b}}{\prod_{i=0}^3\la\kappa_i\ra^b}.
	\]
	This gives $S_m(\vec{\kappa})\lesssim \sum_{j=0}^3 S_m^{(j)}(\vec{\kappa})$, where
	\begin{multline*}
	S_m^{(j)}(\vec{\kappa}):=
	\frac{\mathbf{1}_{J_m}(\widetilde{\kappa})}{(mM)^{b_1}}\,\frac{\la\kappa_j\ra^b}{\prod_{i=0}^3\la\kappa_i\ra^b}\sum_{\substack{n_0\neq n_1}}|\widehat{\chi}(\widetilde{\kappa}-\Omega(\vec{n}))|\,\mathbf{1}_{I_m}(\Omega(\vec{n}))\\
	\times|\widetilde{H}_{n_0}(\kappa_0)|\,
	\mathbf{a}_{n_1}(\kappa_1)\cdot \mathbf{f}(\kappa_2)\,\mathbf{g}(\kappa_3).
\end{multline*}
	To estimate the integration, we first integrate in the~$\kappa_j$ variable using Cauchy--Schwarz, then carry out the sum over~$n_0\neq n_1$ via Lemma~\ref{Schurtype}, and finally integrate over the remaining variables~$\kappa_i$, $i\neq j$.
	Using $\|\widehat{\chi}(\widetilde{\kappa}-\Omega(\vec{n}))\|_{L_{\kappa_j}^2}\lesssim 1$ together with Lemma~\ref{Schurtype}, we obtain
	\begin{align*}
		\int_{\mathbb{R}}S_m^{(0)}(\vec{\kappa})\, \mathrm{d}\kappa_0 &\lesssim \frac{1}{(mM)^{b_1}}\Big(\frac{M}{K}\Big)^{\frac{1}{2}}
		\|\mathbf{1}_{J_m}(\widetilde{\kappa})\, \widetilde{H}_{n_0}(\kappa_0)\|_{L_{\kappa_0}^2\ell_{n_0}^2}\,\frac{\|\mathbf{a}_{n_1}(\kappa_1)\|_{\ell_{n_1}^2}\,
			\mathbf{f}(\kappa_2)\,\mathbf{g}(\kappa_3)
		}{\prod_{i\neq 0}\la\kappa_i\ra^b},\\
		\int_{\mathbb{R}}S_m^{(1)}(\vec{\kappa})\,\mathrm{d}\kappa_1 &\lesssim \frac{1}{(mM)^{b_1}}\Big(\frac{M}{K}\Big)^{\frac{1}{2}}
		\|\mathbf{1}_{J_m}(\widetilde{\kappa})\, \mathbf{a}_{n_1}(\kappa_1)\|_{L_{\kappa_1}^2\ell_{n_1}^2}\,\frac{\|\widetilde{H}_{n_0}(\kappa_0)\|_{\ell_{n_0}^2}\,
			\mathbf{f}(\kappa_2)\,\mathbf{g}(\kappa_3)
		}{\prod_{i\neq 1}\la\kappa_i\ra^b},\\
		\int_{\mathbb{R}}S_m^{(2)}(\vec{\kappa})\,\mathrm{d}\kappa_2 &\lesssim
		\frac{1}{(mM)^{b_1}}\Big(\frac{M}{K}\Big)^{\frac{1}{2}}\|\mathbf{1}_{J_m}(\widetilde{\kappa})\,\mathbf{f}(\kappa_2)\|_{L_{\kappa_2}^2}\,\frac{\|\widetilde{H}_{n_0}(\kappa_0)\|_{\ell_{n_0}^2}\,\|\mathbf{a}_{n_1}(\kappa_1)\|_{\ell_{n_1}^2}\,\mathbf{g}(\kappa_3)}{\prod_{i\neq 2}\la\kappa_i\ra^b},\\
		\int_{\mathbb{R}}S_m^{(3)}(\vec{\kappa})\,\mathrm{d}\kappa_3 &\lesssim
		\frac{1}{(mM)^{b_1}}\Big(\frac{M}{K}\Big)^{\frac{1}{2}}\|\mathbf{1}_{J_m}(\widetilde{\kappa})\,\mathbf{g}(\kappa_3)\|_{L_{\kappa_3}^2}\,\frac{\|\widetilde{H}_{n_0}(\kappa_0)\|_{\ell_{n_0}^2}\,\|\mathbf{a}_{n_1}(\kappa_1)\|_{\ell_{n_1}^2}\,\mathbf{f}(\kappa_2)}{\prod_{i\neq 3}\la\kappa_i\ra^b}.
	\end{align*}
	Summing over $m\leq C_0 M$, applying Cauchy--Schwarz, and integrating over the remaining variables, we obtain
	\begin{align*}
		&\sum_{j=0}^{3}\int_{\mathbb{R}^3}\prod_{i\neq j}\mathrm{d}\kappa_i \sum_{m=1}^{C_0 M}\int_{\mathbb{R}}S_m^{(j)}(\vec{\kappa})\, \mathrm{d}\kappa_j\\ &\lesssim \frac{M^{\frac{1}{2}-b_1}}{K^{\frac{1}{2}}}\Big(\sum_{m=1}^{C_0 M}\frac{1}{m^{2b_1}}\Big)^{\frac{1}{2}}\|\widetilde{H}_{n_0}(\kappa_0)\|_{\ell_{n_0}^2 L_{\kappa_0}^2}\,\|\mathbf{a}_{n_1}(\kappa_1)\|_{\ell_{n_1}^2 L_{\kappa_1}^2}\,\|\mathbf{f}(\kappa_2)\|_{L_{\kappa_2}^2}\,\|\mathbf{g}(\kappa_3)\|_{L_{\kappa_3}^2}\\
		&\lesssim
		\frac{M^{1-2b_1}}{K^{\frac{1}{2}}}\,\|Z\|_{X^{0,b}}\|F\|_{X^{0,b}}\|G\|_{X^{0,b}}.
	\end{align*}
	Combining this with~\eqref{S_m+R_m} and~\eqref{Rm} completes the proof.
\end{proof}

	
	A similar argument yields the following estimates for the non-resonant interactions in the regime where two frequencies are much larger than the others.

\begin{proposition}[High-high-very low estimates]\label{prop:highhighverylow}
	Assume $b>\frac{1}{2}>b_1>\frac{1}{4}$ with $b+b_1<1$. Let $\alpha\in \big(\frac{1}{2},1\big)$ satisfy
	\[
	2(1-\alpha)<2b_1-\frac{1}{2}.
	\]
	Then for all $F_1,F_2,F_3\in X^{0,b}$, if $N_{(3)}\leq N_{(1)}^{\epsilon_0}\sim N_{(2)}^{\epsilon_0}$ with $\epsilon_0<2^{-10}$, we have
	\[
	\left\|
	\chi(t)\,\mathbf{P}_{N_0}\mathcal{N}_{[123]}^{(3)}(\mathbf{P}_{N_1}F_1,\mathbf{P}_{N_2}F_2,\mathbf{P}_{N_3}F_3)
	\right\|_{X^{0,-b_1}}\leq A'(N_0,N_1,N_2,N_3)\prod_{j=1}^3\|\mathbf{P}_{N_j}F_j\|_{X^{0,b}},
	\]
	where
	\[
	A'(N_0,N_1,N_2,N_3)\lesssim N_{(1)}^{\frac{1}{2}-2b_1}N_{(3)}^{\frac{1}{2}}N_{(4)}^{\frac{1}{2}}.
	\]
\end{proposition}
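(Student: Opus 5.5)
The plan is to follow the proof of Proposition~\ref{linear:type1} almost verbatim, with the pair of high frequencies now playing the role of the unpaired pair $(n_0,n_1)$ there, and the two very low frequencies playing the role of the paired index $n_2$. By symmetry of $\gamma$ and of the roles of the inputs in $\mathcal{N}_{[123]}^{(3)}$ (including the dual function $H=\mathbf{P}_{N_0}H$ at index $0$), we may assume the two top frequencies are carried by indices $a,b\in\{0,1,2,3\}$ and the two very low ones by $c,d$, with $N_c,N_d\le N_{(3)}\le N_{(1)}^{\epsilon_0}$. We bound $\gamma$ by $\min(N_0,\dots,N_3)\lesssim N_{(4)}$ via Lemma~\ref{lem:gamma}. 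We then dualize exactly as in \eqref{linear:type1-1}.

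\textbf{Step 1: splitting into cases.} Write $\Omega(\vec n)=\pm(\lambda_{n_a}^2-\lambda_{n_b}^2)\pm(\lambda_{n_c}^2-\lambda_{n_d}^2)$, where the low part is $O(N_{(3)}^2)\le O(N_{(1)}^{2\epsilon_0})$. Two cases arise.

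\emph{Case $\lambda^2_{n_a}$ and $\lambda^2_{n_b}$ enter with the same sign.} Then $|\Omega|\gtrsim N_{(1)}^2$, and the bound follows trivially: pay $\langle\Omega\rangle^{-b_1}\lesssim N_{(1)}^{-2b_1}$ and sum crudely.

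\emph{Case of opposite signs.} This is the main case. The non-resonance condition $\{n_1,n_3\}\cap\{n_0,n_2\}=\emptyset$ forces $n_a\neq n_b$ (the high pair sits on opposite sides), so $|\lambda_{n_a}^2-\lambda_{n_b}^2|\ge |n_a-n_b|(n_a+n_b)-O(N_{(1)}^{2(1-\alpha)})$. Since $N_{(3)}^2\ll N_{(1)}$, this gives $|\Omega(\vec n)|\in I_m$ with $m\ge1$ and $M=N_{(1)}$.

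\textbf{Step 2: modulation decomposition.} For fixed low indices $(n_c,n_d)$, we run the decomposition $S_m+R_m$ of Proposition~\ref{linear:type1}. $R_m$ is negligible by the decay of $\widehat\chi$. For $S_m$, we gain $(mM)^{-b_1}$ by placing the large modulation on one $\kappa_j$. We integrate that variable by Cauchy--Schwarz and sum over $n_a\neq n_b$ using Lemma~\ref{Schurtype}. Here $K\sim M$, so the Schur factor is $O(1)$. Summing $m\le C_0M$ then gives the factor $M^{1/2-b_1}(\sum_m m^{-2b_1})^{1/2}\lesssim M^{1-2b_1}$.

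Let us check the claimed exponent $N_{(1)}^{1/2-2b_1}$. I expect it to come from a refinement of this step. Instead of summing over all $m$, one uses Cauchy--Schwarz differently: for fixed $(n_c,n_d)$ and $m$, the set of pairs $(n_a,n_b)$ is a union of $O(1)$-thick shells indexed by $n_a-n_b\approx m/2$. Applying $\ell^2$ orthogonality in $m$ to the high inputs, rather than a crude sum, should give $\sup_m$ in place of $\sum_m$. One then carries $\langle\kappa\rangle^{-b_1}$ together with the $X^{0,b}$ weights and uses $\sum_m (mM)^{-2b_1}\cdot M$ bounded in the correct norm. I anticipate that this bookkeeping, namely arranging the order of $\kappa$-integration and $m$-summation so that the bound is $M^{1/2-2b_1}$ rather than $M^{1-2b_1}$, is the main obstacle. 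I would first try putting the weight on whichever $\kappa_j$ carries $\langle\kappa_j\rangle\gtrsim mM$ and using $b+b_1<1$ to absorb the remaining $m$-sum.

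\textbf{Step 3: summing over the low indices.} Finally we sum over the low indices $(n_c,n_d)$ by Cauchy--Schwarz in $\ell^2$. This costs $(\#\{n_c\})^{1/2}(\#\{n_d\})^{1/2}\lesssim N_{(3)}^{1/2}N_{(4)}^{1/2}$ after absorbing the factor $\gamma\lesssim N_{(4)}$, possibly trading it against a square root. Combining the three steps gives $A'\lesssim N_{(1)}^{1/2-2b_1}N_{(3)}^{1/2}N_{(4)}^{1/2}$.
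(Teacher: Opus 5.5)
Your route is the paper's. You dualize, bound $\gamma$ by the smallest frequency, and let the two comparable top frequencies play the role of the unpaired pair $(n_0,n_1)$ of Proposition~\ref{linear:type1}. You then run the $S_m/R_m$ decomposition with a Schur bound whose constant is $O(1)$, and pay $\ell^1\to\ell^2$ on the two low indices. The shift $\pm(\lambda_{n_c}^2-\lambda_{n_d}^2)=O(N_{(1)}^{2\epsilon_0})$ is harmless, as you note.

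However, the ``main obstacle'' you identify in Step~2 does not exist; it comes from a bookkeeping slip. In Proposition~\ref{linear:type1}, the prefactor $M^{1/2-b_1}K^{-1/2}$ is $M^{-b_1}$ times the Schur constant $(M/K)^{1/2}$. You correctly observe that here the Schur constant is $O(1)$, but you then keep the $M^{1/2}$ and drop the $K^{-1/2}$. The per-shell factor is simply $(mM)^{-b_1}$. Cauchy--Schwarz in $m$, using the bounded overlap of the $J_m$, then gives directly
\[
M^{-b_1}\Big(\sum_{m\le C_0M}m^{-2b_1}\Big)^{1/2}\lesssim M^{-b_1}\,M^{\frac12-b_1}=M^{\frac12-2b_1}.
\]
This is also what one reads off from $A(N_0,N_1,N_2)$ when $N_0\sim N_1$. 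No orthogonality in $m$ or $\sup_m$ device is needed. As sketched, that device would not help anyway: $\big(\sum_{m}(mM)^{-2b_1}M\big)^{1/2}\sim M^{1-2b_1}$ again.

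Two further corrections.

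First, the same-sign case. This is $|\Omega|\sim N_{(1)}^2$, e.g.\ top pair $(n_0,n_2)$, which the paper does not write out. An unconstrained sum over the two top indices after paying $N_{(1)}^{-2b_1}$ costs a full $N_{(1)}$ and only gives $N_{(1)}^{1-2b_1}$, so ``sum crudely'' does not suffice. You still need the shell argument, now with $O(M)$ shells of length $M$ at height $\sim M^2$. The Schur constant is again $O(1)$, the per-shell gain is $M^{-2b_1}$, and Cauchy--Schwarz over the $O(M)$ shells gives $M^{1/2-2b_1}$.

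Second, in Step~3 the factor $\gamma\lesssim N_{(4)}$ cannot be ``absorbed''. The $\ell^1\to\ell^2$ cost $(N_{(3)}N_{(4)})^{1/2}$ comes on top of it, so the argument proves $N_{(1)}^{1/2-2b_1}N_{(3)}^{1/2}N_{(4)}^{3/2}$. The paper's own sketch has the same bookkeeping: it multiplies $\min(N_0,\dots,N_3)$ by $N_2^{1/2}N_3^{1/2}$. The extra factor $N_{(4)}\le N_{(1)}^{\epsilon_0}$ is harmless in Case~1c of Corollary~\ref{cor:randomop}, where the proposition is applied. Still, you should state the bound you actually obtain rather than claim the factor disappears.
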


\begin{proof}
	The proof is very similar to that of Proposition~\ref{linear:type1}, so we only sketch the differences. To fix ideas, assume $N_0\sim N_1$ and $N_3\leq N_2\leq N_0^{\epsilon_0}$.

	By duality, it suffices to show that for any $H=\mathbf{P}_{N_0}H\in L_{t,x}^2$ with $\|\widetilde{H}_n(\kappa)\|_{\ell_n^2 L_{\kappa}^2}\leq 1$,
	\begin{align*}
		&\int_{\mathbb{R}^4}\mathrm{d}\vec{\kappa}\sum_{\substack{n_0,n_1,n_2,n_3\\
				\{n_0,n_2\}\cap \{n_1,n_3\}=\emptyset
		}}\gamma_{n_0 n_1 n_2 n_3}\cdot\frac{\widehat{\chi}(\widetilde{\kappa}-\Omega(\vec{n}))}{\langle\kappa_0\rangle^{b_1}}\,\ov{\widetilde{H}_{n_0}}(\kappa_0)\prod_{j=1}^3\widetilde{F}^{\pm}_{n_j}(\kappa_j) \\
		&\qquad\lesssim A'(N_0,N_1,N_2,N_3)\prod_{j=1}^3\|F_j\|_{X^{0,b}},
	\end{align*}
	where $F_j^+=F_j$ and $F_j^-=\ov{F}_j$.
	We introduce
	\[
	\mathbf{a}_{n_j}(\kappa_j)=\langle\kappa_j\rangle^b|\widetilde{F}_{n_j}(\kappa_j)|, \quad j=1,2,3.
	\]
	We need to estimate
	\begin{align*}
		\min(N_0,N_1,N_2,N_3)\cdot\int_{\mathbb{R}^4}\mathrm{d}\vec{\kappa}\sum_{\substack{n_0,n_1,n_2,n_3\\
				\{n_0,n_2\}\cap\{n_1,n_3\}=\emptyset
		}}|\widehat{\chi}(\widetilde{\kappa}-\Omega(\vec{n}))|\cdot\frac{|\widetilde{H}_{n_0}(\kappa_0)|}{\langle\kappa_0\rangle^{b_1}}\prod_{j=1}^3\frac{\mathbf{a}_{n_j}(\kappa_j)}{\langle\kappa_j\rangle^b}.
	\end{align*}
	With $K=\min(N_0,N_1)\sim M=\max(N_0,N_1)$, we introduce the same intervals $I_m=[mM/2, (m+1)M/2]$ for $m\in[1,C_0 M]$ and the slight enlargement~$J_m$. We decompose as before into $S_m(\vec{\kappa})$ and $R_m(\vec{\kappa})$, and further split
	\[
	S_m(\vec{\kappa})\lesssim \sum_{j=0}^3 S_m^{(j)}(\vec{\kappa}),
	\]
	where
\begin{multline*}
		S_m^{(j)}(\vec{\kappa}):=\frac{\mathbf{1}_{J_m}(\widetilde{\kappa})}{(mM)^{b_1}}\,\frac{\langle\kappa_j\rangle^b}{\prod_{i=0}^3 \langle\kappa_i\rangle^{b}}\!\!\!\!\sum_{\substack{n_0,n_1,n_2,n_3\\
				\{n_0,n_2\}\cap\{n_1,n_3\}=\emptyset
		}}\!\!\!\!
		|\widehat{\chi}(\widetilde{\kappa}-\Omega(\vec{n}))|\,\mathbf{1}_{I_m}(\Omega(\vec{n}))\\
		\times|\widetilde{H}_{n_0}(\kappa_0)|\prod_{j=1}^3\mathbf{a}_{n_j}(\kappa_j).
	\end{multline*}
	To estimate $\int_{\mathbb{R}^4}S_m^{(j)}(\vec{\kappa})\,\mathrm{d}\vec{\kappa}$, we first integrate the~$\kappa_j$ variable using Cauchy--Schwarz, then sum over~$n_0\neq n_1$, then over~$n_2,n_3$, and finally integrate over the remaining~$\kappa_i$, $i\neq j$. For the sum over~$n_0\neq n_1$, we use the following variant of Lemma~\ref{Schurtype}:
	\begin{align}\label{Schurtypevariant}
		\sup_{\substack{m\in[1,C_0 M]\\
				|z|\leq M^{\epsilon_0}
		}}\sum_{\substack{n_0\sim M,\, n_1\sim M\\
				n_0\neq n_1
		}} \mathbf{1}_{I_m}(|\lambda_{n_0}^2-\lambda_{n_1}^2-z|)\,\mathbf{h}_{n_0}\mathbf{a}_{n_1}\lesssim \|\mathbf{h}_n\|_{\ell^2}\|\mathbf{a}_n\|_{\ell^2}.
	\end{align}
	Indeed, since $M^{\epsilon_0}\ll M$, the same argument as in Lemma~\ref{Schurtype} gives
	\[
	|\lambda_{n_0}^2-\lambda_{n_1}^2-z|>\frac{M}{2}
	\]
	for sufficiently large~$M$, and Schur's test applies with a bounded kernel.

	Finally, we sum over~$n_2,n_3$ using Cauchy--Schwarz in~$\ell^2$, which costs an extra factor of~$N_2^{1/2}N_3^{1/2}$ through the bound
	\[
	\|\mathbf{1}_{n_j\sim N_j}\mathbf{a}_{n_j}(\kappa_j)\|_{\ell_{n_j}^1}\lesssim N_j^{1/2}\|\mathbf{a}_{n_j}(\kappa_j)\|_{\ell_{n_j}^2}, \quad j=2,3.
	\]
	Combining these estimates yields the claimed bound.
\end{proof}

As a consequence of Propositions~\ref{linear:type1} and~\ref{prop:highhighverylow}, we obtain the following global trilinear estimates for the partially resonant nonlinearities.

\begin{corollary}\label{cor:pres1}
	Assume $b>\frac{1}{2}>b_1>\frac{1}{4}$ with $b+b_1<1$. Let $\alpha\in\big(\frac{1}{2},1\big)$ satisfy
	\[
	2(1-\alpha)<2b_1-\frac{1}{2}.
	\]
	Then for any $\frac{3}{4}-b_1<s_1<\alpha-\frac{1}{2}$ and all $z\in X^{0,b}$, $F,G\in X^{s_1,b}$,
	\begin{align}\label{pres1-1}
		\Big\|
		\chi(t)\,\mathcal{N}_{(23)}^{(3)}(z,F,G)
		\Big\|_{X^{0,-b_1}}\lesssim_{\chi} \|z\|_{X^{0,b}}\|F\|_{X^{s_1,b}}\|G\|_{X^{s_1,b}}.
	\end{align}
	Similarly,
	\begin{align}\label{pres1-2}
		\Big\|
		\chi(t)\,\mathcal{N}_{(23)}^{(3)}(F,G,z)
		\Big\|_{X^{0,-b_1}}
		+
		\Big\|
		\chi(t)\,\mathcal{N}_{(23)}^{(3)}(F,z,G)
		\Big\|_{X^{0,-b_1}}
		\lesssim_{\chi} \|z\|_{X^{0,b}}\|F\|_{X^{s_1,b}}\|G\|_{X^{s_1,b}}.
	\end{align}
\end{corollary}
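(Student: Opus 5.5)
We decompose all three inputs and the output dyadically, $z=\sum_{N_1}\mathbf{P}_{N_1}z$, $F=\sum_{N_2}\mathbf{P}_{N_2}F$, $G=\sum_{N_2'}\mathbf{P}_{N_2'}G$, with output localized by $\mathbf{P}_{N_0}$. Since the pairing $n_2=n_3$ forces the two paired inputs to the same frequency, only the diagonal $N_2'\sim N_2$ contributes (up to neighbouring dyadic blocks). For \eqref{pres1-1} we then apply \eqref{resonant:1} of Proposition~\ref{linear:type1} to each block. This gives the bound
\[
A(N_0,N_1,N_2)\,N_2^{-2s_1}\,\|\mathbf{P}_{N_1}z\|_{X^{0,b}}\|\mathbf{P}_{N_2}F\|_{X^{s_1,b}}\|\mathbf{P}_{N_2}G\|_{X^{s_1,b}} .
\]
It remains to sum over $N_0,N_1,N_2$, taking the $\ell^2$ norm in $N_0$.

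For the summation, write $K=\min(N_0,N_1)$ and $M=\max(N_0,N_1)$. We first treat the off-diagonal regime $M\gg K$. There, the $\gamma$ decay of Lemma~\ref{lem:gamma} (the high$\times$low case \eqref{eq:gamma3}, or Lemma~\ref{lem:highlowlowlow}) gives a factor $M^{-2}$ whenever $M\gg N_2$. When instead $M\lesssim N_2$, the factor $\min(K,N_2)K^{-1/2}M^{1-2b_1}N_2^{-2s_1}\le K^{1/2}N_2^{1-2b_1-2s_1}$ already decays geometrically, since $s_1>\frac34-b_1$ yields $1-2b_1-2s_1<-\frac12$. In the diagonal regime $N_0\sim N_1$ we have
\[
A\lesssim \min(N_1,N_2)\,N_1^{\frac12-2b_1}.
\]
If $N_2\le N_1$, the factor $N_2^{1-2s_1}N_1^{\frac12-2b_1}$ is acceptable: summing over $N_2\le N_1$ gives at most $N_1^{\max(1-2s_1,0)+\frac12-2b_1}$, up to a logarithm. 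This exponent is negative because $2s_1>\frac32-2b_1$ and $2b_1>\frac12$, so Schur's test in $(N_0,N_1)$ applies. If $N_2>N_1$, we get $N_1^{\frac32-2b_1}N_2^{-2s_1}$, and summing over $N_1<N_2$ gives $N_2^{\frac32-2b_1-2s_1}$, which is summable by the lower bound on $s_1$. In every regime the output blocks are almost orthogonal in $N_0$, so square summation in $N_0$ follows from Cauchy--Schwarz against $\|z\|_{X^{0,b}}$.

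For \eqref{pres1-2} the unweighted input $z$ sits in one of the paired slots, while $F$ or $G$ is the unpaired input $n_1$. The same modulation argument as in Proposition~\ref{linear:type1} applies with the roles of the slots permuted: Lemma~\ref{Schurtype} is applied to the pair $(n_0,n_1)$, and the paired $\ell^2$ norms are taken over the inputs $z$ and $G$. The block bound is again $A(N_0,N_1,N_2)$, but the weights are now $N_1^{-s_1}N_2^{-s_1}$ instead of $N_2^{-2s_1}$. The summation is redone with this weight split. When $N_2\lesssim N_1$, the weight $N_1^{-s_1}N_2^{-s_1}$ dominates $N_2^{-2s_1}$, so the earlier computation applies. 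When $N_2\gg N_1$, we use $\min(K,N_2)\le N_1$ and the $M^{-2}$ or geometric decay as before.

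The main obstacle is the diagonal regime $N_0\sim N_1\gg N_2$ in \eqref{pres1-2} when $z$ is paired. There we only have the weight $N_1^{-s_1}N_2^{-s_1}$ and the gain $N_1^{\frac12-2b_1}$, so we need $N_2^{1-s_1}N_1^{\frac12-2b_1-s_1}$ to be summable. This requires $\frac32-2b_1-2s_1<0$, which again follows from $s_1>\frac34-b_1$. The upper bound $s_1<\alpha-\frac12$ is only needed so that the hypotheses are non-vacuous, since $2(1-\alpha)<2b_1-\frac12$ implies $\frac34-b_1<\alpha-\frac12$. For small $N$, where Lemma~\ref{Schurtype} needs $M$ large, we use a crude bound, which is harmless.
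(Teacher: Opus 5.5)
Your proposal is correct and follows the paper's proof: dyadic decomposition, the block bound \eqref{resonant:1} (which applies verbatim to \eqref{pres1-2} with $F$ in the unpaired slot, giving the weight $N_1^{-s_1}N_2^{-s_1}$), the decay \eqref{eq:gamma3} to discard the configurations $\max(N_0,N_1)\gg N_2+\min(N_0,N_1)$, and a dyadic summation that closes exactly under $s_1>\frac34-b_1$. The only real difference is bookkeeping: the paper packages your case-by-case summation into the single coefficient bound $\min(N_0,N_1,N_2)^{1-2s_1}\max(N_0,N_1,N_2)^{\frac12-2b_1}$ (resp.\ $\min(N_0,N_1,N_2)^{\frac12-s_1}\max(N_0,N_1,N_2)^{1-2b_1-s_1}$). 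One small point: in the regime $N_2\gg N_1$ of \eqref{pres1-2} you must use $\min(K,N_2)K^{-1/2}=K^{1/2}\le N_1^{1/2}$; the cruder bound $\min(K,N_2)\le N_1$ set against $K^{-1/2}$ does not sum when $N_0\ll N_1\ll N_2$.
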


\begin{proof}
	We first prove~\eqref{pres1-1}. By dyadic decomposition and~\eqref{resonant:1}, the left-hand side is bounded by
	\begin{align*}
		\sum_{N_0,N_1,N_2}\frac{\max(N_0,N_1)^{1-2b_1}\min(N_0,N_1,N_2)}{\min(N_0,N_1)^{1/2}N_2^{2s_1}}\,\|\mathbf{P}_{N_1}z\|_{X^{0,b}}\|\mathbf{P}_{N_2}F\|_{X^{s_1,b}}\|\mathbf{P}_{N_2}G\|_{X^{s_1,b}}.
	\end{align*}
	Without loss of generality, we assume that the dyadic sum is restricted to the additional constraint :
	\begin{align}\label{res-constraint:dyadic}
		\max(N_0,N_1)\lesssim N_2+\min(N_0,N_1).
	\end{align}
	Indeed, if the constraint~\eqref{res-constraint:dyadic} is violated, the correlation bound~\eqref{eq:gamma3} allows us to replace the factor $\min(N_0,N_1,N_2)$ by $\max(N_0,N_1,N_2)^{-2}$, and the resulting dyadic sum converges.

	Now, we claim that, under the constraint \eqref{res-constraint:dyadic} and the conditions of $b_1,s_1$,
	\[
	\frac{\max(N_0,N_1)^{1-2b_1}\min(N_0,N_1,N_2)}{\min(N_0,N_1)^{1/2}N_2^{2s_1}} \lesssim \frac{\min(N_0,N_1,N_2)^{1-2s_1}}{\max(N_0,N_1,N_2)^{2b_1-1/2}},
	\]
	consequently, the dyadic summation converges provided $s_1>\frac{3}{4}-b_1$.
	\medskip
	
	Indeed, in the case $N_2\gtrsim \max(N_0,N_1)$, we have
	\begin{align*}
	\frac{\max(N_0,N_1)^{1-2b_1}\min(N_0,N_1,N_2) }{\min(N_0,N_1)^{1/2}N_2^{2s_1} }\sim &\frac{\max(N_0,N_1)^{1-2b_1}\min(N_0,N_1,N_2) }{\min(N_0,N_1,N_2)^{1/2}\max(N_0,N_1,N_2)^{2s_1} } \\
	\lesssim & \frac{\min(N_0,N_1,N_2)^{1/2} }{\max(N_0,N_1,N_2)^{2s_1+2b_1-1} },
	\end{align*}
where to the last step, we used the fact that $1-2b_1>0$. Now since $s_1>\frac{3}{4}-b_1$, we have $2s_1>\frac{1}{2}$, the desired inequality follows. Now if $N_2\ll \max(N_0,N_1)$, from \eqref{res-constraint:dyadic}, we must have $N_0\sim N_1\sim \max(N_0,N_1,N_2)$, hence
\begin{align*}
\frac{\max(N_0,N_1)^{1-2b_1}\min(N_0,N_1,N_2) }{\min(N_0,N_1)^{1/2}N_2^{2s_1} }\sim & \frac{\min(N_0,N_1,N_2)}{\max(N_0,N_1,N_2)^{2b_1-1/2}\min(N_0,N_1,N_2)^{2s_1} },
\end{align*}
as desired.
\medskip

	For~\eqref{pres1-2}, we may still assume the constraint  \eqref{res-constraint:dyadic}.  the factor~$N_2^{2s_1}$ is replaced by~$N_1^{s_1}N_2^{s_1}$, which gives the weaker bound
	\[
	\frac{\max(N_0,N_1)^{1-2b_1}\min(N_0,N_1,N_2)}{\min(N_0,N_1)^{1/2}N_1^{s_1}N_2^{s_1}} \lesssim \frac{\min(N_0,N_1,N_2)^{\frac{1}{2}-s_1} }{\max(N_0,N_1,N_2)^{s_1+2b_1-1}}.
	\]
Indeed, if $N_2\gtrsim \max(N_0,N_1)$, we have
\begin{align*}
\frac{\max(N_0,N_1)^{1-2b_1}\min(N_0,N_1,N_2) }{\min(N_0,N_1)^{1/2}N_1^{s_1}N_2^{s_1} }\lesssim & \frac{\max(N_0,N_1,N_2)^{1-2b_1}\min(N_0,N_1,N_2)^{1/2-s_1} }{\max(N_0,N_1,N_2)^{s_1}},
\end{align*}
since $1/2>b_1>\frac{1}{4}$ and $s_1+b_1>3/4$, we have $0<1-2b_1<s_1$, the desired estimate follows.  If $N_2\ll \max(N_0,N_1)$, we must have $N_0\sim N_1, N_2\sim \min(N_0,N_1,N_2)$, we have
\begin{align*}
\frac{\max(N_0,N_1)^{1-2b_1}\min(N_0,N_1,N_2) }{\min(N_0,N_1)^{1/2}N_1^{s_1}N_2^{s_1}}\lesssim & \frac{\max(N_0,N_1,N_2)^{1-2b_1}\min(N_0,N_1,N_2)^{1/2-s_1} }{\max(N_0,N_1,N_2)^{s_1}},
\end{align*}
and the desired estimate follows.
Finally, the condition $s_1>\frac{3}{4}-b_1$ ensures convergence of the dyadic sum in~$N_0,N_1,N_2$. The proof of Corollary \ref{cor:pres1} is now complete. 
\end{proof}
	
	\subsection{Deterministic operator bounds}

We prove in this section the deterministic multilinear estimates needed to close the fixed-point problem for~$\mathcal{T}^{N,\dag}_{n}$, the random averaging operator. Recall that
\[
\mathcal{N}_n^{(2)}
\left(
f^{(2)},f^{(3)}
\right)=2\sum_{n_2\neq n_3}\gamma_{nnn_2n_3}\ov{f_{n_2}^{(2)}}f_{n_3}^{(3)}
+2\sum_{m}\gamma_{nnmm}
\left(
f_{m}^{(2)}(0)\ov{f_{m}^{(3)}}(0)-\frac{1}{m^{2\alpha}}
\right).
\]
We now estimate the first part~$\mathcal{N}_{[23]}^{(2)}$.

\begin{proposition}\label{bilinear}
	Assume $q_j\in [2,\infty)$ and $\gamma_j\in(\frac{1}{q_j'},1)$ for $j=1,2,3$.
	Let $\gamma_0\in(\frac{1}{q_1},1)$, $\varphi\in \mathcal{S}(\R)$, $Z_n\in \widetilde{\mathcal{F}}L_{q_1}^{\gamma_1}$, $n\in\N\setminus\{0\}$, and let $(\beta_{2},\beta_{3})$ satisfy
	\[
	\beta_2,\beta_3>2(1-\alpha),\quad \beta_2+\beta_3>1+2(1-\alpha).
	\]
	Then for any $f^{(j)}\in X_{q_j,q_j}^{2,\gamma_j}\cap X_{q_j,q_j}^{\beta_j,\gamma_j}$ for $j=2,3$,
	\begin{align}\label{bilinear-1}
		\|\varphi(t)Z_n\cdot
		\left(\mathcal{N}_{[23]}^{(2)}\right)_n\left(f^{(2)},f^{(3)}\right)\|_{\widetilde{\mathcal{F}}L_{q_1}^{-\gamma_0}}\lesssim_{\varphi,q_j,\gamma_j}
		\|Z_n\|_{\widetilde{\mathcal{F}}L_{q_1}^{\gamma_1}}
		\prod_{i=2}^{3}\|f^{(i)}\|_{X_{q_i,q_i}^{\beta_i,\gamma_i}},
	\end{align}
	and
	\begin{align}\label{bilinear-2}
		\|\varphi(t)Z_n\cdot
		\left(
		\mathcal{N}_{[23]}^{(2)}\right)_n\left(f^{(2)},f^{(3)}
		\right)\|_{\widetilde{\mathcal{F}}L_{q_1}^{0}}\lesssim_{\varphi,q_j,\gamma_j} \|Z_n\|_{\widetilde{\mathcal{F}}L_{q_1}^{\gamma_1}}
		\prod_{i=2}^{3}\|f^{(i)}\|_{X_{q_i,q_i}^{2,\gamma_i}}.
	\end{align}
\end{proposition}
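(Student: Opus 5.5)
The plan is to reduce both estimates to a one-dimensional convolution estimate in the twisted modulation variable, with the spatial sums controlled by the correlation bound \eqref{eq:gamma'} and the counting of Lemma~\ref{lem:db}.

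Fix $n$. The product $\varphi(t)Z_n(t)\,\overline{f^{(2)}_{n_2}(t)}f^{(3)}_{n_3}(t)$, written in twisted Fourier variables, is a convolution. Since $Z_n$ carries the phase $\lambda_n^2$ and the bilinear factor carries $\lambda_{n_3}^2-\lambda_{n_2}^2$, we obtain
\[
\widetilde{(\cdots)}(\kappa)=\int \widehat\varphi\big(\kappa-\kappa_1+\kappa_2-\kappa_3-(\lambda_{n_3}^2-\lambda_{n_2}^2)\big)\widetilde Z_n(\kappa_1)\overline{\widetilde f^{(2)}_{n_2}(\kappa_2)}\widetilde f^{(3)}_{n_3}(\kappa_3)\,\mathrm d\kappa_1\mathrm d\kappa_2\mathrm d\kappa_3 .
\]
The shift $\lambda_{n_3}^2-\lambda_{n_2}^2$ plays the role of a resonance function.

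For \eqref{bilinear-2}, no modulation gain is needed. First use Young's inequality: $\widehat\varphi\in L^1$, together with H\"older in each $\kappa_j$ ($\langle\kappa_j\rangle^{-\gamma_j}\in L^{q_j'}$ because $\gamma_jq_j'>1$), bounds the $L^{q_1}$ norm by $\|Z_n\|_{\widetilde{\mathcal F}L^{\gamma_1}_{q_1}}\prod_{j=2,3}\|\langle\kappa\rangle^{\gamma_j}\widetilde f^{(j)}_{n_j}\|_{L^{q_j}}$. This works uniformly in the shift. Then sum over $n_2\neq n_3$ with $\gamma_{nnn_2n_3}\lesssim\min(n_2,n_3)$, and use the weights $n_j^{-2}$ and H\"older in $\ell^{q_j}$. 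The sum $\sum_{n_2,n_3}\min(n_2,n_3)(n_2n_3)^{-2}\big((n_2n_3)^{-2}\big)^{\frac1{q'}-1}$ converges for $q_j\geq2$.

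For \eqref{bilinear-1}, the weaker regularity $\beta_j$ must be compensated by the negative output weight $\langle\kappa\rangle^{-\gamma_0}$ (with $\gamma_0q_1>1$) and by the size of $\lambda_{n_3}^2-\lambda_{n_2}^2$. The steps are as follows.
\begin{enumerate}
\item Decompose $n_2\sim N_2$ and $n_3\sim N_3$ dyadically and restrict to $|\lambda_{n_3}^2-\lambda_{n_2}^2-k|\le1$ for $k\in\Z$.
\item On each piece, compute the $L^{q_1}$ norm in $\kappa$ after weighting by $\langle\kappa\rangle^{-\gamma_0}$. Here $\kappa$ lies near $k$ up to the modulations $\kappa_1,\kappa_2,\kappa_3$. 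Distribute $\langle k\rangle^{-\gamma_0}\lesssim\langle\kappa\rangle^{-\gamma_0}\prod_j\langle\kappa_j\rangle^{\gamma_0}$ where needed, or use $\|\langle\kappa\rangle^{-\gamma_0}\|_{L^{q_1}}<\infty$. This turns the $\kappa$-integration into a sum over $k$ with an $\ell^1_k$-type bound, without losing derivatives.
\item Sum over $(n_2,n_3)$ and $k$ using Lemma~\ref{lem:db}: each $k$-level set $E_-(k;N_2,N_3)$ has cardinality $\lesssim\min(N_{\min},N_{\max}^{2(1-\alpha)+\varepsilon})$.
\item Combine with H\"older in $\ell^{q_j}_{n_j}$ and the weights $n_j^{-\beta_j}$.
\end{enumerate}
The correlation factor $\min(N_2,N_3)$, the counting factor $N_{\max}^{2(1-\alpha)}$, and the loss in converting $\ell^{q}$ to $\ell^1$ then give a dyadic sum of the form
\[
\sum_{N_2,N_3}\min(N_2,N_3)\,\max(N_2,N_3)^{2(1-\alpha)+\varepsilon}N_2^{-\beta_2}N_3^{-\beta_3}.
\]
This converges exactly when $\beta_2,\beta_3>2(1-\alpha)$ and $\beta_2+\beta_3>1+2(1-\alpha)$. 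The $\varepsilon$ is absorbed into the strict inequalities.

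I expect step 2 to be the main obstacle. The output space has the negative weight $\langle\kappa\rangle^{-\gamma_0}$ in $L^{q_1}$ rather than $L^2$, so the decay must be transferred onto the sum over $k$ while keeping $q_j$-summability in $n_j$ (in $L^{q}$ Schur's test is unavailable and the $\ell^q$–$\ell^1$ conversion costs powers). First I would bound pointwise in $\kappa$ by a sum over $k$ of $\langle\kappa-k\rangle^{-A}$ times products of $L^{q_j'}$-dual pairings, then take the $L^{q_1}$ norm with the weight. If the counting loss in $\ell^q$ turns out worse than $N_{\max}^{2(1-\alpha)}$, I would fall back on the cruder count $\#E_-\lesssim N_{\min}$ in the high–low regime, where the numerology still closes.
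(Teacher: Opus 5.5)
Your plan is essentially the paper's proof. For \eqref{bilinear-2}, the paper uses Hausdorff--Young and H\"older in $t$, which is equivalent to your Young/H\"older argument in $\kappa$; however, your displayed weight should simply be $\min(n_2,n_3)(n_2n_3)^{-2}$ after $\ell^{q_j}\hookrightarrow\ell^\infty$, since $\min(n_2,n_3)(n_2n_3)^{-2+2/q}$ as written diverges for $2\le q\le 4$. For \eqref{bilinear-1}, the paper establishes exactly your steps 1 and 3 as the uniform bound $\sup_{\widetilde\kappa}\sum_{n_2\neq n_3}\gamma_{nnn_2n_3}|\widehat\varphi(\widetilde\kappa+\lambda_{n_2}^2-\lambda_{n_3}^2)|\lambda_{n_2}^{-\beta_2}\lambda_{n_3}^{-\beta_3}\lesssim 1$, via the same dyadic sum. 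The step you flag as the main obstacle is then immediate and uses no gain from the size of $\lambda_{n_3}^2-\lambda_{n_2}^2$, so the redistribution of $\langle k\rangle^{-\gamma_0}$ is unnecessary. Bound each $\lambda_{n_j}^{\beta_j}|\widetilde f^{(j)}_{n_j}(\kappa_j)|$ by its $\ell^\infty_{n_j}$ norm, which costs nothing relative to $\ell^{q_j}_{n_j}$, and pull out the uniform bound above. The four modulation integrals then decouple and close by H\"older, using $\langle\kappa\rangle^{-\gamma_0}\in L^{q_1}$ for the output and $\langle\kappa_j\rangle^{-\gamma_j}\in L^{q_j'}$ for the inputs.
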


\begin{proof}
	We first prove~\eqref{bilinear-1} (in the same spirit as~\cite{BCST24}, but much simpler).
	By duality, it suffices to show that for any $G_n\in \widetilde{\mathcal{F}}L_{q_1'}^{\gamma_0}$ with $\|G_n\|_{\widetilde{\mathcal{F}}L^{\gamma_0}_{q_1'}}\leq 1$,
	\begin{equation}\label{pf:bilinear-1}
		\int_{\R} \varphi(t)\ov{G}_n(t)\cdot Z_n(t)\cdot(\mathcal{N}_{[23]}^{(2)})_{n}(f^{(2)}(t),f^{(3)}(t))\, \dd t
		\lesssim
		\|Z_n\|_{\widetilde{\mathcal{F}}L_{q_1}^{\gamma_1}}
		\prod_{i=2}^{3}\|f^{(i)}\|_{X_{q_i,q_i}^{\beta_i,\gamma_i}}.
	\end{equation}
	Denote $\vec{\kappa}=(\kappa_0,\kappa_1,\kappa_2,\kappa_3)\in\R^4$, $\widetilde{\kappa}=\kappa_1-\kappa_2+\kappa_3-\kappa_0$, and
	\begin{align*}
		&\mathbf{a}_n^{(0)}(\kappa_0)=\la\kappa_0\ra^{\gamma_0}|\widetilde{G}_n(\kappa_0)|,\quad \mathbf{a}_n^{(1)}(\kappa_1)=\la\kappa_1\ra^{\gamma_1}|\widetilde{Z}_n(\kappa_1)|,\\
		&\mathbf{a}_{n_2}^{(2)}(\kappa_2)=\la\kappa_2\ra^{\gamma_2}|\widetilde{f}_{n_2}^{(2)}(\kappa_2)|,\quad \mathbf{a}_{n_3}^{(3)}(\kappa_3)=\la\kappa_3\ra^{\gamma_3}|\widetilde{f}_{n_3}^{(3)}(\kappa_3)|.
	\end{align*}
	It suffices to show that
	\begin{multline}
		\label{pf:bilinear-2}
		\int_{\R^4}\dd\vec{\kappa}\;
		\frac{\mathbf{a}_n^{(0)}(\kappa_0)\mathbf{a}_n^{(1)}(\kappa_1)}{\la\kappa_0\ra^{\gamma_0}\la\kappa_1\ra^{\gamma_1}}\cdot
		\sum_{n_2\neq n_3} \gamma_{nnn_2n_3}|\widehat{\varphi}(\widetilde{\kappa}-\Omega(\vec{n}))|\lambda_{n_2}^{-\beta_2}\lambda_{n_3}^{-\beta_3} \frac{\mathbf{a}_{n_2}^{(2)}(\kappa_2)\mathbf{a}_{n_3}^{(3)}(\kappa_3)}{\la\kappa_2\ra^{\gamma_2}\la\kappa_3\ra^{\gamma_3}}
		\\
		\lesssim
		\|\mathbf{a}_n^{(0)}(\kappa_0)\|_{L_{\kappa_0}^{q_1'}}\|\mathbf{a}_n^{(1)}(\kappa_1)\|_{L_{\kappa_1}^{q_1}}\|\mathbf{a}_{n_2}^{(2)}(\kappa_2)\|_{L_{\kappa_2}^{q_2}\ell_{n_2}^{q_2}}
		\|\mathbf{a}_{n_3}^{(3)}(\kappa_3)\|_{L_{\kappa_3}^{q_3}\ell_{n_3}^{q_3}}.
	\end{multline}
To prove this, we claim that :
	\begin{align}\label{claim:sum}
	\sup_{\vec{\kappa},n}\sum_{n_2\neq n_3}\gamma_{nnn_2n_3}|\widehat{\varphi}(\widetilde{\kappa}+\lambda_{n_2}^2-\lambda_{n_3}^2)|\lambda_{n_2}^{-\beta_2}\lambda_{n_3}^{-\beta_3}\lesssim 1.
	\end{align}
Indeed, for any fixed $n,\vec{\kappa}$, we bound the sum as
\begin{align*}
\sum_{N_2,N_3}\frac{\min(N_2,N_3)}{N_2^{\beta_2}N_3^{\beta_3}}\sum_{\substack{n_2\neq n_3\\
n_2\sim N_2, 
n_3\sim N_3 } } |\widehat{\varphi}(\widetilde{\kappa}+\lambda_{n_2}^2-\lambda_{n_3}^2)|.
\end{align*}
For fixed dyadic integers $N_2,N_3$, using the fact that $\widehat{\varphi}$ is rapidly decreasing, we have
\begin{align*}
\sum_{\substack{n_2\neq n_3\\
n_2\sim N_2, n_3\sim N_3
} }|\widehat{\varphi}(\widetilde{\kappa}+\lambda_{n_2}^2-\lambda_{n_3}^3)|\leq & \sum_{m\in \Z}\sup_{a\in (0,1]}|\widehat{\varphi}(\widetilde{\kappa}+m+a)|\sum_{\substack{n_2\neq n_3\\
n_2\sim N_2, n_3\sim N_3
 } }\mathbf{1}_{|\lambda_{n_2}^2-\lambda_{n_3}^2-m|\leq 1}\\
\lesssim & \max(N_2,N_3)^{2(1-\alpha)}\min(N_2,N_3)^{\epsilon},
\end{align*}
where we have used Lemma \ref{lem:db} in the last step. The claim \ref{claim:sum} follows from the convergent dyadic summation, thanks to the condition for $\beta_2,\beta_3$.

Now, under the numerical conditions $\gamma_j>\frac{1}{q_j'}$ for $j=1,2,3$ and $\gamma_0>\frac{1}{q_1}$, we establish the estimate~\eqref{pf:bilinear-2} follows from H\"older's inequality and the embedding~$\ell^{q_j}\hookrightarrow \ell^{\infty}$.
\medskip

Finally, we prove ~\eqref{bilinear-2}. By Hausdorff--Young, Minkowski, and the embedding $\widetilde{\mathcal{F}}L_q^{\gamma}\hookrightarrow L_t^{\infty}$ (valid when $\gamma>\frac{1}{q'}$),
	\begin{align*}
		\|\varphi(t)Z_n(\mathcal{N}_{[23]}^{(2)})_{n}(f^{(2)},f^{(3)}) \|_{\widetilde{\mathcal{F}}L_{q_1}^0}&\leq \|\varphi(t)Z_n(\mathcal{N}_{[23]}^{(2)})_{n}(f^{(2)},f^{(3)}) \|_{L_t^{q_1'}}\\
		&\lesssim \|\varphi(t)\|_{L_t^{q_1'}}\|Z_n(t)\|_{L_t^{\infty}}\prod_{j=2,3}\|n_j^2 f_{n_j}^{(j)}(t)\|_{\ell_{n_j}^{q_j}L_t^{\infty}}\\
		&\lesssim \|Z_n\|_{\widetilde{\mathcal{F}}L_{q_1}^{\gamma_1}}\|f^{(2)}\|_{X_{q_2,q_2}^{2,\gamma_2}}\|f^{(3)}\|_{X_{q_3,q_3}^{2,\gamma_3}}.
	\end{align*}
	This completes the proof of Proposition \ref{bilinear}.
\end{proof}

The following bounds are consequences of Proposition~\ref{bilinear} and interpolation.
\begin{corollary}\label{co:2-linear:1}
	Assume that $(s,q,\gamma,b,\delta)=(s_{\sigma},q_{\sigma},\gamma_{\sigma},b_{\sigma},\delta_{\sigma})$ satisfy the hierarchy in Definition \ref{hierarchy}.
	Let $(\widetilde{q},\widetilde{\gamma})\in\{(q,\gamma),(2,b)\}$.
	For sufficiently small $\sigma>0$, and
	$$\epsilon_0\in\big(1-\widetilde{q}(1-\widetilde{\gamma}), \frac{2s+2\alpha-3}{3}\big),$$ we have
	\begin{align*}
		\mathrm{(1)} \; &
		\|\chi_T(t)\,\mathcal{I}\big(\mathcal{N}^{(2)}(v_{M_2}^{(\mathrm{D})},v_{M_3}^{(\mathrm{D})})\dotcirc Z\big) \|_{X_{\widetilde{q},\widetilde{q}}^{0,\widetilde{\gamma}}}
		\lesssim T^{\sigma}\max(M_2,M_3)^{10\epsilon_0}(M_2 M_3)^{-(s+\alpha-\frac32)}\|Z\|_{X_{\widetilde{q},\widetilde{q}}^{0,\widetilde{\gamma}}},\\
		\mathrm{(2)} \; &
		\|\chi_T(t)\,\mathcal{I}\big(\mathcal{N}^{(2)}(v_{M_2}^{(\mathrm{C})},v_{M_3}^{(\mathrm{D})})\dotcirc Z\big) \|_{X_{\widetilde{q},\widetilde{q}}^{0,\widetilde{\gamma}}}\lesssim T^{\sigma}\max(M_2,M_3)^{10\epsilon_0}
		M_2^{-\frac1q}
		M_3^{-(s+\alpha-\frac32)} \|Z\|_{X_{\widetilde{q},\widetilde{q}}^{0,\widetilde{\gamma}}},\\
		\mathrm{(3)}  &
		\|\chi_T(t)\,\mathcal{I}\big(\mathcal{N}^{(2)}(v_{M_2}^{(\mathrm{D})},v_{M_3}^{(\mathrm{D})})\dotcirc Z\big) \|_{X_{\widetilde{q},\widetilde{q}}^{0,\widetilde{\gamma}}}
		\lesssim T^{\sigma}
		\max(M_2,M_3)^{10\epsilon_0}(M_2 M_3)^{-\frac{1}{q}}
		\|Z\|_{X_{\widetilde{q},\widetilde{q}}^{0,\widetilde{\gamma}}}.
	\end{align*}
\end{corollary}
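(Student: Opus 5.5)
The plan is to reduce each estimate to Proposition~\ref{bilinear} applied mode by mode, and then to recover the gain in $\max(M_2,M_3)$ and the factor $T^{\sigma}$ by interpolation. First note that $\mathcal{N}^{(2)}(v_{M_2},v_{M_3})\dotcirc Z$ acts diagonally in $n$. Hence the $X^{0,\widetilde\gamma}_{\widetilde q,\widetilde q}$ norm is an $\ell^{\widetilde q}_n$ sum of the one-dimensional quantities $\|\chi_T\mathcal{I}_n(Z_n\,\mathcal{N}_n^{(2)})\|_{\widetilde{\mathcal F}L^{\widetilde\gamma}_{\widetilde q}}$. It therefore suffices to prove a bound uniform in $n$ of the form
\[
\|\chi_T\mathcal{I}_n(Z_n\Theta_n)\|_{\widetilde{\mathcal F}L^{\widetilde\gamma}_{\widetilde q}}\lesssim T^{\sigma}C(M_2,M_3)\|Z_n\|_{\widetilde{\mathcal F}L^{\widetilde\gamma}_{\widetilde q}}.
\]
For this I use the standard one-dimensional Duhamel estimate in twisted Fourier--Lebesgue spaces, $\|\chi_T\mathcal{I}_nF\|_{\widetilde{\mathcal F}L^{\widetilde\gamma}_{\widetilde q}}\lesssim T^{\sigma'}\|F\|_{\widetilde{\mathcal F}L^{\widetilde\gamma-1+\sigma''}_{\widetilde q}}$, as in \cite{BCST24}. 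With this, the problem becomes a bound on $\chi_T Z_n\mathcal{N}_n^{(2)}$ in $\widetilde{\mathcal F}L^{-\gamma_0}_{\widetilde q}$, with $\gamma_0=1-\widetilde\gamma+$ slightly larger than $1/\widetilde q$.

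The constant part of $\mathcal{N}^{(2)}$, involving the initial data, is a time-independent multiplier. In the (D) case it is evaluated through $\mathcal{N}^{(2)}$ at $t=0$. By \eqref{R-bound2} and the $X^{0,b}\subset C_tL^2$ embedding it is bounded by $\min(M,n)$ times the product of the norms. Multiplication by a constant commutes with the weights, so this piece is harmless.

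For the non-resonant part $\mathcal{N}^{(2)}_{[23]}$ I would interpolate between the two bounds of Proposition~\ref{bilinear}:
\begin{itemize}
\item \eqref{bilinear-1}, with gain at regularity $\beta_2,\beta_3$;
\item \eqref{bilinear-2}, which costs two derivatives but lands in $\widetilde{\mathcal F}L^{0}$, so that the Duhamel step yields a positive power of $T$.
\end{itemize}
For a type (D) input I convert $X^{0,b}$ to $X^{\beta,\gamma}_{q,q}$ using frequency localization at $M_j$ and Hölder in $\kappa$. The only relevant information is $\|v\|_{X^{0,b}}\le M^{-s}$, which gives $M^{\beta-s}$ up to a loss $M^{1/2-1/q}$ from $\ell^2\to\ell^q$ (absorbed into $M^{\epsilon_0}$-type losses). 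For a type (C) input, \eqref{bddef:TypeC'} gives $M^{-\alpha+1/q+\delta}$ in $X^{0,\gamma}_{q,q}$ directly. I then choose $\beta_2=\beta_3$ just above $\frac12+(1-\alpha)$, that is $\beta_j=\frac32-\alpha+\epsilon_0$, so that $\beta_2+\beta_3>1+2(1-\alpha)$. This produces the factor $(M_2M_3)^{-(s+\alpha-3/2)}$ in (1), and in (2) the $M_2^{-1/q}$ type exponent after adjusting the (C) regularity. The window $\epsilon_0<\frac{2s+2\alpha-3}{3}$ guarantees that these exponents remain negative after the $10\epsilon_0$ losses. The lower bound $\epsilon_0>1-\widetilde q(1-\widetilde\gamma)$ is the room needed so that $\gamma_0>1/\widetilde q$ is compatible with the target $\widetilde\gamma$. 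Interpolating with \eqref{bilinear-2} with a small weight (of order $\sigma$) on the latter costs $\max(M_2,M_3)^{O(\sigma)}$, which is absorbed, and yields $T^{\sigma}$.

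The main obstacle is the exponent bookkeeping in the interpolation. The Duhamel operator on $\widetilde{\mathcal F}L^{\gamma}_q$ gains exactly one modulation derivative only at the endpoint. The $T^\sigma$ gain must therefore come from sacrificing a small amount of the modulation weight, and it has to be checked that the constraints on $\gamma_0,\gamma_j$ in Proposition~\ref{bilinear} stay satisfied under the hierarchy of Definition~\ref{hierarchy}, for both choices $(\widetilde q,\widetilde\gamma)\in\{(q,\gamma),(2,b)\}$. I would first do the $(2,b)$ case, where $\gamma_0=1-b+$ and $q_1=2$ makes the conditions transparent, and then repeat the argument with $q_1=q$.
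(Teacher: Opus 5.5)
Your architecture matches the paper's proof. It uses the Duhamel estimate with a $T$-gain, then interpolates between \eqref{bilinear-1} (weight $1-\epsilon_0$, with $\beta_j=\frac32-\alpha+$) and \eqref{bilinear-2} (weight $\epsilon_0$), and measures the type (C) input in $X^{0,\gamma}_{q,q}$. Your separate treatment of the constant (initial-data) part of $\mathcal{N}^{(2)}$, which the paper's proof passes over, is a sensible addition.

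\textbf{The type (D) step fails as written.} You cannot convert $X^{0,b}$ into an $\ell^qL^q$-based norm with modulation weight $\gamma$.
\begin{enumerate}
\item In $n$, going from $\ell^2$ to $\ell^q$ is free. The factor $M^{\frac12-\frac1q}$ is the cost of the reverse direction, and a loss of size $M^{\frac12-\sigma}$ could never be absorbed into $M^{10\epsilon_0}$ anyway.
\item In $\kappa$ the embedding is false. Since $\gamma-b=\frac12-\frac1q>0$, a modulation profile concentrated near $|\kappa|\sim K\gg1$ gives $\|\langle\kappa\rangle^{\gamma}\widetilde f\|_{L^q_\kappa}/\|\langle\kappa\rangle^{b}\widetilde f\|_{L^2_\kappa}\sim K^{\frac12-\frac1q}$. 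No H\"older argument can fix this.
\end{enumerate}
The remedy is what the paper does. Proposition~\ref{bilinear} lets each input carry its own pair $(q_j,\gamma_j)$, so take $(q_j,\gamma_j)=(2,b)$ for type (D) inputs and $(q,\gamma)$ only for type (C). Then only frequency localization enters, $\|\mathbf{P}_Kv\|_{X^{\beta,b}}\sim K^{\beta}\|\mathbf{P}_Kv\|_{X^{0,b}}$, and the two endpoints cost $K^{\beta_j}$ and $K^{2}$.

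\textbf{The modulation bookkeeping needs one correction.} After a Duhamel step with gain $T^{\sigma_T}$, the weight required on $Z_n\mathcal{N}^{(2)}_n$ is $-(1-\widetilde\gamma-\sigma_T)$. Duhamel never gains more than one modulation derivative, so the relevant exponent is at most $1-\widetilde\gamma$, and $1-\widetilde\gamma<\frac1{\widetilde q}$ because $\widetilde\gamma>\frac1{\widetilde q'}$. It is not larger than $\frac1{\widetilde q}$, as you write. Consequently \eqref{bilinear-1}, which needs $\gamma_0>\frac1{q_1}$, cannot close the Duhamel step by itself. The interpolation is needed for boundedness, not merely to produce a power of $T$. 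One applies \eqref{bilinear-1} with $\gamma_0=\frac{1-\widetilde\gamma-\sigma_T}{1-\epsilon_0}$, which exceeds $\frac1{\widetilde q}$ exactly when $\epsilon_0>1-\widetilde q(1-\widetilde\gamma-\sigma_T)$. That is the role of the lower bound on $\epsilon_0$, which you identify correctly later.

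The same computation matters for the case $(\widetilde q,\widetilde\gamma)=(q,\gamma)$. There $1-\gamma=\sigma-\sigma^{10}$, so the condition reads $\epsilon_0>\sigma^{9}+\sigma_T/\sigma$, and the time gain is limited to $\sigma_T<\sigma\epsilon_0$. An interpolation weight of order $\sigma$ therefore does not yield $T^{\sigma}$ in that case. The paper's proof shares this imprecision. A smaller positive power of $T$ still suffices for the later fixed-point arguments, after enlarging $C_1$.
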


\begin{proof}
	For sufficiently small $\sigma>0$, we fix the other small parameter $\epsilon_0$ such that
	\[
	1-\widetilde{q}(1-\widetilde{\gamma}-\sigma)<\epsilon_0<1,
	\]
	and set $\gamma_0=\frac{1-\widetilde{\gamma}-\sigma}{1-\epsilon_0}$ (so that $\gamma_0>\frac{1}{\widetilde{q}}$). Then
	\begin{align*}
		\big\|\mathcal{I}_{\chi_T}(Z\dotcirc \mathcal{N}^{(2)}(f^{(2)},f^{(3)}))\big\|_{X_{\widetilde{q},\widetilde{q}}^{0,\widetilde{\gamma}}} \lesssim T^{\sigma}\big\|\chi(t)Z\dotcirc\mathcal{N}^{(2)}(f^{(2)},f^{(3)})\big\|_{X_{\widetilde{q},\widetilde{q}}^{0,-(1-\epsilon_0)\gamma_0}}.
	\end{align*}
	By interpolation and Proposition~\ref{bilinear}, the right-hand side is bounded by
	\begin{equation}
		\label{eq:ast1}
		T^{\sigma_0}\|Z\|_{X_{\widetilde{q},\widetilde{q}}^{0,\widetilde{\gamma}}} \big(\|f^{(2)}\|_{X_{q_2,q_2}^{\beta_2,\gamma_2}}\|f^{(3)}\|_{X_{q_3,q_3}^{\beta_3,\gamma_3}}\big)^{1-\epsilon_0}\cdot \big(\|f^{(2)}\|_{X_{q_2,q_2}^{2,\gamma_2}} \|f^{(3)}\|_{X_{q_3,q_3}^{2,\gamma_3}}\big)^{\epsilon_0}.
	\end{equation}
	For~(1), we apply~\eqref{eq:ast1} with $\gamma_2=\gamma_3=b$, $q_2=q_3=2$, and $\beta_2=\beta_3=\frac{3}{2}-\alpha+$. Since $s>\frac{3}{2}-\alpha$, we obtain the bound
	\[
	T^{\sigma} (M_2 M_3)^{-(s+\alpha-\frac{3}{2})+10\epsilon_0}
	\|Z\|_{X_{\widetilde{q},\widetilde{q}}^{0,\widetilde{\gamma}}},
	\]
	where the exponent~$10\epsilon_0$ comes from the interpolation.
	
	For~(2), we apply~\eqref{eq:ast1} with $\gamma_2=\gamma$, $\gamma_3=b$, $q_2=q$, $q_3=2$, $\beta_2=\alpha-\frac{3}{q}+$, $\beta_3=\frac{3}{2}-\alpha+$. We obtain the bound
	\[
	T^{\sigma}M_2^{-\frac{1}{q}}M_3^{-(s+\alpha-\frac{3}{2})}\max(M_2,M_3)^{10\epsilon_0}\|Z\|_{X_{\widetilde{q},\widetilde{q}}^{0,\widetilde{\gamma}}}.
	\]
	A similar computation for~(3) yields
	\[
	T^{\sigma}\max(M_2,M_3)^{10\epsilon_0}(M_2 M_3)^{-\frac{1}{q}}\|Z\|_{X_{\widetilde{q},\widetilde{q}}^{0,\widetilde{\gamma}}}.
	\]
	This completes the proof.
\end{proof}

	
	\section{Multilinear estimates II: quintic interaction}\label{sec:quintilinear}

In this section, we estimate the quintic nonlinearity $\mathcal{N}^{(4)}(u)\dotcirc u$, which arises from the gauge transform and the normal form reduction. The key idea is to exploit the algebraic product structure of this term to reduce the problem to a pointwise-in-time bound for the quartic operator~$\mathcal{N}^{(4)}$.

\subsection{Reduction to a pointwise-in-time bound}

We begin by showing how the special structure of $\mathcal{N}^{(4)}(u)\dotcirc u$ reduces the quintic estimate to a pointwise-in-time bound for~$\mathcal{N}^{(4)}(u)$.
Recall that
\begin{multline*}
	\mathcal{N}_n^{(4)}(f^{(1)},f^{(2)},f^{(3)},f^{(4)})(t)
	\\
	=\sum_{\substack{m_1,m_2,m_3,m_4\\
			\{m_1,m_3\}\cap\{m_2,m_4\}=\emptyset}}
	\gamma_{nnm_4m_4}\gamma_{m_1m_2m_3m_4}
	\int_0^t \prod_{j=1}^4 (f_{m_j}^{(j)})^{\pm_j}(t')\, \dd t'.
\end{multline*}
Here $f^{\pm_j}=f$ for $j\in\{1,3\}$ and $f^{\pm_j}=\ov{f}$ for $j\in\{2,4\}$.
Establishing a pointwise-in-time bound for~$\mathcal{N}_n^{(4)}$ is analogous to the trilinear estimate, via a duality argument.

\begin{lemma}\label{lem:N4mapping}
	Let $q_0\in[2,\infty)$, $\gamma_0\in(\frac{1}{q_0'},1)$, $0<\delta_0<\frac{1-\gamma_0}{2}$, and $r_0=\big(\gamma_0-\frac{1}{q_0'}+2\delta_0\big)^{-1}$. Then for any $F\in X_{\infty,r_0}^{0,0}$, $u\in X_{q_0,q_0}^{0,\gamma_0}$, and $0<T<1$,
	\[
	\|\chi_T(t)\,\mathcal{I}(F\dotcirc u)_n(t)\|_{\widetilde{\mathcal{F}}L_{q_0}^{\gamma_0}}
	\lesssim
	T^{\delta_0+\frac{1}{r_0'}}\|\chi_1(t)F_n(t)\|_{L_t^{\infty}}\|u_n\|_{\widetilde{\mathcal{F}}L_{q_0}^{\gamma_0}}.
	\]
	Consequently,
	\[
	\|\chi_T(t)\,\mathcal{I}(F\dotcirc u)\|_{X_{q_0,q_0}^{0,\gamma_0}}
	\lesssim
	T^{\delta_0+\frac{1}{r_0'}} \|u\|_{X_{q_0,q_0}^{0,\gamma_0}}\|F\|_{\ell_n^{\infty}L_t^{\infty}}.
	\]
\end{lemma}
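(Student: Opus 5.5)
The estimate is diagonal in $n$, so I will fix $n$ and work with scalar functions of time. The second inequality then follows by taking the $\ell^{q_0}_n$ norm of the first and pulling out $\sup_n\|\chi_1F_n\|_{L^\infty_t}$. The plan is to place all the burden on the Fourier–Lebesgue properties of the truncated Duhamel operator in the twisted variable. The key observation is that multiplication by $\e^{it\lambda_n^2}$ turns $\widetilde{\mathcal{F}}L$ norms into ordinary $\mathcal{F}L$ norms. Under this conjugation, $\chi_T\mathcal{I}_n$ becomes the scalar operator $G\mapsto -i\chi_T(t)\int_0^tG(t')\,\dd t'$, applied to $G=F_n\cdot(\e^{it\lambda_n^2}u_n)$. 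Inserting $\chi_{T}$ cutoffs (legitimate since $\chi_T\chi_1=\chi_T$ for $T<1/2$), it suffices to bound $\|\chi_T\int_0^t\chi_{2T}(t')F_n(t')U(t')\,\dd t'\|_{\mathcal{F}L^{\gamma_0}_{q_0}}$ with $U:=\e^{it\lambda_n^2}u_n\in \mathcal{F}L^{\gamma_0}_{q_0}$.

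The first step is a standard one-dimensional lemma: for $\gamma_0\in(\frac1{q_0'},1)$ and any $\theta$ with $\gamma_0-1<-\theta$, we have
\[
\Big\|\chi_T(t)\int_0^tG\,\dd t'\Big\|_{\mathcal{F}L^{\gamma_0}_{q_0}}\lesssim T^{\theta'}\|G\|_{\mathcal{F}L^{\gamma_0-1+\theta}_{q_0}}.
\]
This is the Fourier–Lebesgue analogue of the $X^{s,b}$ Duhamel estimate. I will prove it by splitting $\widehat G$ into $|\tau|\lesssim T^{-1}$ and $|\tau|\gg T^{-1}$, writing $\int_0^t\e^{it'\tau}\dd t'=(\e^{it\tau}-1)/(i\tau)$, and using Young's inequality with $\widehat{\chi_T}$, as in \cite{BCST24}. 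The second step is to control $\|F_nU\|_{\mathcal{F}L^{\gamma_0-1+\theta}_{q_0}}$ with a negative exponent. Since $\gamma_0-1+\theta<0$, I will pass to a Lebesgue space in time. First, the dual Sobolev/Hausdorff–Young embedding gives $L^{r}_t\hookrightarrow\mathcal{F}L^{-\beta}_{q_0}$ whenever $r\le 2$ and $\beta q_0>1-q_0/r'$. Next, Hölder gives $\|\chi_{2T}F_nU\|_{L^r}\le \|\chi_1F_n\|_{L^\infty}\|U\|_{L^\infty}\|\chi_{2T}\|_{L^r}$ with $\|\chi_{2T}\|_{L^r}\sim T^{1/r}$. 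Finally, the embedding $\mathcal{F}L^{\gamma_0}_{q_0}\hookrightarrow L^\infty_t$ holds because $\gamma_0q_0'>1$. The resulting power of $T$ comes from the $L^r$ norm of the cutoff combined with the Duhamel gain.

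The main obstacle is the exponent bookkeeping: the powers must produce exactly $T^{\delta_0+1/r_0'}$ with $r_0=(\gamma_0-\frac1{q_0'}+2\delta_0)^{-1}$. My plan is to choose $\theta$ so that the Duhamel lemma yields $T^{\delta_0}$, and to take the Lebesgue exponent $r$ on the $G$ side equal to $r_0'$-compatible. Concretely, $1/r=1-1/r_0=\frac1{q_0}+1-\gamma_0-2\delta_0$, which should make the embedding $L^{r}\hookrightarrow \mathcal{F}L^{\gamma_0-1+\delta_0}_{q_0}$ hold with room $\delta_0$; the condition $\delta_0<\frac{1-\gamma_0}2$ guarantees $r\in(1,2]$ and keeps the regularity exponent negative. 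If this exact matching fails, I would fall back on interpolating between the two regimes, $|\tau|\lesssim T^{-1}$ and its complement, in the Duhamel lemma, treating each directly with Hölder in $t$. The stated $T$-power is only used qualitatively (positivity), so a slightly different admissible power would suffice.
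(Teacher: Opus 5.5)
Your plan is correct and is essentially the paper's argument. Both reduce, via the inhomogeneous Duhamel estimate (gain $T^{\delta_0}$), to bounding $F_nu_n$ in $\widetilde{\mathcal{F}}L^{\gamma_0-1+\delta_0}_{q_0}$, using $\langle\tau\rangle^{-(1-\gamma_0-\delta_0)}\in L^{\tilde r}$ with $\tilde r=(1-\gamma_0-2\delta_0)^{-1}$, Hausdorff--Young at exponent $r_0$, $\widehat{U}\in L^1$ (hence $U\in L^\infty_t$) from $\gamma_0q_0'>1$, and $\|\chi_{2T}\|_{L^{r_0'}}\sim T^{1/r_0'}$; the only difference is that the paper applies Young's inequality $L^{r_0}\ast L^1$ on the Fourier side and Hausdorff--Young to $F_n$ alone, whereas you apply Hausdorff--Young to the product and H\"older in time. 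One arithmetic slip needs fixing: $1/r_0'=1-1/r_0=1+\frac{1}{q_0'}-\gamma_0-2\delta_0$, not $\frac{1}{q_0}+1-\gamma_0-2\delta_0$ (your formula gives $r>2$ for large $q_0$, e.g.\ in the application $q_0=q$, where Hausdorff--Young fails), and with the correct value $r_0>q_0\ge2$, so $r_0'\in(1,2)$ and the embedding $L^{r_0'}_t\hookrightarrow\mathcal{F}L^{\gamma_0-1+\delta_0}_{q_0}$ holds since $\frac{1}{q_0}-\frac{1}{r_0}=1-\gamma_0-2\delta_0<1-\gamma_0-\delta_0$.
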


\begin{proof}
	By the inhomogeneous linear estimate, it suffices to show that (implicitly inserting another cutoff $\widetilde{\chi}_T(t)$ such that $F=\widetilde{\chi}_T(t)F$)
	\begin{align}\label{eq:reductionquintic1}
		\|F_n\cdot u_n\|_{\widetilde{\mathcal{F}}L_{q_0}^{\gamma_0+\delta_0-1}}
		\lesssim
		T^{\frac{1}{r_0'}}\|\chi_1(t)F_n(t)\|_{L_t^{\infty}}\|u_n\|_{\widetilde{\mathcal{F}}L_{q_0}^{\gamma_0}}.
	\end{align}
	Set $a=1-\gamma_0-\delta_0\in (0,1)$ and $\widetilde{r}=(1-\gamma_0-2\delta_0)^{-1}$. Then
	\[
	\frac{1}{q_0}=\frac{1}{\widetilde{r}}+\frac{1}{r_0}.
	\]
	Since $0<\delta_0<(1-\gamma_0)/2$, we have $\widetilde{r}\in(1,\infty)$ and $r_0\in[2,\infty)$. By H\"older's and Young's inequalities,
\begin{multline*}
		\|F_n\cdot u_n\|_{\widetilde{\mathcal{F}}L_{q_0}^{-a}}
		\leq
		\|\langle\lambda\rangle^{-a} \widetilde{F}_n\ast \widetilde{u}_n(\lambda)\|_{L_{\lambda}^{q_0}}
		\leq
		\|\langle\lambda\rangle^{-a}\|_{L_{\lambda}^{\widetilde{r}}}\|\widetilde{F}_n\ast\widetilde{u}_n(\lambda)\|_{L_{\lambda}^{r_0}}\\
		\lesssim
		\|\widetilde{F}_n(\lambda)\|_{L_{\lambda}^{r_0}}
		\|\widetilde{u}_n(\lambda)\|_{L_{\lambda}^{1}}.
\end{multline*}
	Since $\gamma_0>1/q_0'$,
	\[
	\|\widetilde{u}_n(\lambda)\|_{L_{\lambda}^1}
	\lesssim
	\|\langle\lambda\rangle^{\gamma_0}\widetilde{u}_n(\lambda)\|_{L_{\lambda}^{q_0}}.
	\]
	Hence
	\[
	\|F_n\cdot u_n\|_{\widetilde{\mathcal{F}}L_{q_0}^{-a}}
	\lesssim
	\|\chi_1(t)F_n\|_{\widetilde{\mathcal{F}}L_{r_0}^{0}}\|u_n\|_{\widetilde{\mathcal{F}}L_{q_0}^{\gamma_0}}.
	\]
	Since $r_0\in[2,\infty)$ and $F=\widetilde{\chi}_T(t)F=\widetilde{\chi}(t/T)\cdot\chi_1(t)F$, by Hausdorff--Young and H\"older's inequality,
	\[
		\|F_n\|_{\widetilde{\mathcal{F}}L_{r_0}^0}
		\leq
		\|\widetilde{\chi}_T(t)\cdot (\chi_1(t) F_n)\|_{L_t^{r_0'}}
		\leq
		\|\widetilde{\chi}_T\|_{L_t^{r_0'}}\|\chi_1(t)F_n\|_{L_t^{\infty}}
		\sim
		T^{\frac{1}{r_0'}}\|\chi_1(t)F_n\|_{L_t^{\infty}},
	\]
	which proves~\eqref{eq:reductionquintic1}.
\end{proof}

\begin{remark}
	We apply this lemma in two situations. The first is to show that $\mathcal{T}_n^{N,\dag}\in \widetilde{\mathcal{F}}L_q^{\gamma}$ in the iteration step for the random averaging operator, where we take $(q_0,\gamma_0)=(q,\gamma)$. The second is in the multilinear estimate for the remainder~$w_N$, where we take $(q_0,\gamma_0)=(2,b)$. These two applications impose the condition
	\[
	\gamma-\frac{1}{q'}=b-\frac{1}{2}.
	\]
\end{remark}

\subsection{Pointwise-in-time four-linear estimate}

According to Lemma~\ref{lem:N4mapping}, it remains to establish pointwise-in-time bounds for the quartic nonlinearity~$\mathcal{N}^{(4)}$. We introduce some notation. For dyadic integers $(M_j)_{1\leq j\leq 4}$, we denote their non-increasing rearrangement by 
\[
M_{(1)}\geq M_{(2)}\geq M_{(3)}\geq M_{(4)}\,.
\] 
We begin with a weaker bound that follows directly from the Strichartz-type estimates of Section~\ref{sec:trilinear}.
For a Schwartz function $\varphi\in \mathcal{S}(\R)$, define
\begin{multline}\label{def:M4n}
	\mathcal{M}_{\varphi,n}^{(4)}((f^{(j)})_{1\leq j\leq 4})
	:=
	\sum_{\substack{m_1,m_2,m_3,m_4\\
			\{m_1,m_3\}\cap\{m_2,m_4\}=\emptyset}}\gamma_{nnm_4m_4}\gamma_{m_1m_2m_3m_4}
	\\
	\times\int_{\R} \varphi(t)\cdot \prod_{j=1}^4 (f_{m_j}^{(j)})^{\pm_j}(t)\, \dd t.
\end{multline}

\begin{lemma}\label{4-linear}
	Let $f^{(j)}=\mathbf{P}_{M_j}f^{(j)}$, where $q_j\in[2,\infty)$ and $\gamma_j\in\big(\frac1{q_j'},1\big)$ for $j=1,2,3,4$.
	Let $\varphi\in\mathcal{S}(\R)$, and fix $n\in\N$. For any $j_0\in\{1,2,3,4\}$ and $j_1\in\{1,2,3,4\}\setminus\{j_0\}$, there exists a coefficient $K_{M_1,M_2,M_3,M_4}^{(j_0,j_1)}$ such that
	\begin{align}\label{eq1:4-linear}
		|\mathcal{M}_{\varphi,n}^{(4)}((f^{(j)})_{1\leq j\leq 4})|
		\lesssim_{\varphi,q_j,\gamma_j,\varepsilon}
		K_{M_1,M_2,M_3,M_4}^{(j_0,j_1)}
		\prod_{j=1}^4\|f^{(j)}\|_{X_{q_j,q_j}^{0,\gamma_j}},
	\end{align}
	and a coefficient $L_{M_1,M_2,M_3,M_4}$ such that
	\begin{align}\label{eq2:4-linear}
		|\mathcal{M}_{\varphi,n}^{(4)}((f^{(j)})_{1\leq j\leq 4})|
		\lesssim_{\varphi,b,\varepsilon}
		L_{M_1,M_2,M_3,M_4}\prod_{j=1}^4\|f^{(j)}\|_{X^{0,b}}.
	\end{align}
	Denote by $P_1\geq P_2\geq P_3$ the non-increasing rearrangement of $\{M_j:j\neq j_1\}$.
	Then
	\begin{align*}
		K_{M_1,M_2,M_3,M_4}^{(j_0,j_1)}
		&\lesssim_{\varepsilon}
		(M_4\wedge n)\,M_{(4)}\,P_1^{1-\alpha+\varepsilon}
		\Big(1+P_2 P_1^{-1/2}\Big)
		\prod_{j\neq j_0}M_j^{\frac12-\frac1{q_j}},\\
		L_{M_1,M_2,M_3,M_4}
		&\lesssim_{\varepsilon}
		(M_4\wedge n)\,M_{(4)}
		\min\Big(1+M_{(3)}M_{(1)}^{-1/2},\, M_{(1)}^{2(1-\alpha)}M_{(3)}^{\varepsilon}\Big).
	\end{align*}

	Moreover, in the regime $M_{(1)}\gg M_{(2)}$, the following improved bounds hold:
	\begin{align*}
		K_{M_1,M_2,M_3,M_4}^{(j_0,j_1)}
		&\lesssim_{\varepsilon}
		M_{(1)}^{-2+\varepsilon}(M_4\wedge n)\,P_1^{1-\alpha}
		\Big(1+M_{(3)}M_{(1)}^{-1/2}\Big)
		\prod_{j\neq j_0}M_j^{\frac12-\frac1{q_j}},\\
		L_{M_1,M_2,M_3,M_4}
		&\lesssim_{\varepsilon}
		M_{(1)}^{-2+\varepsilon}(M_4\wedge n)
		\min\Big(1+M_{(3)}M_{(1)}^{-1/2},\, M_{(1)}^{2(1-\alpha)}M_{(3)}^{\varepsilon}\Big).
	\end{align*}
\end{lemma}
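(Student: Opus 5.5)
We would follow the same duality-and-Fourier scheme as in the proofs of Proposition~\ref{prop:non-resonant} and Lemma~\ref{timeStrichart}. First, we pass to the twisted Fourier transform in time. Writing $f^{(j)}_{m_j}(t)=\int \e^{-it\lambda_{m_j}^2}\e^{it\kappa_j}\widetilde{f}^{(j)}_{m_j}(\kappa_j)\,\dd\kappa_j$, the time integral of $\varphi(t)$ against the product equals $\int_{\R^4}\widehat{\varphi}(\widetilde{\kappa}-\Omega(\vec m))\prod_j(\widetilde f^{(j)}_{m_j})^{\pm}(\kappa_j)\,\dd\vec\kappa$, with $\widetilde\kappa=\kappa_1-\kappa_2+\kappa_3-\kappa_4$. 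Next, we bound the two correlation factors. The factor $\gamma_{nnm_4m_4}$ is bounded by $C(M_4\wedge n)$ by~\eqref{eq:gamma'}. The factor $\gamma_{m_1m_2m_3m_4}$ is bounded by $CM_{(4)}$, and by $CM_{(1)}^{-2}$ in the regime $M_{(1)}\gg M_{(2)}$ by~\eqref{eq:gamma3}. This leaves, for each fixed $\vec\kappa$, exactly the quantity $\mathcal{M}_{\vec K}$ of Lemma~\ref{timeStrichart} with $\eta=|\widehat\varphi|$ and $\mathbf{a}^{(j)}_{m}=|\widetilde f^{(j)}_{m}(\kappa_j)|$.

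\emph{Bound for $L$.} We apply~\eqref{Strichartztype1} pointwise in $\vec\kappa$. This gives the factor $M_{(1)}^{2(1-\alpha)}M_{(3)}^{\varepsilon}\prod_j\|\widetilde f^{(j)}(\kappa_j)\|_{\ell^2}$. The alternative factor $1+M_{(3)}M_{(1)}^{-1/2}$ should come from redoing the proof of~\eqref{Strichartztype1}: we pair $(m_1,m_4)$ and $(m_2,m_3)$ according to the frequency ordering and feed in Lemma~\ref{improvedcounting} (its bound $1+M_2^2/M_1$) in place of Lemma~\ref{lem:db}, with~\eqref{Strichartztype1'} covering the case $M_{(3)}^2\ll M_{(1)}\sim M_{(2)}$. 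We then integrate in $\vec\kappa$ by H\"older, writing $|\widetilde f(\kappa_j)|=\langle\kappa_j\rangle^{-b}\cdot\langle\kappa_j\rangle^{b}|\widetilde f|$. Since $b>\frac12$, the weight $\langle\kappa_j\rangle^{-b}$ lies in $L^2$, so the $\kappa$-integrals close with constants independent of $\vec M$. Here we treat the constraint $\widetilde\kappa\approx\Omega$ simply as boundedness of $\widehat\varphi$ after the supremum over $\kappa$ has been taken inside $\mathcal M_{\vec K}$.

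\emph{Bound for $K^{(j_0,j_1)}$.} We use~\eqref{Strichartztype2}. It places $\|\mathbf a^{(j_0)}\|_{\ell^\infty}$ on the $j_0$ input and the $\ell^2$ norms on the others, and produces the factor $P_1^{1-\alpha+\varepsilon}(1+P_2P_1^{-1/2})$. To pass to $X^{0,\gamma_j}_{q_j,q_j}$ norms, we estimate $\|\mathbf a^{(j)}\|_{\ell^2}\le M_j^{\frac12-\frac1{q_j}}\|\mathbf a^{(j)}\|_{\ell^{q_j}}$ for $j\ne j_0$ and $\|\mathbf a^{(j_0)}\|_{\ell^\infty}\le\|\mathbf a^{(j_0)}\|_{\ell^{q_{j_0}}}$. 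We then apply H\"older in each $\kappa_j$, with $\langle\kappa_j\rangle^{-\gamma_j}\in L^{q_j'}$ because $\gamma_j q_j'>1$, and Minkowski to exchange $\ell^{q_j}$ and $L^{q_j}$. In the regime $M_{(1)}\gg M_{(2)}$, we replace $M_{(4)}$ by $M_{(1)}^{-2}$ and keep the counting bounds otherwise unchanged. The extra $M_{(1)}^{\varepsilon}$ and the change from $P_2$ to $M_{(3)}$ are harmless. This is because in that regime the frequency configuration is non-resonant, so the section counts are $O(1)$ up to $\varepsilon$-losses.

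\emph{Main obstacle.} We expect the delicate step to be getting the $L$ bound with the factor $1+M_{(3)}M_{(1)}^{-1/2}$, rather than the cruder $M_{(1)}^{1-\alpha}$ losses, when all four inputs carry only $\ell^2$ control. We would try a Schur test in which one pair is fixed and Lemma~\ref{improvedcounting} is applied to the sections from both sides. Where the two sides are asymmetric, we would accept the $\min$ with the divisor-bound alternative.
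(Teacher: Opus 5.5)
Your main line is the paper's proof. It runs: Plancherel in time; $\gamma_{nnm_4m_4}\lesssim M_4\wedge n$; $\gamma_{m_1m_2m_3m_4}\lesssim M_{(4)}$ ($M_{(1)}^{-2}$ when $M_{(1)}\gg M_{(2)}$, by \eqref{eq:gamma3}); Lemma~\ref{timeStrichart} for each fixed $\vec\kappa$, with the supremum absorbing $\widetilde\kappa$; then H\"older in each $\kappa_j$ together with $\|\cdot\|_{\ell^2}\le M_j^{\frac12-\frac1{q_j}}\|\cdot\|_{\ell^{q_j}}$. The $K$ bound and the entry $M_{(1)}^{2(1-\alpha)}M_{(3)}^{\varepsilon}$ of $L$ come out exactly as you say. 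The step you leave open is a real one. With only $\ell^2$ control, Lemma~\ref{timeStrichart} gives $1+M_{(3)}M_{(1)}^{-1/2}$ without an extra factor $M^{1-\alpha}$ only through \eqref{Strichartztype1'}, i.e.\ for $M_{(1)}\sim M_{(2)}\gg M_{(3)}^2$, and the paper's one-line proof is silent on the remaining regimes. However, the mechanism you sketch does not work. The pair-versus-pair argument behind \eqref{Strichartztype1} needs the sizes of level sets of $\lambda_k^2\pm\lambda_{k'}^2$, which is what Lemma~\ref{lem:db} controls; Lemma~\ref{improvedcounting}, which counts sections with one index fixed, cannot be substituted there. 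What works is a Schur test of one index against three. Let $j_*$ be the index with $M_{j_*}=M_{(2)}$, and view the sum as a bilinear form in $u=\mathbf{a}^{(j_*)}$ and $v=\bigotimes_{j\neq j_*}\mathbf{a}^{(j)}$, so that $\|v\|_{\ell^2}=\prod_{j\neq j_*}\|\mathbf{a}^{(j)}\|_{\ell^2}$. With three indices fixed, the fourth has $O(1)$ admissible values, since the $\lambda_k^2$ are $\gtrsim k$-separated. With $k_{j_*}$ fixed, the row sum is $\lesssim\sup_{\kappa'}\#\Gamma_{\vec M,\kappa'}(k_{j_*})\lesssim M_{(3)}^{\varepsilon}(1+M_{(3)}^2/M_{(1)})$ by Lemma~\ref{improvedcounting}, the other three scales being $M_{(1)}\ge M_{(3)}\ge M_{(4)}$. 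This yields $M_{(3)}^{\varepsilon}(1+M_{(3)}M_{(1)}^{-1/2})\prod_j\|\mathbf{a}^{(j)}\|_{\ell^2}$ in every regime, including $M_{(1)}\gg M_{(2)}$ where \eqref{Strichartztype1'} is unavailable. The loss $M_{(3)}^{\varepsilon}$ is harmless in Proposition~\ref{4linear'}. (A caution: the proof of the first bound in Lemma~\ref{improvedcounting} estimates the two-index sections by $N_2^{\varepsilon}$. Lemma~\ref{lem:db} gives $O(1)$ for them only when the lowest scale is below the square root of the next one. Leaning on that lemma therefore puts you at the paper's level of rigor, not above it.)

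Your justification of the improved bounds is false: for $M_{(1)}\gg M_{(2)}$ the sections are not $O(1)$. Fix the top index and take $M_{(2)}\sim M_{(3)}\sim M_{(4)}$. The $\sim M_{(2)}^3$ remaining triples have $\Omega$ in an interval of length $O(M_{(2)}^2)$, so some unit level set contains $\gtrsim M_{(2)}$ of them, in agreement with Lemma~\ref{improvedcounting}. What your substitution $M_{(4)}\to M_{(1)}^{-2}$ actually gives is $M_{(1)}^{-2}$ times the general counting factor. For $L$ this is already the stated improved bound, by the one-versus-three estimate above. For $K$, the factor $1+M_{(3)}M_{(1)}^{-1/2}$ is what \eqref{Strichartztype2} yields when $j_1$ sits at scale $M_{(2)}$ (then $P_1=M_{(1)}$ and $P_2=M_{(3)}$). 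This choice is always available in Proposition~\ref{4linear'}, where $j_0$ sits at $M_{(1)}$. For other $j_1$ this argument gives only $M_{(1)}^{-2}$ times the general coefficient, which is still ample for Corollary~\ref{cor:4-linear-1}. Either way, the gain comes from the decay \eqref{eq:gamma3} of the correlation coefficient, not from non-resonance of the configuration.
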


\begin{proof}
	By Plancherel's theorem,
	\begin{align*}
		\int_{\R} \varphi(t)\cdot \prod_{j=1}^4 (f_{m_j}^{(j)})^{\pm_j}(t)\, \dd t
		=
		\int_{\R^4}\dd\vec{\kappa}\; \widehat{\varphi}(\widetilde{\kappa}-\Omega(\vec{m}))\cdot \prod_{j=1}^4(\widetilde{f}_{m_j}^{(j)})^{\pm_j}(\kappa_j),
	\end{align*}
	where $\vec{\kappa}=(\kappa_1,\kappa_2,\kappa_3,\kappa_4)$ and $\widetilde{\kappa}=\kappa_1-\kappa_2+\kappa_3-\kappa_4$.
	The left-hand side of~\eqref{eq1:4-linear} is therefore bounded by
	\begin{multline}\label{pf4-linear-1}
		\int_{\R^4}\dd\vec{\kappa}\;
		\prod_{j=1}^4\la\kappa_j\ra^{-\gamma_j}
		\sum_{\substack{m_1,m_2,m_3,m_4\\
				m_j\sim M_j\\
				\{m_1,m_3\}\cap\{m_2,m_4\}=\emptyset}} \gamma_{nnm_4m_4}\gamma_{m_1m_2m_3m_4}\,\eta(\widetilde{\kappa}-\Omega(\vec{m}))\prod_{j=1}^4\mathbf{a}^{(j)}_{m_j}(\kappa_j)\\
		\lesssim
		(M_4\wedge n)\,M_{(4)}\int_{\R^4}\dd\vec{\kappa}\;
		\prod_{j=1}^4\la\kappa_j\ra^{-\gamma_j}
		\\\times\sum_{\substack{m_1,m_2,m_3,m_4\\
				\{m_1,m_3\}\cap\{m_2,m_4\}=\emptyset}} \eta(\widetilde{\kappa}-\Omega(\vec{m}))\prod_{j=1}^4\mathbf{a}^{(j)}_{m_j}(\kappa_j)\,\mathbf{1}_{m_j\sim M_j},
	\end{multline}
	where $\eta(\cdot)=|\widehat{\varphi}(\cdot)|$ and $\mathbf{a}_{m_j}^{(j)}(\kappa_j)=\la\kappa_j\ra^{\gamma_j}|(\widetilde{f}^{(j)}_{m_j})^{\pm_j}(\kappa_j)|$. The desired estimate then follows from Lemma~\ref{timeStrichart}, Minkowski's inequality, and H\"older's inequality, together with
	\[
	\|\mathbf{1}_{m_j\sim M_j}\la\kappa_j\ra^{\gamma_j} \widetilde{f}_{m_j}^{(j)}(\kappa_j)\|_{\ell_{m_j}^2 L_{\kappa_j}^{q_j}}
	\lesssim
	\|f^{(j)}\|_{X_{q_j,q_j}^{0,\gamma_j}}M_j^{\frac{1}{2}-\frac{1}{q_j}}.
	\]
	This completes the proof.
\end{proof}

The next proposition upgrades the preceding lemma to a pointwise-in-time estimate by summing over the dyadic modulation blocks, at the cost of a small loss in the highest frequency.

\begin{proposition}\label{4linear'}
	Let $q_j\in[2,\infty)$ and $\gamma_j\in\big(\frac{1}{q_j'},1\big)$, and assume $f^{(j)}=\mathbf{P}_{M_j}f^{(j)}$. Let $j_0\in\{1,2,3,4\}$ be such that $M_{j_0}=M_{(1)}$, and let $j_1\in\{1,2,3,4\}\setminus\{j_0\}$. Denote by $K_{M_1,M_2,M_3,M_4}$ and $L_{M_1,M_2,M_3,M_4}$ the coefficients from Lemma~\ref{4-linear} associated to $j_0$ and~$j_1$. Then for any $|t|\leq 1$,
	\begin{multline}\label{quintic:type1}
		|\mathcal{N}_n^{(4)}(f^{(1)},f^{(2)},f^{(3)},f^{(4)})(t)|
		\\
		\lesssim_{\varepsilon}
		M_{(1)}^{2\theta_{j_0}+\varepsilon}
		K_{M_1,M_2,M_3,M_4}
		\|\mathbf{1}_{[0,t]}(\cdot)f^{(j_0)}\|_{X_{q_{j_0},q_{j_0}}^{0,\widetilde{\gamma}_{j_0}}}
		\prod_{j\neq j_0}\|f^{(j)}\|_{X_{q_j,q_j}^{0,\gamma_j}},
	\end{multline}
	where
	\[
	\theta_{j_0}=\gamma_{j_0}-\frac{1}{q_{j_0}'},
	\qquad
	\widetilde{\gamma}_{j_0}=\gamma_{j_0}-\theta_{j_0}-\frac{\varepsilon}{2}<\frac{1}{q_{j_0}'},
	\]
	and $(q_j,\gamma_j)\in\{(2,b),(q,\gamma)\}$. Alternatively,
	\begin{multline}\label{quintic:type2}
		|\mathcal{N}_n^{(4)}(f^{(1)},f^{(2)},f^{(3)},f^{(4)})(t)|
		\\
		\lesssim_{\varepsilon}
		M_{(1)}^{2\theta+\varepsilon}L_{M_1,M_2,M_3,M_4}\,\|\mathbf{1}_{[0,t]}(\cdot)f^{(j_0)}\|_{X^{0,\widetilde{b}}}
		\prod_{j\neq j_0}\|f^{(j)}\|_{X^{0,b}},
	\end{multline}
	where
	\[
	\theta=b-\frac{1}{2},
	\qquad
	\widetilde{b}=b-\theta-\frac{\varepsilon}{2}<\frac{1}{2}.
	\]
\end{proposition}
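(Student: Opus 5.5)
Since $\mathcal{N}_n^{(4)}(f^{(1)},\dots,f^{(4)})(t)=\int_0^t\Phi(t')\,\mathrm{d}t'$ with $\Phi$ the quartic product summed against $\gamma_{nnm_4m_4}\gamma_{m_1m_2m_3m_4}$, I would write $\mathbf{1}_{[0,t]}(t')=\mathbf{1}_{[0,t]}(t')\varphi(t')$ for a Schwartz $\varphi\equiv1$ on $[-1,1]$. I then transfer the sharp cutoff onto the top-frequency input $f^{(j_0)}$, replacing it by $g:=\mathbf{1}_{[0,t]}f^{(j_0)}$. This gives exactly
\[
\mathcal{N}_n^{(4)}(f^{(1)},\dots,f^{(4)})(t)=\mathcal{M}^{(4)}_{\varphi,n}(\dots,g,\dots),
\]
with $g$ in slot $j_0$ and $\mathcal{M}^{(4)}_{\varphi,n}$ defined in \eqref{def:M4n}. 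The difficulty is that $g$ only lies in $X^{0,\widetilde\gamma_{j_0}}_{q_{j_0},q_{j_0}}$ with $\widetilde\gamma_{j_0}<1/q_{j_0}'$, because multiplication by a sharp time cutoff destroys any modulation regularity above the $L^\infty_t$-threshold. Consequently Lemma~\ref{4-linear} cannot be applied directly with that weight in slot $j_0$, since its proof used $\gamma_j>1/q_j'$ to integrate out $\langle\kappa_j\rangle^{-\gamma_j}$ via H\"older.

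To handle this I would decompose the modulation variable $\kappa_{j_0}$ of $\widetilde g$ dyadically, $\langle\kappa_{j_0}\rangle\sim L$.

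\emph{Low modulations, $L\lesssim M_{(1)}^{2}$.} I run the proof of Lemma~\ref{4-linear} verbatim: apply the Strichartz-type bound from Lemma~\ref{timeStrichart} pointwise in $\vec\kappa$, then H\"older in each $\kappa_j$. In slot $j_0$, H\"older on $\{\langle\kappa\rangle\sim L\}$ costs $\|\langle\kappa\rangle^{-\widetilde\gamma_{j_0}}\mathbf{1}_{\sim L}\|_{L^{q'_{j_0}}}\sim L^{1/q'_{j_0}-\widetilde\gamma_{j_0}}=L^{\theta_{j_0}+\varepsilon/2}$. Summing over $L\le M_{(1)}^2$ gives the factor $M_{(1)}^{2\theta_{j_0}+\varepsilon}$ times the coefficient $K_{M_1,M_2,M_3,M_4}$. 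The identical computation with $(q_j,\gamma_j)=(2,b)$, using Cauchy--Schwarz in place of H\"older, yields \eqref{quintic:type2} with $L_{M_1,\dots,M_4}$.

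\emph{High modulations, $L\gg M_{(1)}^2$.} Here $|\Omega(\vec m)|\lesssim M_{(1)}^2$. If the other $|\kappa_j|$ and the argument $\widetilde\kappa-\Omega$ were small, the constraint could not hold. So either $\widehat\varphi(\widetilde\kappa-\Omega)$ is rapidly decaying in $L$, or some other $\langle\kappa_j\rangle\gtrsim L$, in which case the weight $\langle\kappa_j\rangle^{-\gamma_j}$ with $\gamma_j>1/q_j'$ supplies the decay $L^{-(\gamma_j-1/q_j')}$. In both cases I need a decay factor that beats the loss $L^{\theta_{j_0}+\varepsilon/2}$. For the rapid-decay case this is immediate. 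In the second case I would trade: move $\varepsilon/2$ of the weight from $\kappa_j$ onto $\kappa_{j_0}$ via $\langle\kappa_{j_0}\rangle\lesssim\langle\kappa_j\rangle$, while keeping $\gamma_j-\varepsilon/2>1/q_j'$. A crude counting bound (at most polynomial in $M_{(1)}$) suffices there, since the $L$-decay of the leftover factor absorbs it. If this bookkeeping is too tight, I would instead use the fact that the parameters satisfy $\gamma_j-1/q_j'=\theta$ uniformly, so the gain and the loss match up to $\varepsilon$.

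The main obstacle I expect is precisely this high-modulation bookkeeping. The sharp cutoff $\mathbf{1}_{[0,t]}$ prevents closing with the weight $\gamma_{j_0}$, so the argument relies on the other inputs absorbing large modulations. I also need to check that this absorption does not spoil the counting structure (in particular the $P_1^{1-\alpha}$ factor and the $(1+P_2P_1^{-1/2})$ factor in $K$) inherited from \eqref{Strichartztype2} and \eqref{Strichartztype1}.
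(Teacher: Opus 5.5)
Your low-modulation step is essentially the paper's entire proof. There the cutoff is tacitly carried by $f^{(j_0)}$, and each modulation block is upgraded from $\widetilde\gamma_{j_0}$ to $\gamma_{j_0}$ at cost $L^{\theta_{j_0}+\varepsilon/2}$. Lemma~\ref{4-linear} is then applied blockwise, and the modulation sum is simply asserted to be restricted to $L_{(1)}\lesssim M_{(1)}^2$. You are right that the range $L\gg M_{(1)}^2$ needs an argument, but your treatment of it does not close as written, because of an exponent error.

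Since $\widetilde\gamma_{j_0}=\frac{1}{q_{j_0}'}-\frac{\varepsilon}{2}$, H\"older on a shell costs $\|\langle\kappa\rangle^{-\widetilde\gamma_{j_0}}\mathbf{1}_{\langle\kappa\rangle\sim L}\|_{L^{q_{j_0}'}}\sim L^{\varepsilon/2}$, not $L^{\theta_{j_0}+\varepsilon/2}$. The latter is $L^{\gamma_{j_0}-\widetilde\gamma_{j_0}}$, the price of the paper's norm upgrade followed by a global H\"older. For low modulations the slip is harmless; you would in fact get $M_{(1)}^{\varepsilon}$.

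In the subregion where some other $|\kappa_j|\gtrsim L$, however, your accounting gives $L^{\theta_{j_0}+\varepsilon/2}\cdot L^{-\theta_j}$ per dyadic shell. When $\theta_j=\theta_{j_0}$ this is $L^{\varepsilon/2}$, which is not summable over $L\gg M_{(1)}^2$. Shifting weight between $\kappa_j$ and $\kappa_{j_0}$ does not change this net exponent, and your fallback (``gain and loss match up to $\varepsilon$'') is exactly this divergent situation.

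A crude polynomial counting bound also cannot be absorbed in that subregion. Even with the correct accounting, the decay you have there, summed over $L\gtrsim M_{(1)}^2$, gains only about $M_{(1)}^{\varepsilon-2\theta_j}$. Crude counting is affordable only where all other modulations are $\ll L$, since there $\widehat\varphi(\widetilde\kappa-\Omega)$ supplies $L^{-A}$.

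To repair it, keep the bound of Lemma~\ref{timeStrichart}. It is uniform in $\kappa$, so it applies pointwise in $\vec\kappa$ on each region $\{\langle\kappa_{j_0}\rangle\sim L,\ |\kappa_j|\gtrsim L\}$ with the same coefficient $K_{M_1,M_2,M_3,M_4}$ (resp.\ $L_{M_1,M_2,M_3,M_4}$). H\"older with the correct shell cost then gives $L^{\varepsilon/2-\theta_j}$ per shell. This is summable for $\varepsilon<2\theta_j$, which is all that is needed later since $\varepsilon$ can be taken small compared with $\theta$. The high-modulation contribution is then $\lesssim M_{(1)}^{\varepsilon-2\theta_j}K_{M_1,M_2,M_3,M_4}\cdots$.

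If you want larger $\varepsilon$, also retain the constraint $|\widetilde\kappa|\lesssim M_{(1)}^2$; its complement is handled by the decay of $\widehat\varphi$. With the remaining modulations fixed, $\kappa_j$ then lies in an interval of length $O(M_{(1)}^2)$ inside $\{|\kappa_j|\gtrsim L\}$. H\"older in $\kappa_j$ therefore gives $M_{(1)}^{2/q_j'}L^{-\gamma_j}$ instead of $L^{-\theta_j}$, and the sum over $L$ converges whenever $\varepsilon<2\gamma_j$.
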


\begin{proof}
	We decompose each function dyadically in the modulation variable:
	\[
	f^{(j)}=\sum_{L_j} f_{L_j}^{(j)},
	\qquad
	\widetilde{f_{L_j}^{(j)}}(\kappa_j,\cdot)
	=
	\mathbf{1}_{[L_j/2,L_j]}(\langle \kappa_j\rangle)\,\widetilde{f}^{(j)}(\kappa_j,\cdot),
	\]
	and write
	\[
	\mathcal{N}_n^{(4)}(f^{(1)},\cdots,f^{(4)})(t)
	=
	\sum_{L_1,L_2,L_3,L_4}
	\mathcal{N}_n^{(4)}(f_{L_1}^{(1)},\cdots,f_{L_4}^{(4)})(t).
	\]
	Denote by $L_{(1)}\geq L_{(2)}\geq L_{(3)}\geq L_{(4)}$ the non-increasing rearrangement of $L_1,L_2,L_3,L_4$.
	Since each $f^{(j)}$ is localized at spatial frequency~$M_j$, the dyadic summation is restricted to $L_{(1)}\lesssim M_{(1)}^2$.

	Recall that $j_0$ is chosen so that $M_{j_0}=M_{(1)}$. Set
	\[
	\theta_{j_0}=\gamma_{j_0}-\frac{1}{q_{j_0}'},
	\qquad
	\widetilde{\gamma}_{j_0}
	=
	\gamma_{j_0}-\theta_{j_0}-\frac{\varepsilon}{2}
	=
	\frac{1}{q_{j_0}'}-\frac{\varepsilon}{2}
	<
	\frac{1}{q_{j_0}'}.
	\]
	For each dyadic modulation block,
	\[
	\|f_{L_{j_0}}^{(j_0)}\|_{X_{q_{j_0},q_{j_0}}^{0,\gamma_{j_0}}}
	\lesssim
	L_{j_0}^{\theta_{j_0}+\frac{\varepsilon}{2}}
	\|f_{L_{j_0}}^{(j_0)}\|_{X_{q_{j_0},q_{j_0}}^{0,\widetilde{\gamma}_{j_0}}}.
	\]
	Applying Lemma~\ref{4-linear} to each dyadic block, we obtain
	\begin{multline*}
		|\mathcal{N}_n^{(4)}(f^{(1)},f^{(2)},f^{(3)},f^{(4)})(t)|
		\\
		\lesssim
		\sum_{L_1,L_2,L_3,L_4}
		L_{j_0}^{\theta_{j_0}+\frac{\varepsilon}{2}}
		K_{M_1,M_2,M_3,M_4}
		\|f_{L_{j_0}}^{(j_0)}\|_{X_{q_{j_0},q_{j_0}}^{0,\widetilde{\gamma}_{j_0}}}
		\prod_{j\neq j_0}\|f_{L_j}^{(j)}\|_{X_{q_j,q_j}^{0,\gamma_j}}.
	\end{multline*}
	Since $L_{j_0}\leq L_{(1)}\lesssim M_{(1)}^2$, we have $L_{j_0}^{\theta_{j_0}+\varepsilon/2} \lesssim M_{(1)}^{2\theta_{j_0}+\varepsilon}$.
	Summing over the dyadic modulations and applying Cauchy--Schwarz in the $L_j$ variables yields
	\[
	|\mathcal{N}_n^{(4)}(f^{(1)},f^{(2)},f^{(3)},f^{(4)})(t)|
	\lesssim
	M_{(1)}^{2\theta_{j_0}+\varepsilon}
	K_{M_1,M_2,M_3,M_4}
	\|f^{(j_0)}\|_{X_{q_{j_0},q_{j_0}}^{0,\widetilde{\gamma}_{j_0}}}
	\prod_{j\neq j_0}\|f^{(j)}\|_{X_{q_j,q_j}^{0,\gamma_j}}.
	\]
	This proves~\eqref{quintic:type1}. The proof of~\eqref{quintic:type2} is identical, with $(q_j,\gamma_j)$ replaced by $(2,b)$.
\end{proof}
	
	The following product rule allows us to insert sharp time cutoffs without losing regularity, provided the Fourier--Lebesgue exponent is subcritical.

\begin{lemma}\label{productrule}
	Assume $2\le q_0<\infty$ and $0<\widetilde{\gamma}_0<\frac{1}{q_0'}$.
	Let $F\in \widetilde{\mathcal{F}}L_{q_0}^{\widetilde{\gamma}_0}$.
	Then, uniformly in $|t|\le 1$,
	\[
	\|\mathbf{1}_{[0,t]}\cdot F\|_{\widetilde{\mathcal{F}}L_{q_0}^{\widetilde{\gamma}_0}}
	\lesssim
	\|F\|_{\widetilde{\mathcal{F}}L_{q_0}^{\widetilde{\gamma}_0}}.
	\]
\end{lemma}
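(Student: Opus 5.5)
We prove the bound on the Fourier side, by a direct convolution estimate. Since the twisted transform only translates frequencies, $\widetilde{(\mathbf{1}_{[0,t]}F)}(\kappa)=\int \widehat{\mathbf{1}_{[0,t]}}(\kappa-\kappa')\widetilde F(\kappa')\,\mathrm{d}\kappa'$ up to the harmless modulation factor $\e^{it\lambda_n^2}$. It therefore suffices to treat $\widetilde{\mathcal F}L_{q_0}^{\widetilde\gamma_0}$ as the ordinary Fourier--Lebesgue space $\mathcal F L_{q_0}^{\widetilde\gamma_0}$.

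\emph{Step 1: the kernel.} We have $\widehat{\mathbf{1}_{[0,t]}}(\xi)=\frac{1-\e^{-it\xi}}{i\xi}$, so uniformly in $|t|\le1$,
\[
|\widehat{\mathbf{1}_{[0,t]}}(\xi)|\lesssim \min(1,|\xi|^{-1})\lesssim\langle\xi\rangle^{-1}.
\]
Using $\langle\kappa\rangle^{\widetilde\gamma_0}\lesssim\langle\kappa-\kappa'\rangle^{\widetilde\gamma_0}+\langle\kappa'\rangle^{\widetilde\gamma_0}$, we split the weighted convolution into two pieces:
\[
\mathrm{I}=\big\|\int\langle\kappa-\kappa'\rangle^{-1}\langle\kappa'\rangle^{\widetilde\gamma_0}|\widetilde F(\kappa')|\,\mathrm{d}\kappa'\big\|_{L^{q_0}_\kappa},\qquad
\mathrm{II}=\big\|\int\langle\kappa-\kappa'\rangle^{\widetilde\gamma_0-1}|\widetilde F(\kappa')|\,\mathrm{d}\kappa'\big\|_{L^{q_0}_\kappa}.
\]

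\emph{Step 2: term $\mathrm{II}$.} Since $\widetilde\gamma_0<1/q_0'$, we have $\langle\cdot\rangle^{\widetilde\gamma_0-1}\in L^{r}$ for any $r$ with $r(1-\widetilde\gamma_0)>1$. We choose $r$ so that Young's inequality applies with $\langle\kappa'\rangle^{\widetilde\gamma_0}\widetilde F\in L^{q_0}$ on one side. The cleanest route is the weighted Young/Schur form: bound
\[
\mathrm{II}\le\big\|\int\langle\kappa-\kappa'\rangle^{\widetilde\gamma_0-1}\langle\kappa'\rangle^{-\widetilde\gamma_0}g(\kappa')\,\mathrm{d}\kappa'\big\|_{L^{q_0}},\qquad g=\langle\cdot\rangle^{\widetilde\gamma_0}|\widetilde F|.
\]
This is a fractional-integration-type operator with total homogeneity $-1$ at infinity. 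It is bounded on $L^{q_0}$ because the exponents satisfy $0<\widetilde\gamma_0<1/q_0'$, i.e.\ the weight $\langle\kappa'\rangle^{-\widetilde\gamma_0}$ is in the Stein--Weiss admissible range. We verify this by Schur's test with test functions $\langle\kappa\rangle^{-a}$, for $a$ chosen between $\widetilde\gamma_0$-dependent bounds. Concretely, we need $a q_0'<1$ and the mirror condition, which are compatible exactly when $\widetilde\gamma_0<1/q_0'$ and $\widetilde\gamma_0>0$.

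\emph{Step 3: term $\mathrm{I}$, the main obstacle.} Here the kernel $\langle\xi\rangle^{-1}$ is not integrable, so plain Young fails by a logarithm; it is essentially a Hilbert transform. We therefore exploit the exact oscillatory structure:
\[
\widehat{\mathbf{1}_{[0,t]}}(\xi)=\frac{1}{i\xi}\big(1-\e^{-it\xi}\big)\quad\text{for }|\xi|\ge1.
\]
Hence, modulo an integrable part $\mathbf{1}_{|\xi|<1}$ (handled by Young), convolution with this kernel equals $-i\pi(\mathcal H-M_t\mathcal H M_{-t})$. Here $\mathcal H$ is the (truncated) Hilbert transform and $M_t$ is modulation by $\e^{\mp it\kappa}$. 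Modulations are isometries on $L^{q_0}$ and commute with the weight, so $\mathrm{I}$ reduces to boundedness of the Hilbert transform on $L^{q_0}(\langle\kappa\rangle^{\widetilde\gamma_0 q_0}\mathrm{d}\kappa)$. The power weight $\langle\kappa\rangle^{\widetilde\gamma_0q_0}$ is an $A_{q_0}$ weight precisely when $-1<\widetilde\gamma_0 q_0<q_0-1$, i.e.\ $\widetilde\gamma_0<1/q_0'$, which is our hypothesis. We thus invoke the Hunt--Muckenhoupt--Wheeden theorem, which gives a bound uniform in $|t|\le1$. In fact this argument handles the whole operator without the splitting of Step 1, and is the route we would actually write down; Step 2 serves only as a sanity check on the numerology.
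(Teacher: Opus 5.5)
Your final route in Step~3 is the paper's argument: you write convolution with $\widehat{\mathbf{1}_{[0,t]}}$ as a difference of a Hilbert transform and a modulated Hilbert transform, then invoke weighted $L^{q_0}$ boundedness of the Hilbert transform for the power weight $\langle\kappa\rangle^{q_0\widetilde{\gamma}_0}$, which is $A_{q_0}$ precisely because $\widetilde{\gamma}_0<1/q_0'$ (the paper points to Lemma~2.6 of \cite{OTW20} for the case $q_0=2$). Steps~1--2 can be dropped, and there is in fact no modulation factor, since $\widetilde{\mathbf{1}_{[0,t]}F}=\widehat{\mathbf{1}_{[0,t]}}*\widetilde{F}$ exactly, up to the normalizing constant of the Fourier transform.
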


\begin{proof}
	This follows from the boundedness of the Hilbert transform on weighted~$L^{q_0}$,
	with the $A_{q_0}$ weight $w(\tau)=\langle \tau\rangle^{q_0\widetilde{\gamma}_0}$.
	Since $0<\widetilde{\gamma}_0<\frac{1}{q_0'}$, the weight~$w$ belongs to~$A_{q_0}$.
	We refer to Lemma~2.6 in~\cite{OTW20} for an identical argument in the case $q_0=2$ and $\widetilde{\gamma}_0=b<\frac12$.
\end{proof}

We now combine the pointwise-in-time bounds from Proposition~\ref{4linear'} with the input norms of type~(C) and~(D) to obtain the deterministic quintic estimates. These handle all configurations with at least two type~(D) inputs.

\begin{corollary}\label{cor:4-linear-1}
	Assume $\alpha>15/16$ and $(s,q,\gamma,b,\delta)=(s_{\sigma},q_{\sigma},\gamma_{\sigma},b_{\sigma},\delta_{\sigma})$ satisfies the hierarchy in Definition~\ref{hierarchy}, with $\theta=b-\frac{1}{2}=\gamma-\frac{1}{q'}$. Assume moreover $M_{(1)}\sim M_{(2)}$, and let $M=M_{(1)}$, and assume that $\mathbf{P}_{M_j}v_{M_j}^{(\ast_j)}=v_{M_j}^{(\ast_j)}$ for $j=1,2,3,4$.
	Then the following bounds hold:
	\begin{align*}
		\mathrm{(1)} \quad &
		\|\chi(t)\,\mathcal{N}_n^{(4)}(v^{(\mathrm{D})}_{M_1},v^{(\mathrm{D})}_{M_2},v^{(\mathrm{D})}_{M_3},v^{(\mathrm{D})}_{M_4})\|_{L_t^{\infty}}
		\lesssim
		R^4 M_{(1)}^{10\theta+\delta}\cdot
		\frac{M_{(1)}\wedge n}{M_{(1)}}\cdot M_{(1)}^{-(3\alpha+2s-\frac{7}{2})},\\
		\mathrm{(2)} \quad &
		\|\chi(t)\,\mathcal{N}_n^{(4)}(v^{(\mathrm{D})}_{M_1},v^{(\mathrm{D})}_{M_2},v^{(\mathrm{D})}_{M_3},v^{(\mathrm{C})}_{M_4})\|_{L_t^{\infty}}
		\lesssim
		R^3 M_{(1)}^{10\theta+\delta}\cdot
		\frac{M_{(1)}\wedge n}{M_{(1)}}\cdot M_{(1)}^{-(3\alpha+2s-\frac72)},\\
		\mathrm{(3)} \quad &
		\|\chi(t)\,\mathcal{N}_n^{(4)}(v^{(\mathrm{D})}_{M_1},v^{(\mathrm{D})}_{M_2},v^{(\mathrm{C})}_{M_3},v^{(\mathrm{D})}_{M_4})\|_{L_t^{\infty}}
		\lesssim
		R^3 M_{(1)}^{10\theta+\delta}\cdot
		\frac{M_{(1)}\wedge n}{M_{(1)}}\cdot M_{(1)}^{-(3\alpha+2s-\frac72)},\\
		\mathrm{(4)} \quad &
		\|\chi(t)\,\mathcal{N}_n^{(4)}(v^{(\mathrm{D})}_{M_1},v^{(\mathrm{D})}_{M_2},v^{(\mathrm{C})}_{M_3},v^{(\mathrm{C})}_{M_4})\|_{L_t^{\infty}}
		\lesssim
		R^2 M_{(1)}^{10\theta+\delta}\cdot
		\frac{M_{(1)}\wedge n}{M_{(1)}}\cdot M_{(1)}^{-(3\alpha+2s-4)},\\
		\mathrm{(5)} \quad &
		\|\chi(t)\,\mathcal{N}_n^{(4)}(v^{(\mathrm{C})}_{M_1},v^{(\mathrm{C})}_{M_2},v^{(\mathrm{D})}_{M_3},v^{(\mathrm{D})}_{M_4})\|_{L_t^{\infty}}
		\lesssim
		R^2 M_{(1)}^{10\theta+\delta}\cdot
		\frac{M_{(1)}\wedge n}{M_{(1)}}\cdot M_{(1)}^{-(3\alpha+2s-4)}.
	\end{align*}
	In particular, for all interactions with at least two type~(D) inputs,
	\begin{align}\label{cor:4-linear-1-uniform}
		\|\chi(t)\,\mathcal{N}_n^{(4)}(v^{(\ast_1)}_{M_1},v^{(\ast_2)}_{M_2},v^{(\ast_3)}_{M_3},v^{(\ast_4)}_{M_4})\|_{L_t^{\infty}}
		\lesssim
		R^4 M_{(1)}^{10\theta+\delta}\cdot
		\frac{M_{(1)}\wedge n}{M_{(1)}}\cdot M_{(1)}^{-(3\alpha+2s-4)}.
	\end{align}
\end{corollary}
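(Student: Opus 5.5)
We plan to derive all five bounds from Proposition~\ref{4linear'}, combined with Lemma~\ref{productrule} to remove the sharp cutoff $\mathbf{1}_{[0,t]}$, and then insert the type~(C)/(D) norms of Definition~\ref{def:CD}. Fix $|t|\le 1$. We place the top-frequency input (index $j_0$ with $M_{j_0}=M_{(1)}$) in the weakened norm $X^{0,\widetilde b}$ or $X^{0,\widetilde\gamma}_{q,q}$. Since $\widetilde b<\frac12$ and $\widetilde\gamma<\frac1{q'}$, Lemma~\ref{productrule} (applied mode by mode, then summed in $\ell^2_n$ or $\ell^q_n$) gives $\|\mathbf{1}_{[0,t]}f^{(j_0)}\|\lesssim \|f^{(j_0)}\|_{X^{0,b}}$, respectively $\|f^{(j_0)}\|_{X^{0,\gamma}_{q,q}}$. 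The $\chi(t)$ factor is harmless. The resulting losses $M_{(1)}^{2\theta+\varepsilon}$, the factors $T^{-(b-\frac12)}$ coming from \eqref{bddef:TypeC'} (absorbed since $T$ is fixed, or counted as $R$-powers), and the $N^\delta$ losses are all collected into $M_{(1)}^{10\theta+\delta}$.

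\textbf{Case~(1), all inputs of type (D).} Use \eqref{quintic:type2} with the coefficient $L$ from Lemma~\ref{4-linear}. With $M_{(1)}\sim M_{(2)}=M$, we bound $L\lesssim (M_4\wedge n)M_{(4)}M^{2(1-\alpha)}M_{(3)}^\varepsilon$ and the inputs by $\prod_j M_j^{-s}$. Since $s>\frac12$, the factors $M_{(3)}^{1-s}$ and $M_{(4)}^{1-s}$ are maximized at $M$. Using $(M_4\wedge n)\le M\wedge n=(M\wedge n)/M\cdot M$, we obtain
\[
\frac{M\wedge n}{M}\, M^{1+1+2(1-\alpha)-4s}.
\]
This is dominated by the stated exponent $-(3\alpha+2s-\frac72)$ exactly when $4-2\alpha-4s\le \frac72-3\alpha-2s$, i.e.\ when $2s\ge \alpha+\frac12$. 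If this inequality fails for the chosen $s$, we instead use the alternative branch of $L$, namely $1+M_{(3)}M^{-1/2}$, or equivalently interpolate between the two branches. This only needs to be checked case by case.

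\textbf{Cases~(2)--(5), type (C) inputs present.} Here we use \eqref{quintic:type1} with the coefficient $K^{(j_0,j_1)}$. Each type (C) input is placed in $X^{0,\gamma}_{q,q}$ with size $RM_j^{-\alpha+\frac1q+\delta}$, and contributes $M_j^{\frac12-\frac1q}$ through the $\ell^2\to\ell^q$ conversion; together these give $M_j^{\frac12-\alpha}$. Each type (D) input is placed in $X^{0,b}$ with size $M_j^{-s}$. The index $j_0$ must be the top frequency; when the top input is of type (C), we choose $j_1$ as the other top frequency, as in the Remark after Lemma~\ref{timeStrichart}. We then bound $K\lesssim (M_4\wedge n)M_{(4)}P_1^{1-\alpha}(1+P_2P_1^{-1/2})$.
\begin{itemize}
\item In the (DDCC) and (CCDD) configurations, the $M^{1-\alpha}$ gain together with the $\frac12-\alpha$ decay of the two (C) inputs produces the weaker exponent $-(3\alpha+2s-4)$.
\item When there are three (D) inputs, we get the better exponent $\frac72-3\alpha-2s$.
\end{itemize}
The bookkeeping reduces to checking that each monomial in $M_{(3)},M_{(4)}$ has a nonnegative exponent, so that it is maximized at $M$. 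This holds since $\alpha<1$ and $s<1$.

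\textbf{Main obstacle.} The difficulty is the careful optimization over the free choices $j_0,j_1$ and over the two branches of the minimum in $K$ and $L$, across all orderings of $M_1,\dots,M_4$ and of the factor $M_4\wedge n$. This is needed to guarantee the displayed exponents uniformly. We would first treat the worst ordering, $M_{(3)}\sim M_{(4)}\sim M$, where every branch is saturated. The uniform bound \eqref{cor:4-linear-1-uniform} then follows by taking the weakest of the five exponents, since $3\alpha+2s-4<3\alpha+2s-\frac72$, and bounding $R^k\le R^4$.
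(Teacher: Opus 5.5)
Your framework is the paper's: Proposition~\ref{4linear'} with the coefficient $L$ (all inputs of type~(D)) or $K^{(j_0,j_1)}$ (otherwise) from Lemma~\ref{4-linear}, Lemma~\ref{productrule} to remove $\mathbf{1}_{[0,t]}$, and the type~(C)/(D) norms. However, the bookkeeping is the whole content of this corollary, and your reduction of it is wrong.

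\textbf{The monotonicity claim fails.} It is not true that the monomials in the low frequencies have nonnegative exponents. A low-frequency (D) input contributes $M_j^{-s}$ and a (C) input contributes $M_j^{\frac12-\alpha}$. Positive powers come only from $(M_4\wedge n)$, $M_{(4)}$ and $P_2P_1^{-1/2}$.

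\textbf{Case (1).} Once you bound $(M_4\wedge n)\le M$, the coefficient $L$ contains only one low-frequency factor, $M_{(4)}$. What remains is $M_{(4)}^{1-s}M_{(3)}^{-s}\le M_{(3)}^{1-2s}$. This is maximal at $M_{(3)}\sim M_{(4)}\sim 1$, not at $M$; this is exactly where $s>\frac12$ enters. Take $M_1=M_4=M$, $M_2,M_3\sim 1$, $n\gtrsim M$. Both branches of the minimum in $L$ are $\gtrsim 1$, so \eqref{quintic:type2} yields only $M^{1-2s+2\theta+\varepsilon}$. This exceeds your $M^{4-2\alpha-4s}$. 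It also exceeds the exponent displayed in (1) as soon as $\alpha>\frac56$, so switching branches cannot rescue (1). Reaching (1) there would require going beyond Lemma~\ref{4-linear}, for instance by exploiting $|\Omega|\gtrsim M$ in that configuration.

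\textbf{Case (2).} Take $M_1=M_4=M$ and $M_2,M_3\sim1$. The best choice $j_0=4$, $j_1=1$ gives $M^{2-2\alpha-s+\frac1q}$, up to $M^{2\theta+\varepsilon}$. This is larger than $M^{\frac72-3\alpha-2s}$ because $\alpha+s>\frac32$. So three (D) inputs do not yield the better exponent.

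\textbf{What the method actually proves.} Look at the paper's own (DDDD) computation. It records $M^{1-2s}M_{(3)}^{1-2s}$ for $M_{(3)}\lesssim M^{1/2}$ and $M^{\frac72-3s-2\alpha}$ for $M_{(3)}\gg M^{1/2}$; note $s$ and $\alpha$ are interchanged relative to the statement. These bounds, and those in the previous paragraphs, are all $\lesssim M^{4-3\alpha-2s}$ under the hierarchy. So the uniform bound \eqref{cor:4-linear-1-uniform} is what this approach delivers. It is also the only form used later (Remark~\ref{rmk:numerologyquintic} and \eqref{eq:Lambdanu_0}).

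\textbf{Reaching the uniform bound still needs a real case analysis,} not ``saturate every branch at comparable scales'':
\begin{enumerate}
\item Split at $M_{(3)}\sim M^{1/2}$ to choose the branch of the minimum.
\item In (4)--(5) at comparable scales, put the $\ell^\infty$ slot $j_0$ of \eqref{Strichartztype2} on a (C) input. With a (D) input there you get $M^{\frac92-3\alpha-2s}$. This placement is legitimate even when that (C) input is not the top frequency, because the proof of Proposition~\ref{4linear'} only uses $L_{j_0}\lesssim M_{(1)}^2$, not $M_{j_0}=M_{(1)}$. Your rule that $j_0$ must be the top frequency therefore fails here.
\item In (4) with both (C) inputs at $M$ and both (D) inputs at $O(1)$, the best $K^{(j_0,j_1)}$ gives only $M^{\frac52-3\alpha+\frac1q}$. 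This is insufficient for admissible $s>\frac34$, up to $O(\sigma)$. You need instead the $\ell^\infty$ form $K_{(1)}^{2(1-\alpha)}K_{(3)}K_{(4)}\prod_j\|\mathbf{a}^{(j)}\|_{\ell^\infty}$ of \eqref{Strichartztype1}, which gives $M^{3-4\alpha+\frac2q}$.
\end{enumerate}

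\textbf{The $M^{\frac1q}$ losses.} Putting a (C) input in $X^{0,\gamma}_{q,q}$ and using it in an $\ell^\infty$ slot costs $M^{\frac1q}$. This is not $\lesssim M^{10\theta+\delta}$, since $\frac1q=\sigma\gg\theta=\sigma^{10}$. The paper's proof carries $M^{10\theta+\frac2q}$, and that is the loss entering $\nu_0$.
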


\begin{proof}
	To simplify notation, write $v^{(j)}=\mathbf{P}_{M_j}v^{(j)}$ for $j=1,2,3,4$, where each~$v^{(j)}$ is of type~(C) or~(D). The factors $R\, T^{-(\gamma-\frac{1}{q'})}$ arising from the $X_{q,q}^{0,\gamma}$-norms of the type~(C) inputs are suppressed in the proof and restored in the final estimates.

	We prove the estimates only in the regime $M_4\lesssim n$. In the complementary regime $n\lesssim M_4$, the same argument applies using the sharper factor $(M_4\wedge n)$ in Lemma~\ref{4-linear} and Proposition~\ref{4linear'}: the distinguished factor~$M_4$ from $\gamma_{nnm_4m_4}$ is replaced by~$n$, while all other occurrences of~$M_4$ are bounded by~$M_{(1)}$. This yields the extra factor $\frac{M_{(1)}\wedge n}{M_{(1)}}$.

	For fixed $n\in\N$, we estimate
	\begin{align}\label{quartic:Linfty-new}
		\|\mathcal{N}_n^{(4)}(v^{(1)},v^{(2)},v^{(3)},v^{(4)})\|_{L_t^{\infty}}.
	\end{align}
	Applying Proposition~\ref{4linear'} with~$j_0$ chosen so that $M_{j_0}=M_{(1)}$, we obtain
	\begin{align}
		\label{numerics1-new}
		\eqref{quartic:Linfty-new}
		\lesssim
		M_{(1)}^{2\theta+\varepsilon}
		K_{M_1,M_2,M_3,M_4}
		\prod_{j=1}^4|a_{M_j}^{(j)}|,
	\end{align}
	if $v^{(j_0)}$ is of type~(C), and
	\begin{align}
		\label{numerics2-new}
		\eqref{quartic:Linfty-new}
		\lesssim
		M_{(1)}^{2\theta+\varepsilon}
		L_{M_1,M_2,M_3,M_4}
		\prod_{j=1}^4|a_{M_j}^{(j)}|,
	\end{align}
	if all $v^{(j)}$ are of type~(D).

	If $M_{(1)}\gg M_{(2)}$, then bounds~(1)--(5) follow immediately from the improved estimates in Lemma~\ref{4-linear} and the hierarchy of parameters. Hence we may assume $M_{(1)}\sim M_{(2)}$.
    Denote $s_C=\alpha-\frac1q-\delta$ in the computation below:

	\emph{(DDDD) interactions.}
	If $M_{(3)}\lesssim M_{(1)}^{1/2}$, then by~\eqref{numerics2-new},
	\begin{align*}
		M_{(1)}^{2\theta+\varepsilon}
		L_{M_1,M_2,M_3,M_4}
		\cdot (M_1 M_2 M_3 M_4)^{-s}
		\lesssim
		M_{(1)}^{10\theta}
		M_{(1)}^{1-2s}\cdot M_{(3)}^{1-2s}.
	\end{align*}
	If $M_{(3)}\gg M_{(1)}^{1/2}$, then~\eqref{numerics2-new} yields
	\begin{align*}
		M_{(1)}^{2\theta+\varepsilon}
		L_{M_1,M_2,M_3,M_4}
		\cdot (M_1 M_2 M_3 M_4)^{-s}
		&\lesssim
		M_{(1)}^{10\theta}\cdot M_{(1)}^{1-2s+2(1-\alpha)}\cdot M_{(3)}^{1-2s}\\
		&\lesssim
		M_{(1)}^{10\theta}\cdot M_{(1)}^{\frac72-3s-2\alpha},
	\end{align*}
	where we used $M_{(3)}^{-1}\ll M_{(1)}^{-1/2}$ in the last step.

	\emph{(DDDC) interactions.}
	For~(2), we apply~\eqref{numerics1-new} with $\mathbf{a}^{(4)}$ a scale-$M_4$ $h^{s_C}$ envelope (see Definition \ref{def:scale-seq}) and the remaining $\mathbf{a}^{(j)}$ scale-$M_j$ $h^s$ envelopes:
	\begin{align*}
		M_{(1)}^{2\theta+\varepsilon}
		K_{M_1,M_2,M_3,M_4}\cdot(M_1 M_2 M_3)^{-s}M_4^{-s_C}
		\lesssim
		M_{(1)}^{10\theta+\frac{2}{q}}\cdot M_{(1)}^{\frac72-3s-2\alpha}.
	\end{align*}
	For~(3), we view $\mathbf{a}^{(3)}$ as the scale-$M_3$ $h^{s_C}$ envelope and the remaining as $h^s$ envelopes:
	\begin{align*}
		M_{(1)}^{2\theta+\varepsilon}
		K_{M_1,M_2,M_3,M_4}\cdot(M_1 M_2 M_4)^{-s}M_3^{-s_C}
		\lesssim
		M_{(1)}^{10\theta+\frac{2}{q}}\cdot M_{(1)}^{\frac72-3s-2\alpha}.
	\end{align*}

	\emph{(DDCC) interactions.}
	For~(4), we view $\mathbf{a}^{(3)}$ and $\mathbf{a}^{(4)}$ as $h^{s_C}$ envelopes and $\mathbf{a}^{(1)},\mathbf{a}^{(2)}$ as $h^s$ envelopes.
	Recall that $M_{(1)}\sim M_{(2)}$ and $s_C=\alpha-\frac1q-\delta$, we have
	\begin{align*}
		M_{(1)}^{2\theta+\varepsilon}
		K_{M_1,M_2,M_3,M_4}\cdot M_1^{-s}M_2^{-s}M_3^{-s_C}M_4^{-s_C}
		\lesssim
		M_{(1)}^{10\theta+\frac{2}{q}}\cdot M_{(1)}^{4-3\alpha-2s}.
	\end{align*}
	For~(5), the same argument gives
	\[
	M_{(1)}^{2\theta+\varepsilon}
	K_{M_1,M_2,M_3,M_4}\cdot M_1^{-s_C}M_2^{-s_C}M_3^{-s}M_4^{-s}
	\lesssim
	M_{(1)}^{10\theta+\frac{2}{q}}\cdot M_{(1)}^{4-3\alpha-2s}.
	\]
	Restoring the omitted factors yields bounds~(1)--(5). The uniform estimate~\eqref{cor:4-linear-1-uniform} follows from these five cases and the hierarchy of parameters.
\end{proof}

For the probabilistic estimates, we need the following variant of the moment bound from Lemma~\ref{momentbd'}, adapted to handle the sharp time cutoff arising from the pointwise-in-time reduction. For a type (C) term $v_M^{(C)}$ at scale $M$ from Definition \ref{def:CD} associated to operators $\mathcal{T}_m^M(t)$, we denote
$$ g_m^{M}(t):=\mathcal{T}_m^M(t)\cdot g_m(\omega).
$$

\begin{lemma}\label{lem:momentnew}
	Let $(L_j,M_j)_{1\leq j\leq 4}$  be dyadic integers, and let $N=\max(M_1,M_2,M_3,M_4)$.
Let $\mathfrak{h}$ be a deterministic tensor.
Then, $(M^{\delta}R,\delta;\mathcal{B}_{\leq M})$-certainly,
for any $\gamma_0<\frac{1}{q'}$ and any $0<|t|\leq 1$,
\begin{multline*}
		\left|\int_{\R^4}\dd\vec{\kappa}\sum_{\substack{m_1,m_2,m_3,m_4\\
				\{m_1,m_3\}\cap\{m_2,m_4\}=\emptyset}}
		\frac{\mathfrak{h}(\vec{m})\,\widehat{\chi}(\widetilde{\kappa}-\Omega(\vec{m}))}{(m_1 m_2 m_3 m_4)^{\alpha}}\right.\\
		\left.\times
		\widetilde{\mathbf{1}_{[0,t]}g_{m_1}^{M_1}}(\kappa_1)\,\mathbf{1}_{|\kappa_1|\sim L_1}\prod_{j=2}^4\widetilde{g}^{M_j}_{m_j}(\kappa_j)\,\mathbf{1}_{|\kappa_j|\sim L_j}
		\right|\\
		\lesssim
		M^{\delta+2(\gamma-\gamma_0)}R^4
		(M_{1}M_{2}M_{3}M_{4})^{-\alpha}\sup_{\kappa\in\R}\|\mathfrak{h}(\vec{m})\,\widehat{\chi}(\kappa-\Omega(\vec{m}))\|_{\ell_{m_1,m_2,m_3,m_4}^2}.
	\end{multline*}
\end{lemma}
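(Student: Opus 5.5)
The plan is to reduce Lemma~\ref{lem:momentnew} to the moment bound of Lemma~\ref{momentbd'} (with four random factors instead of three), after absorbing the sharp cutoff $\mathbf{1}_{[0,t]}$ into a modified random averaging operator. Write $\mathbf{1}_{[0,t]}g^{M_1}_{m_1}=(\mathbf{1}_{[0,t]}\mathcal{T}^{M_1}_{m_1})\,g_{m_1}(\omega)$. The new operator $\mathcal{T}'_{m_1}:=\mathbf{1}_{[0,t]}\mathcal{T}^{M_1}_{m_1}$ is still $\mathcal{B}_{\leq M_1/2}$-measurable. By Lemma~\ref{productrule}, applied in the twisted variable since multiplication by $\mathbf{1}_{[0,t]}$ commutes with the phase $\e^{it\lambda_m^2}$, its $\widetilde{\mathcal{F}}L^{\gamma_0}_q$ norm is bounded by $\|\mathcal{T}^{M_1}_{m_1}\|_{\widetilde{\mathcal{F}}L^{\gamma_0}_q}\le \|\mathcal{T}^{M_1}_{m_1}\|_{\widetilde{\mathcal{F}}L^{\gamma}_q}\le R$, uniformly in $|t|\le1$. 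The constraint $\gamma_0<1/q'$ is exactly what Lemma~\ref{productrule} requires.

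The price of this step is that Lemma~\ref{k-linearproba} needs the weight $\langle\kappa_1\rangle^{-\gamma}$ with $\gamma q'>1$, while we now control only $\gamma_0$. This is where the dyadic localization $|\kappa_j|\sim L_j$ is used. On the block $|\kappa_1|\sim L_1$ we trade weights, writing $\langle\kappa_1\rangle^{\gamma}\lesssim L_1^{\gamma-\gamma_0}\langle\kappa_1\rangle^{\gamma_0}$, which loses $L_1^{\gamma-\gamma_0}$. Since $\widehat\chi(\widetilde\kappa-\Omega)$ decays rapidly and $|\Omega|\lesssim M^2$, all $L_j\gtrsim M^{2}$ contribute negligibly after taking absolute values: there the moment bound can be run with crude losses absorbed by the decay. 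Hence, up to acceptable errors, $L_1\lesssim M^{2}$, and the loss is at most $M^{2(\gamma-\gamma_0)}$, matching the factor in the statement. Alternatively, for $L_1\gg M^{2+}$ one uses that $\widetilde\kappa$ must be large, so $\widehat\chi$ gives $L_1^{-A}$.

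With the cutoff absorbed, the main argument repeats the proof of Lemma~\ref{momentbd'}. Order the generations $M_{(1)}\ge\dots$ and condition successively on $\mathcal{B}_{\le M_j/2}$. At each step apply Lemma~\ref{k-linearproba}: with $k=1$ when the generations are distinct, or with $k$ equal to the number of inputs sharing a scale when generations coincide. The non-pairing condition $\{m_1,m_3\}\cap\{m_2,m_4\}=\emptyset$ guarantees the sums are unpaired, so the lemma applies. Each step yields $r^{1/2}R$ and the weight $\langle\kappa_j\rangle^{-\gamma}$, with $\langle\kappa_1\rangle^{-\gamma_0}$ for the first input after the trade above. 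Since there is no output variable $\kappa$ here (the quantity is a scalar), we finish with Hölder in $\vec\kappa$ using $\gamma q'>1$ on three variables and the $L_1$-localization on the first. The $\widehat\chi$ factor, with $\widetilde\kappa=\kappa_1-\kappa_2+\kappa_3-\kappa_4$, then gives $\sup_\kappa\|\mathfrak h\,\widehat\chi(\kappa-\Omega)\|_{\ell^2}$ times $(M_1M_2M_3M_4)^{-\alpha}$.

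Finally, $r$-th moment bounds with $r\sim (M^{\delta}R)^{\delta}$ and Chebyshev give the statement $(M^\delta R,\delta;\mathcal{B}_{\le M})$-certainly. The sum over the dyadic $L_j\lesssim M^2$ costs only $\log M\lesssim M^{\delta/2}$.

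The main obstacle is uniformity in $t$: the good set must not depend on $t$. I would handle it by proving the bound for $t$ in an $M^{-10}$-net of $[-1,1]$ and taking a union bound, which costs a polynomial factor in probability that is absorbed by the exponential tails. Between net points I would use the crude Lipschitz dependence of $\mathbf{1}_{[0,t]}g$ in $t$, measured in a weaker norm, together with the finite frequency support.
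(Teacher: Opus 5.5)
Your core argument is the paper's own proof. You write $\mathbf{1}_{[0,t]}g^{M_1}_{m_1}=(\mathbf{1}_{[0,t]}\mathcal{T}^{M_1}_{m_1})\,g_{m_1}$, control the new $\mathcal{B}_{\le M_1/2}$-measurable operator in $\widetilde{\mathcal{F}}L^{\gamma_0}_q$ by Lemma~\ref{productrule}, pay $L_1^{\gamma-\gamma_0}$ on the block $|\kappa_1|\sim L_1$, and rerun the iterated conditional moment argument of Lemma~\ref{momentbd'}.

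\textbf{The reduction to $L_1\lesssim M^2$ does not work as written.} The decay of $\widehat{\chi}(\widetilde{\kappa}-\Omega(\vec m))$ only localizes the alternating sum $\widetilde{\kappa}=\kappa_1-\kappa_2+\kappa_3-\kappa_4$ to $|\widetilde\kappa|\lesssim M^2$. It says nothing about $\kappa_1$ when another $|\kappa_j|$ is comparable to $|\kappa_1|$. So your claim that $\widetilde\kappa$ must be large holds only when $L_1\gg\max_{j\ge2}L_j$.

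Nor are the remaining blocks negligible. Because of the jump of $\mathbf{1}_{[0,t]}$, the transform $\widetilde{\mathbf{1}_{[0,t]}\mathcal{T}_{m_1}}(\kappa_1)$ decays only like $|\kappa_1|^{-1}$. These blocks are therefore not rapidly decaying in $L_1$.

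In the case $L_1\sim L_j\gg M^2$ for some $j\ge 2$, you need the spare weight of the $j$-th input instead. After Minkowski and the supremum over $\widetilde\kappa$, the remaining $\vec\kappa$-norm factorizes (with $\gamma_1=\gamma_0$ and $\gamma_j=\gamma$ for $j\ge2$):
\[
\prod_{j=1}^4\big\|\langle\kappa_j\rangle^{-\gamma_j}\mathbf{1}_{|\kappa_j|\sim L_j}\big\|_{L^{q'}}\sim L_1^{\frac1{q'}-\gamma_0}\prod_{j=2}^4L_j^{-(\gamma-\frac1{q'})}.
\]
This is $\lesssim 1$ as soon as $\frac1{q'}-\gamma_0\le\gamma-\frac1{q'}$. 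You may assume that condition by raising $\gamma_0$ toward $\frac1{q'}$, which only makes the right-hand side of the lemma smaller.

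The paper does not address this point at all. It simply asserts $|\kappa_j|\lesssim M^2$, and it only uses the lemma with $L_j\lesssim M_{(1)}^2$ (Case~3 of Corollary~\ref{cor:4-linearprob}).

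Your net argument, which makes the exceptional set independent of $t$, is also an addition the paper omits; its proof only says the argument of Lemma~\ref{momentbd'} applies. The net argument is sound. The Lipschitz bound between net points is even uniform in $L_1$: the Fourier transform of $\mathbf{1}_{[t',t]}\mathcal{T}_{m_1}$ is bounded by $|t-t'|\,\|\mathcal{T}_{m_1}\|_{L^\infty}$, and the $\kappa_1$-integral can be taken against $\widehat{\chi}\in L^1$.
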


\begin{proof}
	Note that
	\[
	\widetilde{\mathbf{1}_{[0,t]}g_{m_1}^{M_1}}(\kappa_1)=\widetilde{\mathbf{1}_{[0,t]}\mathcal{T}_{m_1}^{M_1}}(\kappa_1)\, g_{m_1}.
	\]
	The proof follows the same argument as Lemma~\ref{momentbd'}. The only difference is that $\mathbf{1}_{[0,t]}\mathcal{T}_{m_1}^{M_1}\in \widetilde{\mathcal{F}}L_q^{\gamma_0}$ for $\gamma_0<\frac{1}{q'}$, by Lemma~\ref{productrule}. Since the modulation variables satisfy $|\kappa_j|\sim L_j$ and $|\kappa_j|\lesssim M^2$, we pick up an extra loss of $|\kappa_1|^{\gamma-\gamma_0}\lesssim M^{2(\gamma-\gamma_0)}$.
\end{proof}

We now turn to the probabilistic quintic estimates, which handle configurations with at least three type~(C) inputs.

\begin{corollary}\label{cor:4-linearprob}
	Assume $\alpha>15/16$ and $(s,q,\gamma,b,\delta)=(s_{\sigma},q_{\sigma},\gamma_{\sigma},b_{\sigma},\delta_{\sigma})$ satisfies the hierarchy in Definition~\ref{hierarchy}, with $\theta=b-\frac{1}{2}=\gamma-\frac{1}{q'}$. Assume moreover $M_{(1)}\sim M_{(2)}$, and let $M=M_{(1)}$.
Then $(M^{\delta}R,\delta;\mathcal{B}_{\leq M})$-certainly,
	\begin{align*}
		\mathrm{(1)} \quad &
		\|\chi(t)\,\mathcal{N}_n^{(4)}(v^{(\mathrm{C})}_{M_1},v^{(\mathrm{C})}_{M_2},v^{(\mathrm{C})}_{M_3},v^{(\mathrm{D})}_{M_4})\|_{L_t^{\infty}}
		\lesssim
		R^3 M_{(1)}^{10\theta+\delta}\cdot
		\frac{M_{(1)}\wedge n}{M_{(1)}}\cdot M_{(1)}^{-(3\alpha+s-3)},\\
		\mathrm{(2)} \quad &
		\|\chi(t)\,\mathcal{N}_n^{(4)}(v^{(\mathrm{C})}_{M_1},v^{(\mathrm{C})}_{M_2},v^{(\mathrm{D})}_{M_3},v^{(\mathrm{C})}_{M_4})\|_{L_t^{\infty}}
		\lesssim
		R^3 M_{(1)}^{10\theta+\delta}\cdot
		\frac{M_{(1)}\wedge n}{M_{(1)}}\cdot
		M_{(1)}^{-(4\alpha+s-4)},\\
		\mathrm{(3)}\quad &
		\|\chi(t)\,\mathcal{N}_n^{(4)}(v^{(\mathrm{C})}_{M_1},v^{(\mathrm{C})}_{M_2},v^{(\mathrm{C})}_{M_3},v^{(\mathrm{C})}_{M_4})\|_{L_t^{\infty}}
		\lesssim
		R^4
		M_{(1)}^{10\theta+\delta}\cdot
		\frac{M_{(1)}\wedge n}{M_{(1)}}\cdot
		M_{(1)}^{-(5\alpha-4)}.
	\end{align*}
\end{corollary}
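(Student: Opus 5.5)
We prove the three bounds of Corollary~\ref{cor:4-linearprob} by the scheme of Proposition~\ref{4linear'}, with the deterministic Strichartz count of Lemma~\ref{timeStrichart} replaced by the moment bound of Lemma~\ref{lem:momentnew} on the type~(C) inputs. As in Corollary~\ref{cor:4-linear-1}, we first treat $M_4\lesssim n$; the factor $\frac{M_{(1)}\wedge n}{M_{(1)}}$ then comes from bounding $\gamma_{nnm_4m_4}$ by $M_4\wedge n$ instead of $M_4$. We also use $\gamma_{m_1m_2m_3m_4}\lesssim M_{(4)}$ from Lemma~\ref{lem:gamma}.

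\emph{Step 1 (reduction).} Fix $|t|\le 1$ and write $\mathcal{N}^{(4)}_n(\cdots)(t)=\int\mathbf{1}_{[0,t]}(t')\prod_j(v_{m_j}^{(j)})^{\pm_j}(t')\,\mathrm{d}t'$. Insert a cutoff $\chi$ and pass to the twisted Fourier side as in Lemma~\ref{4-linear}. Put the sharp cutoff $\mathbf{1}_{[0,t]}$ on one type~(C) input and decompose all modulations dyadically, $|\kappa_j|\sim L_j$. Only $L_{(1)}\lesssim M^2$ contributes non-negligibly, so there are $O(\log^4 M)$ blocks, which the factor $M^{\delta}$ absorbs. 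To make the bound uniform in $t$, we apply Lemma~\ref{lem:momentnew} on a grid of times of mesh $M^{-10}$ and take a union bound; the $M^{C}$ grid points cost only $M^{\delta}$ in the probability of the exceptional set. Between grid points, $t\mapsto\mathcal{N}^{(4)}_n(t)$ is Lipschitz with constant $M^{C}$ after the frequency truncation, so the grid bound extends to all $t$.

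\emph{Step 2 (CCCC and CCDC/CCCD).} For (3), apply Lemma~\ref{lem:momentnew} with $\mathfrak{h}=\gamma_{nnm_4m_4}\gamma_{\vec m}\mathbf 1_{\vec m\sim\vec M}$. Since Lemma~\ref{lem:momentnew} is a scalar bound without output index, no $\ell^2_n$ sum appears, and what must be controlled is $\sup_\kappa\|\mathbf 1_{\vec m\sim \vec M}\widehat\chi(\kappa-\Omega)\|_{\ell^2}$, i.e. the square root of the level-set count. By Lemma~\ref{timeStrichart} and Lemma~\ref{improvedcounting} this is $\lesssim M^{1-\alpha+\varepsilon}(M_{(3)}M_{(4)})^{1/2}$, in the same way as in the proof of Proposition~\ref{prop:CCC}. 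Multiplying by $(M_1\cdots M_4)^{-\alpha}$, $M_4$ and $M_{(4)}$, and using $M_{(1)}\sim M_{(2)}$, the worst case is $M_{(3)}\sim M_{(4)}\sim M$. This gives
\[M^{-4\alpha}\cdot M\cdot M\cdot M^{1-\alpha}\cdot M=M^{4-5\alpha},\]
which is (3). The $M^{2(\gamma-\gamma_0)}$ loss is $M^{O(\theta)}$, absorbed into $M^{10\theta}$.

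For (1) and (2), the moment bound applies only to the three (C) inputs. We therefore use the random operator approach of Proposition~\ref{prop:random-opdyadic}, reduced to one fewer output index. Conditioning on $\mathcal{B}_{\le M}$, we bound the tensor in $(m_1,m_2,m_3)$ against the (D) input $v^{(\mathrm D)}_{M_j}$, by Cauchy--Schwarz in $m_j$ with $\|v^{(\mathrm D)}\|_{X^{0,b}}\le M_j^{-s}$. This gives the deterministic-in-$m_j$ gain of $M_j^{-s}$ times a square-root count in the remaining three indices. When the (D) input sits at $M_4$, the factor $\gamma_{nnm_4m_4}\le M_4$ is attached to it; this is why (1) yields the weaker exponent $3\alpha+s-3$ compared with $4\alpha+s-4$ for (2). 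In (2), $m_4$ is Gaussian and one obtains $M^{-3\alpha}M^{-s}\cdot M^{2}\cdot M^{1-\alpha}\cdot M^{1}=M^{4-4\alpha-s}$.

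\emph{Main obstacle.} The difficulty is the interaction between the sharp cutoff $\mathbf{1}_{[0,t]}$ and the tensor structure. Lemma~\ref{productrule} only allows $\gamma_0<1/q'$, and this forces the $M^{2(\gamma-\gamma_0)}$ loss. The point is to check that every such loss is $O(\theta+\varepsilon)$, so that it fits inside $M^{10\theta+\delta}$. A related concern is that the counting exponents leave positive margin under $\alpha>15/16$ once $s$ satisfies Definition~\ref{hierarchy}. In (1), where the (D) input carries the large coefficient, I would first try the full $\ell^2$ Cauchy--Schwarz on the (D) input. If that loses too much, I would fall back on Lemma~\ref{lem:tens}-type merged tensor norms with the non-commutative Khintchine inequality.
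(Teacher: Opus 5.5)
Your proof of (3) follows the paper. You use Lemma~\ref{lem:momentnew} with the sharp cutoff absorbed into one colored Gaussian, dyadic modulation blocks with $L_{(1)}\lesssim M^2$, and the count $M^{1-\alpha+\varepsilon}(M_{(3)}M_{(4)})^{1/2}$, which gives $M^{4-5\alpha}$. Your grid/Lipschitz step is a legitimate way to make the bound uniform in $t$, a point the paper leaves implicit.

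For (1)--(2) the paper also pairs the (D) input against the three-(C) random vector $V$ and applies Lemma~\ref{momentbd'}, with the (D) index as the output index. The only difference is where the cutoff goes. The paper puts $\mathbf{1}_{[0,t]}$ on the (D) input, using Lemma~\ref{productrule} in $X^{0,b_0}$ with $b_0<\frac12$. It then reaches $\|V\|_{X^{0,-b_0}}$ by interpolating the moment bound with a crude $X^{0,0}$ bound. As a result $V$ does not depend on $t$, and no grid is needed there. Your placement on a (C) input also works.

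Two corrections on this part:
\begin{itemize}
\item The level-set count that enters is over all four indices, since the $\ell^2$ sum in the (D) index is part of $\|V\|$; your numerics for (2) in fact already use this.
\item The fallback to merged tensor norms and non-commutative Khintchine cannot help. The output is a scalar, so $h\mapsto\langle h,V\rangle$ has operator norm exactly $\|V\|$, and Cauchy--Schwarz is already sharp against an arbitrary (D) input.
\end{itemize}

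The genuine gap is (1). Take the factors you list: $(M_1M_2M_3)^{-\alpha}$; $\gamma_{nnm_4m_4}\|v^{(\mathrm D)}_{M_4}\|_{X^{0,b}}\lesssim M_4^{1-s}$; $\gamma_{\vec m}\lesssim M_{(4)}$; and the square-root count $M^{1-\alpha+\varepsilon}(M_{(3)}M_{(4)})^{1/2}$. In the configuration $M_1\sim M_2\sim M_3\sim M_4\sim M$ these give $M^{4-4\alpha-s}$, exactly as in (2). Moving the factor $M_4$ from the random side to the deterministic side does not change the product.

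Moreover $M^{-(3\alpha+s-3)}=M^{3-3\alpha-s}$ is smaller than $M^{4-4\alpha-s}$ by $M^{-(1-\alpha)}$. So (1) as stated is stronger than (2), not weaker as you say. The missing factor $M^{1-\alpha}$ is far beyond the $M^{10\theta}$ slack. Any counting improvement that removed it would improve (2) and (3) equally, so it cannot explain the asymmetry.

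The paper's own Case~1 estimates also multiply out to $M^{4-4\alpha-s}$, so the printed exponent in (1) appears to be a slip. It is harmless, because afterwards only the uniform bound \eqref{finalbd:Nn4} with exponent $3\alpha+2s-4$ is used (Remark~\ref{rmk:numerologyquintic}). That bound follows from $M^{-(4\alpha+s-4)}$, since $s<\alpha$ gives $3\alpha+2s-4<4\alpha+s-4$. Rather than assert $3\alpha+s-3$, you should prove (1) with exponent $4\alpha+s-4$ and note that this suffices.
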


\begin{remark}\label{rmk:numerologyquintic}
	The hierarchy of parameters gives
	\[
	3\alpha+2s-4<s+4\alpha-4< \min\{s+3\alpha-3,\, 5\alpha-4\}.
	\]
	Combining this with Corollary~\ref{cor:4-linear-1}, we see that $(M^{\delta}R,\delta;\mathcal{B}_{\leq M})$-certainly, bounds~(1)--(5) in Corollary~\ref{cor:4-linear-1} together with~(1)--(3) of Corollary~\ref{cor:4-linearprob} are all controlled by the uniform estimate
	\begin{align}\label{finalbd:Nn4}
		R^4 M_{(1)}^{10\theta+\delta}\cdot\frac{M_{(1)}\wedge n}{M_{(1)}}\cdot M_{(1)}^{-(3\alpha+2s-4)}.
	\end{align}
\end{remark}

\begin{proof}[Proof of Corollary~\ref{cor:4-linearprob}]
	As in the proof of Corollary~\ref{cor:4-linear-1}, we work in the regime $M_4\lesssim n$; the complementary regime yields the extra factor $\frac{M_{(1)}\wedge n}{M_{(1)}}$ by the same argument.

	\emph{Case~1: $v_{M_j}^{(\ast_j)}$ for $j=1,2,3$ are of type~(C), and $v_{M_4}^{(\ast_4)}$ is of type~(D).}
	Set $h_{m_4}(t'):=\gamma_{nnm_4m_4}\, v_{m_4}^{(\mathrm{D})}(t')$.
	For fixed $t\in\R$ and $n\in\N$,
	\[
	\mathcal{N}_n^{(4)}(v_{M_1}^{(\mathrm{C})},v_{M_2}^{(\mathrm{C})},v_{M_3}^{(\mathrm{C})},v_{M_4}^{(\mathrm{D})})(t)
	=
	2\Im \int_{\R}\mathbf{1}_{[0,t]}(t')\, h(t')\,V(t')\,\dd t',
	\]
	where $V(t)=\sum_m V_m(t)$ and
	\[
	V_m(t)=\chi_1(t)\sum_{\substack{m_1,m_2,m_3\\
			\{m,m_2\}\cap\{m_1,m_3\}=\emptyset}}
	\gamma_{mm_1m_2m_3}
	\frac{g_{m_1}^{M_1,\dag}(t)\,\ov{g}_{m_2}^{M_2,\dag}(t)\,g_{m_3}^{M_3,\dag}(t)}
	{m_1^\alpha m_2^\alpha m_3^\alpha}.
	\]
	For any $b_0=(1-\vartheta)b<\frac12$,
	\[
	\big|\mathcal{N}_n^{(4)}(v_{M_1}^{(\mathrm{C})},v_{M_2}^{(\mathrm{C})},v_{M_3}^{(\mathrm{C})},v_{M_4}^{(\mathrm{D})})(t)\big|
	\lesssim
	\|\mathbf{1}_{[0,t]}h\|_{X^{0,b_0}}\|V\|_{X^{0,-b_0}}.
	\]
	By Lemma~\ref{productrule},
	\[
	\|\mathbf{1}_{[0,t]}h\|_{X^{0,b_0}}
	\lesssim
	\|h\|_{X^{0,b_0}}
	\lesssim
	M_4^{1-s}\log M_4.
	\]
	By Lemma~\ref{momentbd'}, $(M^\delta R,\delta;\mathcal{B}_{\le M})$-certainly,
	\begin{align*}
		\|V\|_{X^{0,-b}}
		&\lesssim
		M^\delta R^3 (M_1 M_2 M_3)^{-\alpha}
		\sup_{\kappa}\|\gamma_{mm_1m_2m_3}\,\widehat{\chi}_1(\kappa-\Omega(\vec m))\|_{\ell_{m_1,m_2,m_3,m}^2}\\
		&\lesssim_{\varepsilon}
		M^\delta R^3\cdot M_{(4)}\cdot(M_1 M_2 M_3)^{-\alpha}
		\cdot M_{(3)}^{1/2}M_{(4)}^{1/2}M_{(1)}^{1-\alpha+\varepsilon}.
	\end{align*}
	We also have the crude bound
	\[
	\|V\|_{X^{0,0}}
	\lesssim
	R^3 M_{(4)}M_4^{1/2}\cdot (M_1 M_2 M_3)^{1-\alpha+\varepsilon}
	\lesssim
	R^3 M_{(1)}^3.
	\]
	Interpolating between $X^{0,0}$ and $X^{0,-b}$ gives
	\[
	\|V\|_{X^{0,-b_0}}
	\lesssim
	\|V\|_{X^{0,0}}^{\vartheta}\|V\|_{X^{0,-b}}^{1-\vartheta},
	\]
	and therefore
	\[
	\big|\mathcal{N}_n^{(4)}(v_{M_1}^{(\mathrm{C})},v_{M_2}^{(\mathrm{C})},v_{M_3}^{(\mathrm{C})},v_{M_4}^{(\mathrm{D})})(t)\big|
	\lesssim
	R^3 M_{(1)}^{10\theta+\delta}\, M_{(1)}^{-(3\alpha+s-3)}.
	\]

	\emph{Case~2: $v_{M_4}^{(\ast_4)}$ is of type~(C), and one of $v_{M_j}^{(\ast_j)}$ for $j=1,2,3$ is of type~(D).}
	By symmetry, it suffices to consider $v_{M_3}^{(\ast_3)}$ of type~(D).
	Set $h_{m_3}(t'):=v_{m_3}^{(\mathrm{D})}(t')$ and
	\[
	V_{m_3}(t)
	=
	\chi_1(t)
	\sum_{\substack{m_1,m_2,m_4\\ \text{nondegenerate}}}
	\gamma_{nnm_4m_4}\gamma_{m_1m_2m_3m_4}
	\frac{g_{m_1}^{M_1,\dag}(t)\,\ov{g}_{m_2}^{M_2,\dag}(t)\,\ov{g}_{m_4}^{M_4,\dag}(t)}
	{m_1^\alpha m_2^\alpha m_4^\alpha}.
	\]
	Then $\|\mathbf{1}_{[0,t]}h\|_{X^{0,b_0}}\lesssim M_3^{-s}$,
	and by Lemma~\ref{momentbd'}, $(M^\delta R,\delta;\mathcal{B}_{\le M})$-certainly,
	\begin{align*}
		\|V\|_{X^{0,-b}}
		&\lesssim
		M^\delta R^3 (M_1 M_2 M_4)^{-\alpha}
		\sup_{\kappa}\|\gamma_{nnm_4m_4}\gamma_{m_1m_2m_3m_4}\,\widehat{\chi_1}(\kappa-\Omega(\vec m))\|_{\ell_{m_1,m_2,m_3,m_4}^2}\\
		&\lesssim_{\varepsilon}
		M^\delta R^3\cdot M_4 M_{(4)}\cdot (M_1 M_2 M_4)^{-\alpha}
		\cdot M_{(3)}^{1/2}M_{(4)}^{1/2}M_{(1)}^{1-\alpha+\varepsilon}.
	\end{align*}
	The same interpolation argument yields
	\[
	\big|\mathcal{N}_n^{(4)}(v_{M_1}^{(\mathrm{C})},v_{M_2}^{(\mathrm{C})},v_{M_3}^{(\mathrm{D})},v_{M_4}^{(\mathrm{C})})(t)\big|
	\lesssim
	R^3 M_{(1)}^{10\theta+\delta}\cdot M_{(1)}^{-(4\alpha+s-4)}.
	\]

	\emph{Case~3: all $v_{M_j}^{(\ast_j)}$ for $j=1,2,3,4$ are of type~(C).}
	We dyadically decompose the modulation variables:
	\[
	\widetilde{v}_{M_j,L_j}^{(\mathrm{C})}(\kappa_j)
	:=
	\mathbf{1}_{L_j/2<|\kappa_j|\leq L_j}\,\widetilde{v}_{M_j}^{(\mathrm{C})}(\kappa_j),
	\qquad j=1,2,3,4,
	\]
	so that
	\[
	\mathcal{N}_n^{(4)}(v_{M_1}^{(\mathrm{C})},\cdots,v_{M_4}^{(\mathrm{C})})
	=
	\sum_{\substack{L_1,L_2,L_3,L_4\\ L_j\lesssim M_{(1)}^2}}
	\mathcal{N}_n^{(4)}(v_{M_1,L_1}^{(\mathrm{C})},\cdots,v_{M_4,L_4}^{(\mathrm{C})}).
	\]
	Absorbing $\mathbf{1}_{[0,t]}$ into the first colored Gaussian and applying Lemma~\ref{lem:momentnew}, we deduce that $(M^\delta R,\delta;\mathcal{B}_{\le M})$-certainly,
	\begin{align*}
		\big\|\mathcal{N}_n^{(4)}(v_{M_1,L_1}^{(\mathrm{C})},\cdots,v_{M_4,L_4}^{(\mathrm{C})})\big\|_{L_t^\infty}
		&\lesssim
		R^4(M_1 M_2 M_3 M_4)^{-\alpha}M^{4\theta+\delta}\\
		&\quad\times\sup_{\kappa}
		\|\gamma_{nnm_4m_4}\gamma_{m_1m_2m_3m_4}\,\widehat{\chi_1}(\kappa-\Omega(\vec m))\|_{\ell_{m_1,m_2,m_3,m_4}^2}\\
		&\lesssim_{\varepsilon}
		R^4 M^{4\theta+\delta}\left(\prod_{i=1}^{4}M_{i}^{-\alpha}\right)M_4 M_{(4)}
		\cdot M_{(1)}^{1-\alpha+\varepsilon}(M_{(3)}M_{(4)})^{\frac12}.
	\end{align*}
	Summing over $L_j\lesssim M_{(1)}^2$ costs at most $M_{(1)}^\varepsilon$, which is absorbed into $M^\delta$. Therefore,
	\[
	\|\mathcal{N}_n^{(4)}(v_{M_1}^{(\mathrm{C})},\cdots,v_{M_4}^{(\mathrm{C})})\|_{L_t^\infty}
	\lesssim
	R^4 M_{(1)}^{10\theta+\delta}\cdot M_{(1)}^{-(5\alpha-4)}.
	\]
	This proves~(3) and completes the proof.
\end{proof}
	
	
	\section{Proof of Proposition~\ref{prop:trilinear} and Proposition~\ref{prop:quintilinear}}\label{sec:mainproof}

In this section, we carry out the remaining dyadic summation arguments needed to close the proof of Propositions~\ref{prop:trilinear} and~\ref{prop:quintilinear}.
Since most of the analysis reduces to controlling dyadic sums, we introduce the following notion of envelope.

\begin{definition}[Scale-$N$ $h^{s}$-envelope]\label{def:scale-seq}
	Let $\mathbf{a}=(a_M)_{M\in 2^{\mathbb N}}$ be a dyadic sequence, and for $\sigma\in\mathbb R$ define
	\[
	\|\mathbf{a}\|_{h^{\sigma}} := \bigl\| M^{\sigma} a_M \bigr\|_{\ell^2_{M\in 2^{\mathbb N}}}.
	\]
	(So $h^0=\ell^2$.) Fix $0<s\leq 1$. For a dyadic number $N\in 2^{\mathbb N}$, we say that $\mathbf{a}$ is a \emph{scale-$N$ $h^{s}$-envelope} if:
	\begin{align*}
		\text{(1)}\quad & \|\mathbf{a}\|_{h^0} \lesssim N^{-s},\\
		\text{(2)}\quad & \|\mathbf{a}\|_{h^{s_0}} \lesssim_{s_0,s} N^{-(s-s_0)},\qquad \forall\, s_0\in(0,s),\\
		\text{(3)}\quad & |a_M|\lesssim \Big(\frac{N}{M}\Big) N^{-s},\qquad \forall\, M\ge N.
	\end{align*}
\end{definition}

\subsection{Proof of Proposition~\ref{prop:trilinear}}

We start with some reductions by interpolation.
We first apply the linear inhomogeneous estimate
\begin{align*}
	\|\mathcal{I}_{\chi_T}\mathcal{N}(\cdots)\|_{X^{0,b}}\lesssim T^{\sigma}\|\mathcal{N}(\cdots)\|_{X^{0,b+\sigma-1}}=T^{\sigma}\|\mathcal{N}(\cdots)\|_{X^{0,-\frac{1}{2}+\sigma+\sigma^{10}}}.
\end{align*}
No matter the types of nonlinearity and inputs, we will establish the crude estimate
\begin{align}\label{algorithm:A_0}
	\|\mathcal{N}(\cdots)\|_{X^{0,0}}\leq R^3 A_0(\vec{N}).
\end{align}
We then interpolate with the estimate
\begin{align}\label{algorithm:A_1}
	\|\mathcal{N}(\cdots)\|_{X^{0,-b_1}}\leq R^3 A_1(\vec{N}),
\end{align}
where $b_1=\frac{1}{2}+\sigma^{10}$.
Since $b=b_{\sigma}=\frac{1}{2}+\sigma^{10}$, this gives
\begin{align}\label{algorithm:output}
	\|\mathcal{I}_{\chi_T}\mathcal{N}(\cdots)\|_{X^{0,b}}\leq R^3 T^{\sigma}(A_0(\vec{N}))^{\frac{2\sigma+4\sigma^{10}}{1+2\sigma^{10}}} \cdot (A_1(\vec{N}))^{\frac{1-2\sigma-2\sigma^{10}}{1+2\sigma^{10}}}.
\end{align}

As a first step, we establish the following lemma, which allows us to reduce to the regime where the two largest frequencies are comparable.

\begin{lemma}[High-low-low-low bound]\label{lem:highlowlowlow}
	For $v_{N_1}^{(\ast_1)},v_{N_2}^{(\ast_2)},v_{N_3}^{(\ast_3)}$, where $\ast_j\in\{(\mathrm{C}),(\mathrm{D})\}$, we have
	\begin{align*}
		\Big\|\mathcal{I}_{\chi_T}\sum_{\substack{K_1,K_2,K_3,K_4 \\
				K_{(1)}\gg K_{(2)}
		}}\mathbf{P}_{K_4}\mathcal{N}\big(\mathbf{P}_{K_1} v_{N_1}^{(\ast_1)},
		\mathbf{P}_{K_2}v_{N_2}^{(\ast_2)},
		\mathbf{P}_{K_3}v_{N_3}^{(\ast_3)}
		\big)\Big\|_{X^{0,b}}\lesssim R^3 T^{-3(b-\frac{1}{2})}N_{(1)}^{-s}N_{(2)}^{-\frac{\theta}{2}}
	\end{align*}
	for some $\theta>2^{100}\sigma$.
\end{lemma}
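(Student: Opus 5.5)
The plan is to exploit the decay $\gamma_{n_0n_1n_2n_3}\lesssim K_{(1)}^{-2}$ from \eqref{eq:gamma3}, which holds whenever one frequency dominates the other three. In the regime $K_{(1)}\gg K_{(2)}$ this decay should beat all losses from the rough inputs. We therefore do not use probabilistic structure at all. Each type (C) input is treated through its $X^{0,b}$ bound \eqref{bddef:TypeC'}, which is at worst $RT^{-(b-\frac12)}N^{-\alpha+\frac12+\delta}$, and each type (D) input through its $X^{0,b}$ bound $N^{-s}$. Since the output $\mathbf{P}_{K_4}$ is one of the four frequencies, there are two cases: the dominant frequency is an input $K_j$ with $j\le 3$, or it is the output $K_4$.

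\emph{Step 1 (dyadic bound).} For fixed $(K_1,\dots,K_4)$ with $K_{(1)}\gg K_{(2)}$, we apply \eqref{eq:tri1} of Proposition~\ref{prop:non-resonant}, where now $\Gamma=K_{(1)}^{-2}$. This gives the $X^{0,-b_1}$ bound
\[
C_\epsilon K_{(1)}^{-2}K_{(1)}^{2(1-\alpha)}K_{(3)}^{\epsilon}\prod_j\|\mathbf{P}_{K_j}v_{N_j}\|_{X^{0,b}}.
\]
We then pass to $X^{0,b}$ via $\|\mathcal{I}_{\chi_T}F\|_{X^{0,b}}\lesssim T^{\sigma}\|F\|_{X^{0,b-1+\sigma}}$. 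Because $b_1$ in Proposition~\ref{prop:non-resonant} may be taken slightly above $\frac12$, no interpolation is needed here, as $b-1+\sigma\le -b_1$ is allowed for a suitable choice. The same argument, with the correlation bound from \eqref{eq:gamma3} inserted into Proposition~\ref{linear:type1}, covers the partially resonant pieces $\mathcal{N}^{(3)}_{(23)}$, $\mathcal{N}^{(3)}_{(12)}$ and the quadratic term. For these the crude bound $\min(\cdot)$ is simply replaced by $K_{(1)}^{-2}$, and the high frequency is an unpaired index.

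\emph{Step 2 (frequency bookkeeping).} The resulting decay is $K_{(1)}^{-2\alpha+\epsilon}$, with $2\alpha>\frac{15}{8}$. If the top frequency is an input $K_j$ with $K_j\sim N_j$ (the main part, since type (C) inputs are supported at $n\sim N_j$), then $K_{(1)}\gtrsim N_{(1)}$ up to the correction below. Type (D) inputs may carry frequencies $K_j\ge 4N_j$, but these carry the extra factor $(N_j/K_j)$ from Definition~\ref{def:CD}, which only helps. The losses are at most $N_{(2)}^{\frac12}$ and $N_{(3)}^{\frac12}$ from the low-frequency type (C) inputs, together with $T^{-3(b-\frac12)}$. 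Thus each dyadic block is bounded by
\[
R^3T^{-3(b-\frac12)}K_{(1)}^{-2\alpha+\epsilon}N_{(2)}^{\frac12}N_{(3)}^{\frac12}.
\]
Since $K_{(1)}\gtrsim N_{(1)}\ge N_{(2)}$, this is at most $N_{(1)}^{-s}N_{(2)}^{-\theta/2}$ once $2\alpha-1-\epsilon>s+\theta$, which holds because $s<4\alpha-3$ and $\alpha\le1$.

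\emph{Step 3 (output-dominant case).} If instead $K_4=K_{(1)}\gg\max(K_1,K_2,K_3)$, the inputs are dominated by frequencies $K_j$ with $K_j\gtrsim N_j$ only when $N_j$ is of type (D) with a high-frequency tail. This case is handled by the same decay $K_4^{-2}$ together with the tail factors. The sum over $K_4$ converges trivially thanks to the factor $K_4^{-2+}$.

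\emph{Step 4 (summation).} Summing over the dyadic $K_j$ costs only $\log$ factors, which are absorbed into $N_{(2)}^{-\theta/2}$ by slightly lowering $\theta$.

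The main obstacle is bookkeeping the type (D) inputs whose frequency support extends far beyond $N_j$, particularly when such a tail becomes the top frequency. The plan there is to combine the tail bound $(N_j/K_j)N_j^{-s}$ with the $K_{(1)}^{-2}$ decay, and to check that no configuration produces a positive power of $K_{(1)}$ from the $K_{(1)}^{2(1-\alpha)}$ counting loss.
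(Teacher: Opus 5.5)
Your overall strategy is the right one: a purely deterministic argument driven by the decay $\gamma_{n_1n_2n_3n_4}\lesssim K_{(1)}^{-2}$ from \eqref{eq:gamma3}. However, two steps fail as written.

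First, the passage to $X^{0,b}$ in Step~1 is incorrect. Proposition~\ref{prop:non-resonant} needs $b_1>\frac12$, while the inhomogeneous estimate requires the source in $X^{0,b-1+\sigma}=X^{0,-\frac12+\sigma+\sigma^{10}}$. That is a \emph{stronger} norm than $X^{0,-b_1}$, and the inequality $b-1+\sigma\le -b_1$ would force $b_1\le \frac12-\sigma-\sigma^{10}$. So an $X^{0,-b_1}$ bound with $b_1>\frac12$ cannot be fed into Duhamel directly. You must interpolate with an $X^{0,0}$ bound as in \eqref{algorithm:output}, or use a modulation gain as in Proposition~\ref{linear:type1}. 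Once an $X^{0,0}$ bound is available, the counting estimate \eqref{eq:tri1} is not needed at all, and this is what the paper does. Since $b+\sigma-1<0$, it bounds only the $X^{0,0}$ norm, using duality, $|\gamma_{n_1n_2n_3n_4}|\lesssim K_{(1)}^{-2}$ and Cauchy--Schwarz in each $n_j$. This gives the factor $K_{(1)}^{-2}(K_1K_2K_3K_4)^{1/2}\prod_j\|\mathbf{P}_{K_j}v_{N_j}^{(\ast_j)}\|_{X^{0,b}}$. That factor already suffices in this regime and applies verbatim to every sub-sum of the cubic nonlinearity, including $\mathcal{N}^{(3)}_{(12)}$ and $\mathcal{N}^{(3)}_{(23)}$.

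Second, the claim in Step~2 that $K_{(1)}\gtrsim N_{(1)}$ (up to high-frequency tails) is false. Definition~\ref{def:CD} gives no lower frequency localization for a type~(D) term: $w_N$ may carry its whole $X^{0,b}$ mass at frequencies $K\ll N$. If the top-scale input is such a $w_{N_{(1)}}$ and its block $K$ is small, then $K_{(1)}\ll N_{(1)}$ is possible, and $K_{(1)}^{-2\alpha+\epsilon}$ produces no power of $N_{(1)}$. In that configuration the factor $N_{(1)}^{-s}$ must come from $\|\mathbf{P}_{K}w_{N_{(1)}}\|_{X^{0,b}}\le N_{(1)}^{-s}$, with nothing to spare. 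The gain $N_{(2)}^{-\theta/2}$ then has to be extracted from the two remaining inputs. The sum over the blocks of $w_{N_{(1)}}$ must also be closed without a $\log N_{(1)}$ loss; your Step~4, which absorbs logarithms into $N_{(2)}^{-\theta/2}$, cannot do this when $N_{(2)}$ is small. This is exactly Case~2 of the paper's proof. There the two remaining inputs are measured in $X^{\alpha-\frac12-\theta,b}$, where both types have norm $\lesssim RT^{-(b-\frac12)}N_j^{-\theta/2}$. The resulting weight $K_{(1)}^{-1}(K_2K_3)^{1-\alpha+\theta}$ is summable over the dyadic blocks because $2(1-\alpha+\theta)<1$. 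With these two repairs your argument closes, but it then essentially becomes the paper's.
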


\begin{proof}
	It suffices to estimate the $X^{0,0}$-norm of the nonlinearity in this regime.
	We distinguish two cases according to the type of the highest-scale input.

	\noi
	$\bullet${\bf Case 1: $\exists\, j\in\{1,2,3\}$ with $(\ast_j)=(\mathrm{C})$ and $N_j=N_{(1)}$.}

	We first establish the crude estimate~\eqref{algorithm:A_0}.
	Denote $a_{n_j}^{(j)}(t)=|\mathbf{P}_{K_j}v_{N_j,n_j}^{(\ast_j)}(t)|$ for $j=1,2,3$. By duality, for any $(d_{n}(t))_{n\in \N}\in \ell^2(\N)$ with compact support in~$t$ and $\|d_{n}(t)\|_{L_t^2\ell_n^2}\leq 1$,
	\begin{align*}
		&\int_{\R}\dd t \sum_{\substack{n_1,n_2,n_3,n_4\\
				n_j\sim K_j
		}} \gamma_{n_1 n_2 n_3 n_4}\cdot d_{n_4}(t)\cdot\prod_{j=1}^3 a_{n_j}^{(j)}(t)\\
		&\qquad\lesssim K_{(1)}^{-2}\cdot (K_1 K_2 K_3 K_4)^{\frac{1}{2}} \|d_{n_4}(t)\|_{L_t^2\ell_{n_4}^2}\prod_{j=1}^3\|a_{n_j}^{(j)}\|_{L_t^{\infty}\ell_{n_j}^2}.
	\end{align*}
	Since $K_{(1)}\gg K_{(2)}$, we have $\gamma_{n_1 n_2 n_3 n_4}\lesssim K_{(1)}^{-2}$ for $n_j\sim K_j$.
	Thus
	\begin{multline*}
		\big\|\chi(t)\,\mathbf{P}_{K_4}\mathcal{N}\big(\mathbf{P}_{K_1}v_{N_1}^{(\ast_1)},
		\mathbf{P}_{K_2}v_{N_2}^{(\ast_2)},
		\mathbf{P}_{K_3}v_{N_3}^{(\ast_3)}
		\big)\big\|_{X^{0,0}}
		\\
		\lesssim K_{(1)}^{-2}\cdot (K_1 K_2 K_3 K_4)^{\frac{1}{2}}\prod_{j=1}^3\|\mathbf{P}_{K_j}v_{N_j}^{(\ast_j)}\|_{X^{0,b}}.
	\end{multline*}
	Since $(\ast_j)=(\mathrm{C})$ and $N_j=N_{(1)}$, we must have $K_j=N_j=N_{(1)}\leq K_{(1)}$. To fix ideas, assume $j=1$, so that $K_1=N_1=N_{(1)}$. For $i=2,3$, if $(\ast_i)=(\mathrm{C})$, then $K_i=N_i$ and
	\[
	\big\| K_i^{1/2}\|\mathbf{P}_{K_i}v_{K_i}^{(\ast_i)}\|_{X^{0,b}}\big\|_{\ell_{K_i}^1}\lesssim R\, T^{-(b-\frac{1}{2})}N_i^{1-\alpha+\frac{1}{q}+\delta}.
	\]
	If $(\ast_i)=(\mathrm{D})$, we have
	\[
	\big\|K_i^{\frac{1}{2}}\|\mathbf{P}_{K_i}v_{N_i}^{(\ast_i)}\|_{X^{0,b}}\big\|_{\ell_{K_i}^2}\lesssim_{\epsilon} N_i^{-(s-\frac{1}{2}-\epsilon)}.
	\]
	Therefore
	\begin{multline*}
		\Big(
		\sum_{K_4}
		\Big(
		\sum_{\substack{K_1,K_2,K_3\\
				K_{(1)}\gg K_{(2)}
		}}
		\Big\|
		\mathbf{P}_{K_4}\mathcal{N}
		\big(
		\mathbf{P}_{K_1}v_{N_1}^{(\ast_1)},
		\mathbf{P}_{K_2}v_{N_2}^{(\ast_2)},
		\mathbf{P}_{K_3}v_{N_3}^{(\ast_3)}
		\big)
		\Big\|_{X^{0,0}}
		\Big)^2
		\Big)^{1/2}\\
		\lesssim_{\epsilon}
		R^3 T^{-(b-\frac{1}{2})}
		N_{(1)}^{-\frac{3}{2}+3(1-\alpha)+\epsilon+3(\frac{1}{q}+\delta)},
	\end{multline*}
	which is sufficient since $\frac{3}{2}-3(1-\alpha)-3(\frac{1}{q}+\delta)>s$.

	\noi
	$\bullet${\bf Case 2: $\exists\, j\in\{1,2,3\}$ with $(\ast_j)=(\mathrm{D})$ and $N_j=N_{(1)}$.}

	The argument is essentially the same as in Case~1, except that we may have $K_{(1)}\ll N_{(1)}$. Assume $N_1=N_{(1)}$ and $v_{N_1}^{(\ast_1)}$ is of type~(D). We have
	\begin{align*}
		&\Big(\sum_{K_4}\Big(\sum_{\substack{K_1,K_2,K_3\\
				K_{(1)}\gg K_{(2)}
		}} \Big\|\mathbf{P}_{K_4}\mathcal{N}
		\big(
		\mathbf{P}_{K_1}v_{N_1}^{(\ast_1)},
		\mathbf{P}_{K_2}v_{N_2}^{(\ast_2)},
		\mathbf{P}_{K_3}v_{N_3}^{(\ast_3)}
		\big)\Big\|_{X^{0,0}}\Big)^2\Big)^{1/2}\\
		&\lesssim
		\Big(\sum_{K_4}
		\Big(\sum_{\substack{K_1,K_2,K_3\\
				K_{(1)}\gg K_{(2)}
		}} K_{(1)}^{-2}(K_2 K_3)^{1-\alpha+\theta}(K_1 K_4)^{1/2}\\
		&\qquad\qquad\times
		\|\mathbf{P}_{K_1}v_{N_1}^{(\mathrm{D})}\|_{X^{0,b}}
		\prod_{j=2}^3\|K_j^{\alpha-\frac{1}{2}-\theta}\mathbf{P}_{K_j}v_{N_j}^{(\ast_j)}\|_{X^{0,b}}
		\Big)^2
		\Big)^{1/2}\\
		&\lesssim
		\|v_{N_1}^{(\mathrm{D})}\|_{X^{0,b}}\prod_{j=2}^3\|v_{N_j}^{(\ast_j)}\|_{X^{\alpha-\frac{1}{2}-\theta,b}},
	\end{align*}
	for some $\theta>2^{100}\sigma$. For the last inequality, we used $2(1-\alpha+\theta)<1$, which gives
	\begin{align*}
		\sum_{K_4}\Big(\sum_{K_1,K_2,K_3}
		K_{(1)}^{-1}\cdot (K_2 K_3)^{1-\alpha+\theta}
		\Big)^2\lesssim 1.
	\end{align*}
	Since $\|v_{N_1}^{(\mathrm{D})}\|_{X^{0,b}}\lesssim N_1^{-s}$ and, by interpolation,
	\[
	\|v_{N_j}^{(\ast_j)}\|_{X^{\alpha-\frac{1}{2}-\theta,b}}\lesssim R\, T^{-(b-\frac{1}{2})}N_j^{-\frac{\theta}{2}},
	\]
	thanks to $\theta>2^{100}\sigma\gg \frac{1}{q}+\delta$, the proof is complete.
\end{proof}

\noi
$\bullet${\bf Proof of (A) in Proposition~\ref{prop:trilinear}:} Thanks to Lemma~\ref{lem:highlowlowlow}, in the analysis below we always assume $K_{(1)}\sim K_{(2)}$ without displaying this constraint explicitly.

For parameters $(s,q,\gamma,b,\delta)=(s_{\sigma},q_{\sigma},\gamma_{\sigma},b_{\sigma},\delta_{\sigma})$ satisfying the hierarchy of small parameters~\eqref{hierarchy}, set
\[
s_0=s-\theta,
\]
for some $\theta>2^{50}\sigma$ to be fixed in the proof. The reason for introducing this parameter is that when an input $v_{N_j}^{(\ast_j)}$ is of type~(D) and $N_j$ is not the dominant scale (i.e.\ $N_j\ll N_{(1)}$), we use the $X^{s_0,b}$ norm and interpolation to get
\[
\|v_{N_j}^{(\ast_j)}\|_{X^{s_0,b}}\leq \|v_{N_j}^{(\ast_j)}\|_{X^{0,b}}^{\frac{\theta}{s}}\cdot \|v_{N_j}^{(\ast_j)}\|_{X^{s,b}}^{1-\frac{\theta}{s}}\lesssim N_j^{-\frac{\theta}{2}}.
\]

\underline{\textbf{1. (D)(D)(D) interactions}:}
Denote
\[
w^{(j)}:=v_{N_j}^{(\mathrm{D})},\quad j=1,2,3.
\]
We first have the trivial estimate in $X^{0,0}$:
\[
\|\mathcal{N}_{[123]}^{(3)}(w^{(1)},w^{(2)},w^{(3)})\|_{X^{0,0}}\lesssim N_{(1)}^{-s}N_{(2)}^{10}.
\]
Thus we choose $A_0(\vec{N})=N_{(1)}^{-s}N_{(2)}^{10}$ in~\eqref{algorithm:A_0}.

To estimate~\eqref{algorithm:A_1}, we have
\begin{align*}
	\|\mathcal{N}_{[123]}^{(3)}(w^{(1)},w^{(2)},w^{(3)})\|_{X^{0,-b_1}}
	&\lesssim \Big(\sum_{K_4}\Big\|
	\sum_{K_1,K_2,K_3}\mathbf{P}_{K_4}\mathcal{N}_{[123]}^{(3)}(\mathbf{P}_{K_1}w^{(1)},
	\mathbf{P}_{K_2}w^{(2)},\\
	&\phantom{\lesssim\Big(\sum_{K_4}\Big\|\sum_{K_1,K_2,K_3}\mathbf{P}_{K_4}\mathcal{N}_{[123]}^{(3)}(}
	\mathbf{P}_{K_3}w^{(3)})
	\Big\|_{X^{0,-b_1}}^2\Big)^{1/2}\\
	&\lesssim
	\Big(\sum_{K_4}\Big(\sum_{K_1,K_2,K_3}\|
	\mathbf{P}_{K_4}\mathcal{N}_{[123]}^{(3)}(\mathbf{P}_{K_1}w^{(1)},
	\mathbf{P}_{K_2}w^{(2)},\\
	&\phantom{\lesssim\Big(\sum_{K_4}\Big(\sum_{K_1,K_2,K_3}\|\mathbf{P}_{K_4}\mathcal{N}_{[123]}^{(3)}(}
	\mathbf{P}_{K_3}w^{(3)})
	\|_{X^{0,-b_1}}\Big)^2\Big)^{1/2}.
\end{align*}
Applying Proposition~\ref{prop:non-resonant} (combining~\eqref{eq:tri1'} and~\eqref{eq:tri1})\footnote{Since the bound exhibits no derivative loss when $K_{(1)}\gg K_{(3)}^2$, we may assume $K_{(1)}\lesssim K_{(3)}^2$. This allows us to transfer the loss to the lower frequency, bounding $K_{(1)}^{2(1-\alpha)}$ by $K_{(3)}^{4(1-\alpha)}$.}, the right-hand side is bounded by
\begin{align*}
	\Big(\sum_{K_4}\Big(\sum_{K_1,K_2,K_3} c_{K_1 K_2 K_3 K_4}\prod_{j=1}^3 a_{K_j}^{(j)}\Big)^2\Big)^{1/2},
\end{align*}
where
\[
a_{K_j}^{(j)}=\|\mathbf{P}_{K_j}w^{(j)}\|_{X^{0,b}},\quad j=1,2,3,
\]
and the coefficient satisfies
\begin{align}\label{cK1K2K3K4_bd1}
	c_{K_1 K_2 K_3 K_4}\lesssim_{\epsilon}
	\begin{cases}
		K_{(4)}K_{(3)}^{4(1-\alpha)+\epsilon}, &\text{if } K_{4}\in\{K_{(1)},K_{(2)}\} \text{ and } K_{(1)}\sim K_{(2)},\\
		K_{(4)}K_{(1)}^{2(1-\alpha)+\epsilon}, &\text{if } K_4\in\{K_{(3)},K_{(4)}\} \text{ and } K_{(1)}\sim K_{(2)}.
	\end{cases}
\end{align}

To carry out the dyadic summation, we establish the following lemma.

\begin{lemma}\label{lem:dyadicsum1}
	Assume $\frac{1}{2}<\alpha\leq 1$ and $s>\frac{1}{2}$ with $\alpha+s>\frac{3}{2}$. Assume that $c_{K_1 K_2 K_3 K_4}$ satisfies~\eqref{cK1K2K3K4_bd1},
	where $K_{(1)}\geq K_{(2)}\geq K_{(3)}\geq K_{(4)}$ is the non-increasing rearrangement of $K_1,K_2,K_3,K_4$. Then for any $0<\theta<\min\{2\alpha+s-5/2,\, s-1/2\}$,
	\begin{align*}
		&\Big(\sum_{K_4}\Big(\sum_{\substack{K_1,K_2,K_3\\ K_{(1)}\sim K_{(2)}}} c_{K_1 K_2 K_3 K_4}\prod_{j=1}^3 a_{K_j}^{(j)}\Big)^2\Big)^{1/2}\\
		&\qquad\lesssim_{\theta} \min_{\sigma\in \mathfrak{S}_3}\|a_{K_{\sigma_1}}^{(\sigma_1)}\|_{\ell_{K_{\sigma_1}}^2}
		\|K_{\sigma_2}^{s-\theta}a_{K_{\sigma_2}}^{(\sigma_2)}\|_{\ell_{K_{\sigma_2}}^2}
		\|K_{\sigma_3}^{s-\theta}a_{K_{\sigma_3}}^{(\sigma_3)}\|_{\ell_{K_{\sigma_3}}^2}.
	\end{align*}
\end{lemma}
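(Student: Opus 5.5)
We fix a permutation $\sigma\in\mathfrak{S}_3$ and prove the bound with $a^{(\sigma_1)}$ in $\ell^2$ and the other two in $h^{s-\theta}$; taking the minimum over $\sigma$ is then immediate. Write $b_{K}^{(j)}:=K^{s-\theta}a_K^{(j)}$ for the two weighted sequences. The constraint $K_{(1)}\sim K_{(2)}$ leaves only a few possible positions of $K_4$ among the ordered scales. We split the sum according to which two of $K_1,K_2,K_3,K_4$ are the top pair and where $K_4$ sits. In each configuration we write $c\prod_j a^{(j)}_{K_j}$ as
\[
c_{K_1K_2K_3K_4}\,K_{\sigma_2}^{-(s-\theta)}K_{\sigma_3}^{-(s-\theta)}\;a_{K_{\sigma_1}}^{(\sigma_1)}b^{(\sigma_2)}_{K_{\sigma_2}}b^{(\sigma_3)}_{K_{\sigma_3}},
\]
and show that the resulting kernel is summable. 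Concretely, it should be bounded by a product of a factor $\lesssim 1$ that is diagonal in the two comparable top scales and negative powers of the remaining scales, or of ratios of them. The sum over the non-top indices is then controlled by Cauchy--Schwarz in $\ell^2$, and the diagonal pairing of the two top scales is handled by Schur's test. This follows the usual dyadic bookkeeping of \cite{BCST24}.

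\emph{Case $K_4\in\{K_{(1)},K_{(2)}\}$.} Here $c\lesssim K_{(4)}K_{(3)}^{4(1-\alpha)+\epsilon}$, and the output frequency is paired with one input $K_i\sim K_4$. The two other inputs are the low ones $K_{(3)},K_{(4)}$. The worst distribution of weights puts $a^{(\sigma_1)}$ on a low frequency. Since $s-\theta\ge 0$, the weight on the high input $K_i$ can simply be dropped, so the worst case is $K_{(3)}$ or $K_{(4)}$ unweighted. We then need
\[
K_{(4)}K_{(3)}^{4(1-\alpha)+\epsilon}\,(K_{(3)}\text{ or }K_{(4)})^{-(s-\theta)}
\]
to be summable over the low scales. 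This uses $K_{(4)}^{1-(s-\theta)}\le K_{(3)}^{1-(s-\theta)}$, which is legitimate since $s-\theta\le 1$. That yields the exponent $K_{(3)}^{1+4(1-\alpha)-(s-\theta)+\epsilon}$, which I would need to be negative. This is where I expect the hypotheses to enter, and it does not look directly implied by $\theta<2\alpha+s-5/2$. So at this point I would recheck whether one must instead use the alternative bound $K_{(4)}K_{(1)}^{2(1-\alpha)}$ with $K_{(1)}\lesssim K_{(3)}^2$, as in the footnote, giving $K_{(3)}^{2(1-\alpha)}$ rather than $K_{(3)}^{4(1-\alpha)}$. I would also use the $\ell^2$-summation in $K_4$ to absorb one factor. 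The condition $\theta<2\alpha+s-5/2$ is exactly $2(1-\alpha)+(1-\theta')\dots$; I would match it to the exponent $\tfrac32-\alpha+\tfrac12-(s-\theta)\dots$ after Cauchy--Schwarz in the low sums, which gains a square root.

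\emph{Case $K_4\in\{K_{(3)},K_{(4)}\}$.} Here $c\lesssim K_{(4)}K_{(1)}^{2(1-\alpha)+\epsilon}$, and two inputs sit at the top scale $K_{(1)}\sim K_{(2)}$. At least one of the two weighted inputs lies at the top scale, producing a gain $K_{(1)}^{-(s-\theta)}$. In the worst case the other weighted input is the third input at scale $\le K_{(1)}$, while the unweighted one is at the top. The third input then contributes $K^{-(s-\theta)}$, and the factor $K_{(4)}\le\min(K_4,K_3)$ is summed against it, using $s-\theta>1/2$ together with Cauchy--Schwarz over $K_4$ in $\ell^2$. This leaves $K_{(1)}^{2(1-\alpha)-(s-\theta)+\frac12+\epsilon}$. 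Its exponent is negative precisely when $\theta<2\alpha+s-\tfrac52$, which is the stated condition, and the condition $\theta<s-\tfrac12$ handles the low-index summation.

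The main obstacle I expect is the first case, where all the derivative loss from \eqref{cK1K2K3K4_bd1} sits on the low frequencies. The key trick will be to sum $K_4$ in $\ell^2$ against the paired top input via Schur's test, and to use the footnote reduction $K_{(1)}\lesssim K_{(3)}^2$ so that the loss is only $K_{(3)}^{4(1-\alpha)}\approx K_{(3)}^{2(1-\alpha)}K_{(1)}^{1-\alpha}$. I would then redistribute that loss between the scales so that it is compensated by the weights $K^{-(s-\theta)}$ on both low inputs.
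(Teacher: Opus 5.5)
The gap is in your first case, $K_4\in\{K_{(1)},K_{(2)}\}$, at the step where you drop the weight on the input $K_i\sim K_4$ because ``$s-\theta\ge 0$''. Write $s_0:=s-\theta$. Your second case, $K_4\in\{K_{(3)},K_{(4)}\}$, does close: your bound $K_{(1)}^{2(1-\alpha)-s_0+\frac12+\epsilon}$ is a bit lossy (summing $\min(K_3,K_4)K_3^{-s_0}$ actually gives $K_{(1)}^{1-s_0}$, so only $\alpha+s_0>\frac32$ is needed there), but it is correct.

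In the configuration you call worst, $a^{(\sigma_1)}$ sits at a low scale. Then the input paired with $K_4$ at the top must be one of the two weighted ones, and its factor $K_i^{-s_0}\sim K_{(1)}^{-s_0}$ is exactly the gain you need. Once you discard it, the majorant $K_{(4)}K_{(3)}^{4(1-\alpha)+\epsilon}K_{(3)}^{-s_0}$ is not even bounded on single atoms with all four scales $\sim L$. It grows like $L^{5-4\alpha-s_0+\epsilon}$, and $5-4\alpha-s_0\ge 1-s_0>0$ for $\alpha\le 1$, $s_0<1$. So nothing done afterwards can recover the estimate.

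Your proposed remedies do not apply:
\begin{itemize}
\item In this case the hypothesis~\eqref{cK1K2K3K4_bd1} only gives $K_{(4)}K_{(3)}^{4(1-\alpha)+\epsilon}$. The footnote reduction $K_{(1)}\lesssim K_{(3)}^2$ was already used to produce that bound, so inside the lemma you cannot switch to $K_{(4)}K_{(1)}^{2(1-\alpha)}$.
\item Cauchy--Schwarz in $K_4$ produces no power gain.
\item Compensation ``by the weights on both low inputs'' is unavailable precisely in your worst configuration, where only one low input carries a weight.
\end{itemize}

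The fix is to keep that weight. Note that $K_{(4)}\le\min(K_{\sigma_2},K_{\sigma_3})$, and $K_{(3)}\le\max(K_{\sigma_2},K_{\sigma_3})$ because the second smallest of four numbers never exceeds the larger of any two of them. Hence, wherever the two weighted inputs sit,
\[
K_{(4)}K_{(3)}^{4(1-\alpha)+\epsilon}K_{\sigma_2}^{-s_0}K_{\sigma_3}^{-s_0}\le \max(K_{\sigma_2},K_{\sigma_3})^{4(1-\alpha)+\epsilon-s_0}\min(K_{\sigma_2},K_{\sigma_3})^{1-s_0}\le \max(K_{\sigma_2},K_{\sigma_3})^{5-4\alpha-2s_0+\epsilon}.
\]
The condition $5-4\alpha-2s_0<0$ is exactly $\theta<2\alpha+s-\frac52$, and this is where that hypothesis is really used.

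The quantity $\max(K_{\sigma_2},K_{\sigma_3})$ is at least $K_{(3)}$, and it is comparable to the top scale when a weighted input is paired with $K_4$. This power gain makes the sum over the two non-paired scales converge, since the $\ell^1$-sums of unweighted sequences cost only logarithms. The comparable pair $K_i\sim K_4$ is then handled by Cauchy--Schwarz. This is the paper's argument.
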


Before proving Lemma~\ref{lem:dyadicsum1}, we apply it to conclude part~(1) of Proposition~\ref{prop:trilinear}. Choose $\sigma\in\mathfrak{S}_3$ such that $w^{(\sigma_1)}=v^{(\mathrm{D})}_{N_{\sigma_1}}$ with $N_{\sigma_1}=N_{(1)}$. For the other two type~(D) terms $w^{(j)}$, we have
\[
\|w^{(j)}\|_{X^{s-\theta,b}}\lesssim N_j^{-\theta}\log N_j,
\]
with $\theta=2^{100}\sigma$. The hierarchy $s+3\alpha-7/2>2^{100}\sigma$ ensures that $\theta$ satisfies the smallness assumption in Lemma~\ref{lem:dyadicsum1}.
Applying Lemma~\ref{lem:dyadicsum1}, we obtain the bound
\[
N_{(1)}^{-s}N_{(2)}^{-\frac{\theta}{2}}N_{(3)}^{-\frac{\theta}{2}}.
\]
We choose $A_1(\vec{N})=N_{(1)}^{-s}N_{(2)}^{-\frac{\theta}{2}}$ in~\eqref{algorithm:A_1}. Plugging into~\eqref{algorithm:output} and using the hierarchy of small parameters, we conclude the proof of part~(A) in Proposition~\ref{prop:trilinear} for the (D)(D)(D) interaction.

\begin{proof}[Proof of Lemma~\ref{lem:dyadicsum1}]
	Set $s_0:=s-\theta$, where $\theta$ is small enough that
	\[
	s_0>\frac{1}{2},\quad 2\alpha+s_0>\frac{5}{2}.
	\]
	Since the hypothesis on $c_{K_1 K_2 K_3 K_4}$ is symmetric in $K_1,K_2,K_3$, it suffices to prove the estimate with the fixed choice $(\sigma_1,\sigma_2,\sigma_3)=(1,2,3)$.
	Set
	\[
	A_{K_1}:=|a_{K_1}^{(1)}|,\quad B_{K_2}^{(2)}=K_2^{s_0}|a_{K_2}^{(2)}|,\quad B_{K_3}^{(3)}=K_3^{s_0}|a_{K_3}^{(3)}|.
	\]
	By duality, it suffices to show that for every dyadic sequence $d_{K_4}\geq 0$ with $\|d_{K_4}\|_{\ell_{K_4}^2}=1$,
	\begin{align*}
		\Lambda:=\sum_{K_1,K_2,K_3,K_4}c_{K_1 K_2 K_3 K_4}\, K_2^{-s_0}K_3^{-s_0}\cdot A_{K_1}B_{K_2}^{(2)}B_{K_3}^{(3)}d_{K_4}\lesssim \|A\|_{\ell^2}\|B^{(2)}\|_{\ell^2}\|B^{(3)}\|_{\ell^2}.
	\end{align*}
	We decompose $\Lambda\lesssim_{\epsilon} \Lambda_{1}+\Lambda_2$, where
	\begin{align*}
		\Lambda_1&:=\sum_{\substack{K_1,K_2,K_3,K_4\\
				K_{(1)}\sim K_{(2)}\\
				K_4\in \{K_{(1)},K_{(2)}\}
		}}
		K_{(4)}K_{(3)}^{4(1-\alpha)+\epsilon}
		\cdot
		K_2^{-s_0}K_3^{-s_0}\cdot A_{K_1}B_{K_2}^{(2)}B_{K_3}^{(3)}d_{K_4},\\
		\Lambda_2&:=\sum_{\substack{K_1,K_2,K_3,K_4\\
				K_{(1)}\sim K_{(2)}\\
				K_4\in \{K_{(3)},K_{(4)}\}
		}}
		K_{(4)}K_{(1)}^{2(1-\alpha)+\epsilon}\, K_2^{-s_0}K_3^{-s_0}\cdot A_{K_1}B_{K_2}^{(2)}B_{K_3}^{(3)}d_{K_4}.
	\end{align*}
	For $\Lambda_1$, observe that
	\[
	K_{(4)}
	K_{(3)}^{4(1-\alpha)+\epsilon}
	K_{2}^{-s_0}K_3^{-s_0}\lesssim \max(K_2,K_3)^{-s_0+4(1-\alpha)+\epsilon}\min(K_2,K_3)^{-s_0+1}.
	\]
	The condition $2\alpha+s_0>5/2$ ensures that the right-hand side gains a negative power in~$K_{(3)}$, which controls the sum over $K_2,K_3$.
	Since there is no extra power in $K_1,K_4$, by Cauchy--Schwarz in the regime $K_1\sim K_4$ we obtain
	\[
	\Lambda_1\lesssim_{\epsilon}\|A\|_{\ell^2}\|B^{(2)}\|_{\ell^2}\|B^{(3)}\|_{\ell^2}\|d\|_{\ell^2}.
	\]
	For $\Lambda_2$, since $K_4\in\{K_{(3)},K_{(4)}\}$, we must have $\max\{K_2,K_3\}\sim K_{(2)}\sim K_{(1)}$. To fix ideas, assume $K_2\sim K_{(1)}$. Then
	\begin{align*}
		\Lambda_2&\lesssim \sum_{\substack{K_1,K_2,K_3,K_4\\
				K_{2}\sim K_{(1)}
		}} K_{(4)}K_{(1)}^{-s_0+2(1-\alpha)+\epsilon}K_{3}^{-s_0}\cdot A_{K_1}B_{K_2}^{(2)}B_{K_3}^{(3)}d_{K_4}\\
		&\lesssim \sum_{K_1,K_2,K_3,K_4}
		K_{(1)}^{3-2(\alpha+s_0)+\epsilon}\cdot A_{K_1}B_{K_2}^{(2)}B_{K_3}^{(3)}d_{K_4}.
	\end{align*}
	Since $2\alpha+s_0>5/2$ (and thus $\alpha+s_0>3/2$), the exponent on the top frequency~$K_{(1)}$ is negative. The dyadic sum therefore converges, and is bounded by $\|A\|_{\ell^2}\|B^{(2)}\|_{\ell^2}\|B^{(3)}\|_{\ell^2}\|d\|_{\ell^2}$.
\end{proof}

	
	\underline{\textbf{2. (C)(D)(D) interactions:}} We only estimate $\mathcal{N}_{[123]}^{(3)}(v_{N_1}^{(\mathrm{C})},v_{N_2}^{(\mathrm{D})},v_{N_3}^{(\mathrm{D})})$; the other terms follow from the same argument. Denote
\[
w^{(j)}:=v_{N_j}^{(\mathrm{D})}, \quad j=2,3,\qquad \psi^{(1)}=v_{N_1}^{(\mathrm{C})}.
\]

\noi
$\bullet${\bf Case 1: $N_1= N_{(1)}\geq N_2, N_3$.}

First, we have the trivial estimate
\[
\|\mathcal{N}_{[123]}^{(3)}(\psi^{(1)},w^{(2)},w^{(3)})\|_{X^{0,0}}\lesssim R\, T^{-(b-\frac{1}{2})}N_{(1)}^{-\alpha+\frac{1}{2}+\delta}N_{(2)}^{10}.
\]
Thus we choose $A_0(\vec{N})=N_{(1)}^{-\alpha+\frac{1}{2}+\delta}N_{(2)}^{10}$ in~\eqref{algorithm:A_0}. From~\eqref{algorithm:output} and the parameter hierarchy, it suffices to prove that
\begin{align}\label{pf(2)A1bound}
	\|\mathcal{N}_{[123]}^{(3)}(\psi^{(1)},w^{(2)},w^{(3)})\|_{X^{0,-b_1}}\lesssim R\, T^{-(\gamma-\frac{1}{q'})}N_{(1)}^{-s-\theta}N_{(2)}^{-\frac{\theta}{2}},
\end{align}
for some $\theta>2^{50}\sigma$.

Since $\psi^{(1)}=\mathbf{P}_{N_1}\psi^{(1)}$, following the previous argument we have
\begin{align}
	&\|\mathcal{N}_{[123]}^{(3)}(\psi^{(1)},w^{(2)},w^{(3)})\|_{X^{0,-b_1}}\notag  \\ &\qquad\lesssim
	\Big(\sum_{K_4}\Big(\sum_{\substack{K_1,K_2,K_3\\ K_1=N_1}}\|
	\mathbf{P}_{K_4}\mathcal{N}_{[123]}^{(3)}(\mathbf{P}_{N_1}\psi^{(1)},
	\mathbf{P}_{K_2}w^{(2)},
	\mathbf{P}_{K_3}w^{(3)})
	\|_{X^{0,-b_1}}\Big)^2\Big)^{1/2}. \label{N[123](3)dyadicsum}
\end{align}
We apply~\eqref{eq:tri2} with $j_0=1$ and $j_1$ chosen so that $K_{j_1}\in\{K_{(1)},K_{(2)}\}$ (thus $M_1\sim K_{(1)}$, $M_2=K_{(3)}$, $M_3=K_{(4)}$) to bound this by
\begin{align}\label{control(2):case1}
	\Big(\sum_{K_4}
	\Big(
	\sum_{\substack{K_2,K_3\\
			K_{(1)}\sim K_{(2)}
	}} c_{K_1 K_2 K_3 K_4}\cdot \mathbf{1}_{K_1=N_1}
	a_{K_1}b_{K_2}^{(2)}b_{K_3}^{(3)}
	\Big)^2
	\Big)^{1/2},
\end{align}
where
\[
c_{K_1 K_2 K_3 K_4}\leq C_{\epsilon}\, K_{(4)}K_{(2)}^{1-\alpha+\epsilon}\big(1+K_{(3)}K_{(2)}^{-1/2}\big),
\]
and
\[
a_{K_1}=\|\mathbf{P}_{K_1}\psi^{(1)}\|_{X_{q,q}^{0,\gamma}}\leq R\, T^{-(\gamma-\frac{1}{q'})}K_{1}^{-\alpha+\frac{1}{q}+\delta},\quad b_{K_j}^{(j)}=\|\mathbf{P}_{K_j}w^{(j)}\|_{X^{0,b}},\quad j=2,3.
\]

We claim that~\eqref{control(2):case1} is bounded by
\begin{align}\label{control(2):case1output}
	N_1^{\alpha-s-2\theta}a_{N_1} \prod_{j=2,3}\|K^{s_0}_j b_{K_j}^{(j)}\|_{\ell_{K_j}^2},
\end{align}
where $s_0=s-\theta>1/2$ for some $\theta>2^{50}\sigma$.
Once~\eqref{control(2):case1output} is established, interpolating between $X^{0,b}$ and $X^{s,b}$ gives the bound for $A_1(\vec{N})$ in~\eqref{algorithm:A_1}:
\[
A_1(\vec{N})\leq R\, T^{-(\gamma-\frac{1}{q'})}N_1^{-s-\theta}N_2^{-\frac{\theta}{2}}N_3^{-\frac{\theta}{2}},
\]
thanks to the hierarchy so that $\theta>2^{50}\sigma\gg \frac{1}{q}+\delta$.

\medskip
We now prove~\eqref{control(2):case1output}. Set
\[
B_{K_j}^{(j)}=K_j^{s_0}b_{K_j}^{(j)}, \quad j=2,3.
\]
By duality, for $d_{K_4}\in\ell^2$ with $\|d_{K_4}\|_{\ell_{K_4}^2}=1$,
we need to estimate $\Lambda_1+\Lambda_2$, where
\begin{align*}
	\Lambda_1&:=\sum_{\substack{K_1=N_1, K_2,K_3,K_4\\
			K_{(1)}\sim K_{(2)}\\
			K_1\in\{K_{(1)},K_{(2)}\}
	}} K_{(4)}K_{(2)}^{1-\alpha+\epsilon}K_2^{-s_0}K_3^{-s_0}(1+K_{(3)}K_{(2)}^{-1/2})\cdot a_{N_1}B_{K_2}^{(2)}B_{K_3}^{(3)}d_{K_4},\\
	\Lambda_2&:=\sum_{\substack{K_1=N_1, K_2,K_3,K_4\\
			K_{(1)}\sim K_{(2)}\\
			K_1\in\{K_{(3)},K_{(4)}\}
	}} K_{(4)}K_{(2)}^{1-\alpha+\epsilon}K_2^{-s_0}K_3^{-s_0}(1+K_{(3)}K_{(2)}^{-1/2})\cdot a_{N_1}B_{K_2}^{(2)}B_{K_3}^{(3)}d_{K_4}.
\end{align*}
For $\Lambda_1$, we control the factor
\begin{align*}
	K_{(4)}K_{(2)}^{1-\alpha+\epsilon}K_{2}^{-s_0}K_3^{-s_0}(1+K_{(3)}K_{(2)}^{-1/2})&\lesssim N_1^{1-\alpha+\epsilon}\cdot (K_2 K_3)^{-(s_0-\frac{1}{2})}\\
	&\quad+N_1^{\frac{1}{2}-\alpha+\epsilon}\cdot (K_2 K_3)^{1-s_0}.
\end{align*}
Summing over $K_2,K_3,K_4\leq N_1$, we obtain
\[
\Lambda_1\lesssim_{\epsilon} a_{N_1}\cdot N_1^{\frac{5}{2}-\alpha-2s_0+2\epsilon}\cdot \|B_{K_2}^{(2)}\|_{\ell_{K_2}^2}
\|B_{K_3}^{(3)}\|_{\ell_{K_3}^2}.
\]
The hierarchy gives in particular $s+2\alpha-\frac{5}{2}>2^{100}\sigma$, so there exists $\theta>2^{50}\sigma$ such that with $s_0=s-\theta$,
\[
\frac{5}{2}-\alpha-2s_0+2\epsilon\leq \alpha-s-2\theta,
\]
hence $\Lambda_1$ satisfies the desired bound~\eqref{control(2):case1output}.

For $\Lambda_2$, without loss of generality assume $K_2\geq K_3$. We control the factor
\begin{align*}
	K_{(4)}K_{(2)}^{1-\alpha+\epsilon}K_2^{-s_0}K_3^{-s_0}&(1+K_{(3)}K_{(2)}^{-1/2})\\
	&\lesssim N_1^{\alpha-s-2\theta}\cdot K_{(4)}^{1+s_0-\alpha+3\theta}\\
	&\quad\times K_2^{1-\alpha-s_0+\epsilon}K_3^{-s_0}\\
	&\quad+ N_1^{\alpha-s-2\theta}\cdot K_{(4)}^{1+s_0-\alpha+3\theta}\\
	&\quad\times K_{(3)}K_2^{\frac{1}{2}-\alpha-s_0+\epsilon}K_3^{-s_0}\\
	&\lesssim N_1^{\alpha-s-2\theta}\cdot K_{(2)}^{\frac{5}{2}-2\alpha-s_0+3\theta+\epsilon}.
\end{align*}
The hierarchy ensures that the total power of $K_{(2)}$ is negative, so $\Lambda_2$ satisfies~\eqref{control(2):case1output}.

\medskip

\noi
$\bullet${\bf Case 2: $N_2= N_{(1)}$ or $N_3=N_{(1)}$.}

Without loss of generality, assume $N_2=N_{(1)}\geq N_1, N_3$.

The trivial $X^{0,0}$ estimate gives
\[
\|\mathcal{N}_{[123]}^{(3)}(\psi^{(1)},w^{(2)},w^{(3)})\|_{X^{0,0}}\lesssim N_{(1)}^{-s}N_{(2)}^{10}.
\]
Thus we choose $A_0(\vec{N})=N_{(1)}^{-s}N_{(2)}^{10}$ in~\eqref{algorithm:A_0}. We will prove the $A_1(\vec{N})$ bound
\begin{align}\label{pf(2)A1boundcase2}
	\|\mathcal{N}^{(3)}_{[123]}(\psi^{(1)},w^{(2)},w^{(3)})\|_{X^{0,-b_1}}\lesssim R^3 T^{-(\gamma-\frac{1}{q'})}N_{(1)}^{-s}N_{(2)}^{-\frac{\theta}{3}},
\end{align}
for some $1>\theta>2^{50}\sigma$.

As discussed before Case~1, we always sum over $K_{(1)}\sim K_{(2)}$.

Set
\[
a_{K_1}=\|\mathbf{P}_{K_1}\psi^{(1)}\|_{X_{q,q}^{0,\gamma}}\leq R\, T^{-(\gamma-\frac{1}{q'})}K_{1}^{-\alpha+\frac{1}{q}+\delta},\quad b_{K_j}^{(j)}=\|\mathbf{P}_{K_j}w^{(j)}\|_{X^{0,b}},\quad j=2,3,
\]
and additionally
\[
\widetilde{a}_{K_1}:=\|\mathbf{P}_{K_1}\psi^{(1)}\|_{X^{0,b}}\leq R\, T^{-(b-\frac{1}{2})}K_1^{-\alpha+\frac{1}{2}+\delta}.
\]
We split the dyadic sum~\eqref{N[123](3)dyadicsum} into two regimes: $K_{(1)}\sim K_{(2)}\gg K_{(3)}^2$ and $K_{(1)}\sim K_{(2)}\lesssim K_{(3)}^2$.
Applying~\eqref{eq:tri1'} in the first regime and~\eqref{eq:tri2} in the second, by duality we need to estimate $\Lambda_1+\Lambda_2$ for any $d_{K_4}\in \ell^2$ with $\|d_{K_4}\|_{\ell_{K_4}^2}=1$, where
\begin{align*}
	\Lambda_1&:=\sum_{\substack{K_1=N_1,K_2,K_3,K_4\\
			K_{(1)}\sim K_{(2)}\gg K_{(3)}^2
	}} K_{(4)}\cdot a_{N_1}b_{K_2}^{(2)}b_{K_3}^{(3)}d_{K_4},\\
	\Lambda_2&:=\sum_{\substack{K_1=N_1,K_2,K_3,K_4\\
			K_{(1)}\sim K_{(2)}\lesssim K_{(3)}^2
	}} K_{(4)}K_{(2)}^{1-\alpha+\epsilon}K_{(3)}K_{(2)}^{-1/2} \cdot a_{N_1}b_{K_2}^{(2)}b_{K_3}^{(3)}d_{K_4}.
\end{align*}
For $\Lambda_1$, we control the factor
\[
K_{(4)}\leq N_1^{\alpha-\frac{1}{2}-\delta-\theta}K_3^{\frac{3}{2}-\alpha+\frac{1}{q}+\delta+\theta}=N_1^{\alpha-\frac{1}{2}-\delta-\theta} K_3^{\frac{3}{2}-\alpha-s_0+\frac{1}{q}+\delta+\theta}\cdot K_3^{s_0}
\]
for some small $\theta>2^{50}\sigma$. The hierarchy gives $\frac{3}{2}-\alpha-s>0$, which implies $\frac{3}{2}-\alpha-s_0+\frac{1}{q}+\delta+\theta<-\frac{\theta}{2}$ for sufficiently small~$\sigma$ and $\theta>2^{50}\sigma$.
Setting $B_{K_3}^{(3)}:=K_3^{s_0}b_{K_3}^{(3)}$ and noting
\[
N_1^{\alpha-\frac{1}{2}-\delta-\theta}a_{N_1}\leq R\, T^{-(\gamma-\frac{1}{q'})}N_1^{-\theta},
\]
we bound $\Lambda_1$ by
\[
\sum_{\substack{K_1=N_1,K_2,K_3,K_4 \\ K_{(1)}\sim K_{(2)}}} R\, T^{-(\gamma-\frac{1}{q'})}(N_1 K_3)^{-\frac{\theta}{2}} B_{K_3}^{(3)}\cdot b_{K_2}^{(2)}d_{K_4}.
\]
Since there is no weight on $b_{K_2}^{(2)}$ and $d_{K_4}$, the dyadic sum gives
\[
\Lambda_1\lesssim R\, T^{-(\gamma-\frac{1}{q'})}N_2^{-s}(N_1 N_3)^{-\frac{\theta}{2}}.
\]

\medskip

For $\Lambda_2$, using $K_{(3)}^2\gtrsim K_{(2)}$, we first control the factor
\[
K_{(4)}K_{(2)}^{1-\alpha+\epsilon}K_{(3)}K_{(2)}^{-1/2}\lesssim K_{(4)}K_{(3)}^{3-2\alpha+2\epsilon}K_{(2)}^{-1/2}.
\]
Set
\[
A_{N_1}=N_1^{\alpha-\frac{1}{q}-\delta-\theta}a_{N_1},\quad B^{(3)}_{K_3}=K_3^{s_0}b_{K_3}^{(3)}.
\]
Then
\begin{align*}
	\Lambda_2\lesssim \sum_{\substack{K_1=N_1,K_2,K_3,K_4\\
			K_{(1)}\sim K_{(2)}}}
	N_1^{-\alpha+\frac{1}{q}+\delta+\theta}K_3^{-s_0}K_{(4)}K_{(3)}^{3-2\alpha+2\epsilon}K_{(2)}^{-1/2}\cdot
	A_{N_1}B_{K_3}^{(3)}b_{K_2}^{(2)}d_{K_4}.
\end{align*}
Under the hierarchy, there exists $\theta>2^{50}\sigma$ such that
\[
\frac{7}{2}-2\alpha-s_0+\frac{1}{q}+\delta+\theta+2\epsilon<-\theta.
\]
Therefore
\[
\Lambda_2\lesssim\sum_{\substack{K_1=N_1,K_2,K_3,K_4\\
		K_{(1)}\sim K_{(2)}
}} N_1^{-\frac{\theta}{2}}K_{3}^{-\frac{\theta}{2}}\cdot A_{N_1}B_{K_3}^{(3)}\cdot b_{K_2}^{(2)}d_{K_4},
\]
which is bounded by $R\, T^{-(\gamma-\frac{1}{q'})}N_2^{-s}N_1^{-\frac{\theta}{2}}N_3^{-\frac{\theta}{2}}$.

\medskip

\underline{\textbf{3. (C)(C)(D) interactions:}} This is proved in Corollary~\ref{cor:randomop}.

\medskip

\underline{\textbf{4. (C)(C)(C) interactions:}} This is proved in Proposition~\ref{prop:CCC}, combined with the interpolation argument described above.

\medskip

\noi
$\bullet${\bf Proof of (B) in Proposition~\ref{prop:trilinear}:} We apply Proposition~\ref{linear:type1} and
Corollary~\ref{cor:pres1}. From the linear inhomogeneous estimate
\[
\|\mathcal{I}_{\chi_T}\mathcal{N}(\cdots)\|_{X^{0,b}}\lesssim T^{\sigma}\|\mathcal{N}(\cdots)\|_{X^{0,-b_2}},
\]
where $b_2=\frac{1}{2}-\sigma-\sigma^{10}<\frac{1}{2}$, so no interpolation is needed. We always assume $K_{(1)}\sim K_{(2)}$ in the dyadic decomposition.

First, assume there exists $j_0\in\{1,2,3\}$ such that $v_{N_{j_0}}=w^{(j_0)}_{N_{j_0}}$ is of type~(D) with $N_{j_0}\sim N_{(1)}$. With $s_1=(\alpha-\frac{1}{2}-\delta)(1-\theta)$ and $b_1=b_2$, we
apply Corollary~\ref{cor:pres1} to $z=w_{N_{j_0}}^{(j_0)}$ and interpolate between $X^{0,b}$ and $X^{\alpha-\frac{1}{2}-\delta,b}$ for the other two inputs to obtain
\[
\|\mathcal{N}_{(23)}^{(3)}(v_{N_1}^{(\ast_1)},v_{N_2}^{(\ast_2)},v_{N_3}^{(\ast_3)})\|_{X^{0,-b_2}}\lesssim R^2 T^{-2(b-\frac{1}{2})}N_{(1)}^{-s}N_{(2)}^{-\theta}N_{(3)}^{-\theta},
\]
for some $\theta>2^{50}\sigma$.

It remains to handle the case where the type~(C) input has the top scale~$N_{(1)}$.

\noi
$\bullet${\bf Case 1: $N_1\gg N_2, N_3$, and $v_{N_1}^{(\ast_1)}=\psi^{(1)}$ is of type~(C).}

\underline{Case 1.1:}
At least one of $v_{N_2}^{(\ast_2)},v_{N_3}^{(\ast_3)}$ is of type~(C).

To fix ideas, assume $v_{N_2}^{(\ast_2)}=\psi_{N_2}^{(2)}$. Then
\begin{multline*}
	\|\mathcal{N}_{(23)}^{(3)}(\mathbf{P}_{N_1}\psi^{(1)},\mathbf{P}_{N_2}\psi^{(2)},v^{(\ast_3)})\|_{X^{0,-b_2}}\\
	\sim \Big(
	\sum_{K_4}\big\|\mathbf{P}_{K_4}\mathcal{N}_{(23)}^{(3)}(\mathbf{P}_{N_1}\psi^{(1)},\mathbf{P}_{N_2}\psi^{(2)},\mathbf{P}_{N_2}v^{(\ast_3)})\big\|_{X^{0,-b_2}}^2
	\Big)^{1/2}.
\end{multline*}
By Proposition~\ref{linear:type1}, the right-hand side is bounded by
\begin{align*}
	\Big(\sum_{K_4}
	c_{K_4 N_1 N_2}^2
	\Big)^{1/2}\cdot a^{(1)}_{N_1} a_{N_2}^{(2)} b_{N_2}^{(3)},
\end{align*}
where
\begin{align}\label{cK4K1K2}
	c_{K_4 K_1 K_2}\lesssim
	\frac{\max(K_4,K_1)^{1-2b_2}
		\min(K_4,K_1,K_2)
	}{\min(K_4,K_1)^{1/2}},
\end{align}
and
\[
a_{N_j}^{(j)}=\|\psi^{(j)}\|_{X^{0,b}},\quad j=1,2,\qquad b_{N_2}^{(3)}=\|\mathbf{P}_{N_2}v^{(\ast_3)}\|_{X^{0,b}}.
\]
Under $K_{(1)}\sim K_{(2)}$, we always have $\max(K_4,K_1)\lesssim K_2+\min(K_4,K_1)$.
From the proof of Corollary~\ref{cor:pres1}, we recall that
\begin{align}\label{controlcK4K1K2}
	c_{K_4 K_1 K_2}\lesssim \begin{cases}
		K_2^{2s_1}\dfrac{\min(K_4,K_1,K_2)^{1-2s_1}}{\max(K_4,K_1,K_2)^{2b_2-\frac{1}{2}}},\\[6pt]
		K_1^{s_1}K_2^{s_1}\cdot \dfrac{\min(K_4,K_1)^{1-2s_1}}{\max(K_4,K_1)^{2b_2-\frac{1}{2}}}
	\end{cases}
	\lesssim K_2^{2s_1}\frac{\min(K_4,K_1)^{1-2s_1}}{\max(K_4,K_1)^{2b_2-\frac{1}{2}}},
\end{align}
for $\frac{3}{4}-b_2<s_1<\alpha-\frac{1}{2}$ and $b_1>\frac{1}{4}+(1-\alpha)$.

Therefore
\begin{align*}
	&\Big(\sum_{K_4}
	c_{K_4 N_1 N_2}^2
	\Big)^{1/2}\cdot a^{(1)}_{N_1} a_{N_2}^{(2)} b_{N_2}^{(3)}\\
	&\qquad\lesssim N_1^{\frac{1}{2}-2b_2}N_2\cdot a_{N_1}^{(1)}a_{N_2}^{(2)}b_{N_2}^{(3)}\lesssim R^3 T^{-3(b-\frac{1}{2})}N_1^{-(\alpha+2b_2-1-\delta)}\cdot N_2^{1-2(\alpha-\frac{1}{2}-\delta)},
\end{align*}
since the worst bound comes from the case where $v^{(\ast_3)}$ is of type~(C).
The hierarchy gives
\[
R^3 T^{-3(b-\frac{1}{2})}\cdot N_1^{-(3\alpha+2b_2-3-3\delta)}\leq R^3 T^{-3(b-\frac{1}{2})}N_1^{-s-2\theta}
\]
for some $\theta>2^{50}\sigma$.

\medskip

\underline{Case 1.2:}
$v^{(\ast_2)}=w^{(2)},v^{(\ast_3)}=w^{(3)}$ are both of type~(D).

By Proposition~\ref{linear:type1},
\begin{align*}
	\|\mathcal{N}_{(23)}^{(3)}(\mathbf{P}_{N_1}\psi^{(1)}, w^{(2)},w^{(3)})\|_{X^{0,-b_2}}\lesssim a_{N_1}^{(1)}\cdot \Big(
	\sum_{K_4}
	\Big(
	\sum_{K_2} c_{K_4 N_1 K_2}\cdot b_{K_2}^{(2)}b_{K_2}^{(3)}
	\Big)^2
	\Big)^{1/2},
\end{align*}
where $c_{K_4 K_1 K_2}$ satisfies~\eqref{cK4K1K2} and
\[
a_{N_1}^{(1)}=\|\psi^{(1)}\|_{X^{0,b}},\quad b_{K_2}^{(j)}=\|\mathbf{P}_{K_2}w^{(j)}\|_{X^{0,b}},\quad j=2,3.
\]
Summing over $K_2$ using~\eqref{controlcK4K1K2}, we obtain the bound
\begin{align*}
	&N_1^{-(2b_2-\frac{1}{2})+(1-2s_1)}a_{N_1}^{(1)}\|K_2^{s_1}b_{K_2}^{(2)}\|_{\ell_{K_2}^2}\|K_2^{s_1}b_{K_2}^{(3)}\|_{\ell_{K_2}^2}\\
	&\qquad\lesssim R\, T^{-(b-\frac{1}{2})}N_1^{-(2b_2-\frac{1}{2})-(\alpha-\frac{1}{2})+(1-2s_1)+\frac{1}{q}+\delta}\cdot N_2^{-\theta}N_3^{-\theta},
\end{align*}
for some $\theta>2^{50}\sigma$, by interpolating between $\|w^{(j)}\|_{X^{0,b}}$ and $\|w^{(j)}\|_{X^{s_0,b}}$ with $s_0=s-\theta>\frac{1}{2}$. For the choice $b_2=\frac{1}{2}-\sigma-\sigma^{10}$ and $s_1=(\alpha-\frac{1}{2}-\delta)(1-\theta)$, the total power of $N_1$ is $-3\alpha+2+o_{\sigma\to 0}(1)<-s$, which is sufficient.

\medskip

\noi
$\bullet${\bf Case 2: $N_2\geq N_1, N_3$ and $v_{N_2}^{(\ast_2)}=\psi^{(2)}$ is of type~(C), or $N_3\geq N_1, N_2$ and $v_{N_3}^{(\ast_3)}=\psi^{(3)}$ is of type~(C).}

By symmetry of the second and third inputs, we only treat $N_2\gg N_1, N_3$ with $v_{N_2}^{(\ast_2)}=\psi^{(2)}$ of type~(C). Hence
\begin{align*}
	&\|\mathcal{N}_{(23)}^{(3)}(v_{N_1}^{(\ast_1)},\mathbf{P}_{N_2}\psi^{(2)},\mathbf{P}_{N_2}v^{(\ast_3)})\|_{X^{0,-b_2}}\\ &\qquad\lesssim \Big(
	\sum_{K_4}\Big(\sum_{
		K_1
	}
	\|\mathbf{P}_{K_4}\mathcal{N}_{(23)}^{(3)}(\mathbf{P}_{K_1}v_{N_1}^{(\ast_1)},\mathbf{P}_{N_2}\psi_{N_2}^{(2)},\mathbf{P}_{N_2}v_{N_3}^{(\ast_3)})\|_{X^{0,-b_2}}
	\Big)^2
	\Big)^{1/2}.
\end{align*}

\underline{Case 2.1:} $v_{N_3}^{(\ast_3)}=\psi^{(3)}$ is of type~(C).

In this case, $N_2=N_3=N_{(1)}$.
By Proposition~\ref{linear:type1} (with $b_1=b_2=\frac{1}{2}-\sigma-\sigma^{10}$),
\begin{align*}
	&\|\mathcal{N}_{(23)}^{(3)}(v_{N_1}^{(\ast_1)},\psi_{N_2}^{(2)},\psi_{N_3}^{(3)})\|_{X^{0,-b_2}}\\
	&\qquad\lesssim
	\Big(\sum_{K_4}
	\Big(\sum_{\substack{K_1\\
			K_{(1)}\sim K_{(2)}
	}}
	A_0(K_4,K_1,N_2)\|\mathbf{P}_{K_1}v_{N_1}^{(\ast_1)}\|_{X^{0,b}}\|\psi_{N_2}^{(2)}\|_{X^{0,b}}\|\psi_{N_3}^{(3)}\|_{X^{0,b}}
	\Big)^2
	\Big)^{1/2}.
\end{align*}
Note that
\[
A_0(K_4,K_1,N_2)\lesssim \max(K_1,K_4)^{1-2b_2}\min(K_1,K_4)^{\frac{1}{2}},
\]
and $\|\psi_{N_j}^{(j)}\|_{X^{0,b}}\leq R\, T^{-(b-\frac{1}{2})}N_j^{-\alpha+\frac{1}{2}+\delta}$ for $j=2,3$.
We distinguish the regimes $K_1\sim K_4\gg N_2$ and $K_1,K_4\lesssim N_2$. If $K_1\sim K_4\gg N_2$, since $N_2\geq N_1$, the input $v_{N_1}^{(\ast_1)}$ must be of type~(D). In this regime, we obtain
\begin{align*}
	&\|\psi_{N_2}^{(2)}\|_{X^{0,b}}^2\cdot \Big(\sum_{K_4}
	\Big(
	\sum_{K_1\sim K_4} K_1^{\frac{3}{2}-2b_2-s_0}\|K_1^{s_0}\mathbf{P}_{K_1}w_{N_1}^{(\mathrm{D})}\|_{X^{0,b}}
	\Big)^2
	\Big)^{1/2}\\
	&\qquad\lesssim R^2 T^{-2(b-\frac{1}{2})}N_2^{-2\alpha+1+2\delta}\cdot N_1^{-\frac{\theta}{2}},
\end{align*}
where we used $\frac{3}{2}-2b_2-s_0=\frac{1}{2}+2(\sigma+\sigma^{10})+\theta-s<0$ for some $\theta>2^{50}\sigma$.

If $K_1,K_4\lesssim N_2$, the worst case is when $v_{N_1}^{(\ast_1)}$ is of type~(C), giving
\begin{align*}
	&\|\psi_{N_2}^{(2)}\|_{X^{0,b}}^2\cdot \Big(\sum_{K_4\lesssim N_2}
	\Big(
	\sum_{K_1\lesssim N_2}
	N_2^{2-\alpha-2b_2+\delta+\theta} \|K_1^{\alpha-\frac{1}{2}-\delta-\theta}\mathbf{P}_{K_1}v_{N_1}^{(\ast_1)}\|_{X^{0,b}}
	\Big)^{2}
	\Big)^{1/2}\\
	&\qquad\lesssim R^2 T^{-2(b-\frac{1}{2})}N_2^{-3\alpha-2b_2+3+\delta+\theta}\cdot N_1^{-\theta}.
\end{align*}
Since $s>\max\{2\alpha-1+2\delta,\, 3\alpha+2b_2-3-\delta-\theta\}$, we obtain
\[
\|\mathcal{N}_{(23)}^{(3)}(v_{N_1}^{(\ast_1)},\psi_{N_2}^{(2)},\psi_{N_3}^{(3)})\|_{X^{0,-b_2}}\lesssim R^2 T^{-2(b-\frac{1}{2})}N_{(1)}^{-s}N_{(2)}^{-\frac{\theta}{2}}.
\]

\underline{Case 2.2:} $v_{N_3}^{(\ast_3)}$ is of type~(D).

We need to control
\begin{multline*}
	\Big(
	\sum_{K_4}
	\Big(
	\sum_{\substack{K_1\\
			K_{(1)}\sim K_{(2)}
	}}
	\max(K_1,K_4)^{1-2b_2}\min(K_1,K_4)^{\frac{1}{2}}
	\\
	\|\mathbf{P}_{K_1}v_{N_1}^{(\ast_1)}\|_{X^{0,b}}\|\psi_{N_2}^{(2)}\|_{X^{0,b}}\|\mathbf{P}_{N_2}w_{N_3}^{(3)}\|_{X^{0,b}}
	\Big)^{2}
	\Big)^{1/2}.
\end{multline*}
The same analysis as in Case~2.1 applies to the sum involving
\[
\max(K_1,K_4)^{1-2b_2}\min(K_1,K_4)^{\frac{1}{2}}\|\mathbf{P}_{K_1}v_{N_1}^{(\ast_1)}\|_{X^{0,b}}.
\]
The final output is
\[
\|\psi_{N_2}^{(2)}\|_{X^{0,b}}\|\mathbf{P}_{N_2}w_{N_3}^{(3)}\|_{X^{0,b}}\cdot \big(N_1^{-\frac{\theta}{2}}+N_2^{2-\alpha-2b_2+\delta+\theta}\cdot N_1^{-\theta}\big),
\]
where the first term $N_1^{-\frac{\theta}{2}}$ comes from the regime $N_2\sim K_1\sim K_4$ (and $v_{N_1}^{(\ast_1)}$ must be of type~(D)), and the second from $K_1,K_4\lesssim N_2$. Since $N_2\geq N_3$,
\[
\|\mathbf{P}_{N_2}w_{N_3}^{(3)}\|_{X^{0,b}}\lesssim \big(\frac{N_2}{N_3}\big)^{-1}\cdot N_3^{-s}=\big(\frac{N_2}{N_3}\big)^{-1+s}\cdot N_2^{-s},
\]
so the final estimate is strictly better than in Case~2.1. This concludes the proof of part~(B) in Proposition~\ref{prop:trilinear}.

\medskip

\noi
$\bullet${\bf Proof of (C) and (D) of Proposition~\ref{prop:trilinear}:} Part~(D) is a direct consequence of Corollary~\ref{co:2-linear:1}, with the choice
\[
\epsilon_0=\frac{\theta}{10},\quad \sigma_0=\sigma
\]
for some $\theta>2^{100}\sigma$.

For part~(C), if $v_{N_1}^{(\ast_1)}$ is of type~(D), the desired inequality follows directly from the first estimate of part~(D). It remains to prove the following cases:
\begin{itemize}
	\item $N_1= N_2=N_{(2)}$ or $N_1=N_3=N_{(3)}$, and $v_{N_1}^{(\ast_1)}=\psi_{N_1}^{(1)}$ is of type~(C).
	\item $N_1\leq N_2/2$ or $N_1\leq N_3/2$, and $v_{N_1}^{(\ast_1)}$ is of type~(C) or~(D).
\end{itemize}
By the linear estimate,
\[
\|\chi_T(t)\,\mathcal{I}\mathcal{N}(\cdots)\|_{X^{0,b}}\lesssim T^{\sigma}\|\chi(t)\,\mathcal{N}(\cdots)\|_{X^{0,-b_2}},
\]
for $b_2=\frac{1}{2}-\sigma-\sigma^{10}$. Below we only estimate the $X^{0,-b_2}$ norm of the nonlinearity.

We distinguish the following cases.

\noi
$\bullet${\bf Case 1: $N_1=N_{(1)}=\max(N_2,N_3)$ and $v_{N_1}^{(\ast_1)}=\psi_{N_1}^{(\mathrm{C})}$.}

We apply~\eqref{resonant:2} of Proposition~\ref{linear:type1} (with $b_1=b_2$ and $N_1=\max(M_2,M_3)$, $N_2=M_2$, $N_3=M_3$).
By Littlewood--Paley decomposition, we need to estimate
\begin{align}\label{LPdecomposition(C)}
	\sum_{\substack{K_1,K_2,K_3\\
			K_{(1)}\sim K_{(2)}
	}}\|\mathbf{P}_{K_1}\mathcal{N}^{(2)}(\mathbf{P}_{K_1}v_{N_1}^{(\ast_1)},\mathbf{P}_{K_2}v_{N_2}^{(\ast_2)},\mathbf{P}_{K_3}v_{N_3}^{(\ast_3)})\|_{X^{0,-b_2}}.
\end{align}

To fix ideas (and treat the worst case), assume $v_{N_2}^{(\ast_2)},v_{N_3}^{(\ast_3)}$ are both of type~(C) and $N_1=N_2$. Then $K_1=N_1$, $K_2=N_2$, $K_3=N_3$ and there is no dyadic sum. We get
\begin{align*}
	&\max(N_2,N_3)^{1-2b_2}\min(N_2,N_3)^{\frac{1}{2}}\|\psi_{N_1}^{(1)}\|_{X^{0,b}}\|\psi_{N_2}^{(2)}\|_{X^{0,b}}\|\psi^{(3)}_{N_3}\|_{X^{0,b}}\\
	&\qquad\lesssim R^3 T^{-3(\gamma-\frac{1}{q'})}N_1^{1-2b_2}
	N_3^{\frac{1}{2}}
	\cdot (N_1 N_2 N_3)^{-\alpha+\frac{1}{2}+\delta} \sim R^3 T^{-3(\gamma-\frac{1}{q'})} N_1^{2-2\alpha-2b_2}N_3^{1-\alpha+\delta}\\
	&\qquad\lesssim R^3 T^{-3(\gamma-\frac{1}{q'})}N_1^{-s-\theta},
\end{align*}
for some $\theta>2^{100}\sigma$. The case where at least one of $v_{N_2}^{(\ast_2)},v_{N_3}^{(\ast_3)}$ is of type~(D) is similar, since the dyadic sum only costs a $\log N_1$ factor. We omit the details.

\medskip

\noi
$\bullet${\bf Case 2: $N_1\leq \frac{N_2}{2}$ or $N_1\leq \frac{N_3}{2}$.}

Without loss of generality, assume $N_2=N_{(1)}\geq N_3$ and $N_1\leq \frac{N_2}{2}$.

If $N_1\sim N_{(1)}$, the argument from Case~1 remains valid when $v_{N_1}^{(\ast_1)}$ is of type~(C), and when $v_{N_1}^{(\ast_1)}$ is of type~(D) we can apply the operator bound directly. Hence we assume $N_1\ll N_{2}$.

Applying~\eqref{resonant:2} of Proposition~\ref{linear:type1}, the expression~\eqref{LPdecomposition(C)} is bounded by
\[
	\sum_{\substack{K_1,K_2,K_3,K_4=K_1\\
			K_{(1)}\sim K_{(2)}
	}} \max(K_2,K_3)^{1-2b_2}\min(K_2,K_3)^{\frac{1}{2}}
	\prod_{j=1}^3\|\mathbf{P}_{K_j}v_{N_j}^{(\ast_j)}\|_{X^{0,b}},
\]
where $K_{(1)}\geq K_{(2)}\geq K_{(3)}\geq K_{(4)}$ is the non-increasing rearrangement of $K_1,K_2,K_3,K_4$.

\underline{Case 2.1:} $v_{N_2}^{(\ast_2)}$ is of type~(C).

In this case, $K_2=N_2$ is fixed. If $N_2\sim N_3$, the argument is identical to Case~1. Hence assume $N_2\gg N_3$ (and recall $N_2\gg N_1$). Then at least one of $v_{N_1}^{(\ast_1)},v_{N_3}^{(\ast_3)}$ is of type~(D). The constraint $K_{(1)}\sim K_{(2)}$ forces $\max(K_1,K_3)\gtrsim K_2=N_2$. To fix ideas, assume $K_1\gtrsim N_2$, so $v_{N_1}^{(\ast_1)}$ must be of type~(D). We obtain
\begin{align*}
	&\sum_{K_1\gtrsim N_2,K_3} K_3 N_2^{1-2b_2}K_3^{\frac{1}{2}}N_2^{1-2b_2} \|\mathbf{P}_{K_1}v_{N_1}^{(\ast_1)}\|_{X^{0,b}}\|\psi_{N_2}^{(2)}\|_{X^{0,b}}\|\mathbf{P}_{K_3}v_{N_3}^{(\ast_3)}\|_{X^{0,b}}\\
	&\qquad\lesssim R\, T^{-(\gamma-\frac{1}{q'})}
	N_2^{-\alpha+\frac{1}{2}+\delta}
	\sum_{K_1\gtrsim N_2,K_3} K_3^{\frac{1}{2}}N_2^{1-2b_2}\cdot (N_1/K_1)^{1-s}\cdot K_1^{-s}\|\mathbf{P}_{K_3}v_{N_3}^{(\ast_3)}\|_{X^{0,b}}.
\end{align*}
The worst bound occurs when $v_{N_3}^{(\ast_3)}$ is of type~(C), giving
\[
R^2 T^{-2(\gamma-\frac{1}{q'})}N_2^{\frac{3}{2}-\alpha-2b_2+\delta}\cdot N_2^{-s}\cdot N_2^{1-\alpha+\delta}\lesssim R^2 T^{-2(\gamma-\frac{1}{q'})}\cdot N_2^{-s-\theta},
\]
for some $\theta>2^{100}\sigma$. The analysis when $v_{N_3}^{(\ast_3)}$ is of type~(D) is identical.

\underline{Case 2.2:} $v_{N_2}^{(\ast_2)}$ is of type~(D). Since $K_2\lesssim \max(K_1,K_3)$ in the sum~\eqref{LPdecomposition(C)}, we first sum in $K_2$ to bound the expression by
\begin{align*}
	\|v_{N_2}^{(\mathrm{D})}\|_{X^{0,b}}\cdot \sum_{K_1,K_3}\max(K_1,K_3)^{1-2b_2}\min(K_1,K_3)^{\frac{1}{2}}	\|\mathbf{P}_{K_1}v_{N_1}^{(\ast_1)}\|_{X^{0,b}}\|\mathbf{P}_{K_3}v_{N_3}^{(\ast_3)}\|_{X^{0,b}}.
\end{align*}
The worst bound comes from the case where $v_{N_1}^{(\ast_1)},v_{N_3}^{(\ast_3)}$ are both of type~(C), giving
\[
R^2 T^{-2(\gamma-\frac{1}{q'})}N_2^{-s}\max(N_1,N_3)^{\frac{5}{2}-2\alpha-2b_2+2\delta}.
\]
Under the hierarchy, $\frac{5}{2}-2\alpha-2b_2+2\delta<-\theta$ for some $\theta>2^{100}\sigma$, which concludes the proof in this case.

The proof of Proposition~\ref{prop:trilinear} is now complete.
	
	
	\subsection{Proof of Proposition~\ref{prop:quintilinear}}

In addition to Corollaries~\ref{cor:4-linear-1} and~\ref{cor:4-linearprob}, the proof of Proposition~\ref{prop:quintilinear} reduces to summing over all dyadic scales. We achieve this by establishing an abstract dyadic summation lemma.

\medskip

Recall Definition~\ref{def:scale-seq} of scale-$N$ $h^s$-envelopes.
Let $\mathcal{L}^{(4)}$ be a 4-linear form acting on dyadic sequences, and assume that there exist
nonnegative coefficients $c_{M_1,M_2,M_3,M_4}$ such that for all inputs $\mathbf{a}^{(j)}$,
\[
\big|\mathcal{L}^{(4)}((\mathbf{a}^{(j)})_{j=1}^4)\big|\leq
\sum_{M_1,M_2,M_3,M_4\in 2^{\mathbb N}}
c_{M_1,M_2,M_3,M_4}\prod_{j=1}^4 |a^{(j)}_{M_j}|.
\]
For a quadruple $(M_1,M_2,M_3,M_4)$, denote by
$M_{(1)}\ge M_{(2)}\ge M_{(3)}\ge M_{(4)}$
the non-increasing rearrangement.

\begin{lemma}[Dyadic summation test]\label{lem:sum-abstract}
	Let $0<s_1,s_2,s_3,s_4\leq 1$. Assume that there exists
	\[
	\nu_0\in \Big(0,\min\{s_1,s_2,s_3,s_4\}\Big]
	\]
	such that for every dyadic quadruple $(M_1,M_2,M_3,M_4)\in (2^{\N})^4$ and every choice of scale-$M_j$ $h^{s_j}$-envelopes $b^{(j)}$, the typical-atom estimate
	\begin{equation}\label{eq:typical-atom-output-gain}
		c_{M_1,M_2,M_3,M_4}\prod_{j=1}^4 |b^{(j)}_{M_j}|
		\leq
		\Lambda\cdot M_{(1)}^{-\nu_0}
	\end{equation}
	holds for some $\Lambda>0$ independent of $M_1,M_2,M_3,M_4$.

	Then for any $(N_1,N_2,N_3,N_4)\in(2^{\N})^4$ and any scale-$N_j$ $h^{s_j}$-envelopes~$a^{(j)}$,
	\begin{equation}\label{eq:dyadic-sum-output-gain}
		\big|\mathcal{L}^{(4)}((a^{(j)})_{j=1}^4)\big|
		\lesssim
		\Lambda\cdot N_{(1)}^{-\nu_0}(\log N_{(1)})^4,
	\end{equation}
	where the implicit constant is independent of $N_j$.
\end{lemma}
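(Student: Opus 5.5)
The plan is to decompose each envelope into a ``typical atom'' part and geometric tails, and then sum the resulting geometric weights. Fix scale-$N_j$ $h^{s_j}$-envelopes $a^{(j)}$. For each $j$ and each dyadic $M_j$, we will show that $|a^{(j)}_{M_j}|$ is dominated by $w_j(M_j)\,|b^{(j)}_{M_j}|$, where $b^{(j)}$ is a scale-$M_j$ $h^{s_j}$-envelope and $w_j$ is an explicit weight. The weight comes from the definition. For $M_j\ge N_j$, property (3) gives $|a^{(j)}_{M_j}|\lesssim (N_j/M_j)N_j^{-s_j}=(N_j/M_j)^{1-s_j}M_j^{-s_j}$. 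For $M_j<N_j$, property (2) with $s_0$ close to $s_j$ gives $M_j^{s_0}|a^{(j)}_{M_j}|\lesssim N_j^{-(s_j-s_0)}$, i.e.\ $|a^{(j)}_{M_j}|\lesssim (M_j/N_j)^{s_j-s_0}M_j^{-s_j}$; alternatively, property (1) gives $|a^{(j)}_{M_j}|\lesssim N_j^{-s_j}$. In all cases, $|a^{(j)}_{M_j}|\le w_j(M_j)\,M_j^{-s_j}$ with $w_j\lesssim 1$.

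The second step is to check that a single-bin sequence is admissible. The sequence $b^{(j)}:=M_j^{-s_j}\mathbf 1_{\{M_j\}}$, concentrated at one dyadic point, is a scale-$M_j$ $h^{s_j}$-envelope up to constants: (1) and (2) hold trivially, and (3) is vacuous beyond $M_j$. Applying \eqref{eq:typical-atom-output-gain} to this atom gives
\[
c_{M_1,M_2,M_3,M_4}\prod_j M_j^{-s_j}\le \Lambda M_{(1)}^{-\nu_0},
\]
and therefore
\[
|\mathcal L^{(4)}|\le \Lambda\sum_{M_1,\dots,M_4}M_{(1)}^{-\nu_0}\prod_j w_j(M_j).
\]

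The remaining step, and the main obstacle, is the dyadic summation. Since $M_{(1)}^{-\nu_0}\le M_j^{-\nu_0}$ for every $j$, the sum factorizes up to choosing which index attains the maximum. The difficulty is that below $N_j$ the bound $w_j\lesssim 1$ is not summable, so each coordinate can cost a logarithm. I will therefore split the four-fold sum into regions according to whether $M_{(1)}\ge N_{(1)}$ or $M_{(1)}<N_{(1)}$.

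In the region $M_{(1)}<N_{(1)}$, each $M_j$ ranges over at most $\log N_{(1)}$ dyadic values with weights at most $1$. The prefactor $M_{(1)}^{-\nu_0}$ is not smaller than $N_{(1)}^{-\nu_0}$ here, so this is where the gain must come from the envelopes themselves. Let $j_\ast$ be the index with $N_{j_\ast}=N_{(1)}$. For that index, use the weight $(M/N_{(1)})^{s_{j_\ast}-s_0}$ from property (2), choosing $s_0=s_{j_\ast}-\nu_0$, which is admissible since $\nu_0\le s_{j_\ast}$ (for $\nu_0=s_{j_\ast}$ use property (1) directly). Then
\[
M_{(1)}^{-\nu_0}w_{j_\ast}(M_{j_\ast})\le M_{j_\ast}^{-\nu_0}(M_{j_\ast}/N_{(1)})^{\nu_0}=N_{(1)}^{-\nu_0}.
\]
The other three indices, together with the free $M_{j_\ast}$ sum, contribute at most $(\log N_{(1)})^4$.

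In the region $M_{(1)}\ge N_{(1)}$, we use $M_{(1)}^{-\nu_0}\le N_{(1)}^{-\nu_0}(N_{(1)}/M_{(1)})^{\nu_0}$. The factor $(N_{(1)}/M_{(1)})^{\nu_0}$ makes the sum over the top index geometric. The weights $(N_j/M_j)^{1-s_j}\le1$ and the $\log$ counts for the lower indices (all at most $M_{(1)}$) are absorbed: each lower index contributes at most $\log M_{(1)}$, and $(\log M_{(1)})^3 (N_{(1)}/M_{(1)})^{\nu_0}$ summed over $M_{(1)}\ge N_{(1)}$ is $\lesssim(\log N_{(1)})^3$ up to constants depending on $\nu_0$.

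Combining the two regions yields \eqref{eq:dyadic-sum-output-gain}. The care needed lies in the choice $s_0=s_{j_\ast}-\nu_0$ in property (2), whose implicit constant depends on $s_0$ and hence only on $\nu_0$ and $s_j$. This is what makes the constant independent of $N_j$.
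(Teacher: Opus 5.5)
Your proof is correct and is essentially the paper's argument. The one-atom envelopes give $c_{M_1,\dots,M_4}\le\Lambda M_{(1)}^{-\nu_0}\prod_j M_j^{s_j}$, and the sum is then split according to whether $M_{(1)}\ge N_{(1)}$ (geometric decay in $M_{(1)}$) or $M_{(1)}<N_{(1)}$ (smallness of the top envelope compensating $M_{(1)}^{-\nu_0}$).

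The only difference is that you use pointwise bounds on $M^{s_j}|a^{(j)}_M|$ and count dyadic scales, where the paper uses $\ell^2$ bounds with Cauchy--Schwarz; your route still fits in $(\log N_{(1)})^4$. Also, property (1) alone already gives the weight $(M/N_{j_\ast})^{s_{j_\ast}}\le (M/N_{j_\ast})^{\nu_0}$ below $N_{j_\ast}$, so you do not need property (2).
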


\begin{proof}
	Fix $(M_1,M_2,M_3,M_4)\in(2^{\N})^4$. For each $j\in\{1,2,3,4\}$, define the one-atom
	envelope $b^{(j)}$ by
	\[
	b_M^{(j)}:=
	\begin{cases}
		M_j^{-s_j},& M=M_j,\\
		0,& M\neq M_j.
	\end{cases}
	\]
	Each $b^{(j)}$ is a scale-$M_j$ $h^{s_j}$-envelope in the sense of
	Definition~\ref{def:scale-seq}. Applying~\eqref{eq:typical-atom-output-gain} to these choices, we obtain the coefficient bound
	\begin{equation}\label{eq:coeff-bound-output-gain}
		c_{M_1,M_2,M_3,M_4}
		\leq
		\Lambda \cdot M_{(1)}^{-\nu_0}
		\prod_{j=1}^4 M_j^{s_j}.
	\end{equation}

	Given the scale-$N_j$ envelopes $a^{(j)}$, define
	\[
	A_M^{(j)}:=M^{s_j}|a_M^{(j)}|,\qquad M\in 2^{\N}.
	\]
	To fix ideas, suppose $N_1=N_{(1)}$.
	Using~\eqref{eq:coeff-bound-output-gain},
	\begin{align*}
		\big|\mathcal{L}^{(4)}((a^{(j)})_{j=1}^4)\big|
		&\leq
		\sum_{M_1,M_2,M_3,M_4}
		c_{M_1,M_2,M_3,M_4}\prod_{j=1}^4 |a^{(j)}_{M_j}|\\
		&\leq
		\sum_{M_1,M_2,M_3,M_4}
		\Lambda\cdot M_{(1)}^{-\nu_0}
		\prod_{j=1}^4 A^{(j)}_{M_j}.
	\end{align*}
	By Definition~\ref{def:scale-seq}, for $j\in\{1,2,3,4\}$,
	\begin{equation}\label{eq:l2-claim-output-gain}
		\sum_{\substack{M\in 2^{\N}\\ M\leq N}} (A_M^{(j)})^2\lesssim 1.
	\end{equation}
	By Cauchy--Schwarz, for any $N\in 2^{\N}$,
	\begin{equation}\label{eq:l1-claim-output-gain}
		\sum_{\substack{M\in 2^{\N}\\ M\leq N}} A_M^{(j)}\lesssim (\log N)^{1/2}.
	\end{equation}

	We group the sum according to the largest dyadic scale $L:=M_{(1)}$:
	\begin{align*}
		\big|\mathcal{L}^{(4)}((a^{(j)})_{j=1}^4)\big|
		&\leq
		\sum_{L}
		\Lambda\cdot L^{-\nu_0}
		\sum_{\substack{M_1,M_2,M_3,M_4\\ M_j\leq L}}
		\prod_{j=1}^4 A_{M_j}^{(j)}\\
		&\leq
		\sum_{L}
		\Lambda\cdot L^{-\nu_0}
		\prod_{j=1}^4\Big(\sum_{M_j\leq L} A_{M_j}^{(j)}\Big).
	\end{align*}
	We split the sum into $L\geq N_{(1)}$ and $L<N_{(1)}$. For $L\geq N_{(1)}$, using~\eqref{eq:l1-claim-output-gain},
	\[
	\sum_{L\geq N_{(1)}}
	\Lambda\cdot L^{-\nu_0}(\log L)^2\lesssim \Lambda N_{(1)}^{-\nu_0}(\log N_{(1)})^{2}.
	\]

	For $L<N_{(1)}$, since $N_1=N_{(1)}$, we have $M_1\leq L<N_1$, and
	\begin{align*}
		&\sum_{L<N_{(1)}}\Lambda\cdot L^{-\nu_0}\sum_{\substack{M_1,M_2,M_3,M_4\\ M_j\leq L}}M_1^{s_1}|a_{M_1}^{(1)}|\prod_{j=2}^4 A_{M_j}^{(j)}\\
		&\qquad\lesssim \sum_{L<N_{(1)}}\Lambda \cdot L^{-\nu_0+s_1}(\log L)^{3/2}\|a^{(1)}\|_{\ell_{M_1}^2}.
	\end{align*}
	Since $\|a^{(1)}\|_{\ell_{M_1}^2}\lesssim N_{(1)}^{-s_1}$ and $\nu_0\leq s_1$, this is bounded by
	\[
	\Lambda\cdot N_{(1)}^{-\nu_0}(\log N_{(1)})^4.
	\]
	This proves~\eqref{eq:dyadic-sum-output-gain}.
\end{proof}

We are now ready to prove Proposition~\ref{prop:quintilinear}.

\begin{proof}[Proof of Proposition~\ref{prop:quintilinear}]

For $v_{N_j}^{(\ast_j)}$ with $\ast_j\in\{\mathrm{C},\mathrm{D}\}$ and $j\in\{1,2,3,4,5\}$, denote
\[
F_n(t):=\mathcal{N}_n^{(4)}(v_{N_1}^{(\ast_1)},v_{N_2}^{(\ast_2)},v_{N_3}^{(\ast_3)},v_{N_4}^{(\ast_4)})(t).
\]

By Lemma~\ref{lem:N4mapping},
\begin{align*}
	\big\|\chi_T(t)\,\mathcal{I}(F\dotcirc v_{N_5}^{(\ast_5)})_n\big\|_{\widetilde{\mathcal{F}}L_2^{b}}\lesssim T^{\delta_0+\frac{1}{r_0'}}\|\chi_1(t)F_n(t)\|_{L_t^{\infty}}\|v_{N_5,n}^{(\ast_5)}\|_{\widetilde{\mathcal{F}}L_2^b}.
\end{align*}
Assume that at least three of $v_{N_j}^{(\ast_j)}$, $j\in\{1,2,3,4\}$, are of type~(D). We apply Lemma~\ref{lem:sum-abstract} together with Corollary~\ref{cor:4-linear-1}.
In the proof of Corollary~\ref{cor:4-linear-1}, if $v^{(\ast_j)}$ is of type~(D), we use $\mathbf{a}^{(j)}=(\|\mathbf{P}_{M}v^{(\ast_j)}\|_{X^{0,b}})_{M\in 2^{\N}}$ as an $h^s$-envelope, while
for $v^{(\ast_j)}$ of type~(C), we use the $h^{s_C}$-envelope $\mathbf{b}^{(j)}=(\|\mathbf{P}_M v^{(\ast_j)}\|_{X_{q,q}^{0,\gamma}})_{M\in 2^{\N}}$, where
\[
s_C=\alpha-\frac{1}{q}-\delta,
\]
according to the different types of interactions in~(1)--(5) of Corollary~\ref{cor:4-linear-1}. In particular, the uniform estimate~\eqref{cor:4-linear-1-uniform} implies that for any given scales $M_1,M_2,M_3,M_4$, the typical-atom estimate holds with the bound $\Lambda M_{(1)}^{-\nu_0}$, where
\begin{align}\label{eq:Lambdanu_0}
	\Lambda:=R^4 T^{-3(\gamma-\frac{1}{q'})},\quad \nu_0=3\alpha+2s-4-10\theta-\frac{2}{q},
\end{align}
and one verifies from the hierarchy in Definition~\ref{hierarchy} that $\nu_0\leq \min\{s_C,s\}$. Applying Lemma~\ref{lem:sum-abstract}, we obtain
\begin{align}\label{bd:finalN_n^4}
	\|\chi_1(t)F_n(t)\|_{L_t^{\infty}}\lesssim_{\epsilon} \Lambda N^{-\nu_0+\epsilon},
\end{align}
where $N=\max(N_1,N_2,N_3,N_4)$.

By Lemma~\ref{lem:N4mapping}, we deduce
\begin{align}
	\big\|\cdot\mapsto
	\mathcal{I}_{\chi_T}\big(
	\cdot\dotcirc\mathcal{N}^{(4)}(v_{N_1}^{(\ast_1)},v_{N_2}^{(\ast_2)},v_{N_3}^{(\ast_3)},v_{N_4}^{(\ast_4)})
	\big)\big\|_{X^{0,b}\to X^{0,b}}
	&\lesssim R^4 T^{\delta+\frac{1}{r_0'}-3(\gamma-\frac{1}{q'})}N^{-\nu_0},\label{opquintic:X0b}\\
	\big\|\cdot\mapsto
	\mathcal{I}_{\chi_T}\big(
	\cdot\dotcirc\mathcal{N}^{(4)}(v_{N_1}^{(\ast_1)},v_{N_2}^{(\ast_2)},v_{N_3}^{(\ast_3)},v_{N_4}^{(\ast_4)})
	\big)\big\|_{X_{q,q}^{0,\gamma}\to X_{q,q}^{0,\gamma}}
	&\lesssim R^4 T^{\delta+\frac{1}{r_0'}-3(\gamma-\frac{1}{q'})}N^{-\nu_0}, \label{opquintic:X0q}
\end{align}
where $r_0^{-1}=\gamma-\frac{1}{q'}+2\delta=b-\frac{1}{2}+2\delta$. Under the hierarchy~\eqref{hierarchy},
\[
\delta+\frac{1}{r_0'}-3\Big(\gamma-\frac{1}{q'}\Big)\geq \sigma,
\quad\nu_0\geq \delta,
\]
which proves the last two inequalities of Proposition~\ref{prop:quintilinear}.

For the first inequality, we distinguish two cases.

\noi
$\bullet${\bf Case 1: $N_5=2N$ and $\max(N_1,N_2,N_3,N_4)\leq N$.}

We conclude by the operator norm estimate~\eqref{opquintic:X0b} and $\|v_{N_5}^{(\mathrm{D})}\|_{X^{0,b}}\leq (2N)^{-s}$.

\medskip

\noi
$\bullet${\bf Case 2: $N_5 \leq \max(N_1,N_2,N_3,N_4)=2N$.}

By Lemma~\ref{lem:N4mapping} and~\eqref{finalbd:Nn4},
\begin{align*}
	&\big\|
	\mathcal{I}_{\chi_T}\big(\mathcal{N}^{(4)}(v_{N_1}^{(\ast_1)},\cdots, v_{N_4}^{(\ast_4)})\dotcirc v_{N_5}^{(\ast_5)}\big)
	\big\|_{X^{0,b}}\\ &\qquad\leq
	\Big(\sum_{M}
	\big\|\mathcal{I}_{\chi_T}\big(\mathbf{P}_{M}\mathcal{N}^{(4)}(v_{N_1}^{(\ast_1)},\cdots, v_{N_4}^{(\ast_4)})\dotcirc \mathbf{P}_{M}v_{N_5}^{(\ast_5)}\big)
	\big\|_{X^{0,b}}^2
	\Big)^{1/2}\\
	&\qquad\lesssim T^{\delta+\frac{1}{r_0'}}\mathcal{Q},
\end{align*}
where
\begin{align}\label{eq:Q}
	\mathcal{Q}:=\Big(
	\sum_{M}\|\chi(t)\,\mathbf{P}_M\mathcal{N}_n^{(4)}(v_{N_1}^{(\ast_1)},\cdots, v_{N_4}^{(\ast_4)})\|_{\ell_n^{\infty}L_t^{\infty}}^2 \|\mathbf{P}_M v_{N_5}^{(\ast_5)}\|_{X^{0,b}}^2
	\Big)^{1/2}.
\end{align}

For $j\in\{1,2,3,4\}$, write
\[
v_{N_j}^{(\ast_j)}
=
\sum_{K_j\in 2^{\N}}\mathbf{P}_{K_j}v_{N_j}^{(\ast_j)}.
\]
For each fixed dyadic quadruple $(K_1,K_2,K_3,K_4)$, set $K:=\max(K_1,K_2,K_3,K_4)$.
For the output block, define
\[
b_M:=\|\mathbf{P}_M v_{N_5}^{(\ast_5)}\|_{X^{0,b}}.
\]
For the four input blocks, define
\[
a_{K_j}^{(j)}
:=
\begin{cases}
	\|\mathbf{P}_{K_j}v_{N_j}^{(\ast_j)}\|_{X^{0,b}},
	& v_{N_j}^{(\ast_j)} \text{ is of type (D)},\\[1mm]
	\|\mathbf{P}_{K_j}v_{N_j}^{(\ast_j)}\|_{X_{q,q}^{0,\gamma}},
	& v_{N_j}^{(\ast_j)} \text{ is of type (C)}.
\end{cases}
\]
By Definition~\ref{def:scale-seq}, $(a_K^{(j)})_{K\in 2^{\N}}$ is a scale-$N_j$ $h^{s_j}$-envelope, with
\[
s_j=
\begin{cases}
	s,& v_{N_j}^{(\ast_j)} \text{ is of type (D)},\\
	s_C:=\alpha-\frac1q-\delta,& v_{N_j}^{(\ast_j)} \text{ is of type (C)}.
\end{cases}
\]
Moreover, since $v_{N_5}^{(\ast_5)}$ is of type~(D) in the first inequality of
Proposition~\ref{prop:quintilinear}, the sequence $(b_M)_{M\in 2^{\N}}$ is a scale-$N_5$ $h^{s_5}$-envelope, where
\[
s_5=\begin{cases}
	s,& v_{N_5}^{(\ast_5)} \text{ is of type (D)},\\
	s_D:=\alpha-\frac12-\delta,& v_{N_5}^{(\ast_5)} \text{ is of type (C)}.
\end{cases}
\]
For $\vec{K}=(K_1,K_2,K_3,K_4)$, set
\[
G_{M,\vec{K},n}(t):=\mathbf{P}_M\mathcal{N}_n^{(4)}
\big(
\mathbf{P}_{K_1}v_{N_1}^{(\ast_1)},
\mathbf{P}_{K_2}v_{N_2}^{(\ast_2)},
\mathbf{P}_{K_3}v_{N_3}^{(\ast_3)},
\mathbf{P}_{K_4}v_{N_4}^{(\ast_4)}
\big)(t).
\]
By the uniform estimate~\eqref{cor:4-linear-1-uniform} or Corollary~\ref{cor:4-linearprob} (see Remark~\ref{rmk:numerologyquintic}),
for every $n\sim M$,
\begin{equation}\label{eq:blockwise-pointwise-case2}
	\|\chi_1(t) G_{M,\vec{K},n}(t)\|_{L_t^\infty}
	\lesssim
	\Lambda\,\frac{K\wedge M}{K}\,
	K^{-\nu_0}\,
	\prod_{j=1}^4 K_j^{s_j}a_{K_j}^{(j)},
\end{equation}
with the same $\Lambda,\nu_0$ as in~\eqref{eq:Lambdanu_0}.

Plugging into~\eqref{eq:Q}, we obtain
\begin{align*}
	\mathcal{Q}&\lesssim \Lambda\Big(\sum_{M}b_M^2\Big(\sum_{\vec{K}}\frac{K\wedge M}{K}\, K^{-\nu_0}\cdot\prod_{j=1}^4 K_j^{s_j}a_{K_j}^{(j)}
	\Big)^2\Big)^{1/2}\\
	&\lesssim \Lambda
	\Big(\sum_{M}b_M^2\,\mathrm{I}_M^2\Big)^{1/2}+\Lambda
	\Big(\sum_{M}b_M^2\,\mathrm{II}_M^2\Big)^{1/2},
\end{align*}
where
\[
\mathrm{I}_M:=\sum_{\substack{K_1,K_2,K_3,K_4\\
		K\leq M
}} K^{-\nu_0}\prod_{j=1}^4 K_j^{s_j}a_{K_j}^{(j)},\qquad \mathrm{II}_M:=\sum_{\substack{K_1,K_2,K_3,K_4\\
		K> M
}} M\, K^{-\nu_0-1}\prod_{j=1}^4 K_j^{s_j}a_{K_j}^{(j)}.
\]

Our goal is to show
\begin{equation}\label{eq:target-Q}
	\mathcal{Q}\lesssim N_{(1)}^{-s}N_{(2)}^{-\delta}.
\end{equation}
The key numerology is
\begin{equation}\label{eq:numerology}
	\nu_0 + s_D - s = \nu_0 + \Big(\alpha - \frac{1}{2} - \delta\Big) - s > 10\delta.
\end{equation}

Without loss of generality, assume $N_1 = N_{(1)} = 2N \ge N_5$ and $N_2$ is the second largest among $N_1,N_2,N_3,N_4$, so that $\max(N_2,N_5)=N_{(2)}$. We bound the input sequences using their envelope properties from Definition~\ref{def:scale-seq}:
\begin{align}
	\|a^{(1)}\|_{\ell^2} &\lesssim N_1^{-s_1}, \label{eq:env1} \\
	\|K_2^{s_2-\delta} a^{(2)}\|_{\ell^2} &\lesssim N_2^{-\delta}, \label{eq:env2} \\
	\|K_j^{s_j} a^{(j)}\|_{\ell^2} &\lesssim 1, \quad j=3,4. \label{eq:env34}
\end{align}
For fixed $K=\max(K_1,K_2,K_3,K_4)$, define
\[
H_K := \sum_{\max_j K_j=K} \prod_{j=1}^4 K_j^{s_j} a_{K_j}^{(j)}.
\]
By isolating the index $i \in \{1,2,3,4\}$ that attains $K_i = K$,
\begin{equation}\label{eq:HK-decomp}
	H_K \le \sum_{i=1}^4 K^{s_i} a_K^{(i)} \prod_{j \neq i} \Big(\sum_{K_j \le K} K_j^{s_j} a_{K_j}^{(j)}\Big).
\end{equation}
By Cauchy--Schwarz and~\eqref{eq:env1}--\eqref{eq:env34},
\begin{align}
	\sum_{K_1 \le K} K_1^{s_1} a_{K_1}^{(1)} &\lesssim K^{s_1} N_1^{-s_1}, \label{eq:sumK1} \\
	\sum_{K_2 \le K} K_2^{s_2} a_{K_2}^{(2)} &\lesssim K^{\delta} N_2^{-\delta}, \label{eq:sumK2} \\
	\sum_{K_j \le K} K_j^{s_j} a_{K_j}^{(j)} &\lesssim_{\epsilon} K^{\epsilon},\quad j=3,4, \label{eq:sumK34}
\end{align}
for any $\epsilon > 0$. Substituting~\eqref{eq:sumK1}--\eqref{eq:sumK34} into~\eqref{eq:HK-decomp} for each choice of the maximum index~$i$, we obtain
\begin{align}
	H_K &\lesssim K^{s_1+\delta+2\epsilon} N_2^{-\delta} a_K^{(1)} \notag \\
	&\quad + K^{s_1+\delta+2\epsilon} N_1^{-s_1} \big(K^{s_2-\delta} a_K^{(2)}\big) \notag \\
	&\quad + K^{s_1+\delta+\epsilon} N_1^{-s_1} N_2^{-\delta} \big(K^{s_3} a_K^{(3)} + K^{s_4} a_K^{(4)}\big). \label{eq:HK-bound}
\end{align}
We can write
\begin{equation}\label{eq:HK-final}
	H_K \lesssim K^{s_1+\delta+2\epsilon} \widetilde{A}_K,
\end{equation}
where
\[
\widetilde{A}_K := N_2^{-\delta} a_K^{(1)} + N_1^{-s_1} \big(K^{s_2-\delta} a_K^{(2)}\big) + N_1^{-s_1} N_2^{-\delta} \big(K^{s_3} a_K^{(3)} + K^{s_4} a_K^{(4)}\big),
\]
and
\begin{equation}\label{eq:Atilde-norm}
	\|\widetilde{A}\|_{\ell_K^2} \lesssim N_1^{-s_1} N_2^{-\delta}.
\end{equation}

We now bound $\mathrm{I}_M$ and $\mathrm{II}_M$. For $\mathrm{I}_M$, by Cauchy--Schwarz in~$K$:
\begin{align}
	\mathrm{I}_M &= \sum_{K \le M} K^{-\nu_0} H_K \lesssim \sum_{K \le M} K^{s_1 - \nu_0 + \delta + 2\epsilon} \widetilde{A}_K \notag \\
	&\le \Big(\sum_{K \le M} K^{2(s_1 - \nu_0 + \delta + 2\epsilon)}\Big)^{1/2} \|\widetilde{A}\|_{\ell_{K}^2} \notag \\
	&\lesssim M^{s_1 - \nu_0 + \delta + 2\epsilon} N_1^{-s_1} N_2^{-\delta}, \label{eq:IM-bound}
\end{align}
where the geometric sum is dominated by $K=M$ since $s_1 - \nu_0 \geq 0$.

Similarly, for $\mathrm{II}_M$:
\[
\mathrm{II}_M = M \sum_{K > M} K^{-\nu_0-1} H_K \lesssim M \sum_{K > M} K^{s_1 - \nu_0 - 1 + \delta + 2\epsilon} \widetilde{A}_K.
\]
Since $s_1 \le 1$ and $\nu_0 > 10\delta$, the exponent $s_1 - \nu_0 - 1 + \delta + 2\epsilon$ is strictly negative for small $\epsilon$. By Cauchy--Schwarz,
\begin{align}
	\mathrm{II}_M &\le M \Big(\sum_{K > M} K^{2(s_1 - \nu_0 - 1 + \delta + 2\epsilon)}\Big)^{1/2} \|\widetilde{A}\|_{\ell_{K}^2} \notag \\
	&\lesssim M^{s_1 - \nu_0 + \delta + 2\epsilon} N_1^{-s_1} N_2^{-\delta}. \label{eq:IIM-bound}
\end{align}
The bounds for $\mathrm{I}_M$ and $\mathrm{II}_M$ coincide. Setting $\rho := s_1 - \nu_0 + \delta + 2\epsilon$, we obtain
\begin{equation}\label{eq:Q-intermediate}
	\mathcal{Q} \lesssim \Lambda\, N_1^{-s_1} N_2^{-\delta} \Big(\sum_M b_M^2 M^{2\rho}\Big)^{1/2}.
\end{equation}

We perform the summation over the output scale~$M$ by distinguishing two subcases.

\smallskip\noindent
\textbf{Subcase 2.1: $v_{N_5}^{(\ast_5)}$ is of type~(C).}
The scale is fixed at $M = N_5$, so there is no dyadic sum over~$M$. We have $b_M = \|\mathbf{P}_{N_5} v_{N_5}^{(\mathrm{C})}\|_{X^{0,b}} \lesssim N_5^{-s_D}$. From~\eqref{eq:Q-intermediate},
\[
\mathcal{Q} \lesssim \Lambda\, N_5^{-s_D+\rho} N_1^{-s_1} N_2^{-\delta} = \Lambda\, N_5^{-s_D + s_1 - \nu_0 + \delta + 2\epsilon} N_1^{-s} N_1^{-(s_1 - s)} N_2^{-\delta}.
\]
Since $M = N_5 \le N_1$ and $s_1 \ge s$, we have $N_1^{-(s_1 - s)} \le N_5^{-(s_1 - s)}$. The total power of~$N_5$ is then $-s_D - \nu_0 + s + \delta + 2\epsilon$.
By~\eqref{eq:numerology}, $\nu_0 + s_D - s > 10\delta$. Choosing $\epsilon$ small enough ensures this exponent is strictly negative, yielding $\mathcal{Q} \lesssim \Lambda\, N_{(1)}^{-s} N_{(2)}^{-\delta}$.

\smallskip\noindent
\textbf{Subcase 2.2: $v_{N_5}^{(\ast_5)}$ is of type~(D).}
In this case, $(b_M)_{M \le N_5}$ is a scale-$N_5$ $h^s$-envelope, and we must sum over all dyadic~$M$. We bound
\begin{equation}\label{eq:sum-bM}
	\Big(\sum_{M} M^{2\rho} b_M^2\Big)^{1/2}.
\end{equation}
We further distinguish the type of $v_{N_1}^{(\ast_1)}$. If $v_{N_1}^{(\ast_1)}$ is of type~(D), then $s_1=s$ and $\rho=s-\nu_0+\delta+2\epsilon<s$. By property~(2) of Definition~\ref{def:scale-seq}, the expression~\eqref{eq:sum-bM} is bounded by $N_5^{-(s-\rho)}$.

If $v_{N_1}^{(\ast_1)}$ is of type~(C), then $s_1=s_C$ and $\rho=s_C-\nu_0+\delta+2\epsilon$. If $\rho\leq s$, we apply property~(2) of Definition~\ref{def:scale-seq} with $s_0=\rho$ to bound~\eqref{eq:sum-bM} by $N_5^{-(s-\rho)}\leq 1$. Inserting this into~\eqref{eq:Q-intermediate} yields
\[
\mathcal{Q}\lesssim \Lambda\, N_1^{-s_C}N_2^{-\delta}\leq \Lambda\, N_1^{-s}N_{(2)}^{-\delta},
\]
since $s_C>s+\delta$.

If $\rho>s$, we bound~\eqref{eq:sum-bM} by
\[
\Big(\sum_{M\leq N_5}M^{2\rho}b_M^2\Big)^{1/2}+\Big(\sum_{M>N_5}M^{2\rho}b_M^2\Big)^{1/2}.
\]
The first term is bounded by $N_5^{\rho-s}$. For the second, property~(3) of Definition~\ref{def:scale-seq} gives $|b_M|\leq (M/N_5)^{-1}\cdot N_5^{-s}$, hence
\[
\Big(\sum_{M>N_5}M^{2\rho} b_M^2\Big)^{1/2}\lesssim N_5^{-s}\cdot N_5^{\rho-s},
\]
where we used $\rho<s_C<1$. Inserting into~\eqref{eq:Q-intermediate} gives
\[
\mathcal{Q}\lesssim \Lambda\, N_1^{-s_C}N_2^{-\delta}N_5^{\rho-s}\leq \Lambda\, N_1^{-s}N_2^{-\delta}N_5^{\rho-s}\cdot N_1^{-(s_C-s)}\leq \Lambda\, N_1^{-s}N_2^{-\delta}N_5^{\rho-s_C}.
\]
Since $\rho-s_C<-\delta$, we finally obtain
\[
\mathcal{Q}\lesssim \Lambda\, N_{(1)}^{-s}N_{(2)}^{-\delta}.
\]
This completes the proof of Case~2 and of Proposition~\ref{prop:quintilinear}.
\end{proof}
	
	\appendix

	
	\appendix

\section{Estimates of the correlation coefficient}
\label{sub:gamma}
Recall that
\[
\gamma_{nn_1n_2n_3}
=\frac{2}{\pi^{3}}
\int_{0}^{\pi}
\frac{\sin(nr)\sin(n_{1}r)\sin(n_{2}r)\sin(n_{3}r)}{r^{2}}
\mathrm{d}r\,.
\]
In this appendix we complete the proof of Lemma~\ref{lem:gamma} by proving~\eqref{eq:gamma2} and~\eqref{eq:gamma3}.

\begin{proof}[Proof of~\eqref{eq:gamma2}]
	Let~$n,m$ be such that~$n/2\leq m\leq 2n$. A change of variable gives
	\[
	\gamma_{nnmm}
	=
	\frac{2}{\pi^{3}}\int_{0}^{\pi}
	\frac{\sin(nr)^{2}\sin(mr)^{2}}{r^{2}}
	\mathrm{d}r
	=
	\frac{2n}{\pi^{3}}\int_{0}^{n\pi}
	\frac{\sin(r)^{2}\sin(\frac{m}{n}r)^{2}}{r^{2}}
	\mathrm{d}r\,.
	\]
	Since
	\[
	\frac{2n}{\pi^{3}}\int_{n\pi}^{+\infty}
	\frac{\sin(r)^{2}\sin(\frac{m}{n}r)^{2}}{r^{2}}
	\mathrm{d}r=\mathcal{O}(1)\,,
	\]
	for~$n$ sufficiently large and~$n/2\leq m\leq 2n$,
	\[
	\gamma_{nnmm} = \frac{2n}{\pi^{3}}\int_{0}^{+\infty}
	\frac{\sin(r)^{2}\sin(\frac{m}{n}r)^{2}}{r^{2}}
	\mathrm{d}r + \mathcal{O}(1)
	\geq \frac{c}{2}\,n-\mathcal{O}(1)\,,
	\]
	where
	\[
	c= \frac{2}{\pi^{3}}\inf_{\rho\in[\frac{1}{2},2]}
	\int_{0}^{+\infty}
	\frac{\sin(r)^{2}\sin(\rho r)^{2}}{r^{2}}
	\mathrm{d}r.
	\]
	Since the above integral defines a nonnegative continuous function of~$\rho$ that does not vanish on the compact set~$[1/2,2]$, we conclude that~$c>0$. This completes the proof of~\eqref{eq:gamma2}.
\end{proof}

To prove~\eqref{eq:gamma3} we need the following trigonometric identity, which can be found in~\cite[equation~(10.1)]{Tzv08}.
\begin{lemma}\label{trignometric}
	For~$k,m\in\mathbb{N}$ and~$\theta\in[0,\pi]$,
	\begin{align}\label{eq:trigo}
		\frac{\sin(k\theta)}{\sin\theta}\cdot\frac{\sin(m\theta)}{\sin\theta}=\sum_{j=1}^{\min(k,m)}\frac{\sin((|k-m|+2j-1)\theta)}{\sin\theta}.
	\end{align}
\end{lemma}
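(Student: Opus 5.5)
The plan is to prove the identity by induction on $\min(k,m)$, using the symmetry of both sides in $(k,m)$. By symmetry we assume $k\ge m$, so that $|k-m|=k-m$ and $\min(k,m)=m$. It also suffices to treat $\theta\in(0,\pi)$: both sides are continuous functions of $\theta$ once each quotient $\sin(\ell\theta)/\sin\theta$ is read as the Chebyshev polynomial $U_{\ell-1}(\cos\theta)$, so the endpoints follow by continuity. Multiplying by $\sin^2\theta$, the claim is equivalent to
\[
\sin(k\theta)\sin(m\theta)=\sin\theta\sum_{j=1}^{m}\sin\big((k-m+2j-1)\theta\big).
\]

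The cleanest route is to evaluate the right-hand side directly rather than by induction. Each term can be rewritten with the product-to-sum formula
\[
2\sin\theta\,\sin(p\theta)=\cos\big((p-1)\theta\big)-\cos\big((p+1)\theta\big), \qquad p=k-m+2j-1.
\]
This gives $\cos((k-m+2j-2)\theta)-\cos((k-m+2j)\theta)$. Summing over $j=1,\dots,m$ telescopes, because the second cosine for index $j$ is the first cosine for index $j+1$. Only the first and last cosines survive:
\[
2\sin\theta\sum_{j=1}^m\sin\big((k-m+2j-1)\theta\big)=\cos\big((k-m)\theta\big)-\cos\big((k+m)\theta\big).
\]
By the product-to-sum formula again, the right-hand side equals $2\sin(k\theta)\sin(m\theta)$. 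Dividing by $2\sin^2\theta$ gives the identity.

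There is no real obstacle here. The only points needing care are the index bookkeeping in the telescoping sum and the degenerate cases. When $m=0$ the sum is empty and the left-hand side vanishes, so the identity holds trivially. At $\theta\in\{0,\pi\}$ the continuous-extension interpretation covers it.
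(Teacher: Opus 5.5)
Your proof is correct, and it takes a different route from the paper only in the sense that the paper gives no proof: it quotes the identity from \cite[equation~(10.1)]{Tzv08}. Your telescoping argument therefore makes the lemma self-contained. Writing $2\sin\theta\sin(p\theta)=\cos((p-1)\theta)-\cos((p+1)\theta)$ with $p=k-m+2j-1$ and summing over $j=1,\dots,m$ collapses the sum to $\cos((k-m)\theta)-\cos((k+m)\theta)=2\sin(k\theta)\sin(m\theta)$.

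Your handling of the endpoints is also the right one for how the identity is used later. You read each quotient as $U_{\ell-1}(\cos\theta)$ and extend by continuity, which treats the identity as one between the trigonometric polynomials $P_n(z)=\sin(nz)/\sin z$; this is exactly how it appears in the proof of \eqref{eq:gamma3}. There the paper actually needs it on $[-\pi,\pi]$. Your computation is valid for every real $\theta$ with $\sin\theta\neq0$, so it covers that case directly, as does evenness in $\theta$.

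The only flaw is expository. You announce an induction on $\min(k,m)$ and then, rightly, never use it, so that opening sentence should be dropped.
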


\begin{proof}[Proof of~\eqref{eq:gamma3}]
	To compute the integral defining~$\gamma_{n_0n_1n_2n_3}$, we introduce an even function~$\chi_0\in C_c^{\infty}((-\frac{\pi}{2},\frac{\pi}{2}))$ such that~$\chi_0(z)\equiv 1$ for~$|z|\leq \frac{\pi}{4}$, and set~$\chi_1(z):=\frac{1-\chi_0(z)}{z^2}$. Then
	\begin{align*}
		4\gamma_{n_0n_1n_2n_3}=
		&\int_0^\pi \frac{\chi_0(\theta)}{\theta^2}\prod_{j=0}^3\sin(n_j \theta)\,\mathrm{d}\theta+\int_0^\pi \chi_1(\theta)\prod_{j=0}^3 \sin(n_j \theta) \,\mathrm{d}\theta\\
		=&:\mathrm{I}+\mathrm{II}.
	\end{align*}

	We first show that the interior contribution satisfies~$\mathrm{I} = \mathcal{O}(n_0^{-\infty})$ (this is contained in~\cite[Lemma~2.6]{BGT05}).
	We write
	\[
	\frac{1}{\theta^2}=\frac{1}{\sin^2(\theta)}-\Big(\frac{1}{\sin^2(\theta)}-\frac{1}{\theta^2}\Big).
	\]
	By Mittag-Leffler's expansion,
	\[
	\frac{1}{\sin^2(\theta)}-\frac{1}{\theta^2}=\sum_{\ell \neq 0}\frac{1}{(\theta-\pi\ell)^2}.
	\]
	Using symmetry,
	\begin{align*}
		\mathrm{I}=\frac{1}{2}\int_{-\pi}^{\pi} \frac{\chi_0(z)}{\sin^2(z)}\prod_{j=0}^3 \sin(n_jz)\, \mathrm{d}z +\int_{\R}f(z)\prod_{j=0}^3\sin(n_jz)\, \mathrm{d}z=:\mathrm{I}_1+\mathrm{I}_2,
	\end{align*}
	where~$f(z)=-\chi_0(z)\sum_{\ell\neq 0}\frac{1}{(z-\pi\ell)^2}$ belongs to~$C_c^{\infty}(\R)$. Since~$\prod_{j=0}^3\sin(n_jz)$ is a finite linear combination of exponentials~$\e^{i(\pm n_0\pm n_1\pm n_2\pm n_3)z}$ and~$n_0\gg n_1+n_2+n_3$, the term~$\mathrm{I}_2$ is~$O(n_0^{-\infty})$. For~$\mathrm{I}_1$, set
	\[
	P_n(z):=\frac{\sin(nz)}{\sin(z)},
	\]
	so that
	\[
	\mathrm{I}_1=\frac{1}{2}\int_{-\pi}^{\pi}\chi_0(z)\sin^2(z)\prod_{j=0}^3 P_{n_j}(z)\, \mathrm{d}z.
	\]
	By Lemma~\ref{trignometric} and the assumption~$n_0\gg n_1+n_2+n_3$,
	\[
	\prod_{j=0}^3 P_{n_j}(z)=\sum_{j_1=1}^{n_1}\sum_{j_2=1}^{n_2}\sum_{j_3=1}^{n_3}P_{n_0-n_1-n_2-n_3+2j_1+2j_2+2j_3-3}(z)\,.
	\]
	Hence
	\begin{align*}
		\mathrm{I}_1=\frac{1}{2}\sum_{\substack{1\leq j_1\leq n_1\\
				1\leq j_2\leq n_2\\
				1\leq j_3\leq n_3}}\int_{\R}\chi_0(z)\sin(z) \sin(Nz) \,\mathrm{d}z,
	\end{align*}
	where~$N=n_0-n_1-n_2-n_3+2j_1+2j_2+2j_3-3 \geq \frac{n_0}{2}$. Each term in the sum is non-stationary, so
	\[
	\mathrm{I}_1 = \mathcal{O}(n_1n_2n_3)\cdot \mathcal{O}(n_0^{-\infty})=\mathcal{O}(n_0^{-\infty}).
	\]
It remains to show that~$\mathrm{II}=\mathcal{O}(n_0^{-2})$. We write
	\[
	\mathrm{II}=\int_0^{\pi}\chi_1(\theta)F(\theta)\,\mathrm{d}\theta,
	\]
	where
	\begin{align*}
		F(\theta)&:=\frac{1}{16}\sum_{\substack{\iota_j\in\{+1,-1\}\\j=0,1,2,3}}\prod_{j=0}^3\iota_j\;\e^{i\iota_jn_j\theta}\\
		&=\frac{1}{16}\frac{\mathrm{d}}{\mathrm{d}\theta}
		\Big(\sum_{\substack{\iota_j\in\{+1,-1\}\\j=0,1,2,3}}\frac{\iota_0\iota_1\iota_2\iota_3}{\iota_0n_0+\iota_1n_1+\iota_2n_2+\iota_3n_3}\e^{i(\iota_0n_0+\iota_1n_1+\iota_2n_2+\iota_3n_3)\theta}\Big).
	\end{align*}
	Since~$|\sigma(\vec\iota)| := |\iota_0 n_0 + \iota_1 n_1 + \iota_2 n_2 + \iota_3 n_3| \gtrsim n_0$ for all~$\vec\iota\in\{+1,-1\}^4$ (because~$n_0\gg n_1+n_2+n_3$), we may integrate by parts twice. The boundary terms at~$\theta=0$ vanish because~$\chi_1$ is supported away from~$0$. At~$\theta=\pi$, the boundary contribution after the first integration by parts is proportional to
	\[
	\sum_{\vec\iota}\frac{\iota_0\iota_1\iota_2\iota_3}{\sigma(\vec\iota)}(-1)^{\sigma(\vec\iota)}.
	\]
	The substitution~$\vec\iota\mapsto -\vec\iota$ sends~$\sigma\mapsto -\sigma$ while leaving the product~$\iota_0\iota_1\iota_2\iota_3$ and~$(-1)^{\sigma}$ unchanged. Hence each term pairs with its negative, and the sum vanishes. The same argument applies to the boundary term after the second integration. Each remaining integral is~$\mathcal{O}(|\sigma|^{-2}) = \mathcal{O}(n_0^{-2})$, so~$\mathrm{II}=\mathcal{O}(n_0^{-2})$. This completes the proof of~\eqref{eq:gamma3}.
\end{proof}

\section{On the probabilistic scaling}
\label{sec:scaling}

The notion of \emph{probabilistic scaling}, introduced in~\cite{DNY24,DNY22} and further explained in~\cite{DNYvietnam}, identifies the exponent~$\alpha$ for which the high$\times$high interactions arising in the second Picard iterate fail to exhibit any gain of regularity relative to the typical regularity of the initial data.
In this appendix, we show that for the radial cubic Schrödinger equation~\eqref{eq:NLS} on~$\mathbf{B}_{3}$, the corresponding critical exponent is~$\alpha=1$.

Let~$N\in2^{\N}$, $t\geq0$, and~$\alpha\geq0$. We define
\[
\mathcal{I}(t,\mathbf{P}_{N}\phi_{\alpha}):=\int_{0}^{t}\e^{i(t-\tau)\Delta}|\e^{i\tau\Delta}\mathbf{P}_{N}\phi_{\alpha}|^{2}\e^{i\tau\Delta}
\mathbf{P}_{N}
\phi_{\alpha}\,
\mathrm{d}\tau\,,
\]
which corresponds to the second Picard iterate applied to the wave packet~$\mathbf{P}_{N}\phi_{\alpha}$, localized at frequency scale~$N$. Recall that the typical regularity of~$\phi_{\alpha}$ is~$H^{\alpha-1/2-}$.

\begin{lemma}[Probabilistic scaling]\label{lem:ps}
	Let~$\alpha\geq0$. There exists a constant~$C>0$ such that for all~$t\geq0$ and~$N\in2^{\N}$,
	\begin{equation}
		\label{eq:scaling}
		\left\|\mathcal{I}(t,\mathbf{P}_{N}\phi_{\alpha})\right\|_{L^{2}(\Omega;H^{\alpha-1/2})}\geq CtN^{2(1-\alpha)}\,.
	\end{equation}
\end{lemma}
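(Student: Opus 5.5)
Our plan is to isolate the resonant high$\times$high$\times$high contribution \eqref{res:hhh} inside $\mathcal{I}(t,\mathbf{P}_N\phi_\alpha)$ and to show that, on the modes $n\sim N$, it produces a term of size $t\,\mu_n^2$ times the data, while everything else is orthogonal to it in $L^2(\Omega)$ or at least not large enough to cancel it. We expand in the eigenbasis. With $u_n(\tau)=\e^{-i\tau n^2}g_n n^{-\alpha}\mathbf{1}_{N/2<n\le N}$, the $n$-th coefficient of $\mathcal{I}$ is
\[
\mathcal{I}_n(t)=\e^{-itn^2}\sum_{n_1,n_2,n_3}\gamma_{nn_1n_2n_3}\frac{g_{n_1}\overline{g_{n_2}}g_{n_3}}{(n_1n_2n_3)^{\alpha}}\int_0^t \e^{i\tau\Omega}\,\mathrm{d}\tau,\qquad \Omega=n^2-n_1^2+n_2^2-n_3^2.
\]
Since $H^{\alpha-1/2}$ weights $n\sim N$ by $N^{\alpha-1/2}$, and the modes are orthogonal, it suffices to bound from below $\sum_{n\sim N}N^{2\alpha-1}\mathbb{E}|\mathcal{I}_n(t)|^2$, or simply the projection onto the component with $\{n_1,n_3\}\ni n$.

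The key step is the Wick/chaos decomposition of the random cubic form. We split it into its first-chaos part, which is a multiple of $g_n$, and its third-chaos part, which is orthogonal to it in $L^2(\Omega)$. Hence $\mathbb{E}|\mathcal{I}_n|^2\ge \mathbb{E}|\Pi_1\mathcal{I}_n|^2$, where $\Pi_1$ denotes projection onto the span of $g_n$. The first-chaos component arises from pairings with $n_1=n_2$ and $n_3=n$, or with $n_3=n_2$ and $n_1=n$. In both cases $\Omega=0$, so the time integral equals $t$. Pairings $n_1=n_3=n_2=n$ contribute lower order. We therefore get
\[
\Pi_1\mathcal{I}_n(t)=\e^{-itn^2}\,t\,\frac{g_n}{n^\alpha}\Bigl(2\sum_{m\sim N}\gamma_{nnmm}\,m^{-2\alpha}+O(\gamma_{nnnn}N^{-2\alpha})\Bigr),
\]
up to the contraction constants coming from $\mathbb{E}|g|^2$ and $\mathbb{E}|g|^4$.

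By \eqref{eq:gamma2}, $\gamma_{nnmm}\ge cN$ for $n,m\sim N$, so the bracket is at least $c'N\cdot N\cdot N^{-2\alpha}=c'N^{2-2\alpha}$. All terms are positive, so there is no cancellation. The only care needed is at the diagonal $m=n$, where the exact combinatorial coefficient differs; this changes one term of size $O(N^{1-2\alpha})$, which is negligible. Summing, we obtain
\[
\|\mathcal{I}\|^2_{L^2(\Omega;H^{\alpha-1/2})}\gtrsim \sum_{n\sim N}N^{2\alpha-1}\,t^2N^{-2\alpha}N^{4-4\alpha}\sim t^2N^{4(1-\alpha)},
\]
which yields \eqref{eq:scaling}.

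The main obstacle is the rigorous justification that the third-chaos part is orthogonal to the paired part; this requires writing the product $g_{n_1}\overline{g_{n_2}}g_{n_3}$ as Wick products plus contractions. We would handle this by the standard complex Hermite decomposition. The only subtlety is the fully diagonal term $|g_n|^2g_n$, whose first-chaos projection is $2g_n$, and this is absorbed into the positive lower-order term above.
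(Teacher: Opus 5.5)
Your proof is correct, and it follows a leaner route than the paper's. The paper splits $\mathcal{I}_n$ according to \eqref{eq:111}--\eqref{res:hhh} and uses that, for Gaussian data, the resonant pieces are uncorrelated with the non-resonant ones. It then applies the reverse triangle inequality $\|\mathcal{I}\|\geq\bigl|\|\mathrm{II}_N\|-\|\mathrm{III}_N\|\bigr|$. There $\mathrm{II}_N$ keeps the full random coefficient $\sum_{m\sim N}\gamma_{nnmm}|g_m|^2m^{-2\alpha}$, whose second moment is bounded below via positivity of $\mathbb{E}[|g_m|^2|g_\ell|^2|g_n|^2]$ and \eqref{eq:gamma2}. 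The diagonal term $\mathrm{III}_N$ is bounded above separately by $tN^{1-2\alpha}$.

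You instead project onto the single direction $g_n$ (normalize $\mathbb{E}|g_n|^2=1$). All you need is Cauchy--Schwarz, $\mathbb{E}|\mathcal{I}_n|^2\geq|\mathbb{E}[\mathcal{I}_n\overline{g_n}]|^2$, and the Wick formula $\mathbb{E}[g_{n_1}\overline{g_{n_2}}g_{n_3}\overline{g_n}]=\delta_{n_1n_2}\delta_{n_3n}+\delta_{n_1n}\delta_{n_2n_3}$. This gives exactly $\mathbb{E}[\mathcal{I}_n\overline{g_n}]=2t\,\e^{-itn^2}n^{-\alpha}\sum_{m\sim N}\gamma_{nnmm}m^{-2\alpha}$. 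There is in fact no diagonal correction: $\mathbb{E}|g_n|^4=2$ is precisely the sum of the two pairings. So you need no analogue of $\mathrm{III}_N$ and no uncorrelatedness check between groups of terms.

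What you discard is the fluctuation $|g_m|^2-1$ in the coefficient. It is of lower order ($N^{3/2-2\alpha}$ against $N^{2-2\alpha}$), so nothing is lost. What you gain is that the divergence is identified directly with the $m\sim N$ part of the mean $\mu_n^2$, which is exactly what the paper's gauge transform removes.

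Two small corrections to your write-up. First, the first-chaos component of $\mathcal{I}_n$ is not a multiple of $g_n$ alone. The pairings $n_1=n_2$, $n_3=k\neq n$ (and symmetrically $n_2=n_3$, $n_1=k\neq n$) produce terms in $g_k$ with oscillating factors $\int_0^t\e^{i\tau(n^2-k^2)}\mathrm{d}\tau$. This is harmless, because you only use the orthogonal projection onto $\mathrm{span}(g_n)$. Second, for the same reason the ``main obstacle'' you describe is not one: no complex Hermite decomposition is required, only the fourth-moment Wick formula above.
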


\begin{remark}[Comparison with the case of~$\mathbb{S}^{2}$]
	The divergence of the second Picard iterate described in Lemma~\ref{lem:ps} for~$\alpha\leq1$ is of a different nature from that established in~\cite{BCLST25}, as it arises purely from high$\times$high interactions. By contrast, the high$\times$low interactions isolated in~\cite{BCLST25} were treated in~\cite{BCST24} through suitable random averaging operators.
\end{remark}

\begin{proof}
	We begin by isolating the resonant contribution responsible for the growth in~$N$. Using the decomposition~\eqref{eq:non} of the nonlinearity, probabilistic cancellations, and following the argument of~\cite[Lemma~4.1]{BCLST25}, we obtain the lower bound:
	\begin{equation}
		\label{eq:apC}
		\|\mathcal{I}(t,\mathbf{P}_{N}\phi)\|_{L^{2}(\Omega;H^{s})}
		\geq
		\left|
		\|\mathrm{II}_{N}(t)\|_{L^{2}(\Omega;H^{s})}
		-
		\|\mathrm{III}_{N}(t)\|_{L^{2}(\Omega;H^{s})}
		\right|\,,
	\end{equation}
	where
	\begin{align*}
		\mathrm{II}_{N}(t)&:=t \sum_{n\sim N}
		\left(
		\sum_{m\sim N}\gamma_{nnmm}\frac{|g_{m}(\omega)|^{2}}{m^{2\alpha}}
		\right)
		\frac{g_{n}(\omega)}{n^{\alpha}}\mathbf{e}_{n}\,,
		\\
		\mathrm{III}_{N}(t)&:= t \sum_{n\sim N}\gamma_{nnnn}\frac{|g_{n}(\omega)|^{2}g_{n}(\omega)}{n^{3\alpha}}\mathbf{e}_{n}\,,
	\end{align*}
	correspond to the contributions arising from~\eqref{res:hhh} and~$\gamma_{nnnn}|u_n|^2u_n$, respectively. The key observation underlying~\eqref{eq:apC} is that, when~$u$ is distributed according to a Gaussian measure, the resonant contributions~\eqref{res:hhh} and~$\gamma_{nnnn}|u_n|^2u_n$ are uncorrelated with the non-resonant terms~\eqref{eq:111}--\eqref{eq:112}.

	We first estimate~$\mathrm{III}_{N}(t)$ from above:
	\[
	\|\mathrm{III}_{N}(t)\|_{L^{2}(\Omega;H^{s})}^{2}
	= t^{2}\sum_{n\sim N}n^{2s}\gamma_{nnnn}^{2}\frac{\mathbb{E}[|g_{n}(\omega)|^{6}]}{n^{6\alpha}}
	\lesssim t^{2}N^{2s+3-6\alpha}\,,
	\]
	which yields, for~$s=\alpha-1/2$,
	\[
	\|\mathrm{III}_{N}(t)\|_{L^{2}(\Omega;H^{s})} \lesssim tN^{1-2\alpha}\,.
	\]
	We now turn to the lower bound for~$\mathrm{II}_{N}(t)$. A direct computation gives
	\begin{align*}
		\|\mathrm{II}_{N}(t)\|_{L^{2}(\Omega;H^{s})}^{2}
		&=t^{2}\sum_{n\sim N}n^{2s}
		\mathbb{E}
		\Big|\sum_{m\sim N} \gamma_{nnmm}
		\frac{|g_{m}|^{2}g_{n}}{m^{2\alpha} n^{\alpha}}\Big|^{2}\\
		&=t^{2}\sum_{n\sim N}n^{2s}
		\sum_{m,\ell\sim N} \gamma_{nnmm}\gamma_{nn\ell\ell}
		\frac{\mathbb{E}[|g_{m}|^{2}|g_{\ell}|^{2}|g_{n}|^{2}]}{m^{2\alpha} \ell^{2\alpha} n^{2\alpha}}\\
		&\gtrsim t^{2} N^{2s-6\alpha}\sum_{n,m,\ell\sim N}\gamma_{nnmm}\gamma_{nn\ell\ell}\,.
	\end{align*}
	Invoking the lower bound~\eqref{eq:gamma2} on the correlation coefficients~$\gamma_{nnmm}$ when~$n\sim m$, we obtain
	\[
	\|\mathrm{II}_{N}(t)\|_{L^{2}(\Omega;H^{s})}^{2} \gtrsim t^{2}N^{2s-6\alpha+5}\,.
	\]
	Taking~$s=\alpha-1/2$ yields the lower bound~\eqref{eq:scaling}. Since~$2(1-\alpha)>1-2\alpha$ for all~$\alpha$, the conclusion follows from~\eqref{eq:apC}.
\end{proof}

	\bibliography{biblio}{}

@article{NS15,
	author = {Nahmod, A. R. and Staffilani, G.},
	doi = {10.4171/JEMS/543},
	fjournal = {Journal of the European Mathematical Society (JEMS)},
	issn = {1435-9855},
	journal = {J. Eur. Math. Soc. (JEMS)},
	language = {English},
	number = {7},
	pages = {1687--1759},
	title = {Almost sure well-posedness for the periodic 3D quintic nonlinear {Schr{\"o}dinger} equation below the energy space},
	volume = {17},
	year = {2015},
	zbl = {1326.35353},
	zbmath = {6483979}}

@article{Bri20,
	author = {Bringmann, Bjoern},
	doi = {10.1093/imrn/rnz385},
	fjournal = {IMRN. International Mathematics Research Notices},
	issn = {1073-7928},
	journal = {Int. Math. Res. Not.},
	language = {English},
	number = {11},
	pages = {8657--8697},
	title = {Almost sure local well-posedness for a derivative nonlinear wave equation},
	volume = {2021},
	year = {2021},
	zbl = {1473.35361},
	zbmath = {7398555}}

@article{DNYvietnam,
	author = {Deng, Yu and Nahmod, Andrea R. and Yue, Haitian},
	doi = {10.1007/s10013-023-00672-w},
	fjournal = {Vietnam Journal of Mathematics},
	issn = {2305-221X},
	journal = {Vietnam J. Math.},
	language = {English},
	number = {4},
	pages = {1001--1015},
	title = {The probabilistic scaling paradigm},
	volume = {52},
	year = {2024},
	zbl = {1546.35201},
	zbmath = {7903084}}

@article{Tzv08,
	author = {Tzvetkov, N.},
	doi = {10.5802/aif.2422},
	fjournal = {Annales de l'Institut Fourier},
	issn = {0373-0956},
	journal = {Ann. Inst. Fourier},
	language = {English},
	number = {7},
	pages = {2543--2604},
	title = {Invariant measures for the defocusing nonlinear {Schr{\"o}dinger} equation},
	url = {semanticscholar.org/paper/e3966362828501da4ab0f945e95a528477703ee4},
	volume = {58},
	year = {2008},
	zbl = {1171.35116},
	zbmath = {5505491}}

@article{BGT05,
	author = {Burq, N. and G{\'e}rard, P. and Tzvetkov, N.},
	doi = {10.1007/s00222-004-0388-x},
	fjournal = {Inventiones Mathematicae},
	issn = {0020-9910},
	journal = {Invent. Math.},
	language = {English},
	number = {1},
	pages = {187--223},
	title = {Bilinear eigenfunction estimates and the nonlinear {Schr{\"o}dinger} equation on surfaces},
	volume = {159},
	year = {2005},
	zbl = {1092.35099},
	zbmath = {2132029}}

@article{BCST24-2,
	author = {Burq, N and Camps, N and Sun, C and Tzvetkov, N},
	journal = {in preparation},
	title = {Probabilistic well-posedeness for the nonlinear {S}chr{\"o}dinger equation on the {2D} sphere {II}: the {G}ibbs measure},
	year = {2026}}

@article{OTW20,
	author = {Oh, T. and Tzvetkov, N. and Wang, Y.},
	doi = {10.1017/fms.2020.51},
	fjournal = {Forum of Mathematics, Sigma},
	issn = {2050-5094},
	journal = {Forum Math. Sigma},
	language = {English},
	note = {Id/No e48},
	pages = {63},
	title = {Solving the 4NLS with white noise initial data},
	volume = {8},
	year = {2020},
	zbl = {1452.35193},
	zbmath = {7276284}}

@article{BCST24,
	author = {Burq, N. and Camps, N. and Sun, C. and Tzvetkov, N.},
	journal = {arXiv:2404.18229},
	title = {Probabilistic well-posedeness for the nonlinear {S}chr{\"o}dinger equation on the {$2d$} sphere I: positive regularities},
	year = {2025}}

@article{BCLST25,
	author = {Burq, N. and Camps, N. and Latocca, M.  and Sun, C. and Tzvetkov, N. },
	doi = {10.4171/EMSS/92},
	fjournal = {EMS Surveys in Mathematical Sciences},
	issn = {2308-2151},
	journal = {EMS Surv. Math. Sci.},
	language = {English},
	number = {1},
	pages = {123--154},
	title = {The second {Picard} iteration of {NLS} on the {{\(2d\)}} sphere does not regularize {Gaussian} random initial data},
	volume = {12},
	year = {2025},
	zbl = {1568.35121},
	zbmath = {8057575}}

@article{Bou96,
	author = {Bourgain, J.},
	doi = {10.1007/BF02099556},
	fjournal = {Communications in Mathematical Physics},
	issn = {0010-3616},
	journal = {Commun. Math. Phys.},
	language = {English},
	number = {2},
	pages = {421--445},
	title = {Invariant measures for the 2D-defocusing nonlinear {Schr{\"o}dinger} equation},
	volume = {176},
	year = {1996},
	zbl = {0852.35131},
	zbmath = {906546}}

@article{BB14,
	author = {Bourgain, J. and Bulut, A.},
	doi = {10.4171/JEMS/461},
	fjournal = {Journal of the European Mathematical Society (JEMS)},
	issn = {1435-9855},
	journal = {J. Eur. Math. Soc. (JEMS)},
	language = {English},
	number = {6},
	pages = {1289--1325},
	title = {Almost sure global well-posedness for the radial nonlinear {Schr{\"o}dinger} equation on the unit ball. {II}: the 3D case},
	volume = {16},
	year = {2014},
	zbl = {1301.35145},
	zbmath = {6324432}}

@article{Yue21,
	author = {Yue, H.},
	doi = {10.1016/j.jde.2021.01.031},
	fjournal = {Journal of Differential Equations},
	issn = {0022-0396},
	journal = {J. Differ. Equations},
	language = {English},
	pages = {754--804},
	title = {Global well-posedness for the energy-critical focusing nonlinear {Schr{\"o}dinger} equation on {{\(\mathbb{T}^4\)}}},
	volume = {280},
	year = {2021},
	zbl = {1459.35346},
	zbmath = {7319449}}

@article{DNY22,
	author = {Deng, Y. and Nahmod, A. R. and Yue, H.},
	doi = {10.1007/s00222-021-01084-8},
	fjournal = {Inventiones Mathematicae},
	issn = {0020-9910},
	journal = {Invent. Math.},
	language = {English},
	number = {2},
	pages = {539--686},
	title = {Random tensors, propagation of randomness, and nonlinear dispersive equations},
	volume = {228},
	year = {2022},
	zbl = {1506.35208},
	zbmath = {7514024}}

@article{DNY24,
	author = {Deng, Y. and Nahmod, A. R. and Yue, H.},
	doi = {10.4007/annals.2024.200.2.1},
	fjournal = {Annals of Mathematics. Second Series},
	issn = {0003-486X},
	journal = {Ann. Math. (2)},
	language = {English},
	number = {2},
	pages = {399--486},
	title = {Invariant {Gibbs} measures and global strong solutions for nonlinear {Schr{\"o}dinger} equations in dimension two},
	volume = {200},
	year = {2024},
	zbmath = {7995674}}

@article{Tao01,
	author = {Tao, T.},
	doi = {10.1007/PL00005588},
	fjournal = {Communications in Mathematical Physics},
	issn = {0010-3616},
	journal = {Commun. Math. Phys.},
	language = {English},
	number = {2},
	pages = {443--544},
	title = {Global regularity of wave maps. {II}: {Small} energy in two dimensions},
	volume = {224},
	year = {2001},
	zbl = {1020.35046},
	zbmath = {1709188}}

@article{Tao04,
	author = {Tao, T.},
	doi = {10.1142/S0219891604000032},
	fjournal = {Journal of Hyperbolic Differential Equations},
	issn = {0219-8916},
	journal = {J. Hyperbolic Differ. Equ.},
	language = {English},
	number = {1},
	pages = {27--49},
	title = {Global well-posedness of the {Benjamin}-{Ono} equation in {{\({H}^ 1({R})\)}}.},
	volume = {1},
	year = {2004},
	zbl = {1055.35104},
	zbmath = {2078689}}

@article{Kan25,
	arxiv = {arXiv:2512.02250},
	author = {Kaneshiro, C.},
	howpublished = {Preprint, {arXiv}:2512.02250 [math.{PR}] (2025)},
	journal = {arXiv:2512.02250},
	title = {A {New} {Proof} of the {Abstract} {Random} {Tensor} {Estimate} by {Deng}, {Nahmod}, and {Yue}},
	url = {https://arxiv.org/abs/2512.02250},
	year = {2025}}

@article{BR25,
	author = {Bringmann, B. and Rodnianski, I.},
	doi = {10.2140/pmp.2025.6.139},
	fjournal = {Probability and Mathematical Physics},
	issn = {2690-0998},
	journal = {Probab. Math. Phys.},
	language = {English},
	number = {1},
	pages = {139--193},
	title = {Well-posedness of a gauge-covariant wave equation with space-time white noise forcing},
	volume = {6},
	year = {2025},
	zbl = {1558.35323},
	zbmath = {7989460}}

@article{Tsutsumi,
 author = {Takaoka, H. and Tsutsumi, Y.},
 title = {Well-posedness of the {Cauchy} problem for the modified {KdV} equation with periodic boundary condition},
 fjournal = {IMRN. International Mathematics Research Notices},
 journal = {Int. Math. Res. Not.},
 issn = {1073-7928},
 volume = {2004},
 number = {56},
 pages = {3009--3040},
 year = {2004},
 language = {English},
 doi = {10.1155/S1073792804140555},
 zbMATH = {2135032},
 Zbl = {1154.35442}
}

@incollection{vHan17,
	author = {van Handel, R.},
	booktitle = {Convexity and concentration},
	doi = {10.1007/978-1-4939-7005-6_4},
	isbn = {978-1-4939-7004-9; 978-1-4939-7005-6},
	language = {English},
	pages = {107--156},
	publisher = {New York, NY: Springer},
	title = {Structured random matrices},
	year = {2017},
	zbl = {1376.15029},
	zbmath = {6781194}}

@article{HP93,
	author = {Haagerup, U. and Pisier, G.},
	doi = {10.1215/S0012-7094-93-07134-7},
	fjournal = {Duke Mathematical Journal},
	issn = {0012-7094},
	journal = {Duke Math. J.},
	language = {English},
	number = {3},
	pages = {889--925},
	title = {Bounded linear operators between {{\(C^*\)}}-algebras},
	volume = {71},
	year = {1993},
	zbl = {0803.46064},
	zbmath = {558991}}

@article{FW25,
	archiveprefix = {arXiv},
	author = {Forlano, J. and Wang, Y.},
	eprint = {2509.14861},
	journal = {arXiv:2509.14861},
	primaryclass = {math.AP},
	title = {Invariant Gibbs dynamics for the nonlinear Schr\"odinger equations on the disc},
	url = {https://arxiv.org/abs/2509.14861},
	year = {2025}}

@article{GKO24,
 author = {Gubinelli, M. and Koch, H. and Oh, T.},
 title = {Paracontrolled approach to the three-dimensional stochastic nonlinear wave equation with quadratic nonlinearity},
 fjournal = {Journal of the European Mathematical Society (JEMS)},
 journal = {J. Eur. Math. Soc. (JEMS)},
 issn = {1435-9855},
 volume = {26},
 number = {3},
 pages = {817--874},
 year = {2024},
 language = {English},
 doi = {10.4171/JEMS/1294},
 zbMATH = {7834921},
 Zbl = {1537.35434}
}

@article{GIP15,
 author = {Gubinelli, M. and Imkeller, P. and Perkowski, N.},
 title = {Paracontrolled distributions and singular {PDEs}},
 fjournal = {Forum of Mathematics, Pi},
 journal = {Forum Math. Pi},
 issn = {2050-5086},
 volume = {3},
 pages = {75},
 note = {Id/No e6},
 year = {2015},
 language = {English},
 doi = {10.1017/fmp.2015.2},
 zbMATH = {6476283},
 Zbl = {1333.60149}
}
	\bibliographystyle{alpha} 

\end{document}